\newtheorem{theorem}{Theorem}[section]
\newtheorem{corollary}[theorem]{Corollary}
\newtheorem{definition}[theorem]{Definition}
\newtheorem{lemma}[theorem]{Lemma}
\newtheorem{proposition}[theorem]{Proposition}
\newtheorem{remark}[theorem]{Remark}
\def\N{\mathbb{N}}
\def\R{\mathbb{R}}
\def\C{\mathbb{C}}
\def\Z{\mathbb{Z}}
\def\exp{{\rm{exp}}}
\def\re{{\rm{Re}}}
\def\e{\varepsilon}
\def\a{\alpha}
\def\ft{\tilde{f}}
\def\vt{\tilde{v}}
\def\vb{\overline{v}}
\def\R{\mathbb{R}}
\def\dd{\,\textrm{d}}
\def\hc{f}
\def\op{\mathcal{F}}
\def\opg{\mathcal{S}}
\def\opF{\mathcal{F}}
\def\opR{\mathcal{R}}
\def\opL{\mathcal{S}}
\def\opN{\mathcal{N}}
\def\opP{\mathcal{P}}
\def\opT{\mathcal{T}}
\def\opTt{\widetilde{\mathcal{T}}}
\def\O{\mathcal{O}}
\def\co{\mathcal{C}}
\def\w{w}
\def\m{\textrm{min}}
\def\dg{\delta g}
\def\dv{\delta v}
\def\dgt{\widetilde{\delta g}}
\def\Xin{\mathcal{X}}
\def\Zin{\mathcal{Z}}
\def\Yin{\mathcal{Y}}
\def\c{c}
\def\esv{}
\def\esvq{}
\def\esvi{}
\def\Cin{\mathbf{b}}
\def\Cout{\mathbf{a}}
\def\fdout{f_0^{\mathrm{out}}}
\def\vdout{v_0^{\mathrm{out}}}
\def\fdin{f_0^{\mathrm{in}}}
\def\vdin{v_0^{\mathrm{in}}}
\def\gout{g^{\mathrm{out}}}
\def\wout{w^{\mathrm{out}}}
\def\gin{g^{\mathrm{in}}}
\def\win{w^{\mathrm{in}}}
\def\fout{f^{\mathrm{out}}}
\def\vout{v^{\mathrm{out}}}
\def\fin{f^{\mathrm{in}}}
\def\vin{v^{\mathrm{in}}}
\newcommand\f[1]{\hc_{#1}}
\renewcommand\v[1]{v_{#1}}
\author{M. Aguareles$^1$}
\address{${}^{1}$Departament de'Inform\`atica, Matem\`atica Aplicada i Estad\'istica, Universitat de Girona, Girona, Spain. }
\email{maria.aguareles@udg.edu}
\author{I. Baldom\'a$^{2}$}
\address{${}^{2}$Departament de Matem\`atiques, Universitat Polit\`ecnica de Catalunya (UPC), IMTECH (UPC),
Centre de Recerca Matem\`atica (CRM), Barcelona, Spain}
\email{immaculada.baldoma@upc.edu}
\author{T. M-Seara$^{3}$}
\address{${}^{3}$Departament de Matem\`atiques, Universitat Polit\`ecnica de Catalunya (UPC), IMTECH (UPC),
Centre de Recerca Matem\`atica (CRM), Barcelona, Spain}
\email{tere.m-seara@upc.edu}
\title{A rigorous derivation of the asymptotic wavenumber of spiral wave solutions of the complex Ginzburg-Landau equation.}
\date{\today}
\begin{document}
\maketitle

\begin{abstract}
 In this work $n$-armed Archimedian spiral wave solutions of the complex Ginzburg-Landau equation are considered. These solutions are showed to depend on two characteristic parameters, the so called \emph{twist parameter}, $q$, and the \emph{asymptotic wavenumber} $k$. The existence and uniqueness of the value of $k=k_*(q)$ for which  $n$-armed Archimedian spiral wave solutions exist is a classical result,
obtained back in the 80’s by Kopell and Howard. In this work we deal with a different problem, that is, the asymptotic expression of $k_*(q)$ as $q \to 0$.
Since the eighties, different heuristic perturbation techniques, like formal asymptotic expansions, have conjectured an asymptotic expression of $k_* (q)$ which is of the form $k_*(q) \sim C q^{-1} e^{-\frac{\pi}{2n |q|}}$ being $C$ a known constant. However, the validity of this expression has remained opened until now, despite of the fact that it has been widely used for more than $40$ years. In this work, using a functional analysis approach, we finally prove the validity of the asymptotic formula for $k_*(q)$, providing a rigorous bound for its relative error, which turns out to be $k_*(q)= C q^{-1} e^{-\frac{\pi}{2nq}} (1+ \mathcal{O}(|\log q|^{-1})$. Moreover, such approach can be used in more general equations such as the celebrated $\lambda-\omega$ systems.
\end{abstract}

\section{Introduction}
In a wide range of physical, chemical and biological systems of different interacting species, one usually finds that the dynamics of each species is governed by a diffusion mechanism along with a reaction term, where the interactions with the other species are taken into account. For instance, one finds these type of systems in the modelling of chemical reaction processes as a model for pattern formation mechanisms (\cite{hohenberg93}), in the description of some ecological systems (\cite{murray01}), in phase transitions in superconductivity (\cite{hoffmann12}) or even to describe cardiac muscle cell performance \cite{erhardt22}, among many others. Mathematically, a reaction-diffusion system is essentially a system of ordinary differential equations to which some diffusion terms have been added:
\begin{equation}
\label{eq:RD}
    \partial_\tau U= D\Delta U + F(U,a),
\end{equation}
where $U=U(\tau,\vec{x})\in \mathbb{R}^N$, $\vec{x}=(x,y)\in \R^2$, $\tau\in \R$, $D$ is a diffusion matrix, $F$ is the reaction term, which is usually nonlinear, $\Delta = \partial_{xx}+\partial_{yy}$ is the Laplace operator and $a$ is a parameter (for instance some catalyst concentration in a chemical reaction) or a group of parameters.

In this paper we deal with a particular type of reaction-diffusion equations which are traditionally denoted as \emph{oscillatory systems}. These are characterised by the fact that they tend
to produce oscillations in homogeneous situations (i.e. when the term $D\Delta U$ vanishes). Of particular interest are oscillatory reaction-diffusion systems which tend
to produce spatial homogeneous oscillations. These are systems like \eqref{eq:RD} where the dynamical system that is obtained when one neglects the spatial derivatives (i.e., the Laplace operator) has an asymptotically stable periodic orbit. To be more precise, we refer to dynamical systems that undergo a non-degenerate supercritical Hopf bifurcation at $(U_0,a_0)$. In this case, one can derive an equation for the amplitude of the oscillations, $A\in \mathbb{C}$, by taking $\e^2=a-a_0>0$ small, $t=\e^2 \tau$ and writing the modulation of local oscillations with frequency $\omega$ as solutions of \eqref{eq:RD} of the form
    $$
    U(\tau,\vec{x},a)=U_0 + \e [ A(t , \vec{x})e^{i\omega \tau} v+\bar{A}(t , \vec{x})e^{-i\omega \tau} \bar{v}]+ \mathcal{O}(\e^2),
    $$
where $\bar{}$ denotes the complex conjugate. Under generic conditions, performing suitable scalings and upon neglecting the higher order terms in $\varepsilon$ (see, for instance, Section 2
in \cite{kuramoto03}, \cite{aranson02}, or \cite{mielke02}), the amplitude, $A(t , \vec{x})$, turns out to satisfy the celebrated complex Ginzburg-Landau equation (CGL)
\begin{equation}
\label{eq:CGL}
\partial_t A = (1+i\alpha) \Delta A + A - (1+i\beta) A |A|^2,
\end{equation}
where $A(t,\Vec{x})\in\mathbb{C}$ and $\alpha,\beta$ are real parameters (depending on $F$ and $D$). The universality and ubiquity of CGL has historically produced a large amount of research and it is one of the most studied nonlinear partial differential system of equations specially among the physics community. The CGL equation is also known to exhibit a rich variety of different pattern solutions whose stability and emergence are still far from being completely understood (see \cite{correia20}, \cite{plec01}, \cite{doelman09}, \cite{scheel03}, \cite{sandstede20}, \cite{dodson19} for some of the latest achievements and open problems).

We note that \eqref{eq:CGL} has two special features: the solutions are invariant under spatial translations, that is, if $A(t,\vec{x})$ is a solution, then $A(t,\vec{x}+ \vec{x}_0)$ does also satisfy equation \eqref{eq:CGL} for any fixed $\vec{x}_0\in\R^2$, and it also has gauge symmetry, that is $\widetilde{A}(t,\vec{x})= e^{i\phi} A(t,\vec{x})$ is a solution for any $\phi\in\R$.

In this work we shall focus on some special rigidly rotating solutions of~\eqref{eq:CGL} called \emph{Archimedian spiral waves}. In order to define these solutions, following~\cite{sandstede20}, we consider first polar coordinates, that is $\vec{x}=(r\cos \varphi,r \sin \varphi) \in \mathbb{R}^2$ in which equation \eqref{eq:CGL} reads:
\begin{equation}
\label{eq:CGL_pol}
\partial_t A = (1+i\alpha) \left(\partial^2_r A+
\frac{1}{r}\partial_r A+\frac{1}{r^2}\partial^2_\varphi A\right)+ A - (1+i\beta) A |A|^2,
\end{equation}
where, abusing notation, we denote by the same letter $A(t,r,\varphi)$ the solution in polar coordinates.
To define spiral waves let us first consider the one dimensional CGL equation:
\begin{equation}
\label{eq:CGL1d}
\partial_t A = (1+i\alpha) \partial ^2_r A + A - (1+i\beta) A |A|^2 ,\qquad r\in\mathbb{R}
\end{equation}
and introduce the notion of \emph{wave train}.
\begin{definition}\label{def:wavetrain}
A \emph{wave train} of~\eqref{eq:CGL_pol} is a non constant solution, $A(t,r)$, of equation~\eqref{eq:CGL1d} of the form:
\begin{equation}
    \label{sol:1d}
    A(t, r)=A_*(\Omega t -k_*r),
\end{equation}
where the \emph{profile} $A_*(\xi)$ is $2\pi$-periodic, $\Omega\in\R\backslash \{0\}$ is the frequency of the wave train and $k_*\in\R$ is the corresponding (spatial) wavenumber.
\end{definition}

The particular case of a single mode wave train, namely $A(t,r)= C e^{i(\Omega t - k_*r)}$ leads to the well-known relations
\begin{equation}\label{eq:disp}
C=\sqrt{1-k_*^2}, \qquad \Omega = \Omega (k_*) = - \beta +k_*^2 (\beta- \alpha).
\end{equation}
The last condition on the frequency is the associated \textit{dispersion relation}. Then, for any pair of the parameter values $(\alpha, \beta)$ there exist a family of {single mode} wave trains of~\eqref{eq:CGL1d} of the form given in~\eqref{sol:1d} satisfying conditions~\eqref{eq:disp}, one for each wavenumber $k_*$.

Now we define (see Definition~\ref{def:archimedian}) an \emph{$n$-armed Archimedian spiral wave} which, roughly speaking, is a bounded solution of~\eqref{eq:CGL_pol} that asymptotically, as $r\to\infty$, tends to a particular wave train (see Figure~\ref{fig:1d}).
From a physical point of view, spiral waves arise when inhomogeneities of the medium force a zero amplitude in particular points in space (\cite{hend61}). These points where the amplitude is forced to vanish are usually known as \emph{defects} (\cite{aranson02}). By virtue of the translation invariance of~\eqref{eq:CGL}, in spiral wave solutions with a single defect, one can place the defect anywhere in space, in particular at the origin, i.e. $ A(t,\vec{0})=0$.

In this work we shall use the following definition of an $n$-armed spiral wave solution of the complex Ginzburg-Landau equation given in \cite{sandstede20}:
\begin{definition}\label{def:archimedian} Let $n\in \mathbb{N}$, we say that $A(t,r,\varphi)$ is a \emph{rigidly rotating Archimedian $n$-armed spiral wave} solution of equation~\eqref{eq:CGL_pol} if it is a bounded solution of the form $A(t,r,\varphi)=A_s(r, n \varphi + \Omega t)$,  defined for $r\geq 0$ satisfying that
$$
\lim_{r\to \infty} \max_{\psi \in [0,2\pi]} |A_s(r,\psi) - A_*(-k_* r + \theta(r) + \psi) | = 0,
$$
and
$$
\lim_{r\to \infty}\max_{\psi \in [0,2\pi]}| \partial_{\psi} A_s(r,\psi) - A_*'(-k_* r + \theta(r) + \psi) |
= 0,
$$
where the profile $ A_*(\Omega t -k_*r )$ is a wave train of the equation~\eqref{eq:CGL1d}, $A_s(r,\cdot)$ is $2\pi$-periodic and $\theta$ is a smooth function such that $\lim_{r\to \infty} \theta'(r) \to 0$.

The parameter $k_*$ is in this case known as the \emph{asymptotic wavenumber} of the spiral.
\end{definition}
Notice that, in a co-rotating frame given by $\psi=n\varphi+ \Omega t$ and considering $r$ as the independent variable, spiral wave solutions can be seen as a heteroclinic orbit, as represented in Figure \ref{fig:1d}, connecting the equilibrium point $A=0$ with the wave train solution $A_*$.
\begin{figure}[ht]
    \centering
    \includegraphics[width=\textwidth/2]{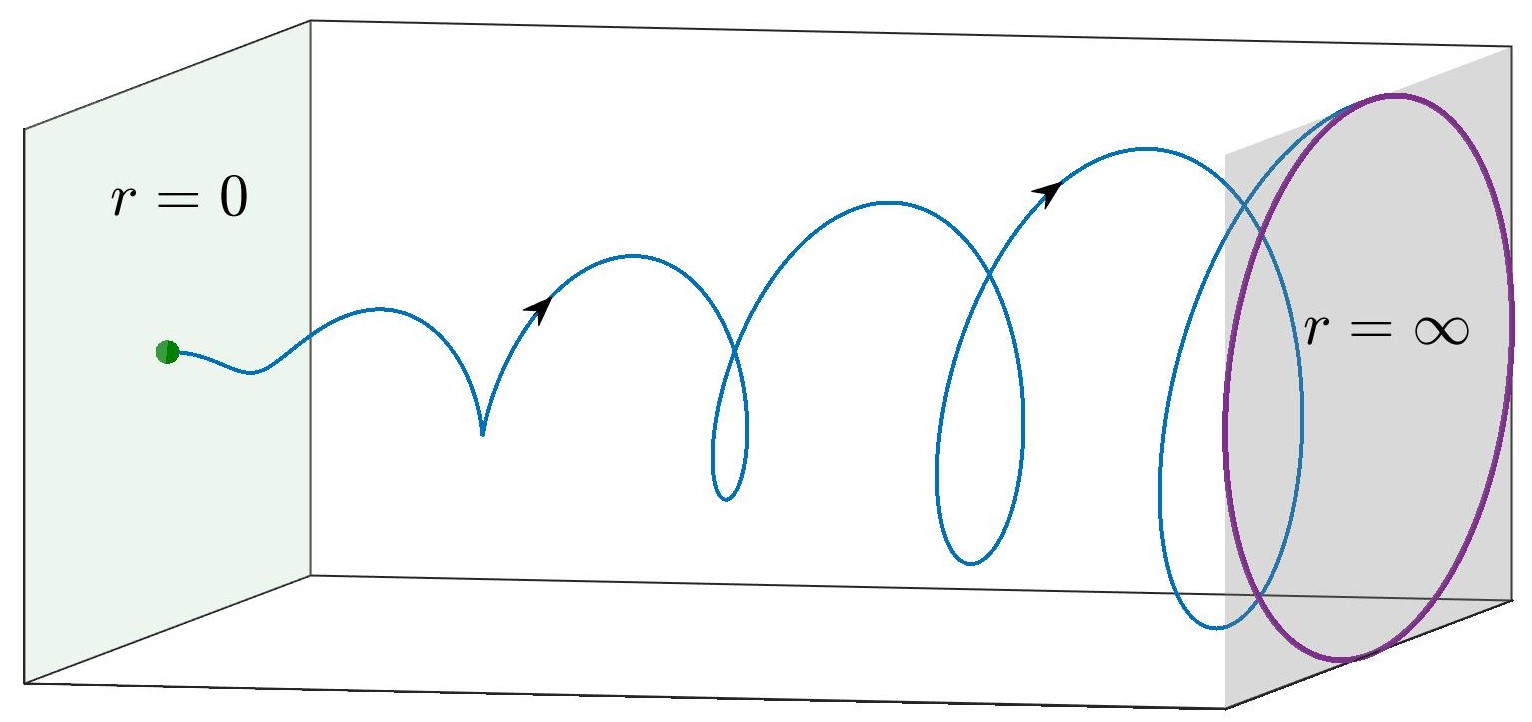}
    \caption{Representation of the spiral wave solutions of \eqref{eq:CGL} as an heteroclinic connection.}
    \label{fig:1d}
\end{figure}

To give the main result of this paper we  introduce  the so-called \emph{twist parameter} $q$:
\begin{equation}\label{intro:eq:q}
 q=\frac{\beta-\alpha}{1+\alpha \beta}
\end{equation}
which, in particular, is well defined for values of $\alpha,\beta $ such that  $|\alpha -\beta| \ll 1$.
 As we shall explain in Section~\ref{secion:Sp}, the shape of the spiral waves strongly depends on this parameter. In fact, when $q=0$,
 the solutions of the Ginzburg-Landau equation~\eqref{eq:CGL} of the form  $A(t,\vec{x}) = e^{-i \alpha t} \hat{A}(t,\vec{x})$ satisfy the ``real'' Ginzburg-Landau equation
 $$
\partial_t \hat{A}= \Delta \hat{A} + \hat{A} - \hat{A}|\hat{A}|^2, \qquad \hat{A}(t,\vec{x}) \in \mathbb{R}.
 $$
Our perturbative analysis considers the case in which we are close to the ``real'' Ginzburg-Landau equation, that is to say, we deal with values of $q$ which are small.

The main result of this paper reads as follows:
\begin{theorem}\label{thm:mainintroduction}
For any $n \in \mathbb{N}$, there exist $q_0>0$, small enough, and a unique function $\kappa_*: (-q_0,q_0) \to \mathbb{R}$ of the form
\begin{equation}\label{intro:k}
\kappa_*(q)= \frac{2}{q}e^{-\frac{C_n}{n^2} -\gamma } e^{-\frac{\pi}{2n|q|}} (1+\mathcal{O}(|\log q|^{-1})),
\end{equation}
with $\gamma$ the Euler's constant and $C_n$ a constant depending only on $n$, satisfying that the complex Ginzburg-Landau equation \eqref{eq:CGL_pol} possesses  rigidly rotating Archimedian $n$-armed spiral wave solutions of the form
\begin{equation} \label{sol:spiral}
A(t,r,\varphi;q)= \mathbf{f}(r;q) \exp \left(i(\Omega  t+\Theta (r;q)\pm n\varphi)\right),
\end{equation}
with a single defect satisfying
\begin{equation*}
\mathbf{f}(0;q)=0,\qquad  \qquad \lim_{r\to \infty} \mathbf{f}(r;q)=\sqrt{1-k_*^2},\qquad
\Theta'(0;q)=0,\qquad
\lim_{r\to \infty} \Theta'(r;q)=-k_*,
\end{equation*}
if and only if the asymptotic wavenumber of the spiral wave is $k_*=\kappa_*(q)$ as given in \eqref{intro:k} and  $\Omega$ satisfies~\eqref{eq:disp}. In addition $\Theta'(r;q)$ has constant sign, that is, for $q$ fixed, $\mathbf{f}(r;q)$ is an increasing function,
$$\mathbf{f}(r;q)>0,\qquad \textrm{for }\; r>0$$
and, as a consequence, $\lim_{r\to \infty} \mathbf{f}'(r;q)=0$.
\end{theorem}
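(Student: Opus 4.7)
The plan is to reduce the PDE problem to an ODE boundary-value problem by substituting the ansatz \eqref{sol:spiral} into \eqref{eq:CGL_pol}. Setting $v(r) = \Theta'(r)$ and separating real and imaginary parts yields a coupled second-order system for the pair $(\mathbf{f}, v)$, parametrized by $q$ and the unknown asymptotic wavenumber $k_*$. Using the dispersion relation \eqref{eq:disp} to eliminate $\Omega$, the problem becomes: find $(\mathbf{f}, v, k_*)$ satisfying this system, with $\mathbf{f}(0)=0$, $v(0)=0$ at the defect, and $\mathbf{f}(r) \to \sqrt{1-k_*^2}$, $v(r) \to -k_*$ as $r \to \infty$. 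By the classical Kopell--Howard result, for each $q$ there is at most one value of $k_*$ for which such a heteroclinic connection exists; our task is to compute its asymptotic form as $q \to 0$.

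I would next examine the limit $q = 0$. Here the system reduces to the real Ginzburg--Landau vortex problem, whose unique bounded $n$-armed solution is the classical Hagan vortex with $v_0 \equiv 0$, $\mathbf{f}_0(r) \to 1$ and $k_* = 0$. The expansion of $\mathbf{f}_0$ near infinity can be written in terms of modified Bessel functions, and it is from this expansion that the constants $e^{-C_n/n^2}$ and $e^{-\gamma}$ in \eqref{intro:k} ultimately arise. For $q \neq 0$ small, this real solution cannot be continued to match $v(\infty) = -k_*$ with $k_* = 0$: the outer linearization around the wave train has one growing and one decaying mode, and the matching fixes the wave-train parameter $k_*$ precisely so as to cancel the forbidden mode. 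The whole point is that this cancellation forces $k_*$ to be exponentially small, of the order predicted by \eqref{intro:k}.

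To make this rigorous I would use matched asymptotics to build a quasi-solution $(\mathbf{f}_{\mathrm{app}}, v_{\mathrm{app}}, k_{\mathrm{app}})$ with $k_{\mathrm{app}}$ of the form \eqref{intro:k}: an inner expansion perturbing the Hagan vortex on scales $r \sim 1$, an outer expansion perturbing the wave train on scales $r \sim 1/q$, and a WKB/Bessel-function matching in between. The exponent $\pi/(2n|q|)$ appears as an integrated WKB phase across the matching region, and the explicit prefactor $(2/q) e^{-C_n/n^2 - \gamma}$ comes from the connection formulas between Bessel asymptotics. I would then linearise the full problem around this quasi-solution and close a contraction-mapping argument in weighted Banach spaces adapted to the boundary behaviour at $0$ and $\infty$. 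The linear operator has a finite-dimensional cokernel generated by the translation and gauge symmetries of \eqref{eq:CGL}; the corresponding solvability condition, evaluated on the residual, pins down $k_*$ uniquely with $\mathcal{O}(|\log q|^{-1})$ relative error. Monotonicity of $\mathbf{f}$ and constant sign of $v$ would then follow from the explicit quasi-solution plus the smallness of the correction.

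The main obstacle will be the quantitative inversion of the linearized operator in norms fine enough to distinguish exponentially small from algebraic quantities. Standard weighted $L^2$ or $C^k$ norms are too coarse: one needs function spaces adapted to the WKB phase of the matching region, typically involving exponential weights tuned to the Stokes line responsible for $e^{-\pi/(2n|q|)}$, or equivalently an analysis on complexified $r$-domains along which the exponential decay becomes algebraic. Verifying that the residual defect of the quasi-solution is captured exactly once by an adjustment of $k_*$, uniformly in $q$ and with the asserted logarithmic accuracy, is where the functional-analytic estimates must be most delicate.
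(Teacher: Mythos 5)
Your overall skeleton coincides with the paper's: reduce to the radial ODE system for $(\mathbf{f},v)$ with $v=\Theta'$, use the Kopell--Howard result for uniqueness, take the real Ginzburg--Landau vortex $f_0$ as the inner profile and the wave train as the outer state, and let the matching determine $k_*$. Where you diverge is in the architecture of the rigorous step. The paper does not build one global quasi-solution and invert a linearization with a symmetry-generated cokernel; it constructs two \emph{exact} one-parameter families of solutions --- an outer family on $[r_0,\infty)$ obtained by a contraction around $(F_0,V_0)$ with $V_0=K_{inq}'/K_{inq}$, and an inner family on $[0,r_0]$ obtained by a contraction around $(f_0,qv_0)$ --- and then matches them at a single intermediate point $r_0=e^{\rho/q}/\sqrt2$, $\rho=(q/|\log q|)^{1/3}$, solving the resulting three scalar equations in $(\mathbf{a},\mathbf{b},\mu)$ by Brouwer. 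Your attribution of the solvability condition to a cokernel generated by translation and gauge symmetries does not fit the radial reduction: gauge invariance is already quotiented out (only $\Theta'$ appears) and translation is broken by pinning the defect; the actual free parameters are the constants of integration multiplying the admissible homogeneous solutions ($K_0$ at infinity, $I_n$ at the origin), plus $k$ itself.

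The concrete gaps in your route are these. First, the exponential $e^{-\pi/(2n|q|)}$ does not come from a WKB phase, a Stokes line, or complexified $r$; it comes from the real small-argument asymptotics of the Bessel function of imaginary order, which give $v_0^{\mathrm{out}}(r)\sim -(n/r)\tan\bigl(nq\log r+nq\log(kq)+\tfrac{\pi}{2}-\theta_{0,nq}\bigr)$ in the overlap region, and from the requirement that this match the algebraically small inner phase $v_0^{\mathrm{in}}\sim -qn^2\log r/r+qC_n/r$; that forces $nq\log(kq)+\tfrac{\pi}{2}=\mathcal{O}(q)$, i.e.\ $kq=\mu e^{-\pi/(2nq)}$. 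Once $k$ is reparametrized this way, $\mu$ lives in a fixed compact interval and every estimate is uniform in $\mu$; your flagged ``main obstacle'' --- norms fine enough to separate exponentially small from algebraic quantities --- simply never arises, so insisting on exponentially tuned weights or complexified domains would be solving a problem the correct setup avoids. (Relatedly, $e^{-\gamma}$ comes from $\theta_{0,nq}=\arg\Gamma(1+inq)=-\gamma nq+\mathcal{O}(q^2)$ in the outer Bessel expansion, and $C_n$ from the renormalized integral of $\xi f_0^2(1-f_0^2)$ in the inner phase, not from a Bessel expansion of $\mathbf{f}_0$ at infinity.) Second, monotonicity of $\mathbf{f}$ does not follow merely from smallness of the correction: the perturbative bounds control $\partial_r f>0$ only up to $r\sim q^{-2}|\log q|^{-3}$, since beyond that the error bound $\mathcal{O}(q^2|\log r|^2/r^2)$ is no longer dominated by $\partial_r f_0^{\mathrm{in}}\sim n^2/r^3$; the paper needs a genuine comparison-principle argument (a Protter-type sub/supersolution lemma) to rule out interior maxima of $f$ for large $r$. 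Without an analogue of that step your positivity and monotonicity claims are unproved.
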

\begin{remark}\label{remark:relative_error}
We emphasize the results of Theorem~\ref{thm:mainintroduction} ensure the existence of a constant $M$ (depending on $q_0$ and $n$) such that for all $q\in (-q_0,q_0)$ one has
$$
\left | \frac{q}{2} e^{\frac{C_n}{n^2} +\gamma } e^{\frac{\pi}{2n|q|}}\kappa_*(q) - 1\right |\leq  \frac{M}{ |\log q|}.
$$
That is, we rigorously bound the relative error of $\kappa_*(q)$ with respect to its dominant term.
\end{remark}
The simple description of spiral wave patterns of \eqref{eq:CGL} clashes with the complexity of obtaining rigorous results on their existence, stability or emergence. In fact, the existence and uniqueness of $\kappa_*(q)$ and, as a consequence, of the rotational frequency of the pattern $\Omega$, is a classical result that was obtained in the 80's by Kopell \& Howard in \cite{kopell}.
At the same time the physics community started showing interest in this type of phenomena and several authors used formal perturbation analysis techniques to describe spiral wave solutions (see for instance \cite{greenberg81}, \cite{cohen78} or \cite{yamada76}).
More relevantly, Greenberg in \cite{Greenberg} and Hagan in \cite{hagan82} used formal techniques of matched asymptotic expansions to conjecture an asymptotic formula for $k_*=\kappa_*(q)$ when $q$ is small. The conjectured expression \eqref{intro:k} of the wavenumber $k_* (q)$, has been widely used in the literature and checked numerically in innumerable occasions (see for instance \cite{hohenberg93}, \cite{aranson02}, \cite{mikhailov2012}, \cite{coullet1989}, \cite{pinto2001} or \cite{Tsai2010}) but it has never been rigorously proved, that is the main purpose of the present paper. Furthermore, and as far as the authors know, in the previous works where expression~\eqref{intro:k} was formally derived the order of the error was either not mentioned or it was considered (without proof) to be $\mathcal{O}(q)$.

The precise computation of the constants in the exponentially small terms arising in~\eqref{intro:k} was already a challenge to overcome when the formal derivation was obtained and, in fact, 30 years later in \cite{AgChWi08}, a new simpler formal asymptotic scheme was used. It is therefore not that surprising that it has taken more than 40 years to finally obtain a rigorous proof of the expression~\eqref{intro:k} (see Remark~\ref{remark:relative_error}).

The novelty of our approach is to introduce a suitable functional setting which allows as to prove
that a necessary and sufficient condition for the spiral waves to exist is that the associated wavenumber, $k_*$, has to be exactly $\kappa_*(q)$ as in~\eqref{intro:k}.
This functional approach has furthermore allowed to provide a very detailed description of the structure of the whole spiral wave solutions, of which several features, such as positivity or monotonicity among many others, have now been rigorously established.

Archimedian spiral wave patterns are present in some other systems. In particular, there is another type of reaction-diffusion systems, the so-called \emph{$\lambda-\omega$ systems}, which have been classically used to investigate rotating spiral wave patterns:
\begin{equation}
\label{eq:lambda-omega}
    \frac{\partial}{\partial t}\begin{pmatrix}
u_1\\
u_2
\end{pmatrix} = \begin{pmatrix} \lambda(f) & -\omega(f)\\
\omega(f) &\lambda(f) \end{pmatrix}
\begin{pmatrix}
u_1\\
u_2
\end{pmatrix} +\Delta \begin{pmatrix}
u_1\\
u_2
\end{pmatrix},
\end{equation}
where $u_1(t,\vec{x}),u_2(t,\vec{x})\in\R$ and $\omega(\cdot ),\lambda(\cdot )$ are real functions of the modulus $f=\sqrt{u_1^2+u_2^2}$.
Actually, this system was first introduced by Kopell \& Howard in \cite{kopell73} as a model to describe plane wave solutions in oscillatory reaction diffusion systems. Not much later the same authors in \cite{kopell74a}, \cite{kopell74b} and \cite{kopell}, under some assumptions on $\lambda, \omega$, rigorously proved the existence and uniqueness of spiral wave solutions of \eqref{eq:lambda-omega} with a single mode.
Later, in~\cite{AgBaSe2016}, the authors proved that, in fact, the asymptotic wavenumber $k_*=k_*(q)$ has to be a flat function of the (small) parameter $q$. The particularity of this system is that the equations satisfied by spiral waves turn out to be exactly the same as the ones for the CGL equation when $\lambda(z)=1-z^2$ and $\omega(z)=\Omega+q(1-k^2-z^2)$, as we show later in Remark \ref{rem:lamda-omega}.

\subsection{Spiral patterns}\label{secion:Sp}

By Definition~\ref{def:archimedian} of Archimedian spiral waves, spiral wave solutions of the form~\eqref{sol:spiral} provided by Theorem~\ref{thm:mainintroduction}, have to tend, as $r\to \infty$, to
\begin{equation*}
A_*(\Omega t-k_*r+\theta(r))=C e^{i(\Omega t - k_*r+\theta(r))}
\end{equation*}
with $A(t,r)=A_*(\Omega t-k_*r)$ a wave train of~\eqref{eq:CGL1d}, that is $C,\Omega\in \mathbb{R}$ satisfying~\eqref{eq:disp} and $\theta'(r) \to 0$ as $r\to \infty$.
We will see in Section~\ref{sec:derivationequation} that, in fact, these are the only possible wave trains of~\eqref{eq:CGL1d}, namely, wave trains of equation~\eqref{eq:CGL1d} only have one mode.
The contour lines of $A_*$, that is to say, $\mathrm{Re} \big (A_*(\Omega t-k_*r +n\varphi) e^{-i\Omega t}\big )=c$ for any real constant $c$ (or equivalently $-k_*r + n\varphi = c'$), are Archimedian spirals whose wavelength $L$ (distance between two spiral arms) is given by
$$
L= \frac{2\pi n}{|k_*|}.
$$
The parameter $n\in\Z$ is known as the \emph{winding number} of the spiral and it represents the number of times that the spiral crosses the positive horizontal axis when $\varphi$ is increased by $2\pi$. In Figure~\ref{fig:spirals} we represent $n$\emph{-armed} archimedian spirals for different winding numbers, $n$.
\begin{figure}[ht]
     \centering
     \begin{subfigure}[b]{0.24\textwidth}
         \centering
         \includegraphics[width=\textwidth]{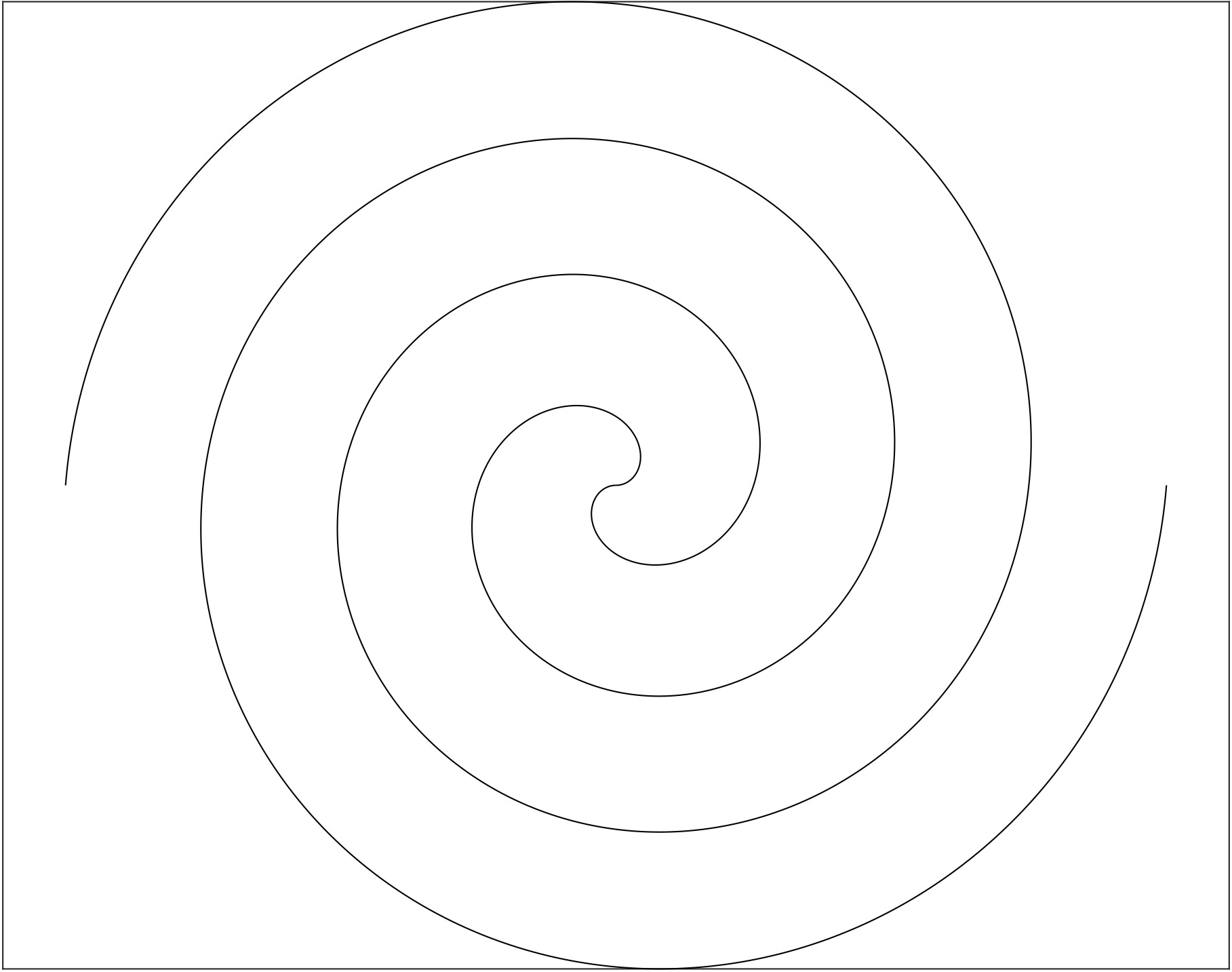}
         \caption{$n=1$}
     \end{subfigure}
     \begin{subfigure}[b]{0.24\textwidth}
         \centering
         \includegraphics[width=\textwidth]{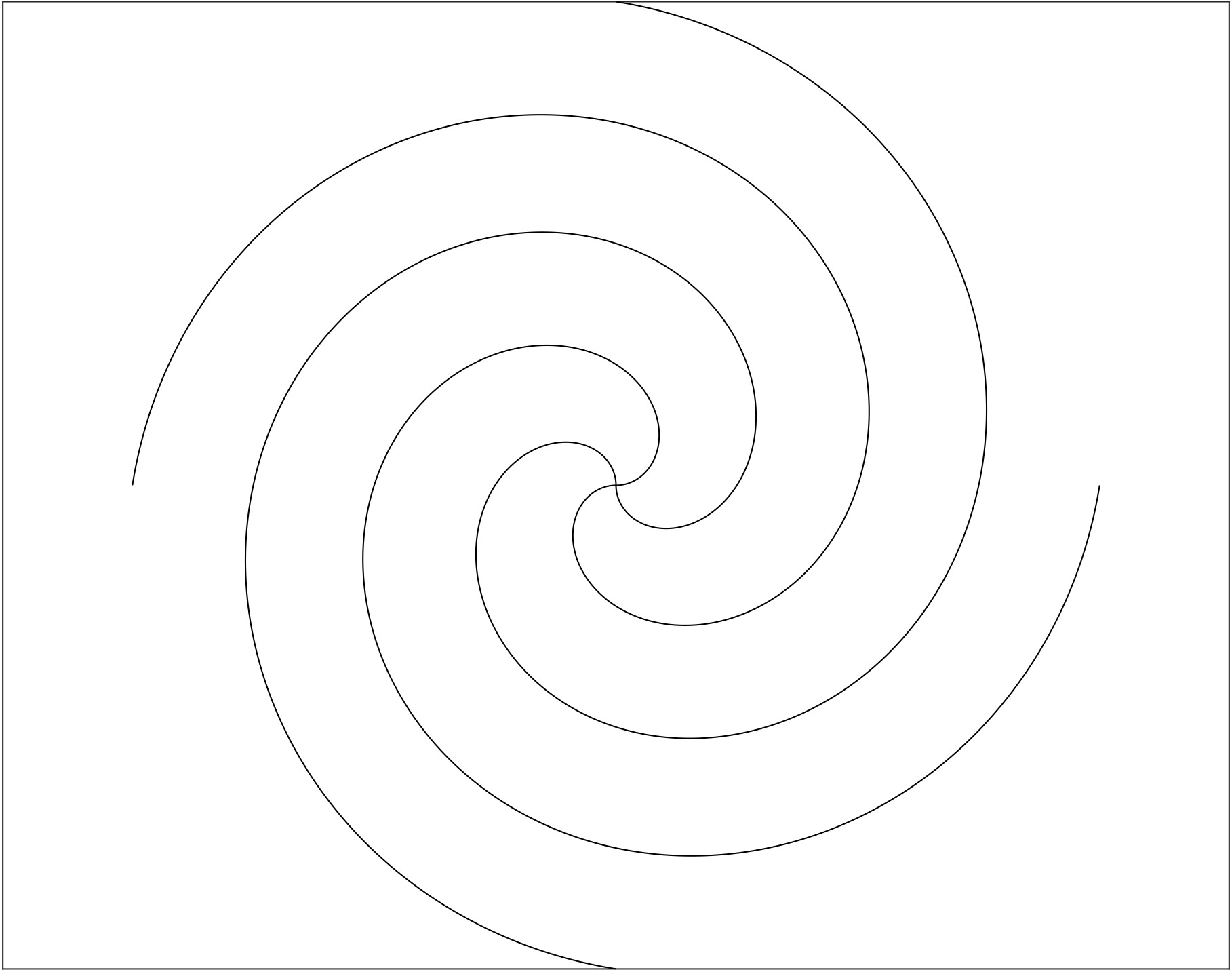}
         \caption{$n=2$}
     \end{subfigure}
     \begin{subfigure}[b]{0.24\textwidth}
         \centering
         \includegraphics[width=\textwidth]{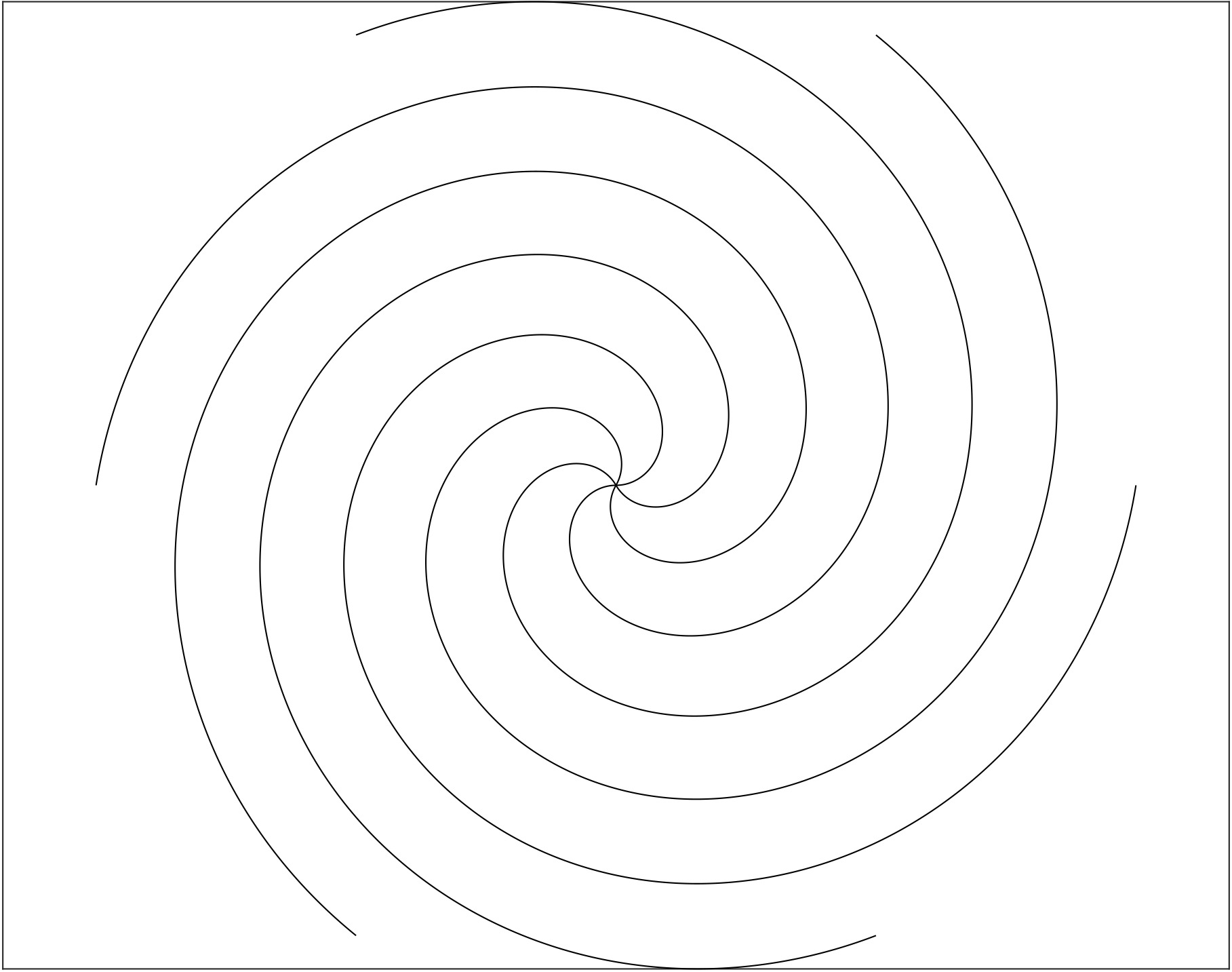}
         \caption{$n=3$}
     \end{subfigure}
     \begin{subfigure}[b]{0.24\textwidth}
         \centering
         \includegraphics[width=\textwidth]{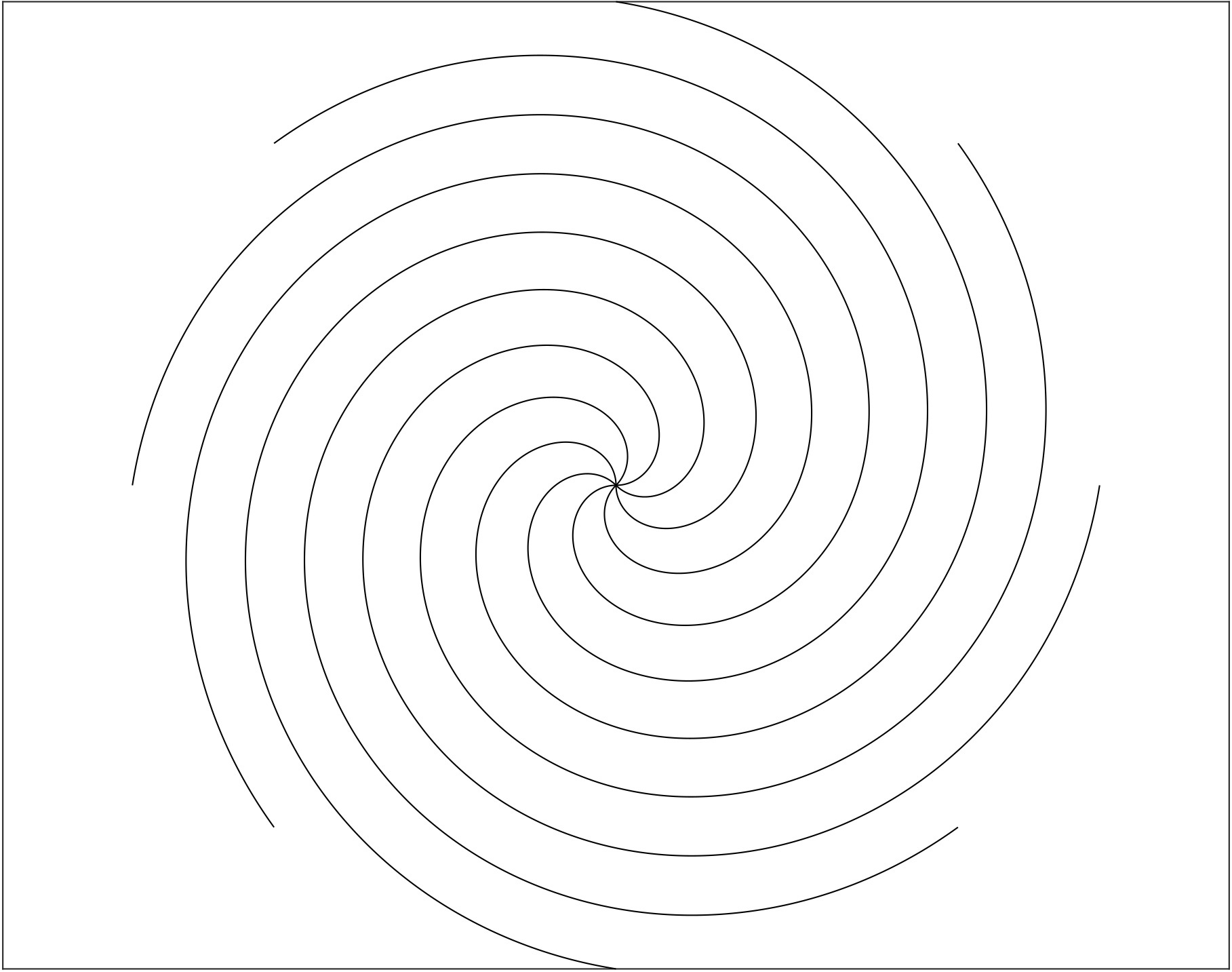}
         \caption{$n=4$}
     \end{subfigure}
        \caption{Representation of archimedian n-armed spiral waves for different winding numbers $n$.}
         \label{fig:spirals}
\end{figure}

At this point we must emphasize the role of the parameter $q$ in~\eqref{intro:eq:q} in the shape of the spiral wave $$
A(t,r,\varphi;q) =\mathbf{f}(r;q) e^{ i(\Omega t + \Theta (r;q) + n\varphi)}
$$
provided in Theorem~\ref{thm:mainintroduction}. Recall that the asymptotic wavenumber of the spiral wave is $k_*=\kappa_*(q)$ with $\kappa_*(q)$ defined in~\eqref{intro:k}. Let $A_*$ be the wave train associated to the spiral wave $A$ as in Definition~\ref{def:archimedian}. Then, from~\eqref{eq:disp},
$$
\lim_{r\to \infty} \mathbf{f}(r;q)= \sqrt{1-k_*^2}.
$$
Moreover, expression ~\eqref{intro:k} shows that $\lim_{q\to 0}\kappa_*(q)=0$, and therefore $\lim_{r\to\infty} \Theta'(r;0)=0$.
In fact, when $q=0$, that is $\alpha= \beta$ (see \eqref{intro:eq:q}), again from the dispersion equation~\eqref{eq:disp} one has that $C=1$ and $\Omega =-\beta$. In this case, the solutions of the Ginzburg-Landau equation~\eqref{eq:CGL_pol} of the form  $A(t,r,\varphi):= e^{i \Omega t}\hat{A}(r,\varphi)$ are such that $\hat{A}$ satisfies
$$
\partial_r^2 \hat{A} + \frac{1}{r} \partial_r \hat{A} + \frac{1}{r^2} \partial_{\varphi}^2 \hat{A} +\hat{A} - \hat{A} |\hat{A}|^2=0.
$$
For any $n\in \mathbb{N}$, this equation has a solution of the form $\hat{A}(r,\varphi)= \mathbf{f}(r) e^{in\varphi}$ with $\mathbf{f}(0)=0$, $\lim_{r\to \infty} \mathbf{f}(r)=1$. Indeed, the equation that $\mathbf{f}$ satisfies,
$$
\mathbf{f}'' + \frac{1}{r} \mathbf{f}' - \frac{n^2}{r^2} \mathbf{f} + \mathbf{f} - \mathbf{f}^3=0,
$$
is a particular case of the equation studied in~\cite{Aguareles2011}, proving that there exists a unique solution satisfying the conditions in Theorem~\ref{thm:mainintroduction} when $q=0$. Therefore, plotting $\re(\hat{A}(r,\varphi))$ one finds the surface depicted in the left image of Figure~\ref{fig:spirals3d}.
\begin{figure}[ht]
     \centering
     \begin{subfigure}[b]{0.48\textwidth}
         \centering
         \includegraphics[width=\textwidth]{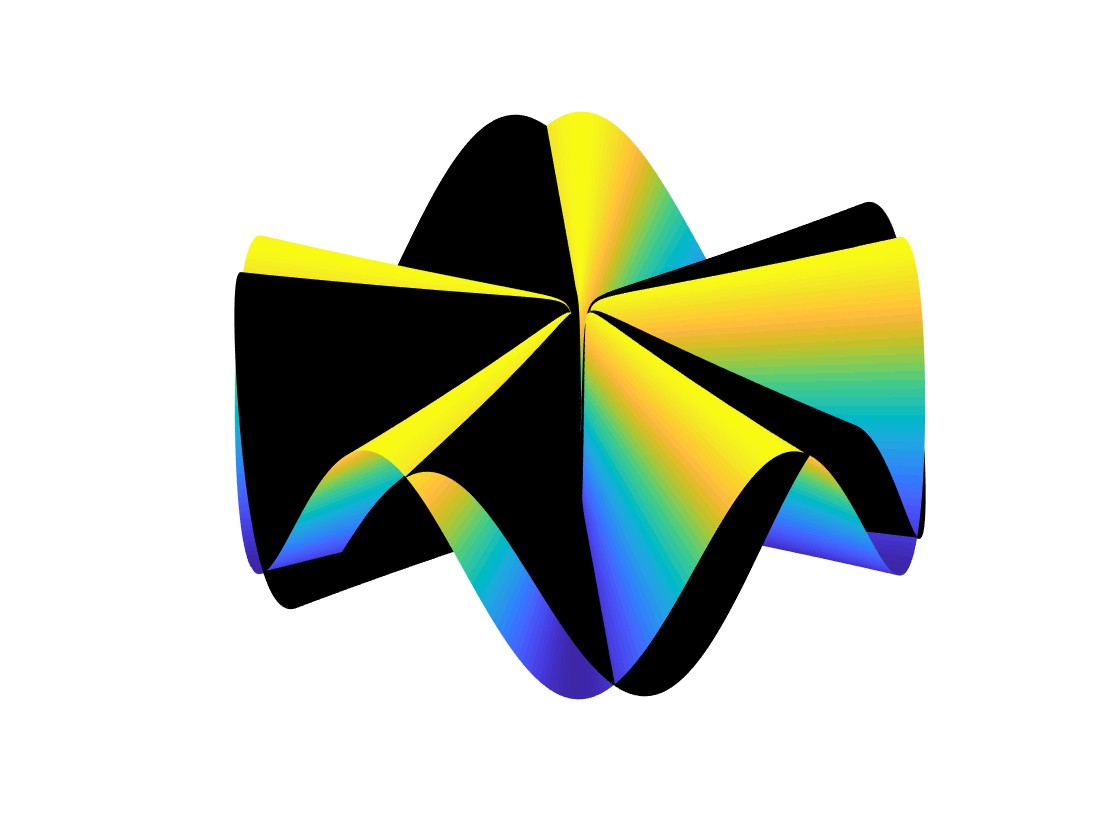}
         \caption{$q=0$}
     \end{subfigure}
     \begin{subfigure}[b]{0.48\textwidth}
         \centering
         \includegraphics[width=\textwidth]{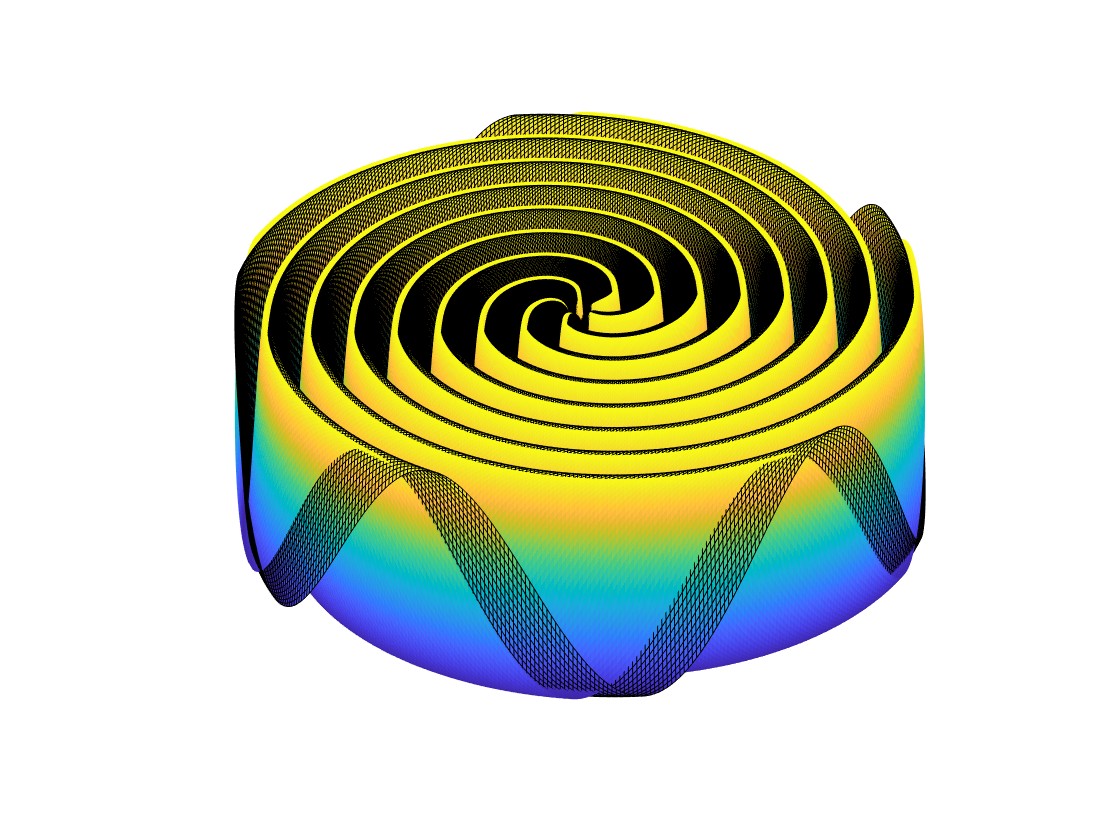}
         \caption{$q \neq 0$}
     \end{subfigure}
        \caption{Real part of the solutions of equation \eqref{eq:CGL} when $q=0$ and for $q\neq 0$. The picture shows $\big (\vec{x},\re (A(t,\vec{x}) e^{i\Omega t}) \big )$}
         \label{fig:spirals3d}
\end{figure}
We note that contour lines of $\re(\hat{A}(r,\varphi)$ are straight lines emanating from the origin.

However, if $q\neq 0$, $\Theta(r;q)$ is not constant and the contour lines bend and become the already mentioned Archimedian spirals, as the ones depicted in the right image of Figure \ref{fig:spirals3d}.
This is why $q$ is usually denoted as the \emph{twist} parameter of the spiral.

The paper is organized as follows. First in Section~\ref{sec:derivationequation} we prove that the only associated wave trains (Definition~\ref{def:wavetrain}) have a single mode (Lemma~\ref{lem:wavetrains}) and we obtain a system of ordinary differential equations that $\mathbf{f}$ and $\Theta$ have to satisfy in order for $A$, as defined in~\eqref{sol:spiral}, to be a rigidly rotating Archimedian $n$-armed spiral wave. In addition we set the boundary conditions which characterize $\mathbf{f}$ and $\Theta'$ (see Lemma~\ref{lem:newequation}). Finally, we enunciate Theorem~\ref{thm:main}, about the existence of such solutions and we prove Theorem~\ref{thm:mainintroduction} as a corollary of Theorem~\ref{thm:main}.

The rest of the paper is devoted to prove Theorem~\ref{thm:main}. First in Section~\ref{sec:main} we explain the strategy we follow to prove Theorem~\ref{thm:main} as well as
some heuristic arguments which motivate the asymptotic expression for the asymptotic wavenumer $k_*$. Section~\ref{sec:matching} is devoted to prove Theorem~\ref{thm:main} using rigorous matching methods. For that, Theorems~\ref{th:outermatching} and~\ref{th:innermatching} prove the existence of families of solutions and, finally, Theorem~\ref{thm:matchingtotal} proves the desired formula for the asymptotic wavenumber. The more technical Sections~\ref{sec:provaOuter} and~\ref{sec:proofInner} deal with the proof of Theorems~\ref{th:outermatching} and~\ref{th:innermatching} respectively.

\section{Spiral waves as solutions of ordinary differential equations}
\label{sec:derivationequation}

Next lemma characterizes the form of the possible wave train solutions of equations~\eqref{eq:CGL1d}:
\begin{lemma}\label{lem:wavetrains}
The wave trains associated to~\eqref{eq:CGL_pol} have a unique mode, namely, they are of the form
$
A(t,r)=C e^{i (\Omega t -k_* r)}
$
with $k_*\in \mathbb{R}$, and the constants  $C,\Omega \neq 0$ satisfy the relations~\eqref{eq:disp}.
\end{lemma}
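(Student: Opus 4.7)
The plan is to reduce the PDE~\eqref{eq:CGL1d} to an autonomous complex ODE via the wave-train ansatz and then classify its $2\pi$-periodic solutions by a direct Fourier-mode analysis. Substituting $A(t,r) = A_*(\xi)$ with $\xi = \Omega t - k_* r$ into~\eqref{eq:CGL1d} and using $\partial_t A = \Omega A_*'$, $\partial_r^2 A = k_*^2 A_*''$, the equation reduces to
\[
(1+i\alpha)\,k_*^2\,A_*''(\xi) - \Omega\,A_*'(\xi) + A_*(\xi) - (1+i\beta)\,A_*(\xi)\,|A_*(\xi)|^2 = 0.
\]
It then suffices to show that every non-constant $2\pi$-periodic solution $A_*$ is a single Fourier monomial; the form of the wave train and the dispersion relations~\eqref{eq:disp} would then follow by direct substitution and matching real and imaginary parts of the resulting algebraic relation.

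To carry this out, I would expand $A_*(\xi) = \sum_{n \in \mathbb{Z}} a_n\,e^{in\xi}$ and use that $A_*|A_*|^2 = \sum_{p,q,r} a_p a_q \bar a_r\,e^{i(p+q-r)\xi}$. Matching coefficients of $e^{in\xi}$ in the ODE yields the infinite algebraic system
\[
\bigl[\,1 - (1+i\alpha)\,k_*^2\,n^2 - in\,\Omega\,\bigr]\, a_n = (1+i\beta)\!\!\sum_{p+q-r=n}\!\! a_p\,a_q\,\bar a_r, \qquad n \in \mathbb{Z}.
\]
Let $S = \{n \in \mathbb{Z} : a_n \neq 0\}$ be the spectrum of $A_*$. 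Suppose for contradiction that $|S| \geq 2$, and set $M = \max S$ and $m = \min S$, so $M > m$. Examine the mode $n_* = 2M - m$: since $n_* > M$, it follows that $n_* \notin S$, so the left-hand side of the mode equation at $n = n_*$ vanishes. On the right-hand side, any triple $(p,q,r) \in S^3$ contributing to this mode satisfies $p + q - r = 2M - m$, equivalently $p + q = 2M - m + r$; combined with the bounds $p + q \leq 2M$ and $r \geq m$, both must be saturated simultaneously, forcing $p = q = M$ and $r = m$. Hence the right-hand side equals $(1+i\beta)\,a_M^2\,\bar a_m \neq 0$, contradicting the vanishing of the left-hand side.

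Consequently $|S| \leq 1$; non-constancy of $A_*$ excludes $S \subseteq \{0\}$, so $S = \{m\}$ for a single integer $m \neq 0$ and $A_*(\xi) = a_m\,e^{im\xi}$. Absorbing the winding number $m$ into the parameters (replacing $(\Omega, k_*)$ by $(m\Omega, m k_*)$, so that the rescaled profile has fundamental period $2\pi$), writing $a_m = C\,e^{i\phi_0}$ with $C > 0$, and using the gauge invariance $A \mapsto e^{i\phi}A$ of~\eqref{eq:CGL1d} to take $\phi_0 = 0$ without loss of generality, yields $A(t,r) = C\,e^{i(\Omega t - k_* r)}$. Plugging this expression back into the reduced ODE and separating real and imaginary parts of the resulting algebraic relation produces $C^2 = 1 - k_*^2$ and $\Omega = -\beta + k_*^2(\beta - \alpha)$, which are precisely~\eqref{eq:disp}; since a wave train requires $\Omega \neq 0$, this also fixes the admissible values of $k_*$. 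The heart of the plan, and the only non-routine step, is the spectral observation in the second paragraph: the triple-product structure of the cubic nonlinearity produces a ``maximum-mode'' contribution $a_M^2 \bar a_m$ at $n_* = 2M - m$ that cannot be balanced by any other triple, forcing the spectrum of $A_*$ to be a singleton.
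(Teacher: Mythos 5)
Your reduction to the profile ODE, the mode equation with the full cubic convolution $\sum_{p+q-r=n}a_pa_q\bar a_r$, and the final extraction of the dispersion relations~\eqref{eq:disp} are all correct, and your handling of the nonlinearity is in fact more explicit than the paper's (which writes the Fourier coefficient of $(1+i\beta)A|A|^2$ as $(1+i\beta)|A|^2a^{[\ell]}$, i.e.\ treats $|A|^2$ as a scalar multiplier, and then compares the resulting algebraic relations for two putative modes $\ell_1,\ell_2$ to force $\ell_1=\ell_2$). However, your central step has a genuine gap: you set $M=\max S$ and $m=\min S$, which presupposes that the spectrum $S=\{n:a_n\neq 0\}$ is bounded above and below. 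Nothing in Definition~\ref{def:wavetrain} guarantees this. A non-constant $2\pi$-periodic profile — even an analytic one, as solutions of this second-order analytic ODE are — can have infinitely many nonzero Fourier modes (e.g.\ $e^{\cos\xi}$ has full spectrum), in which case $\max S$ and $\min S$ do not exist and the extremal-frequency argument, whose whole point is that the triple $(M,M,m)$ at $n_*=2M-m$ cannot be balanced, never gets off the ground. Exponential decay of the $a_n$ does not rescue it: the argument genuinely requires $A_*$ to be a trigonometric polynomial, which you have not shown.

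To close the gap you need some a priori structural information before invoking any mode-counting. One route is to first prove that $|A_*|$ is constant (for instance by writing $A_*=fe^{i\chi}$ in polar form and reading off from the real and imaginary parts of the profile ODE that $f$ and $\chi'$ must be constant); once $|A_*|^2$ is a constant the cubic term is genuinely diagonal in Fourier, each mode equation becomes the scalar relation the paper uses, and comparing the real parts for two distinct modes forces $\widehat k_*^2(\ell_1^2-\ell_2^2)=0$, after which the imaginary parts exclude $\ell_1=-\ell_2$. Your concluding normalisation (absorbing the winding number into $(\Omega,k_*)$, using gauge invariance to take $a_m>0$, and matching real and imaginary parts to get $C^2=1-k_*^2$ and $\Omega=-\beta+k_*^2(\beta-\alpha)$) is fine as it stands and coincides with the paper's.
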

\begin{proof}
Assume that $A_*(\xi)= \sum_{\ell \in \mathbb{Z}} a^{[\ell]} e^{i \ell \xi}$, $a^{[\ell]}\in \mathbb{C}$, and let $A(t,r)$ be the wave train defined through $A_*$, that is $A(t,r)=A_*(\widehat{\Omega} t - \widehat{k}_* r)$. Since $A(t,r)$ has to be a solution of~\eqref{eq:CGL1d}, we have that, for all $\ell \in \mathbb{Z}$
$$
  i \ell \widehat{\Omega} a^{[\ell]}   =
-(1+ i\alpha) \widehat{k}_*^2   \ell^2 a^{[\ell]} +    a^{[\ell]} - (1+i \beta) |A|^2 a^{[\ell]},
$$
with $|A|^2 = |A(t,r)|^2 = A(t,r) \overline{A(t,r)}$ the complex modulus. Assume that $a^{[\ell_1]}, a^{[\ell_2]} \neq 0$ for some $\ell_1, \ell_2$. Then
\begin{align*}
i \ell_1 \widehat{\Omega} &= - (1+ i\alpha) \widehat{k}_*^2 \ell_1^2 + 1 - (1+i\beta)|A|^2, \\
i \ell_2 \widehat{\Omega} &= - (1+ i\alpha) \widehat{k}_*^2 \ell_2^2 + 1 - (1+i\beta)|A|^2.
\end{align*}
This implies that
\begin{align*}
&\widehat{\Omega} \ell_1 = -\alpha \widehat{k}_*^2 \ell_1^2 -\beta |A|^2, \qquad 0 = - \widehat{k}_*^2 \ell_1^2 + 1 - |A|^2  \\
&\widehat{\Omega} \ell_2 = -\alpha \widehat{k}_*^2 \ell_2^2 -\beta |A|^2, \qquad 0 = -\widehat{k} _*^2 \ell_2^2 + 1 - |A|^2
\end{align*}
and as a consequence $0=-\widehat{k}_*^2 (\ell_1^2 - \ell_2^2)$ so, if $\widehat{k}_* \neq 0$, $\ell_1 = \pm \ell_2$.
If $\widehat{k}_* = 0$, then we have that $\widehat{\Omega} (\ell_1 - \ell_2)=0$ so that $\ell_1= \ell_2$ and we are done (recall that $\widehat{\Omega} \neq 0)$. If $\ell_1 = -\ell_2$, we deduce that
$\widehat{\Omega} \ell_1 = \widehat{\Omega} \ell_2 = -\widehat{\Omega }\ell_1$ which implies that $\ell_1=0$ and hence $A(t,r)$ is constant which is a contradiction with Definition~\ref{def:wavetrain}.
Therefore $\ell_1=\ell_2$ and $A(t,r)$ has only one mode indexed by $\ell$.
Defining $\Omega = \ell \widehat{\Omega}$ and $k_*= \ell \widehat{k}_*$ the wave train is expressed as $A(t,r)=C e^{i(\Omega t- k_* r)}$. Imposing that $A(t,r)$ is a solution of~\eqref{eq:CGL1d}, we obtain
$$
\Omega = - \alpha k_* - \beta |A|^2 , \qquad 0=-k_*^2   + 1- |A|^2.
$$
Using that $|A|= C$, we have that $C= \sqrt{1-k_*^2}$ and $\Omega= - \beta +k_*^2 (\beta - \alpha)$.
\end{proof}

We fix now $C,\Omega$ and $k_*$ such that they satisfy the relations in~\eqref{eq:disp}, namely
\begin{equation}\label{eq:disp:section2}
C^{2}= 1- k^2_*, \qquad \Omega = - \beta + k_*^2 (\beta-\alpha)
\end{equation}
and the associated wave train is
$$
A_*(\Omega t- k_* r)=C e^{i(\Omega t - k_* r)}.
$$
By Lemma~\ref{lem:wavetrains} and Definition~\ref{def:archimedian} of Archimedian spiral wave, in this paper we look for single mode spiral wave solutions of the form
\begin{equation}\label{form:spiralwavessection2}
A(t,r,\varphi)= \mathbf{f}(r;q) e^{i (\Omega t + n \varphi + \Theta(r;q))},
\end{equation}
with
$$
\lim_{r\to \infty} \mathbf{f}(r;q) = \sqrt{1- k_*^2}, \qquad \lim_{r\to \infty} \Theta'(r;q)=-k_*.
$$
\begin{remark}
By Definition~\ref{def:archimedian}, an Archimedian spiral wave associated to the wave train $A_*(\Omega t -k_*r)= C e^{i(\Omega t - k_* r)}$, satisfies
$$
A(t,r,\varphi)=A_s(r,\Omega t +n \varphi) = \sum_{\ell \in  \mathbb{Z}} a^{[\ell]}(r) e^{i \ell (\Omega t+n\varphi)} = \sum_{\ell \in \mathbb{Z}} f^{[\ell]}(r) e^{i\ell (\Omega t + n\varphi) + i\theta_{\ell}(r)}
$$
with $f^{[\ell]}(r) \geq 0$ for all $\ell \in \mathbb{Z}$,
$$
\lim_{r\to \infty} |f^{[1]}(r)-C|= \lim_{r \to \infty} |a^{[1]}(r) e^{-i\theta_1(r)}-C|=0,
$$
with $\theta_1(r)$ such that $\lim_{r\to \infty} \theta_1'(r)=-k_*$, and, for $\ell \neq 1$,
$$
\lim_{r\to \infty} a^{[\ell]}(r) = 0.
$$
The spiral waves we are looking for, that is, of the form provided in~\eqref{form:spiralwavessection2}, are the ones where $a^{[\ell]} \equiv 0$, for $\ell \neq 1$. These single mode solutions are the ones that were studied in previous works by authors~\cite{kopell,Greenberg,hagan82,AgBaSe2016}.
\end{remark}
We look for the equations that $\mathbf{f}$ and $\Theta$ have to satisfy in order for $A(t,r,\varphi)$ of the form in~\eqref{form:spiralwavessection2} to be a solution of~\eqref{eq:CGL_pol}. We recall the definition of $q$ provided in~\eqref{intro:eq:q}
\begin{equation}\label{def:q:section2}
q= \frac{\beta- \alpha}{1+ \alpha \beta}.
\end{equation}
\begin{lemma}\label{lem:newequation}
Assume that $|\alpha - \beta| < 1$. Let $\Omega\neq 0$, $k_*$ be constants satisfying~\eqref{eq:disp:section2}
and $A(t,r,\varphi;q)=\mathbf{f}(r;q) \mathrm{exp}(i (\Omega t + \Theta(r;q) \pm n\varphi ))$ for some functions $\mathbf{f}$ and $\Theta$. We introduce
$$
a=\left (\frac{1+\alpha^2}{1-\Omega \alpha}\right )^{1/2}.
$$
and
\begin{equation*}
 f(r;q)= \left ( \frac{1+ \alpha \beta}{1- \Omega \alpha}\right )^{1/2} \mathbf{f}(a r;q), \qquad
\chi(r;q) = \Theta(a r;q).
\end{equation*}

Then $A(t,r,\varphi;q)$ is a solution of~\eqref{eq:CGL_pol}
 if and only if $f$ and $v=\chi'$ satisfy the ordinary differential equations
\begin{subequations}
\label{originals}
\begin{align}
f''+\frac{f'}{r}-f\frac{n^2}{r ^2}+f(1-f^2-v^2)&=0,\label{eq:f}\\
fv'+f\frac{v}{r }+2 f'v+qf(1-f^2-k^2)&=0. \label{eq:Chi}
\end{align}
\end{subequations}
with $k \in [-1,1]$ satisfying the relations
$$
q(1-k^2) = -\frac{\Omega + \alpha}{1-\Omega \alpha}, \qquad
k_*  = \frac{k }{(1- \alpha q (1-k^2))^{1/2}}.
$$
\end{lemma}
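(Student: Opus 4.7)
The proof is essentially a direct substitution followed by a carefully chosen change of variables, so the plan is to organize the bookkeeping so that the parameters $a$, $c := (1+\alpha\beta)/(1-\Omega\alpha)$, $q$ and the dispersion relation appear naturally.

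First, I would substitute the ansatz $A(t,r,\varphi;q) = \mathbf{f}(r;q)\exp\bigl(i(\Omega t + \Theta(r;q)\pm n\varphi)\bigr)$ into the polar form \eqref{eq:CGL_pol}. Using $|A|^2 = \mathbf{f}^2$ and $\partial_\varphi^2 A = -n^2 A$, after dividing by $e^{i\Phi}$ and separating real and imaginary parts one obtains, with the shorthand $L\mathbf{f} := \mathbf{f}'' + \tfrac{1}{r}\mathbf{f}' - \mathbf{f}(\Theta')^2 - \tfrac{n^2}{r^2}\mathbf{f}$ and $A\mathbf{f} := \mathbf{f}\Theta'' + 2\mathbf{f}'\Theta' + \tfrac{1}{r}\mathbf{f}\Theta'$, the two equations
\begin{equation*}
L\mathbf{f} - \alpha\,A\mathbf{f} + \mathbf{f} - \mathbf{f}^3 = 0, \qquad A\mathbf{f} + \alpha\,L\mathbf{f} - \Omega\mathbf{f} - \beta\mathbf{f}^3 = 0.
\end{equation*}
The key algebraic step is to form the two combinations that diagonalize this $(1,\alpha)$--$(\alpha,1)$ structure: $E_1 + \alpha E_2$ isolates $(1+\alpha^2)L\mathbf{f}$ and yields $L\mathbf{f} = \tfrac{-(1-\alpha\Omega)}{1+\alpha^2}\mathbf{f} + \tfrac{1+\alpha\beta}{1+\alpha^2}\mathbf{f}^3$, while $E_2 - \alpha E_1$ isolates $(1+\alpha^2)A\mathbf{f}$ and gives $A\mathbf{f} = \tfrac{\Omega+\alpha}{1+\alpha^2}\mathbf{f} + \tfrac{\beta-\alpha}{1+\alpha^2}\mathbf{f}^3$.

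Second, I would apply the rescaling $f(r) = \sqrt{c}\,\mathbf{f}(ar)$, $\chi(r) = \Theta(ar)$ with $a^2 = (1+\alpha^2)/(1-\Omega\alpha)$. Chain rule converts $L\mathbf{f}$ and $A\mathbf{f}$ into the analogous expressions in $(f,\chi)$ divided by $a^2$ (with an extra factor $\sqrt{1/c}$ from the amplitude rescaling), and multiplying by $a^2\sqrt{c}$ the coefficient of the linear term becomes $a^2(1-\alpha\Omega)/(1+\alpha^2)=1$ while the cubic coefficient becomes $a^2(1+\alpha\beta)/((1+\alpha^2)c)=1$ in the first equation. This produces exactly \eqref{eq:f} with $v=\chi'$. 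In the second equation the same rescaling makes the linear coefficient equal to $(\Omega+\alpha)/(1-\alpha\Omega)$ and turns the cubic coefficient into $\tfrac{\beta-\alpha}{1-\alpha\Omega}\cdot\tfrac{1-\alpha\Omega}{1+\alpha\beta}=q$, i.e.\ the very quantity defined in \eqref{def:q:section2}; this is the whole point of the choice of $c$.

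Third, to recover \eqref{eq:Chi} I would set $k\in[-1,1]$ by $q(1-k^{2}) = -(\Omega+\alpha)/(1-\alpha\Omega)$, at which point the second equation becomes $fv' + \tfrac{fv}{r} + 2f'v + qf(1-f^2-k^2)=0$, as required. Finally, to relate $k$ and $k_*$ I would use the dispersion relation \eqref{eq:disp:section2}: writing $\Omega+\alpha = (\alpha-\beta)(1-k_*^2)$ and $1-\alpha\Omega = (1+\alpha\beta)(1-\alpha q k_*^2)$ via $\beta-\alpha = q(1+\alpha\beta)$, the defining identity for $k$ simplifies to $1-k^2 = (1-k_*^2)/(1-\alpha q k_*^2)$, which is readily inverted to give $k_* = k/\sqrt{1-\alpha q(1-k^2)}$. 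The hypothesis $|\alpha-\beta|<1$ is used only to ensure that $1-\alpha\Omega$ and $1+\alpha\beta$ are positive so that $a$ and $c$ are well defined real quantities.

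There is no real obstacle beyond careful accounting: the only non-mechanical ingredients are (i) realising that the combinations $E_1+\alpha E_2$ and $E_2-\alpha E_1$ decouple the system, and (ii) checking that the choices of $a$ and $c$ are exactly the ones which normalize the coefficients and make the factor $q$ emerge in the $v$-equation. Everything else is substitution and the chain rule, plus the algebraic manipulation of \eqref{eq:disp:section2} in the final step.
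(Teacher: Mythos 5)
Your proposal is correct and is essentially the paper's own argument: your combinations $E_1+\alpha E_2$ and $E_2-\alpha E_1$ are exactly the paper's division of the complex equation by $1+i\alpha$ (multiplication by $1-i\alpha$), and the rescaling, the definition of $k$ via $q(1-k^2)=-(\Omega+\alpha)/(1-\Omega\alpha)$, and the algebraic derivation of $k_*=k(1-\alpha q(1-k^2))^{-1/2}$ from the dispersion relation all match. The only detail glossed over is the short verification that $|\alpha-\beta|<1$ together with \eqref{eq:disp:section2} actually forces $1+\alpha\beta>0$ and $1-\Omega\alpha>0$, which the paper spells out at the start of its proof.
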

\begin{proof}
We first note that, for $|\alpha - \beta |<1$, we have that $1+\alpha \beta >0$.
In addition, $1- \Omega \alpha >0$. Indeed,  according to \eqref{eq:disp:section2},
$$
1-\Omega \alpha = 1 -\alpha (-\beta + k_*^2 (\beta -\alpha))= 1 + \alpha \beta- \alpha \beta k_*^2 + \alpha^2 k_*^2 = 1+ \alpha \beta (1-k_*^2)+ \alpha^2 k_*^ 2.
$$
Therefore, if $\alpha \beta \geq 0$, using that $k_*<1$ (see again~\eqref{eq:disp:section2}), we have that $1-\Omega \alpha>0$.
When $\alpha \beta <0$, since $1+\alpha \beta>0$,
$$
1-\Omega\alpha =1-|\alpha \beta| (1-k_*^2) + \alpha^2 k_*^2 > 1 -|\alpha \beta| = 1+ \alpha \beta>0.
$$
Consider the rotating frame with the scalings
\begin{equation}\label{eq:BA}
B(r,\varphi)=  \delta e^{-i\Omega t} A\left (t,ar,\varphi \right ) =f(r;q) e^{i(\pm n\varphi + \chi (r;q))},
\end{equation}
where $f(r;q)= \delta \mathbf{f}(ar;q)$ and $\chi(r;q)=\Theta(ar;q)$.

Since $A$ is solution of~\eqref{eq:CGL_pol}, $B$ is a solution of
$$
\partial_r^2 B + \frac{1}{r} \partial_r B + \frac{1}{r^2}\partial_{\varphi}^2 B + a^2 \frac{1- i \Omega}{1+ i\alpha} B - \delta^{-2} a^2 \frac{1+ i\beta}{1+ i \alpha} B |B|^2 =0,
$$
or equivalently
$$
\partial_r^2 B + \frac{1}{r} \partial_r B + \frac{1}{r^2}\partial_\varphi^2 B + a^2\frac{1- \Omega \alpha - i ( \Omega + \alpha)}{  1+ \alpha^2}  B - a^2
\frac{1+\alpha \beta + i (\beta - \alpha)}{\delta^2   (1+ \alpha^2)}   B |B|^2=0.
$$
We notice that, by definition of $a$ and taking $\delta$
$$
a^2= \frac{1+ \alpha^2}{1- \Omega \alpha}, \qquad \delta ^2 = a^2\frac{1+ \alpha \beta}{1+ \alpha^2}= \frac{1+ \alpha \beta}{1- \Omega \alpha},
$$
and we denote
\begin{equation}\label{eq:q:new}
\widehat{\Omega}= - a^2 \frac{\Omega + \alpha}{ (1+ \alpha^2)} = - \frac{\Omega + \alpha}{1- \Omega \alpha}.
\end{equation}
Then, since
$$
a^2 \frac{\beta-\alpha}{\delta^2  (1+ \alpha^2 )} = \frac{\beta- \alpha}{1+ \alpha \beta}=q,
$$
the function $B$ satisfies the equation
$$
\partial_r^2 B+ \frac{1}{r} \partial_r B + \frac{1}{r^2}\partial_\varphi^2 B + (1+ \widehat{\Omega} i ) B
- (1+ q i) B |B|^2 = 0.
$$
and, substituting the form of $B$ in~\eqref{eq:BA},
we obtain that $f$ and $\chi$ satisfy the ordinary differential equations
\begin{align*}
    f'' + \frac{f'}{r} - f \frac{n^2}{r^2} + f \big (1-f^2 - (\chi')^2\big )& =0, \\
    2 f' \chi' + f \chi'' + \frac{1}{r} f \chi' + \widehat{\Omega} f - q f^3 &=0.
\end{align*}
Notice that, by~\eqref{eq:disp:section2},
\begin{equation}
    \label{eq:Omega_bar}
    \widehat{\Omega} = \frac{(\beta - \alpha)}{1-\Omega \alpha} (1-k_*^2)
\end{equation}
and then $\widehat{\Omega}$ and $q$ have the same sign as $\beta - \alpha$. Introducing $v=\chi'$ and  $k\in [-1,1]$ by the relation $\widehat{\Omega} = q (1-k^2)$, the above equations are the ones in~\eqref{originals}.

To finish, we deduce the relation between $k_*$ and $k$. First we note that, using the definition of $q$,
$$
1-\widehat{\Omega} \alpha = 1- q \alpha (1-k^2)=  \frac{1+\alpha \beta - \alpha (\beta -\alpha) (1-k^2) }{1+\alpha \beta }
=\frac{1+ \alpha^2 (1-k^2) + \alpha \beta k^2}{1+\alpha \beta }>0.
$$
Then, since
$$
\Omega =  - \frac{\alpha + \widehat{\Omega}}{ 1- \alpha \widehat{\Omega}} = - \frac{\alpha + q (1-k^2)}{1- \alpha q (1-k^2)},
$$
using that $\Omega = -\beta + k_*^2 (\beta - \alpha)$
$$
k_*^2 (\beta -\alpha)= \frac{\beta - \alpha \beta q (1-k^2) - \alpha - q (1-k^2)}{1 - \alpha q (1-k^2)} =   \frac{\beta -\alpha - q (1-k^2)(1+ \alpha \beta)}{1- \alpha q (1-k^2)}.
$$
When $\alpha \neq \beta$, by definition of $q$, we have that
$$
k_*^2 = \frac{k^2}{1- \alpha q (1-k^2)}.
$$
When $q=0$, we simply define $k=k_*$ which is consistent with the above definitions.
\end{proof}

\begin{remark}\label{rem:lamda-omega}
Spiral wave solutions of $\lambda-\omega$ systems in~\eqref{eq:lambda-omega} can be written in terms of a system of ordinary differential equations by writing the system \eqref{eq:lambda-omega} in complex form. That is, denoting $A=u_1+iu_2$, it satisfies
$$
\partial_t A = (\lambda(f) + i \omega (f)) A + \Delta  A.
$$
Then considering the change to polar coordinates $\vec{x}=(r\cos \varphi, r\sin \varphi)$ and looking for solutions of the form provided in \eqref{sol:spiral} yields the following system of ordinary differential equations:
\begin{equation}
\begin{aligned}
\label{eq:lam-om-ODE}
f''+\frac{f'}{r}-f\frac{n^2}{r^2}+f(\lambda(f)-(\chi')^2)&=0,\\
f\chi''+f\frac{\chi'}{r}+2 f'\chi'+f(\omega(f)-\Omega)&=0.
\end{aligned}
\end{equation}
The equations~\eqref{originals} correspond to equations~\eqref{eq:lam-om-ODE} in the particular case where $\lambda(z)=1-z^2$ and $\omega(z)=\Omega + q(1-k^2 -z^2)$.
\end{remark}

An important observation is that when $q=0$ (see \eqref{eq:q:new} for the definition of $q$) equation \eqref{eq:Chi} simply reads
$$
fv'+f\frac{v}{r}+2 f'v=\frac{(r f^2 v)'}{rf} = 0,
$$
and therefore $r f^2 v = \textrm{ctant}$.
Therefore, given that the solutions that we are looking for must be bounded at $r=0$, the only possible solution is therefore $v\equiv 0$.
Also, substituting in~\eqref{eq:f} one finds that
$$
f(r;0)=f_0(r),
$$
is the solution of
\begin{equation}\label{eqfosection2}
    \begin{split}
       & f_0''+\frac{f_0'}{r}-f_0\frac{n^2}{2r^2}+f_0(1-f_0^2)=0.
    \end{split}
\end{equation}
In the previous paper~\cite{Aguareles2011} (see also~\cite{AgBaSe2016}), the existence of solutions of the above differential equation was stated (in fact a more general set of differential equations was considered) under the boundary conditions
\begin{equation}\label{boundarysplit}
f_0(0)=0, \qquad \lim_{r\to \infty} f_0(r)=1,
\end{equation}
satisfying in addition
\begin{equation}\label{propf0infty:section2}
f_0(r) = 1 - \frac{n^2}{2r^2} + \mathcal{O}(r^{-4}), \qquad r\to \infty.
\end{equation}
In this new setting, Theorem~\ref{thm:mainintroduction} is a straightforward consequence of the following result which, moreover, provides a more detailed information on the constant $C_n$.

\begin{theorem}\label{thm:main} Let $n\in \mathbb{N}\cup \{0\}$. There exist  $q_0>0$ and a function $\kappa:[0,q_0] \to \mathbb{R}$ satisfying $\kappa(0)=0$, and
\begin{equation*}
\kappa(q)=  \frac{2}{q}e^{-\frac{C_n}{n^2} -\gamma } e^{-\frac{\pi}{2n|q|}} (1+\mathcal{O}(|\log q |^{-1})),
\end{equation*}
with $\gamma$ the Euler's constant and
$$
C_n=\lim_{r\to \infty} \left ( \int_{0}^r \xi f_0^2(\xi) (1-f_0^2 (\xi))\,d\xi -  n^2\log r  \right ),
$$
where $f_0$ is the solution of \eqref{eqfosection2} and \eqref{boundarysplit},
such that if $k=\kappa(q)$, then
the system~\eqref{originals} subject to the set of boundary conditions
\begin{equation*}
\begin{split}
&f(0;q)=v(0;q)=0,\\
&\lim_{r\to\infty}f(r;q)=\sqrt{1-k^2},  \qquad \lim_{r\to\infty}v(r;q)=-k,
\end{split}
\end{equation*}
has a solution.

In addition such a solution satisfies that, for $r>0$, $v(r;q)$ has constant sign, for $q$ fixed, $f(r;q)$ is an increasing function, $f(r;q)>0$ and, as a consequence, $\lim_{r\to \infty} f'(r;q)=0$.
\end{theorem}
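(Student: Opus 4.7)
The plan is a rigorous matched-asymptotic argument on two scales. For $q=0$ the system \eqref{originals} decouples ($v\equiv 0$ and $f=f_0$ the solution of \eqref{eqfosection2}). For small $q>0$, I would build the solution separately in an \emph{outer} region $r=\mathcal{O}(1)$, where the profile stays close to $(f_0,0)$, and in a far-field (\emph{inner}) region $r=\mathcal{O}(1/q)$, obtained after the rescaling $\rho=qr$, and then impose compatibility of the two asymptotic expansions in the overlap $1\ll r\ll 1/q$. That single scalar compatibility condition is what selects $k=\kappa(q)$.

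Outer analysis (Theorem~\ref{th:outermatching}): write $f=f_0+\tilde f$, $v=\tilde v$, and recast \eqref{originals} as a fixed-point problem for $(\tilde f,\tilde v)$ in a Banach space of pairs with prescribed algebraic decay at infinity. The linearisation around $(f_0,0)$ splits into a Schr\"odinger-type operator acting on $\tilde f$ (whose kernel is generated by $f_0'$) and a first-order transport operator acting on $\tilde v$; both admit explicit right inverses via variation of parameters, and a standard contraction yields a family of outer solutions smoothly parametrised by $(q,k)$. The crucial output is the large-$r$ behaviour of $\tilde v$: equation \eqref{eq:Chi} rewrites as $(rf^2 v)'=-qrf^2(1-f^2-k^2)$, so integrating from $0$ and using the far-field expansion \eqref{propf0infty:section2} together with the regularisation defining $C_n$, one obtains in the overlap range a logarithmic tail of the form
\[
r f^2(r;q)\,\tilde v(r;q,k) = -q\bigl(n^2\log r + C_n\bigr) + qk^2\int_0^r\xi f_0^2\,d\xi + \text{smaller},
\]
so the outer expansion already carries the constant $C_n$ of the final formula.

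Inner analysis (Theorem~\ref{th:innermatching}): rescale $\rho=qr$, $F(\rho;q)=f(r;q)$, $V(\rho;q)=v(r;q)$; the second-order terms of \eqref{eq:f} pick up a factor $q^2$, so to leading order \eqref{originals} degenerates to the algebraic constraint $F^2+V^2=1$ coupled with a first-order equation for $V$. After eliminating $F$ via the constraint and linearising around $V=-k$, the equation for the perturbation is a small perturbation of a modified-Bessel operator whose fundamental solutions behave, in the overlap region, as $I_n$ and $K_n$ at argument of order $n/q$, and therefore differ in size by a huge factor $e^{\pi/(nq)}$. Selecting the decaying $K_n$-type solution (imposed by the boundary condition $V\to -k$ at $\rho=\infty$) and transporting it back to the overlap produces a $\rho\to 0$ tail whose leading coefficient on the algebraically growing mode is proportional to $k\,q^{-1}e^{\pi/(2nq)}$.

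Matching (Theorem~\ref{thm:matchingtotal}) equates the outer and inner tails in a common variable; cancelling the $\log r$ pieces reduces the problem to a single scalar equation whose unique small solution is
\[
\kappa(q) = \frac{2}{q}\,e^{-C_n/n^2-\gamma}\,e^{-\pi/(2n|q|)}\bigl(1+\mathcal{O}(|\log q|^{-1})\bigr),
\]
the exponentially small factor being the inverse of the Bessel growth detected on the inner side, the constant $C_n$ coming from the outer tail, and $\gamma$ emerging from the standard Bessel/log asymptotics in the overlap. The qualitative properties follow a posteriori: $v$ has constant sign because \eqref{eq:Chi}, read as a first-order linear ODE for $v$ with fixed-sign source $-qf(1-f^2-k^2)$, preserves sign; $f>0$ on $(0,\infty)$ and monotonicity follow by a Sturm comparison on \eqref{eq:f} using $|v|\le|k|\ll 1$; and $\lim_{r\to\infty} f'(r)=0$ is obtained by integrating \eqref{eq:f} against $f'$ and using the prescribed limit of $f$. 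The main obstacle is precisely the exponentially small selection: since the two inner fundamental solutions differ in size by the factor $e^{\pi/(nq)}$, the coefficient that fixes $\kappa(q)$ must be isolated against a vastly larger background, which forces working with a carefully chosen basis of inner solutions whose Wronskian picks out exactly this coefficient (morally, a Stokes-type computation in the complex $\rho$-plane); moreover, sharpening the relative error from the naive $\mathcal{O}(q)$ down to $\mathcal{O}(|\log q|^{-1})$ requires retaining every logarithmic correction in both regions and verifying that no further non-analytic contribution enters at that order.
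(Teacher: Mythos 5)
Your overall architecture (two regions, fixed-point constructions on each side, a tail carrying $C_n$ from the regularised integral, and a matching condition selecting $k$) is the same as the paper's, and your derivation of the near-origin logarithmic tail via $(rf^2v)'=-qrf^2(1-f^2-k^2)$ is exactly how $C_n$ enters. But the core of the theorem --- the mechanism that produces the factor $e^{-\pi/(2n|q|)}$ --- is misidentified, and your proposed route would not yield the formula. You attribute the exponential smallness to the disparity between the integer-order Bessel functions $I_n$ and $K_n$ at argument of order $n/q$, i.e.\ to a Stokes-type separation of an exponentially growing and an exponentially decaying solution. That is not what happens. In the far field the correct variable is $R=kqr$ (not $\rho=qr$; since $k$ is exponentially small these scales are wildly different), and after recombining the two equations as in \eqref{outer_0} the $v$-component is governed by the Riccati equation \eqref{Ricatti_0}, whose relevant solution is $V_0=K_{inq}'/K_{inq}$ with $K_{inq}$ the modified Bessel function of \emph{imaginary} order $inq$. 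The term $q^2n^2/R^2$ that creates this imaginary order is produced precisely by the elimination step you gloss over, and it is essential: without it one gets $K_0'/K_0$ and no quantisation of $k$ at all. The selection of $k$ then comes from the \emph{small}-argument behaviour $K_{i\nu}(x)\propto\sin\bigl(\nu\log(x/2)-\arg\Gamma(1+i\nu)\bigr)$, which oscillates in $\log x$; matching the resulting $-\tfrac{n}{r}\tan\bigl(nq\log r+nq\log(kq)+\tfrac{\pi}{2}-\theta_{0,nq}\bigr)$ of \eqref{vdoutmaching_new} against the $-qn^2\log r/r+qC_n/r$ tail forces the phase $nq\log(kq)+\tfrac{\pi}{2}$ to be $\mathcal{O}(q)$, whence $kq=\mu e^{-\pi/(2nq)}$; the Euler constant enters through $\arg\Gamma(1+inq)=-\gamma nq+\mathcal{O}(q^2)$, not through integer-order Bessel asymptotics. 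No contour deformation or Stokes computation is involved.

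Three further points. First, the kernel of the linearisation of \eqref{eq:f} about $f_0$ is \emph{not} generated by $f_0'$ (the equation is non-autonomous because of the $1/r$ and $n^2/r^2$ terms); the paper instead inverts the constant-coefficient-at-infinity operator with fundamental system $I_n,K_n$ and absorbs $3(1-f_0^2)$ as a contractive perturbation. Second, the matching is not a single scalar compatibility condition: each side carries a free constant ($\mathbf{a}$ for the far field, $\mathbf{b}$ for the near field), and one must solve a genuinely three-dimensional system (matching $f$, $\partial_r f$ and $v$) for $(\mathbf{a},\mathbf{b},k)$, done here by Brouwer; moreover the matching point must be taken exponentially large, $r_0=e^{\rho/q}/\sqrt{2}$ with $\rho=(q/|\log q|)^{1/3}$, and an overlap window $1\ll r\ll 1/q$ is too small to make the error terms on both sides simultaneously negligible. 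Third, your bound $|v|\le|k|$ is false in the near-origin region, where $|v|\sim q|\log r|/r$ vastly exceeds the exponentially small $k$; the positivity and monotonicity of $f$ are obtained in the paper from $f<\sqrt{1-k^2}$ together with a maximum-principle comparison lemma, not from $|v|\le |k|$.
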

\begin{remark} We do not need to impose the extra boundary condition $\lim_{r\to \infty} f'(r;q)=0$ which, as we will see along the proof of Theorem~\ref{thm:main}, is a consequence of imposing that the solution satisfies $\lim_{r\to \infty} (f(r;q),v(r;q))=(\sqrt{1-k^2}, -k)$.
 \end{remark}
\begin{proof}[Proof of Theorem~\ref{thm:mainintroduction} as a Corollary of Theorem~\ref{thm:main}]
We first emphasize the fact that equations~\eqref{originals} remain unaltered when $(v,q)$ is substituted by $(-v, -q)$.
Therefore one can consider $q\geq 0$ without loss of generality.
From the property~\eqref{propf0infty:section2} of $f_0$ as $r\to \infty$, it is clear that the constant $C_n \in \mathbb{R}$.

From Theorem~\ref{thm:main} and Lemma~\ref{lem:newequation} there exists a spiral wave of the form~\eqref{sol:spiral} satisfying $\lim_{r\to \infty} \mathbf{f}'(r;q)=0$, $\mathbf{f}(0;q)=\Theta'(0;q)=0$ and
$$
\lim_{r\to \infty} \mathbf{f}(r;q)=  \sqrt{1-\kappa(q)^2} \left (\frac{1-\Omega \alpha}{ 1+ \alpha \beta } \right )^{1/2}, \qquad
\lim_{r\to \infty} \Theta'(r;q)= -\kappa(q) \left ( \frac{1-\Omega \alpha }{1+\alpha^2}\right )^{1/2}.
$$
By Lemma~\ref{lem:newequation},
$$
\kappa_*(q)= \kappa(q) \left (1 - \alpha q (1-\kappa(q) \right )^{-1/2}.
$$

Since $\kappa_*(q)$ has the same first order expression as $\kappa(q)$ provided $q$ is small enough, the expression for $\kappa_*(q)$ in Theorem~\ref{thm:mainintroduction} follows from the one for $\kappa(q)$.

To guarantee that $\mathbf{f}$ and $\Theta$ satisfy the required asymptotic conditions we need to check that $k_*=\kappa_*(q)$ and $k=\kappa(q)$ satisfy
$$
1-k_*^2 = (1-k^2) \frac{1-\Omega \alpha}{ 1+ \alpha \beta }, \qquad -k_* = -k \left ( \frac{1-\Omega \alpha }{1+\alpha^2}\right )^{1/2}.
$$
Indeed, from Lemma~\ref{lem:newequation} and using definition~\eqref{def:q:section2} of $q$, we have that, if $q\neq 0$
$$
 1-k^2 =- \frac{1}{q} \frac{\alpha - \beta + k_*^2 (\beta - \alpha)}{1- \Omega \alpha} = (1-k_*^2) \frac {(1+ \alpha \beta)}{1- \Omega \alpha},
$$
and the first equality is proven.
With respect to the second one, we have to prove that
$$
(1- \Omega \alpha)(1- \alpha q (1-k^2))= 1+\alpha^2.
$$
The equality is satisfied for $\alpha=0$. When $\alpha \neq 0 $ we have to prove that
$$
0=- (\Omega + q (1-k^2)) + \alpha (\Omega q (1-k^2)-1)
= - (\Omega +\alpha) - q(1-k^2)(1- \Omega  \alpha),
$$
which from Lemma~\ref{lem:newequation} is true.

For the uniqueness of the function $\kappa_*(q)$ we use Theorem 3.1 in~\cite{kopell} and Lemma 2.1 in~\cite{AgBaSe2016}, related to $\lambda-\omega$ systems as~\eqref{eq:lam-om-ODE}, with the assumptions $\lambda(1)=0$, $\lambda'(z), \omega'(z) <0$, for $z\in (0,1]$ and $|\omega'(z)|=\mathcal{O}(|q|)$. We note that our case corresponds to $\lambda(z)=1-z^2$ and $\omega(z) = \Omega + q(1-k^2 -z^2)$ that satisfies these conditions.
The result in~\cite{AgBaSe2016} says that, if system~\eqref{eq:lam-om-ODE} has a solution with boundary conditions given by
$$
\lim_{r\to \infty}f(r) = f_\infty, \qquad \lim_{r\to \infty} f'(r)=0, \qquad \lim_{r\to \infty} v(r) = v_\infty,
$$
then $f_\infty$ is such that $\omega(f_\infty) = \Omega $ and $v_\infty^2 = \lambda(f_\infty)$.
The result in~\cite{kopell} states that there exists a unique value, $v_\infty (q)$, for $q$ small enough, such that the system~\eqref{eq:lam-om-ODE} has solution with boundary conditions
$$
\lim_{r\to \infty}f(r) = f_\infty, \qquad \lim_{r\to \infty} f'(r)=0, \qquad \lim_{r\to \infty} \chi'(r) = v_\infty (q),
$$
and $f, v$ regular at $r=0$. Applying these results to our case we obtain that $f_\infty = \sqrt{1-k^2}$ and $v_\infty=-k$ and the results in~\cite{kopell} gives the uniqueness result in Theorem~\ref{thm:mainintroduction}.
\end{proof}

After more than forty years, Theorems~\ref{thm:main} and~\ref{thm:mainintroduction} provide a rigorous proof of the explicit asymptotic expressions widely used for $k=\kappa(q)$ and $k_*=\kappa_*(q)$ as well as rigorous bounds for their relative errors.
Furthermore, the rigorous matching scheme used in this paper opens the door to showing without much extra effort the equivalent result for spiral waves in the more general setting of $\lambda-\omega$ systems.

\section{Main ideas in the proof of Theorem \ref{thm:main}}\label{sec:main}
To prove Theorem~\ref{thm:main} we need to study the existence of solutions of equations~\eqref{originals}
with boundary conditions:
\begin{equation}\label{eq:boundaryconditions}
\begin{aligned}
    &f(0;k,q)=v(0;k,q)=0, \\ &\lim_{r\to \infty } f(r;k,q)= \sqrt{1-k^2},  \qquad \lim_{r\to \infty } v(r;k,q)= -k .
    \end{aligned}
\end{equation}
The strategy of the proof is as follows. We split the domain $r\geq 0$ in two regions limited by a convenient value $r_0\gg 1$:
\begin{itemize}
    \item
A far-field (\textit{outer region}) defined as
\begin{equation}\label{eq:outerconditionint}
r\in [r_0,\infty), \quad \mbox{where} \quad     \lim_{r\to \infty} f(r;k,q)=\sqrt{1-k^2}, \qquad \lim_{r\to \infty} v(r;k,q)=-k,
\end{equation}
are the only boundary conditions that are imposed.
\item
An \emph{inner region} defined as
\begin{equation}\label{eq:innerconditionint}
r\in [0,r_0], \quad \mbox{where} \quad f(0;k,q)=v(0;k,q)=0
\end{equation}
are the boundary conditions.
\end{itemize}
The specific value of  $r_0=\frac{1}{\sqrt{2}}e^{\rho/q}$ with $\rho=(\frac{q}{|\log q|})^\frac13 $ will be explained in Section~\ref{subsec:mathching1}.

We shall obtain two families of solutions (see Theorems \ref{th:outermatching}  and \ref{th:innermatching}), depending on two free parameters $\mathbf{a},\mathbf{b} \in \R$, namely:
\begin{itemize}
\item
$f^{\mathrm{out}}(r,\mathbf{a};k,q)$, $\partial_r f^{\mathrm{out}}(r,\mathbf{a};k,q)$,
$v^{\mathrm{out}}(r,\mathbf{a};k,q)$ for the \emph{outer region} satisfying \eqref{eq:outerconditionint} and
\item
$f^{\mathrm{in}}(r,\mathbf{b};k,q)$,  $\partial_r f^{\mathrm{in}}(r,\mathbf{b};k,q)$, $v^{\mathrm{in}}(r,\mathbf{b};k,q)$ for the \emph{inner region} satisfying \eqref{eq:innerconditionint},
\end{itemize}
 which, upon matching them in the common point $r=r_0=r_0(q)$, provides a system with three equations and three unknowns $(\mathbf{a},\mathbf{b},k)$:
 \begin{equation*}
     \begin{split}
         f^{\mathrm{in}}(r_0,\mathbf{b};k,q)= & f^{\mathrm{out}}(r_0,\mathbf{a};k,q),\\
                  \partial _r f^{\mathrm{in}}(r_0,\mathbf{b};k,q)= & \partial_r f^{\mathrm{out}}(r_0,\mathbf{a};k,q),\\
                           v^{\mathrm{in}}(r_0,\mathbf{b};k,q)= & v^{\mathrm{out}}(r_0,\mathbf{a};k,q).
     \end{split}
 \end{equation*}
 Therefore, having fixed $q$, this system provides a solution  $(\mathbf{a}^*,\mathbf{b}^*, k^*)$.
 Consequently, for the value of $k=k^*$, we have a solution of system~\eqref{originals} defined for all $r\geq 0$ as:
\begin{equation}\label{def:solution:outer_inner}
(f(r;k,q),v(r;k,q)) =
 \begin{cases}
\big (f^{\mathrm{in}}(r,\mathbf{b}^*;k^*,q), v^{\mathrm{in}}(r,\mathbf{b}^*;k^*,q)\big )  & \text{if  }  r\in [0,r_0] \\
\big (f^{\mathrm{out}}(r,\mathbf{a}^*;k^*,q), v^{\mathrm{out}}(r,\mathbf{a}^*;k^*,q)\big )  & \text{if  }  r\geq r_0.
\end{cases}
 \end{equation}
 satisfying the boundary conditions
 \eqref{eq:boundaryconditions}.
 This proves the existence result in Theorem~\ref{thm:main} taking $\kappa(q)=k^*$.

Before stating the main results which provide Theorem~\ref{thm:main}, in Section \ref{sec:matching}, in the next subsection we give some intuition about how we obtain the value of $k=\kappa(q)$.

\subsection{The asymptotic expression for $k=\kappa(q)$}\label{subsec:outer_inner_variables}
One can find in the literature different heuristic arguments, based on (formal) matched asymptotic expansions techniques, which motivate the particular asymptotic expression for the parameter $k$:
\begin{equation}
\label{eq:k_heur}
k=\kappa(q)=\frac{\bar{\mu}}{q} e^{-\frac{\pi}{2nq}} (1+ \mathrm{o}(1)),
\end{equation}
with $\bar{\mu}\in \R$ a parameter independent of $q$ (see for instance \cite{hagan82}).
However, in this section we explain the particular deduction that is more consistent with the rigorous proof provided in the present work which we obtain by performing a change of parameter $k=\frac{\mu}{q} e^{-\frac{\pi}{2nq}}$ and finding the value of $\mu$ that solves the problem. Furthermore, a novelty of our proof is that it also provides that the relative error in expression \eqref{eq:k_heur} is in fact $\mathcal{O}(|\log q|^{-1})$.

We begin, as we explained at the beginning of Section~\ref{sec:main}, by looking for solutions of equations~\eqref{originals} which satisfy the boundary conditions~\eqref{eq:outerconditionint} at $r=\infty$,
which we shall denote as the \emph{outer solutions}.
We  introduce a new parameter
\begin{equation}\label{def:epsilon}
    \e=k q,
\end{equation}
and perform the scaling
\begin{equation}\label{eq:canvi_0}
R=\e r, \ V(R)=k^{-1}v(R/\e), \ F(R)=f(R/\e),
\end{equation}
to equations \eqref{originals}. We obtain
\begin{subequations}
\begin{align}
&\e^2\left(F''+\frac{F'}{R}-F\frac{n^2}{R^2}\right)+F(1-F^2-k^2V^2)=0,\label{outerF1_0}\\
&\e^2\left(V'+\frac{V}{R}+2\frac{V F'}{F}-1\right)+q^2(1-F^2)=0.
\label{outerV1_0}
\end{align}
\end{subequations}
If $\e\ne 0$ one can use the actual value of $1-F^2$ provided by equation~\eqref{outerF1_0} to recombine equations~\eqref{outerF1_0} and~\eqref{outerV1_0} to obtain the equivalent system:
\begin{subequations}
\label{outer_0}
\begin{align}
&\e ^2 \left(F''+\frac{F'}{R}-F\frac{n^2}{R^2}\right)+F(1-F^2-k^2V^2)=0,\label{outerF_0}\\
&V'+\frac{V}{R}+V^2+q^2\frac{n^2}{R^2}-1=\frac{q^2}{F}\left(F''+\frac{F'}{R}\right)-2V\frac{F'}{F}.\label{outerV_0}
\end{align}
\end{subequations}
By virtue of~\eqref{eq:outerconditionint} we look for bounded solutions of equations \eqref{outer_0} satisfying:
\begin{equation}
\label{eq:boundaryconditionsouter_0}
\lim_{R\to\infty}F(R;k,q)= \sqrt{1-k^2},  \qquad \lim_{R\to\infty}V(R;k,q)=-1.
\end{equation}
It is easy to prove (compare with Proposition~\ref{prop:dominantouter}) that the formal asymptotic expansions of bounded solutions when $R\to \infty$ satisfy
\begin{equation}\label{eq:asymptoticsinfinityR_0}
\begin{split}
F(R;k,q)&\sim \sqrt{1-k^2}-\frac{k^2}{2R\sqrt{1-k^2}}+\O\left (\frac{\e^2}{R^2}\right ),\qquad  \mbox{as} \ R\to \infty, \\
V(R;k,q)&\sim -1-\frac{1}{2R}+\O\left (\frac{\e^2}{R^2}\right ),\qquad  \mbox{as} \ R\to \infty .
\end{split}
\end{equation}
We note that equation \eqref{outerF_0} is singular in $\e$. In particular, if $\e=0$, and therefore $k=0$ (recall \eqref{def:epsilon}), either $F=0$, which is a trivial solution we are not interested in, or  $1-F^2(R)=0$, which also gives a non interesting solution.
But, if we write equation \eqref{outerF_0} as
\[
\e ^2 \left(F''+\frac{F'}{R }\right)+F\left (-\frac{\e ^2  n^2}{R^2}+1-F^2-k^2V^2\right )=0,
\]
we observe that the asymptotic expansions~\eqref{eq:asymptoticsinfinityR_0} suggest that the terms $\e^2F'/R$ and $\e^2F''$ are of higher order in $k$, and therefore in $\e$, than the rest.
Therefore we will take as first approximation the solution of
\[
-\frac{\e ^2  n^2}{R^2}+1-F^2-k^2V^2=0,
\]
which gives our candidate to be the main part of the outer solution we are looking for:
\begin{equation}\label{eq:F0_0}
F_0(R)=F_0(r;k,q)=\sqrt{1-k^2V_0^2(R;q)-\e^2 \frac{n^2}{R^2}}.
\end{equation}
Then, neglecting again  the terms of order depending on $F'$ and $F''$ in equation \eqref{outerV_0}, a natural definition for $V_0$ is the solution of the Ricatti equation
\begin{equation}\label{Ricatti_0}
V_0'+\frac{V_0}{R}+V_0^2+q^2\frac{n^2}{R^2}-1=0,
\qquad  \mbox{such that} \qquad \lim _{R\to \infty}V_0(R;q)=-1.
\end{equation}
Observe that the boundary condition for $V_0$ gives:
\[
\lim _{R\to \infty}F_0(R;k,q)=\sqrt{1-k^2},
\]
as expected.

A solution of~\eqref{Ricatti_0} is given by (see, for instance~\cite{abramowitz})
\begin{equation}\label{eq:V0_0}
V_0(R;q) = \frac{K_{inq}'(R)}{K_{inq}(R)},
\end{equation}
with $K_{inq}$ the modified Bessel function of the first kind.
It is a well known fact that (see~\cite{abramowitz}),
\begin{equation*}
K_{\nu}(R) =\sqrt{\frac{\pi}{2R}} e^{-R} \left (1+ \mathcal{O}(R^{-1})\right ),   \qquad  \mbox{as} \ R\to \infty,
\end{equation*}
for any $\nu\in \C$, where $\mathcal{O}(R^{-1})$ is uniform as $\nu\to 0$.
Therefore the functions $(F_0,V_0)$ satisfy the boundary conditions \eqref{eq:boundaryconditionsouter_0}.

We go back to our original variables through the scaling \eqref{eq:canvi_0} and define:
\begin{equation}\label{dominanttermouter_0}
\fdout(r;k,q)=F_0(\e r;k,q)=F_0(kqr;k,q),\qquad \vdout(r;k,q)=kV_0(\e r;q)=kV_0(kqr;q),
\end{equation}
which satisfy
\begin{equation}\label{eq:boundary1_0}
    \lim_{r\to \infty} \vdout(r;k,q) = -k,\qquad \lim_{r\to \infty} \fdout(r;k,q) = \sqrt{1-k^2}.
\end{equation}
The precise properties of the dominant terms $\fdout,\vdout$ will be exposed in Proposition~\ref{prop:dominantouter}.

An important observation if $r\gg 1 $ but $kr$ is small enough, is that the function $\vdout(r;k,q)$ has the following asymptotic expansion (a rigorous proof of this fact will be done in Proposition~\ref{prop:dominantouter}, see~\eqref{vdoutmaching_new}):
\begin{equation*}
\vdout(r;k,q) = -\frac{n}{r} \mathrm{tan} \left (nq \log r + nq \log kq +\frac{\pi}{2} -\theta_{0,nq}\right ) [1+ \mathcal{O}(q^2)] ,
\end{equation*}
with $\theta_{0,nq}=\mathrm{arg} \left(\Gamma(1+inq)\right)=-\gamma nq + \mathcal{O}(q^2)$, $\Gamma$ is the Euler's Gamma function, and $\gamma$ the Euler's constant.

We now deal with the \emph{inner solutions} of~\eqref{originals} departing the origin
satisfying $f(0;k,q)=v(0;k,q)=0$.
For moderate values of $r$, the \emph{inner problem} is perturbative with respect to the parameter $q$. For that reason, to define the dominant term of the inner solutions we first consider the case $q=0$.
Let us now recall that
in~\cite{AgBaSe2016} it was proven that, when $q=0$, system~\eqref{originals} has a solution $(f,v)$ with boundary conditions~\eqref{eq:boundaryconditions} if and only if $k=k(0)=0$.
In this case, $v=v(r;0,0)=0$ and $\f0(r)=f(r;0,0)$ satisfies the boundary conditions~\eqref{boundarysplit} and the second order differential equation~\eqref{eqfosection2}, that is:
\begin{equation}\label{eq:introf0_0}
\f0''+\frac{\f0'}{r}-\f0\frac{n^2}{r^2}+\f0(1-\f0^2)=0,\qquad \f0(0)=0, \qquad \lim_{r\to\infty}\f0(r) = 1.
\end{equation}
As we already mentioned, the existence and properties of $\f0$ were studied in the previous work~\cite{Aguareles2011}.

As $v(r;0,0)\equiv 0$, we write  $v(r;k,q)=q \esv \vb(r;k,q)$ so the system~\eqref{originals} reads
\begin{align*}
f''+\frac{f'}{r}-f\frac{n^2}{r^2}+f(1-f^2-q^2\esvq \vb^2)&=0,\\
f\vb'+f\frac{\vb}{r}+2\vb f'+ \esvi f(1-f^2-k^2)&=0.
\end{align*}
Let us now consider $(\f0(r),\v0(r;k))$, the unique solution of this system when $q\esvq=0$ satisfying ~\eqref{eq:introf0_0}
and
\begin{equation}\label{eq:introv0_0}
 \v0'+\frac{\v0}{r} + 2 \v0 \frac{\f0'}{\f0} + \esvi (1-\f0^2-k^2)=0,\qquad
\v0(0;k)=0.
\end{equation}
In~\cite{Aguareles2011} it was proven that
 $\f0(r)>0$, for $r>0$ and $\f0(r)\sim \alpha_0 r^n$, as $r\to 0$,
thus, the function
\begin{equation}\label{expr:v_0}
\v0(r;k) = \frac{1}{r \f0^2(r)} \int_{0}^r \xi \f0^2 (\xi)(1- \f0^2(\xi)-k^2)\dd \xi,
\end{equation}
satisfies~\eqref{eq:introv0_0} and $\v0(0;k)=0$.
We then define the functions, whose properties are stated in Proposition~\ref{prop:f0v0_new}
\begin{equation}\label{dominantterminner_0}
\fdin(r)=\f0(r),\qquad  \vdin(r;k,q) = q \v0(r;k).
\end{equation}

In Proposition~\ref{prop:f0v0_new} it will be proven that, if $r\gg 1 $ but $kr$ is small enough, the function $\vdin(r;k,q)$ has the following asymptotic expansion, see~\eqref{prop:v0innermatching}:
\begin{equation}\label{prop:v0innermatchingheu}
\vdin(r;k,q)=-q\frac{n^2(1+k^2)}{r} \log r + \frac{qC_n}{r} -\frac{k^2q}{2}r +   q\mathcal{O}(r^{-3}\log r) + qk^2 \mathcal{O}(r^{-1}),
\end{equation}
with $C_n$ defined in Theorem~\ref{thm:main}.

We emphasize that we expect the functions $\vdout$ and $\vdin$ to be the first order of the functions $v^{\rm{out}}$ and $v^{\rm{in}}$ in the outer and inner domains of $r$.  Therefore,  a natural request is that they ``coincide up to first order" in some large enough intermediate point, $r_0$, such that $kr_0$ and $q\log r_0$ are still small enough quantities.
With these hypotheses and using the previous asymptotic expansion~\eqref{prop:v0innermatchingheu} we obtain:
\[
\vdin (r_0;k,q)=\frac{q}{r_0}\left[
-n^2\log r_0+C_n+ HOT\right],\]
where the terms in $HOT$ are small provided $kr_0$ is small. With respect to $\vdout$, using that  $\theta_{0,nq}=-\gamma nq+ \mathcal{O}(q^2)$, we have that
\[
\vdout(r_0;k,q) =\frac{q}{r_0}\left[ -\frac{n}{q} \mathrm{tan} \left (nq \log r_0 + nq \log kq +\frac{\pi}{2} + nq \gamma +\mathcal{O}(q^2)\right ) [1+ \mathcal{O}(q^2)] \right].
\]
Observe that if $nq \log kq +\frac{\pi}{2}=\mathcal{O}(q)=mq$, upon Taylor expanding the tangent function one obtains:
\[
\vdout(r_0;k,q) =-\frac{q}{r_0}\left[ n^2 \log r_0 + n
m + n^2 \gamma + HOT \right]
\]
and then it is possible to have $\vdout(r_0)-\vdin(r_0)=0$ because the ``large" term
$n^2 \log r_0$ is canceled.

The last observation of this section is that taking
$kq=\mu e^{-\frac{\pi}{2nq}}$ gives $nq \log kq +\frac{\pi}{2}=nq \log \mu =\mathcal{O}(q)$. For this reason, during the proof of Theorem \ref{thm:main} in the rest of the paper, we will rewrite the parameter $k$ using this expression:
\begin{equation}\label{def:formak}
 kq=\mu e^{-\frac{\pi}{2nq}},
\end{equation}
and we will prove that, for $q$ small enough, there exists a value of $\bar{\mu}$ independent of $q$ such that, for $k$ given by \eqref{def:formak} with $\mu = \bar{\mu} + \mathcal{O}(|\log q|^{-1})$,
\eqref{originals} has a solution satisfying the required asymptotic conditions~\eqref{eq:boundaryconditions}.

\section{Proof of Theorem \ref{thm:main}: Matching argument}\label{sec:matching}

In order to prove Theorem~\ref{thm:main} following the strategy explained in Section~\ref{sec:main}, we provide the precise statements about the existence of the families of solutions $(f^{\mathrm{out}},v^{\mathrm{out}})$ in the \emph{outer region}~\eqref{eq:outerconditionint} (Section~\ref{subsec:outer}) and $(f^{\mathrm{in}}, v^{\mathrm{in}})$  in the \emph{inner region}~\eqref{eq:innerconditionint} (Section~\ref{subsec:inner}). Moreover, since our method relies on finding  $(f^{\mathrm{out}},v^{\mathrm{out}})$  and $(f^{\mathrm{in}}, v^{\mathrm{in}})$
near the dominant terms $(f_0^{\mathrm{out}}, v_0^{\mathrm{out}})$ and $(f_0^{\mathrm{in}}, v_0^{\mathrm{in}})$,  given in \eqref{dominanttermouter_0} and \eqref{dominantterminner_0} respectively, we set all the properties of these dominant terms in Proposition~\ref{prop:dominantouter} and~\ref{prop:f0v0_new} respectively.
After that, in Sections~\ref{subsec:mathching1} and~\ref{sec:constants}, the rigorous matching of the dominant terms is done. Finally in Section~\ref{sec:matchingfinal}, we finish the proof of Theorem~\ref{thm:main}.

The modified Bessel functions $I_\nu, K_\nu$, see~\cite{abramowitz}, play an important role in our proofs.
From now on we shall use that for any $\nu\in \C$, there exists $z_0>0$ (see~\cite{abramowitz}), such that
\begin{equation}\label{asymKIn_new}
K_\nu(z)= \sqrt{\frac{\pi}{2 z}}e^{-z} \left (1+ \frac{4\nu^2 -1}{8z}+\O\left (\frac{1}{z^2}\right )\right ) ,\quad
I_\nu(z)= \sqrt{\frac{1}{2\pi z}}e^{z} \left (1+ \O\left (\frac{1}{z}\right ) \right ),
\qquad |z|\geq z_0 ,
\end{equation}
where, for $|\nu| \leq \nu_0$ the $\O(1/z)$ terms are bounded by $\frac{M}{|z|}$ for $|z|\geq z_0$ and $M,z_0$ only depend on $\nu_0$. In addition, when $\nu \in \mathbb{N}$,
\begin{equation}
\label{asymKIn0_new}
    K_\nu(z) = \mathcal{O}(z^{-\nu}), \qquad I_\nu (z) = \mathcal{O}(z^{\nu}), \qquad |z| \to 0,
\end{equation}
where, again, $\O(z^{\nu})$ is uniform for $\nu\leq \nu_0$.

From now on we denote by $M$ a constant independent on $q,k$ that can (and will) change its value along the proofs. In addition when the notation $\O(\cdot)$ is used, it means that the terms are bounded uniformly everywhere the function is studied.

\subsection{Outer solutions}\label{subsec:outer}

We begin the proof of Theorem~\ref{thm:main}, studying the dominant terms $f_0^{\mathrm{out}}, v_0^{\mathrm{out}}$ defined in~\eqref{dominanttermouter_0}in the \emph{outer region} (see~\eqref{eq:outerconditionint}).
\begin{proposition}\label{prop:dominantouter}
For any $0<\mu_0<\mu_1$, there exists $q_0=q_0(\mu_0,\mu_1)>0$ such that for any
$\mu\in [\mu_0,\mu_1]$ and $q\in (0,q_0]$, the functions
$v_0^{\mathrm{out}}(r;k,q) $ and $f_0^{\mathrm{out}}(r;k,q)$ defined in~\eqref{dominanttermouter_0}
with $k=\mu e^{-\frac{\pi}{2nq}}$, satisfy the following properties:
\begin{enumerate}
    \item There exists $R_0>0$ such that for $kqr\geq R_0$,
\begin{equation}\label{asymptoticandbound:V0_new}    \begin{aligned}
v_0^{\mathrm{out}}(r;k,q)&= -k -\frac{1}{2qr} + k \O\left (\frac{1}{(kq r)^{2}}\right ), \\
f_0^{\mathrm{out}}(r,k,q) &= \sqrt{1-k^2} \left (1 - \frac{k}{2qr(1-k^2)}\right ) + \O\left (\frac{1}{(qr)^2}\right ).
\end{aligned}
\end{equation}
\item \label{item2dominantouter}
For $2  e^{-\frac{\pi}{2nq}}\leq kqr\leq (qn)^2 $, we have:
\begin{equation}\label{vdoutmaching_new}
\vdout(r;k,q) = -\frac{n}{r} \mathrm{tan} \left (nq \log r + nq \log \left (\frac{\mu}{2}\right ) -\theta_{0,nq}\right ) [1+ \mathcal{O}(q^2)] ,
\end{equation}
with $\theta_{0,nq}=\mathrm{arg} \left(\Gamma(1+inq)\right)=-\gamma nq + \mathcal{O}(q^2)$ where $\Gamma$ is the Euler's Gamma function and $\gamma$ the Euler's constant.
\item \label{item3dominantouter}
For $2 e^{-\frac{\pi}{2nq}}\leq kqr$,
we have:
\begin{equation}\label{propertiesvout}
\partial_r v_0^{\mathrm{out}}(r;k,q)>0, \quad  v_0^{\mathrm{out}}(r;k,q) <-k, \quad
\partial_r \fdout(r;k,q)>0.
\end{equation}
\item \label{item4dominantouter}
Let $\alpha\in (0,1)$. There exists $\bar{q}_0=\bar{q}_0(\alpha,\mu_0,\mu_1)$ and a constant $M=M(\alpha,\mu_0,\mu_1)>0$ such that if $r_{\min}$ satisfies $2e^{2} e^{-\frac{\pi}{2qn}} \leq kq r_{\min} \leq (kq)^\alpha$ then, for $r\geq r_{\min}$,
$v^{\mathrm{out}}_0$ satisfies:
\begin{equation}\label{morepropertiesvout}
\begin{aligned}
&|v^{\mathrm{out}}_0(r;k,q)|, \quad  |r \partial_r v^{\mathrm{out}}_0(r;k,q)|, \quad |r^2 \partial_r^2 v^{\mathrm{out}}_0(r;k,q)| \leq M r_{\min}^{-1},\\
&|r(\vdout(r;k,q)+k)|, \quad |r^2 \partial_r \vdout(r;k,q)|,\quad  |r^3 \partial_r^2 \vdout(r;k,q)|\leq M q^{-1}.
\end{aligned}
\end{equation}
With respect to $\fdout$, we have;
\begin{equation}\label{morepropertiesfout}
\begin{aligned}
&\fdout(r;k,q)\geq 1/2, \quad |r^2 \partial_r \fdout(r;k,q)|,\quad  |r^3 \partial_r^2 \fdout(r;k,q)|\leq Mq^{-1} r_\m^{-1},
\\
&|1-\fdout(r;k,q)|,\quad |r \partial_r \fdout(r;k,q)|, \quad  |r^2 \partial_r^2 \fdout(r;k,q)|\leq M r_\m^{-2}.
\end{aligned}
\end{equation}
\end{enumerate}
\end{proposition}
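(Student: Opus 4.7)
Proof proposal.

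The backbone of the proof is the explicit representation $V_0(R;q) = K_{inq}'(R)/K_{inq}(R)$, where $K_{inq}$ is the modified Bessel function of the second kind of pure imaginary order; this follows from the definition \eqref{eq:V0_0} and the decay requirement $\lim_{R\to\infty}V_0=-1$. Granted this identity, all statements about $V_0$ reduce to asymptotics of $K_{inq}$, and those about $\fdout$ follow from \eqref{eq:F0_0} and the scaling \eqref{dominanttermouter_0}. The plan is to treat items 1 and 2 via direct asymptotic expansions of $K_{inq}$ in the two canonical regimes, and items 3--4 via the Riccati equation \eqref{Ricatti_0} combined with these expansions.

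For item 1 (large $R = kqr$), I would push the asymptotic \eqref{asymKIn_new} one order further,
\[
K_{inq}(z) = \sqrt{\tfrac{\pi}{2z}}\,e^{-z}\left(1 + \frac{-4n^2q^2 - 1}{8z} + \mathcal{O}\!\left(\tfrac{1}{z^2}\right)\right),
\]
and differentiate either directly or via the contiguous relation $K_{inq}'(z) = (inq/z)K_{inq}(z) - K_{1+inq}(z)$, applying \eqref{asymKIn_new} again to $K_{1+inq}$. The quotient yields $V_0(R;q) = -1 - 1/(2R) + \mathcal{O}(1/R^2)$ uniformly in small $q$. Substituting into $\vdout(r;k,q) = kV_0(kqr;q)$, and into $F_0 = \sqrt{1 - k^2 V_0^2 - k^2q^2 n^2/R^2}$ followed by Taylor-expanding the square root, reproduces \eqref{asymptoticandbound:V0_new}.

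For item 2 (small $R$ with $R\le(qn)^2$), I would invoke the reflection formula $K_\nu = \tfrac{\pi}{2\sin(\nu\pi)}(I_{-\nu} - I_\nu)$ with $\nu = inq$, together with the power series $I_\nu(z) = (z/2)^\nu/\Gamma(1+\nu)\,(1+\mathcal{O}(z^2))$. Writing $\Gamma(1 \pm inq) = |\Gamma(1+inq)|\,e^{\pm i\theta_{0,nq}}$ and using $\sin(inq\pi) = i\sinh(nq\pi)$, the two complex powers $(z/2)^{\pm inq}$ combine into a single real sine,
\[
K_{inq}(z) = -\,\frac{\pi\,\sin\!\bigl(nq\log(z/2) - \theta_{0,nq}\bigr)}{\sinh(nq\pi)\,|\Gamma(1+inq)|}\,(1+\mathcal{O}(z^2)).
\]
Differentiating and dividing yields $V_0(R;q) = (nq/R)\cot\!\bigl(nq\log(R/2)-\theta_{0,nq}\bigr)(1+\mathcal{O}(R^2))$. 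Substituting $R=kqr$ with $kq = \mu e^{-\pi/(2nq)}$ converts the argument into $nq\log r + nq\log(\mu/2) - \pi/2 - \theta_{0,nq}$, and then $\cot(x-\pi/2) = -\tan x$ produces \eqref{vdoutmaching_new}. The constraint $R \le (qn)^2$ makes the $\mathcal{O}(R^2)$ remainder $\mathcal{O}(q^4)$, well within the stated $\mathcal{O}(q^2)$; the expansion $\theta_{0,nq} = -\gamma nq + \mathcal{O}(q^2)$ comes from $\log\Gamma(1+w) = -\gamma w + \mathcal{O}(w^2)$.

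For items 3 and 4, I would exploit the Riccati equation $V_0' = 1 - V_0^2 - V_0/R - n^2q^2/R^2$, whose right-hand side viewed as a quadratic in $V_0$ has roots $V_\pm(R) = (-1\pm\sqrt{1+4R^2-4n^2q^2})/(2R)$. The inequality $V_0' > 0$ (equivalently $\partial_r\vdout > 0$) holds exactly when $V_-(R) < V_0 < V_+(R)$; this is verified throughout $R \ge 2e^{-\pi/(2nq)}$ by checking the two endpoints using items 1 and 2 (at the lower endpoint $V_0 \approx -\gamma n^2 q^2 e^{\pi/(2nq)}/2$ sits far above $V_- \approx -\tfrac12 e^{\pi/(2nq)}$; at infinity the $\mathcal{O}(1/R^2)$ correction of $V_0$ keeps it above $V_-$), plus a continuity/no-crossing argument in between, exploiting that $V_0(R;q) \to K_0'(R)/K_0(R) \in (V_-,V_+)$ uniformly on compacts of $(0,\infty)$ as $q\to 0$. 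The inequality $V_0 < -1$ (i.e. $\vdout < -k$) and the monotonicity $\partial_r \fdout > 0$ then follow from the same expansions together with $\partial_r(V_0^2) < 0$ when $V_0 < 0$ and $V_0' > 0$. Finally, the quantitative bounds of item 4 come from differentiating the tangent formula of item 2: the constraint $kqr \le (kq)^\alpha$ with $\alpha \in (0,1)$ keeps the argument of the tangent inside an interval of the form $[\mathcal{O}(q), (1-\alpha)\pi/2 + \mathcal{O}(q|\log q|)]$, uniformly bounded away from $\pi/2$, so $\tan$, $\sec^2$ and their derivatives stay uniformly bounded, and the prefactor $1/r$ (plus polynomial $r$-dependence of higher derivatives) gives the claimed $r^{-1}$, $r^{-2}$, $r^{-3}$ scalings; for $R$ beyond $(qn)^2$ one switches directly to item 1. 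The main obstacle is the transitional regime $R \sim 1$ where neither asymptotic of $K_{inq}$ is sharp and where one must rule out zeros of $K_{inq}$ in $[2e^{-\pi/(2nq)},\infty)$; the specific cutoff is calibrated so that at this point $nq\log(R/2)-\theta_{0,nq}\approx -\pi/2$, placing $R$ safely between the two nearest zeros of the sinusoidal approximant, and continuity together with the uniform convergence $K_{inq}\to K_0>0$ on compacts of $(0,\infty)$ as $q\to 0$ handles the middle range.
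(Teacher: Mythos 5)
Your items 1 and 2 follow essentially the paper's own route: the large-$z$ expansion of $K_{inq}$ and its derivative for item 1, and for item 2 the reflection formula $K_{i\nu}=-\tfrac{\pi i}{2\sinh(\nu\pi)}(I_{-i\nu}-I_{i\nu})$ recombined into a real sine via $\Gamma(1\pm i\nu)=|\Gamma(1+i\nu)|e^{\mp i\gamma\nu+\mathcal{O}(\nu^2)}$, followed by the substitution $kq=\mu e^{-\pi/(2nq)}$ that turns the cotangent into the tangent of \eqref{vdoutmaching_new}. One imprecision in item 2: the relative error is not simply the $\mathcal{O}(R^2)=\mathcal{O}(q^4)$ remainder of the power series, because $V_0$ is a quotient in which one divides by $\sin$ and $\cos$ of an argument that approaches $-\pi/2$ at the lower end and $0$ at the upper end of the range, so these can be as small as $\mathcal{O}(q)$ and $\mathcal{O}(q|\log q|)$ respectively; the correct bookkeeping (the paper's factors $a(R)$, $b(R)$) still lands inside $\mathcal{O}(q^2)$, so the conclusion survives but your stated reason does not.

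The genuine gaps are in items 3 and 4. For item 3, the equivalence $V_0'>0\iff V_-<V_0<V_+$ is correct, but you only ever check $V_0>V_-$ at the endpoints and never verify $V_0<V_+$. That missing half is exactly where the difficulty lies: at $R=2e^{-\pi/(2nq)}$ the argument of the cotangent is $-\tfrac{\pi}{2}+\gamma nq+\mathcal{O}(q^2)$, giving $V_0\approx-\gamma (nq)^2/R$, while $V_+\approx-(nq)^2/R$; since $\gamma<1$ one gets $V_0>V_+$, i.e.\ the inequality you need \emph{fails} at the stated lower endpoint, and the sign of $V_0'$ there is decided by the comparison of $\gamma$ with $1$ — it cannot be dispatched by "far above $V_-$". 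Moreover $V_\pm$ are not solutions of the Riccati equation, so "no crossing" is not automatic; the paper instead argues by contradiction at the largest critical point $R_*$, computing $V_0''(R_*)=V_0(R_*)/R_*^2+2q^2n^2/R_*^3$ from the differentiated Riccati equation and using the precise cotangent asymptotics to exclude $V_0(R_*)=v_\pm(R_*)$. For item 4, your two expansions leave the entire intermediate range $(qn)^2\le kqr\le R_0$ uncovered: that set is not contained in any fixed compact subset of $(0,\infty)$ as $q\to0$, so "uniform convergence of $K_{inq}$ to $K_0$ on compacts" does not reach it, and you offer no mechanism there for the bounds on $\partial_r\vdout$ and $\partial_r^2\vdout$. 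The paper gets these from the monotonicity of item 3, the auxiliary function $g(R)=RV_0(R)$ (shown to have no critical points via the Riccati equation), and a separate lemma asserting $K_{inq}>0$, $K_{inq}'<0$, $K_{inq}''>0$ for $kqr\ge 2e^2e^{-\pi/(2nq)}$, proved from the integral representation $K_{i\nu}(x)=\int_0^\infty e^{-x\cosh t}\cos(\nu t)\,dt$ — an ingredient entirely absent from your proposal.
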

The proof of this proposition is postponed to Appendix~\ref{sec:dominantouter} and it involves a careful study of some properties of the Bessel functions $K_{inq}$.

Once $(\fdout,\vdout)$ are studied, we look for solutions in the \emph{outer region} satisfying boundary conditions~\eqref{eq:outerconditionint}.
This is the contain of the following theorem which gives the existence and bounds of a one parameter family of solutions of equations~\eqref{originals},
which stay close to the approximate solutions $(\fdout(r;k,q),\vdout(r;k,q))$ given in~\eqref{dominanttermouter_0}
for all $r\geq r_2$, being $r_2$ any number such that $r_2=\O(\varepsilon^{\a-1})$ with {$0<\a<1$ satisfying that $q^{-1}\e ^{1-\alpha}\to 0$} when $q\to 0$.
\begin{theorem} \label{th:outermatching}
For any  $\eta >0$, $0<\mu_0<\mu_1$, there exist $q_0=q_0(\mu_0,\mu_1,\eta)>0$, $e_0=e_0(\mu_0,\mu_1,\eta)>0$ and $M=M(\mu_0,\mu_1,\eta)>0$ such that,
 for any $\mu \in [\mu_0,\mu_1]$ and $q\in[0,q_0]$ if we take $\e=\mu e^{-\frac{\pi}{2nq}}$
  and $\alpha\in(0,1)$
satisfying
\begin{equation}\label{cond:thouter0}
q^{-1}\e ^{1-\alpha}<e_0,
\end{equation}
taking $r_2$ as
\begin{equation}\label{cond:r2}
r_2 =  \e^{\a-1},
\end{equation}
and $\Cout$ satisfying
\begin{equation}
    \label{cond:a}
|\Cout|\leq \eta r_2^{-3/2}e^{r_2 \sqrt{2}}\, ,
\end{equation}
equations~\eqref{originals} have a family of solutions $(\fout(r,\Cout;k,q),\vout(r,\Cout;k,q))$
defined for $r \geq r_2$
which are of the form
\begin{equation}\label{def:fvout}
\begin{split}
&\fout(r,\Cout;k,q)=\fdout(r;k,q)+\gout(r,\Cout;k,q), \\ &\vout(r,\Cout;k,q)=\vdout(r;k,q)+\wout(r,\Cout;k,q).
\end{split}
\end{equation}
where $\fdout,\vdout$ are defined in~\eqref{dominanttermouter_0}. The functions $\gout,\wout$ satisfy
$$
|r^2 \gout(r,\Cout;k,q)|,\, |r^2 \partial\gout(r,\Cout;k,q)|\le  M ,  \qquad
|r^2 \wout(r,\Cout;k,q)|\le   M q^{-1} (\eta + q^{-1} \e^{1-\alpha}).
$$
We can also  decompose
\begin{equation}
\label{def:gout}
\gout(r,\Cout;k,q)= K_0(r\sqrt{2}) \Cout+\gout_0(r;k,q)+\gout_1(r,\Cout;k,q),
\end{equation}
where $K_0$ is the modified Bessel function of the first kind (\cite{abramowitz}), and $\gout_0(r;k,q)$ is an explicit function independent of $\eta$.
Moreover,
\begin{itemize}
    \item[(i)]
    there exists $q_0^*=q_0^*(\mu_0, \mu_1)>0$, and $M_0=M_0(\mu_0,\mu_1)$ such that, for $q\in[0,q_0^*]$,
    \begin{equation}\label{eq:fitagout}
        |r^2 \gout_0(r;k,q)|,\,  |r^2 \partial \gout_0(r;k,q)|\leq M_0  \e^{1-\a}q^{-1}  ,
    \end{equation}
    \item[(ii)]
    and for $q\in[0,q_0]$,
    \begin{equation} \label{eq:fitagout_bis}
        |r^2 \gout_1(r, \Cout;k,q)|,\,  |r^2 \partial \gout_1(r, \Cout;k,q)|\leq M_1\e^{1-\a}q^{-1} e^{-r_2 \sqrt{2}} r_2^{3/2} |\Cout |,
    \end{equation}
    where $M_1=M_1(\mu_0,\mu_1,\eta)$ depends on $\mu_0,\mu_1$, and $\eta$.
\end{itemize}
With respect to $\wout$, it can be decomposed as $\wout = \wout_0 + \wout_1$ satisfying that for $q\in [0,q_0]$
$$
|r^2 \wout_0(r,\Cout;k,q)|   \leq M_2 q^{-1} e^{-r_2 \sqrt{2}} r_2^{3/2} |\Cout |
, \qquad
|r^2 \wout_1 (r,\Cout;k,q)| \leq M_2 \e^{1-\alpha}q^{-2} ,
$$
with  $M_2=M_2(\mu_0,\mu_1,\eta)$.
\end{theorem}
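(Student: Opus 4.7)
The plan is to seek $(\fout,\vout)$ as a perturbation of the explicit dominant pair $(\fdout,\vdout)$ from Proposition~\ref{prop:dominantouter} by solving for $(\gout,\wout)$ via a Banach fixed point on $[r_2,\infty)$. Substituting the ansatz~\eqref{def:fvout} into~\eqref{originals} and expanding the cubic and quadratic nonlinearities around $(\fdout,\vdout)$ gives a system of the schematic form
\[
\mathcal{L}_1 \gout - 2\fdout\vdout \, \wout = -\mathcal{R}_1 + \mathcal{N}_1(\gout,\wout),\qquad
\mathcal{L}_2 \wout + \mathcal{S}(\gout) = -\mathcal{R}_2 + \mathcal{N}_2(\gout,\wout),
\]
where $\mathcal{L}_1 g = g'' + g'/r - g(n^2/r^2 + 3(\fdout)^2 + (\vdout)^2 - 1)$, $\mathcal{L}_2$ is the first-order operator inherited from~\eqref{eq:Chi}, $\mathcal{R}_1,\mathcal{R}_2$ are the residuals produced by inserting $(\fdout,\vdout)$ into~\eqref{originals} (non-zero because $(\fdout,\vdout)$ only solves a reduced problem), and $\mathcal{N}_1,\mathcal{N}_2$ collect the quadratic-and-higher terms in $(\gout,\wout)$. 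Using the bounds $\fdout = \sqrt{1-k^2}+\O(r_2^{-2})$ and $\vdout=-k+\O(r_2^{-1}q^{-1})$ from~\eqref{morepropertiesvout}--\eqref{morepropertiesfout}, $\mathcal{L}_1$ becomes at leading order the modified-Bessel operator $g\mapsto g''+g'/r-g(2+n^2/r^2)$.

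Next I would invert the linear parts. For $\mathcal{L}_1$, the dominant homogeneous solutions are $I_0(r\sqrt{2})$ (growing) and $K_0(r\sqrt{2})$ (decaying, by~\eqref{asymKIn_new}); I would build a Green's kernel by variation of constants that enforces decay at infinity. The kernel of the decay-preserving inverse is one-dimensional, generated by $K_0(r\sqrt{2})$, which is what introduces the free parameter $\Cout$ in~\eqref{def:gout}. For $\mathcal{L}_2$, being first order with coefficient $\fdout\ne 0$, one inverts by an explicit integrating factor (essentially $r\fdout^2$), imposing decay at infinity to make the inverse unique; this is why $\wout$ does not carry an independent free parameter. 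The coupling $2\fdout\vdout\wout$ is small because $\vdout=\O(k+(qr)^{-1})$ on $[r_2,\infty)$, and the source $\mathcal{S}(\gout)$ in the $\wout$-equation is controlled by $\partial_r\gout$, so both couplings are weaker than the diagonal leading behaviour.

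Then I would set up the contraction on the Banach space
\[
X=\Bigl\{(g,w)\in C^1([r_2,\infty))\times C^0([r_2,\infty)):\ \|(g,w)\|_X <\infty \Bigr\},\qquad \|(g,w)\|_X = \sup_{r\ge r_2}\bigl(r^2|g|+r^2|g'|+qr^2|w|\bigr),
\]
in a ball of radius matching the bounds of the theorem. The operator is
\[
\mathcal{T}(g,w) = \Bigl(K_0(r\sqrt 2)\Cout + \mathcal{L}_1^{-1}(-\mathcal{R}_1+2\fdout\vdout w +\mathcal{N}_1(g,w)),\ \mathcal{L}_2^{-1}(-\mathcal{R}_2-\mathcal{S}(g)+\mathcal{N}_2(g,w))\Bigr).
\]
Using Proposition~\ref{prop:dominantouter}, the residuals $\mathcal{R}_1,\mathcal{R}_2$ are bounded by expressions that, once integrated against the Green's kernel, give the explicit $\Cout$-independent contribution $\gout_0$ (respectively the $\O(\e^{1-\alpha}q^{-2})$ piece $\wout_1$). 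Every term depending on $\Cout$ — either directly through the homogeneous solution $K_0(r\sqrt2)\Cout$ or through the nonlinearities that feed on it — is gathered into $\gout_1$ and $\wout_0$, and inherits the factor $|\Cout|\,e^{-r_2\sqrt2}r_2^{3/2}$ coming from $K_0(r_2\sqrt2)\sim \sqrt{\pi/(2r_2\sqrt2)}\,e^{-r_2\sqrt2}$.

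The main obstacle I foresee is the subtle balance of smallness: condition~\eqref{cond:thouter0} ($q^{-1}\e^{1-\alpha}\to 0$) is precisely what is needed so that $\gout_0$, $\wout_1$, and all the nonlinear contributions sit inside a ball where $\mathcal{T}$ contracts; condition~\eqref{cond:a} on $|\Cout|$ is calibrated so that $K_0(r\sqrt{2})\Cout$ at $r=r_2$ remains $\O(\eta r_2^{-2})$, matching the target norms. A delicate point is that $\mathcal{L}_2$ provides only algebraic gain, so bounding $\wout$ requires careful use of the explicit decay rates in~\eqref{morepropertiesvout}--\eqref{morepropertiesfout} together with the smallness of $\vdout+k$ and $\fdout-\sqrt{1-k^2}$. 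Once the contraction is verified, existence and uniqueness in the ball yield the decomposition~\eqref{def:gout} and all the bounds stated for $\gout,\gout_0,\gout_1,\wout_0,\wout_1$, finishing the proof.
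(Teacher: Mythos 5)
Your overall architecture --- perturbing off $(\fdout,\vdout)$, carrying the free constant $\Cout$ on the decaying homogeneous solution $K_0(r\sqrt{2})$ of the second-order linearization, inverting the first-order $w$-equation with decay at infinity so that no second free constant appears, and closing a contraction in weighted sup norms with the $\Cout$-independent and $\Cout$-dependent pieces separated --- is the same as the paper's (Section~\ref{sec:provaOuter}, Theorem~\ref{prop:outerfixedpoint}). The genuine gap is in your treatment of the $\wout$-equation. You propose to linearize \eqref{eq:Chi} directly and invert with the integrating factor $r(\fdout)^2$. But the linearization of $qf(1-f^2-k^2)$ contributes the zeroth-order coupling $-2q(\fdout)^2\gout$ to the source, which is \emph{not} controlled by $\partial_r\gout$; and the decay-at-infinity inverse attached to the kernel $r(\fdout)^2$, namely $w(r)=\frac{1}{r(\fdout)^2}\int_{\infty}^{r}\xi (\fdout)^2(\cdot)\,d\xi$ with $(\fdout)^2\to 1-k^2$, applied to that term requires $\int_r^\infty \xi\, q|\gout(\xi)|\,d\xi$, which diverges logarithmically for $\gout=\O(r^{-2})$; the same divergence occurs for the coupling $2\vdout\,\partial_r\gout$ because $|\vdout|\geq k$. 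So the scheme as written does not close. The paper avoids this by first substituting the value of $1-F^2$ from \eqref{outerF1_0} into \eqref{outerV1_0}, producing the Riccati form \eqref{outerV_0}, whose right-hand side depends on the perturbation of $f$ only through first and second derivatives, and whose linearization around $V_0=K_{inq}'/K_{inq}$ is $W'+W/R+2V_0W$. The correct integrating factor is therefore $R\,K_{inq}^2(R)$, not $r(\fdout)^2$: its exponential decay yields the loss-free bound $\|\mathcal{S}_2[h]\|_\ell\le\tfrac12\|h\|_\ell$ (Lemma~\ref{lem:W2}), and the identity $V_0K_{inq}^2=\tfrac12(K_{inq}^2)'$ permits the integrations by parts (Lemmas~\ref{lem:AS2} and~\ref{lem:S22}) that absorb the remaining $V_0\,\partial_r\gout$ and $q^2\,\partial_r^2\gout$ couplings.

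A second, related omission: after the recombination the $w$-equation contains $\partial_r^2\gout$, which your norm $\sup\bigl(r^2|g|+r^2|g'|+qr^2|w|\bigr)$ does not control. The paper handles this with the Gauss--Seidel substitution of $\mathcal{F}_1[G,W]$ in place of $G$ inside $\mathcal{N}_2$, so that the smoothing of $\mathcal{S}_1$ (Corollary~\ref{cor:L1}) supplies the second derivative, combined with the integration-by-parts lemmas above. You would need an analogous device before the contraction estimate can be carried out.
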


Theorem \ref{th:outermatching} is proved in Section~\ref{sec:provaOuter}
by performing the scaling \eqref{eq:canvi_0} and studying the solutions of the outer equations \eqref{outer_0} with boundary conditions \eqref{eq:boundaryconditionsouter_0} near the functions $F_0,V_0$ given in~\eqref{eq:F0_0} and~\eqref{eq:V0_0}. The proof is done through  a fixed point argument in a suitable Banach space.

We emphasize that given that when $r\to \infty$, $\gout$ and $\wout$ have limit zero, and $\fdout$ and $\vdout$ satisfy~\eqref{eq:boundary1_0}, then $(\fout,\vout)$ satisfy the boundary conditions~\eqref{eq:boundaryconditions}.
With this result in mind we now proceed with the study of the behaviour of solutions of~\eqref{originals} departing $r=0$, also called \emph{inner solutions}.

\subsection{Inner
solutions}\label{subsec:inner}
We now deal with the families of solutions of~\eqref{originals} departing the origin, satisfying the boundary condition $f(0)=v(0)=0$  that are defined for values of $r$ in the \emph{inner region} (see~\eqref{eq:innerconditionint}).

We first set the properties of $\f0^{\mathrm{in}}, \v0^{\mathrm{in}}$, the dominant terms in the \emph{inner region} defined in~\eqref{dominantterminner_0}, that will mostly be used throughout this proof.
\begin{proposition}\label{prop:f0v0_new}
For any $0<\mu_0<\mu_1$, there exists $q_0=q_0(\mu_0,\mu_1)>0$ such that for any $\mu\in [\mu_0,\mu_1]$ and $ q\in [0,q_0]$, the functions $\f0^{\mathrm{in}}(r), \v0^{\mathrm{in}}(r;k,q)$ defined in~\eqref{dominantterminner_0} with $kq=\mu e^{-\frac{\pi}{2nq}}$, satisfy the following properties:
\begin{enumerate}
\item \label{item1dominant_inner}
For all $r>0$ we have $\f0^{\mathrm{in}}(r), \partial_r \f0^{\mathrm{in}}(r)>0$ and there exists $c_f>0$ such that:
\begin{equation*}
\begin{aligned}
 & \f0^{\mathrm{in}}(r)\sim c_f r^n, \quad r\to 0,\qquad
\f0^{\mathrm{in}}(r) = 1 - \frac{n^2}{2r^2} + \O(r^{-4}),\quad r\to \infty,\\
&\partial_r \f0^{\mathrm{in}}(r)\sim nc_fr^{n-1},\quad r\to 0,
\quad \partial_r \f0^{\mathrm{in}}(r) = \frac{n^2}{r^3} + \O(r^{-5}),\quad r\to \infty.  &&
\end{aligned}
\end{equation*}
\item
For  $0<r\leq \frac{n}{k\sqrt{2}}$, $\v0^{\mathrm{in}}(r;k,q)<0$ and there exists a positive function $c_v(k)=c_v^0+ \O(k^2)$ such that
\begin{equation*}
\begin{aligned}
&\v0^{\mathrm{in}}(r;k,q)\sim -qc_v (k)r ,\quad r\to 0, \qquad
|\v0^{\mathrm{in}}(r;k,q)|\leq Mq \frac{|\log r|}{r},\quad 1\ll r< \frac{n}{k\sqrt{2}},\\
&\partial_r \v0^{\mathrm{in}}(r;k,q) \sim -q c_v(k),\quad r\to 0, \qquad
|\partial_r \v0^{\mathrm{in}}(r;k,q)|\leq M q \frac{\log r}{\esv r^2},\quad 1\ll r< \frac{n}{k\sqrt{2}}.
\end{aligned}
\end{equation*}
\item
For $1\ll r \leq \frac{n}{k\sqrt{2}}$, we have that  %
\begin{equation}\label{prop:v0innermatching}
\vdin(r;k,q)=-q\frac{n^2(1+k^2)}{r} \log r + \frac{qC_n}{r} -\frac{k^2q}{2}r +   q\mathcal{O}(r^{-3}\log r) + qk^2 \mathcal{O}(r^{-1}),
\end{equation}
with $C_n$ defined in Theorem~\ref{thm:main} and
$$
\partial_r \vdin(r;k,q) = q \frac{n^2}{r^2} \log r + q \O(r^{-2}).
$$
\end{enumerate}
\end{proposition}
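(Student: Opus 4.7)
The three items split naturally along the structure of the dominant system: item (1) concerns only the scalar second-order ODE for $\fdin=f_0$ in \eqref{eq:introf0_0}, while items (2) and (3) exploit the explicit integral representation of $v_0$ obtained by viewing \eqref{eq:introv0_0} as a linear first-order equation for $v_0$ with coefficients depending on $f_0$.

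For item (1), existence and uniqueness of $f_0$, together with positivity, monotonicity, and the near-origin power $f_0(r)\sim c_f r^n$, are the content of the main existence result in \cite{Aguareles2011}. The large-$r$ tail $f_0(r)=1-n^2/(2r^2)+O(r^{-4})$ is obtained by substituting $f_0=1+\phi$ into \eqref{eq:introf0_0}, so that $\phi$ satisfies a linear equation of the form
\begin{equation*}
\phi''+\frac{\phi'}{r}-2\phi=\frac{n^2}{r^2}+\text{lower-order terms},
\end{equation*}
matching the formal series $\phi=\sum_{j\ge 1} a_j r^{-2j}$ to force $a_1=-n^2/2$, and controlling the remainder in $O(r^{-4})$ via variation of constants or a sub/super-solution argument on the linearised equation. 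Differentiating the asymptotic expansion and invoking the ODE justifies the expansion for $\partial_r f_0$; the near-origin derivative estimate follows from the convergent power series $f_0(r)=c_f r^n(1+O(r^2))$ already constructed in \cite{Aguareles2011}. For item (2), multiplying \eqref{eq:introv0_0} by $rf_0^2$ recognises the left-hand side as $(rf_0^2 v_0)'$, which integrates to the explicit formula \eqref{expr:v_0} (with the sign dictated by the ODE). On $0<r\le n/(k\sqrt{2})$ the integrand $\xi f_0^2(\xi)(1-f_0^2(\xi)-k^2)$ has a definite sign: the asymptotic from item (1) gives $1-f_0^2(\xi)=n^2/\xi^2+O(\xi^{-4})>k^2$ for $\xi$ up to $n/(k\sqrt{2})$, while $1-f_0^2(\xi)\to 1$ as $\xi\to 0$, and for $k$ small both exceed $k^2$. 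Hence $\vdin<0$ throughout. Near $r=0$, substituting $f_0(\xi)\sim c_f \xi^n$ in the integrand gives $\int_0^r\xi f_0^2(1-f_0^2-k^2)\,d\xi\sim c_f^2(1-k^2)r^{2n+2}/(2n+2)$, and division by $rf_0^2(r)\sim c_f^2 r^{2n+1}$ produces $v_0(r;k)\sim -(1-k^2)r/(2(n+1))$, identifying $c_v^0=1/(2(n+1))$ and, by analytic dependence on $k^2$, $c_v(k)=c_v^0+O(k^2)$. The derivative statements follow by differentiating \eqref{expr:v_0}; the logarithmic bound $|\vdin|\le Mq|\log r|/r$ on $1\ll r\le n/(k\sqrt{2})$ is inherited from the refined expansion of item (3).

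Item (3) is the most delicate step. The plan is to decompose $\xi f_0^2=\xi+\xi(f_0^2-1)$ and use $f_0^2-1=-n^2/\xi^2+O(\xi^{-4})$ to obtain
\begin{equation*}
\int_0^r\xi f_0^2(\xi)\,d\xi=\frac{r^2}{2}-n^2\log r+D_n+O(r^{-2})
\end{equation*}
for some constant $D_n$. Combined with the very definition of $C_n$ in Theorem \ref{thm:main},
\begin{equation*}
\int_0^r\xi f_0^2(\xi)(1-f_0^2(\xi))\,d\xi=n^2\log r+C_n+O(r^{-2}),
\end{equation*}
this yields
\begin{equation*}
\int_0^r\xi f_0^2(1-f_0^2-k^2)\,d\xi=n^2(1+k^2)\log r+(C_n-k^2 D_n)-\tfrac{k^2}{2}r^2+O(r^{-2})+k^2 O(r^{-2}).
\end{equation*}
Dividing by $rf_0^2(r)$ and using $1/(rf_0^2)=1/r+n^2/r^3+O(r^{-5})$, the leading large-$r$ terms reproduce \eqref{prop:v0innermatching}; subdominant $k^2$-pieces such as $k^2 D_n/r$ and the cross-terms with $n^2/r^3$ are absorbed into the stated $qk^2 O(r^{-1})$ error, using that $k^2 r^2\le n^2/2$ throughout the relevant range. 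The analogous expansion for $\partial_r\vdin$ follows by differentiating the integral representation and repeating the estimates. The chief technical obstacle is this simultaneous tracking of the orders in $1/r$, $\log r$ and $k$, ensuring that every error combines into a uniform bound on the full range $1\ll r\le n/(k\sqrt{2})$.
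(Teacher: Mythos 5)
Your proposal follows essentially the same route as the paper: item (1) rests on \cite{Aguareles2011}, item (2) on the sign of the integrand $\xi\f0^2(1-\f0^2-k^2)$ over $0<r\le n/(k\sqrt2)$, and item (3) on splitting $\int_0^r\xi\f0^2(1-\f0^2-k^2)$ into the $k$-independent piece (whose finite part is $C_n$ by definition) and $-k^2\int_0^r\xi\f0^2$, expanded via $\f0^2=1-n^2/r^2+\O(r^{-4})$. The only cosmetic differences are that the paper also outsources the large-$r$ tail of $\f0$ and the $k=0$ part of item (2) to \cite{Aguareles2011} and \cite{AgBaSe2016} (estimating only the difference $\v0(\cdot;k)-\v0(\cdot;0)$, which is $k^2(r\f0^2)^{-1}\int_0^r\xi\f0^2$), where you rederive them directly; your explicit value $c_v^0=1/(2(n+1))$ is a harmless bonus.

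There is, however, one point you should not let pass as smoothly as you do. The ODE \eqref{eq:introv0_0} forces $r\f0^2(r)\v0(r)=-\int_0^r\xi\f0^2(1-\f0^2-k^2)\,d\xi$ (your "sign dictated by the ODE", consistent with $\v0<0$). Your expansion of the integral, $n^2(1+k^2)\log r+(C_n-k^2D_n)-\tfrac{k^2}{2}r^2+\dots$, is correct, but dividing by $-r\f0^2(r)$ then yields $-\tfrac{C_n}{r}+\tfrac{k^2}{2}r$ for the subleading terms, i.e.\ the \emph{opposite} signs to the $+\tfrac{qC_n}{r}-\tfrac{k^2q}{2}r$ claimed in \eqref{prop:v0innermatching}; so the assertion that the division "reproduces" the stated formula does not hold as written. (The same tension exists inside the paper: the displayed formula \eqref{expr:v_0} and the integral representation used in Appendix B differ by a sign, and the appendix's own final display produces $-C_n/r+k^2r/2$ while declaring \eqref{prop:v0innermatching} proved.) Since the sign of the $C_n/r$ term propagates directly into $\bar\mu=2e^{\mp C_n/n^2-\gamma}$ in the matching step, you must fix the convention explicitly and verify which sign survives, rather than absorbing the discrepancy into the phrase "reproduce the leading terms".
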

The proof of this proposition is referred to Appendix~\ref{sec:inner_dominant} and mostly relies on previous works~\cite{Aguareles2011} and~\cite{AgBaSe2016}.

The following theorem, whose proof is provided in Section~\ref{sec:proofInner}, states that there exists a family of solutions of~\eqref{originals}, satisfying the boundary conditions at the origin, which remains close to the approximate solutions $(\fdin(r),\vdin(r;k,q))$ given in~\eqref{dominantterminner_0}, for all $r\in[0,r_1]$, being $r_1=\O(e^{\rho/q})$ for some $\rho>0$ small enough.
\begin{theorem}\label{th:innermatching}
For any $\eta >0$, $0<\mu_0<\mu_1$, there exist $q_0=q_0(\mu_0,\mu_1,\eta)>0$,
$\rho_0=\rho_0(\mu_0,\mu_1,\eta)>0$ and $M=M(\mu_0,\mu_1,\eta)>0$ such that
for any $\mu \in [\mu_0, \mu_1]$, $q\in[0,q_0]$
and
\begin{equation}\label{eq:rho0th}
\rho\in(0, \rho_0),
\end{equation}
taking $\e=\mu e^{-\frac{\pi}{2nq}}$,
$r_1$ as
\begin{equation}\label{cond:r1}
r_1=\frac{e^{\rho/q}}{\sqrt{2}},
\end{equation}
and $\Cin$ satisfying
\begin{equation}\label{cond:b}
|\Cin| r_1^{3/2} e^{\sqrt{2} r_1}\leq \dfrac{\eta}{(\sqrt{2})^{3/2}} q^2 (\log \sqrt{2}r_1)^2= \dfrac{\eta}{(\sqrt{2})^{3/2}} \rho^2,
\end{equation}
the system~\eqref{originals} has a family of solutions $(\fin(r,\Cin;k,q),\vin(r,\Cin;k,q))$
defined for $r\in [0,r_1]$ satisfying boundary conditions \eqref{eq:innerconditionint}, that is,
$\fin(0,\Cin;k,q)=\vin(0,\Cin;k,q)=0$.
Moreover, these functions satisfy:
\begin{equation}
\label{def:fvin}
\fin(r,\Cin;k,q)=\fdin(r)+\gin(r,\Cin;k,q),\qquad \vin(r,\Cin;k,q)= \vdin(r;k,q)+\win(r,\Cin;k,q),
\end{equation}
with $\fdin,\vdin$ defined in~\eqref{dominantterminner_0}. The functions
$\gin$, $\win$ satisfy for all $r\in [0,r_1]$
\begin{equation*}
 \big | \gin (r,\Cin;k,q)\big |\leq M q^2,  \qquad  \big |\win (r,\Cin;k,q)\big |\leq M q^3 ,
\end{equation*}
for $0\leq r <1$
\begin{align*}
&\big | \gin(r,\Cin;k,q) \big | \leq Mq^2 r^{n}, \qquad \big | \partial \gin(r,\Cin;k,q)  \big | \leq M q^2r^{n-1}, \\
&\big | \win(r,\Cin;k,q) \big | \leq M q^3 r  ,  \qquad \;
 \big | \partial \win(r,\Cin;k,q) \big | \leq  M q^3 ,
\end{align*}
and for $1\ll r \leq r_1$
\begin{equation*}
 \big | \gin(r,\Cin;k,q)  \big | \leq M q^2     \frac{|\log r|^2}{r^2}, \qquad
 \big | \win(r,\Cin;k,q) \big | \leq  M q^3 \frac{|\log r|^3}{r}.
\end{equation*}
In addition, there exists a function $I$ satisfying
\begin{equation}\label{propertyImathing}
\begin{split}
&I'(r_1\sqrt{2})K_n(r_1\sqrt{2}) - I(r_1\sqrt{2}) K_n'(r_1\sqrt{2})= \frac{1}{r_1 \sqrt{2}}, \\ &|I(r_1\sqrt{2})|,|I'(r_1\sqrt{2})| \leq M_I \frac{1}{\sqrt{r_1}} e^{r_1 \sqrt{2}},
\end{split}
\end{equation}
for some constant $M_{I}$, and where $K_n$ is the modified Bessel function of the first kind  (\cite{abramowitz}), such that
\begin{equation}
\label{def:gin}
    \gin(r,\Cin;k,q)= I(r\sqrt{2})\Cin+\gin_0(r;k,q)+\gin_1(r,\Cin;k,q),
\end{equation}
where $\gin_0(r;k,q)$ is an explicit function which is independent of $\eta$. Also, for $1\ll r \leq r_1$,
\begin{itemize}
    \item[(i)] there exists $q_0^*=q_0^*(\mu_0, \mu_1)>0$, and $M_0(\mu_0,\mu_1)$ such that, for $q\in[0,q_0^*]$,
    \begin{equation}\label{eq:fitagin}
        |\gin_0(r;k,q)|, |\partial \gin_0(r;k,q)| \leq M_0 q^2    \frac{|\log r|^2}{r^2},
    \end{equation}
    \item[(ii)] and for $q\in[0,q_0]$,
    \begin{equation*}
       |\gin_1(r,\Cin;k,q)|, |\partial \gin_1(r,\Cin;k,q)| \leq M_1 q^2
       \rho^2   \frac{|\log r|^2}{r^2},
    \end{equation*}
    where $M_1=M_1(\mu_0,\mu_1,\eta)$ depends on $\mu_0,\mu_1$, and $\eta$.
\end{itemize}
\end{theorem}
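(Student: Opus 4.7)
The plan is to substitute $\fin = \fdin + \gin$ and $\vin = \vdin + \win$ into~\eqref{originals} and set up a Banach fixed-point problem for $(\gin,\win)$. Since $\f0$ satisfies~\eqref{eq:introf0_0} exactly and $(\f0,q\v0)$ formally solves the full system \emph{with the $v^2$-term removed from the first equation}, the only residual introduced by the ansatz is $\fdin(\vdin)^2 = q^2 \f0 \v0^2$ in the first equation. Rearranging yields a coupled system
$$
\mathcal{L}_f\,\gin = q^2 \f0 \v0^2 + \mathcal{N}_f[\gin,\win], \qquad \mathcal{L}_v\,\win = \mathcal{N}_v[\gin,\win],
$$
where
$$
\mathcal{L}_f\,g = g'' + \frac{g'}{r} - \frac{n^2}{r^2}\,g + (1-3\f0^2)\,g, \qquad \mathcal{L}_v\,w = \f0\,w' + \frac{\f0}{r}\,w + 2\f0'\,w = \frac{(r\f0^2 w)'}{r\f0},
$$
and $\mathcal{N}_f,\mathcal{N}_v$ collect the nonlinearities in $(\gin,\win)$ together with the coupling term $q\gin(1-\f0^2-k^2)$ and the cross terms coming from the second equation of~\eqref{originals}.

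Next, I would invert these two linear operators. The operator $\mathcal{L}_v$ is first-order in $w$ and admits the explicit right inverse $(\mathcal{L}_v^{-1} h)(r) = (r\f0^2(r))^{-1}\int_0^r \xi\,\f0(\xi)\,h(\xi)\,d\xi$, well-defined because $\f0(r)\sim c_f r^n$ at the origin (Proposition~\ref{prop:f0v0_new}) and automatically producing $w(0)=0$. For $\mathcal{L}_f$ one uses two independent homogeneous solutions: a solution bounded at the origin (behaving like $r^n$) and the function $I$ of~\eqref{propertyImathing}, normalised so that its leading behaviour at infinity matches a multiple of the modified Bessel function $I_n(r\sqrt{2})$ (since $1-3\f0^2\to -2$ as $r\to\infty$), yielding the exponential bound $|I(r_1\sqrt{2})|,|I'(r_1\sqrt{2})|\leq M_I r_1^{-1/2} e^{r_1\sqrt{2}}$; the Wronskian identity with $K_n(r\sqrt{2})$ is essentially the standard modified-Bessel Wronskian, perturbed only at order $(1-\f0^2)$. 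Variation of parameters then produces a particular inverse $\mathcal{L}_f^{-1}$ whose output vanishes like $r^n$ at the origin, and the general $g(0)=0$ solution is obtained by adding the one-parameter family $I(r\sqrt{2})\Cin$; this gives precisely the decomposition~\eqref{def:gin}.

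I would set up the fixed-point problem in a product Banach space with weighted sup norms encoding the prescribed small-$r$ and large-$r$ behaviours (weights of order $q^2 r^n$ near $0$ and $q^2|\log r|^2/r^2$ for $r\gg 1$ in the $g$-slot, and $q^3 r$ and $q^3|\log r|^3/r$ in the $w$-slot). A single iteration identifies the leading inhomogeneous piece $\gin_0 := \mathcal{L}_f^{-1}(q^2\f0\v0^2)$, independent of $\Cin$, satisfying~\eqref{eq:fitagin} by direct estimation using the bounds on $\v0$ from Proposition~\ref{prop:f0v0_new}(2); feeding this into $\mathcal{L}_v^{-1}$ gives the leading piece of $\win$ of order $q^3$. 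The remaining contributions $\gin_1,\win_1$, driven by the homogeneous term $I(r\sqrt{2})\Cin$ and the quadratic nonlinearities, come out strictly smaller in the weighted norms under assumption~\eqref{cond:b}.

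The main obstacle is controlling $\mathcal{L}_f^{-1}$ up to the exponentially large endpoint $r_1 = e^{\rho/q}/\sqrt{2}$, since the Green's function carries the growing factor $I(r\sqrt{2})$, potentially of size $e^{r_1\sqrt{2}}/\sqrt{r_1}$. The key observation is that both the source $q^2\f0\v0^2$ and all nonlinearities decay only polynomially, so the relevant integrals produce an output of the prescribed size $q^2|\log r_1|^2/r_1^2 = 2q^2\rho^2/r_1^2$; the smallness hypothesis~\eqref{eq:rho0th} on $\rho$ is exactly what guarantees contractivity on the chosen ball, and the bound~\eqref{cond:b} is calibrated so that the free homogeneous term $I(r\sqrt{2})\Cin$ does not spoil the weighted estimates at $r=r_1$. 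Once stability and contraction are verified on a product ball of radius $\mathcal{O}(q^2)$ for $g$ and $\mathcal{O}(q^3)$ for $w$, the Banach fixed-point theorem delivers the family $(\gin,\win)$ with the decomposition and bounds claimed in the theorem.
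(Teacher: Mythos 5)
Your architecture coincides with the paper's: perturb around $(\f0,q\v0)$, observe that the only residual sits in the $f$-equation and equals $q^2\f0\v0^2$, invert the first-order operator in $w$ by the explicit integrating factor $r\f0^2$, and run a fixed point in weighted sup-norm spaces encoding $r^n$ behaviour at the origin and $|\log r|^2/r^2$ (resp.\ $|\log r|^3/r$) behaviour at the far end, with the homogeneous term $I\,\Cin$ carrying the free parameter. The one genuine methodological difference is how the second-order operator is inverted. You propose to build the Green's function of the full variable-coefficient operator $\mathcal{L}_f$ from its own recessive/dominant homogeneous solutions. The paper instead subtracts $h_0=\tfrac12 q^2\v0^2\f0$ to remove the source, inverts only the exact modified Bessel operator $g''+g'/s-(1+n^2/s^2)g$ with kernel built from $K_n,I_n$, and absorbs the $O(1)$ potential term $\tfrac32(1-\f0^2)g$ through a Neumann series $(\mathrm{Id}-\mathcal{T})^{-1}$, the contractivity of $\mathcal{T}$ being the key imported input (Lemma~\ref{lem:opTinner}, after~\cite{AgBaSe2016}). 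Your route buys a cleaner conceptual statement but hides the same work: the existence and uniform asymptotics of the homogeneous solutions of $\mathcal{L}_f$ on $[0,r_1]$ is exactly what the Neumann series construction delivers, and you assert it rather than prove it. The point where your reasoning as written would fail is the identity~\eqref{propertyImathing}: $I$ and $K_n$ solve \emph{different} equations, so their Wronskian-type combination is not of the form $c/s$ and the exact value $1/(r_1\sqrt2)$ is not ``the standard Bessel Wronskian perturbed at order $1-\f0^2$''. In the paper the identity holds exactly, and only at the endpoint $s_1=r_1\sqrt2$, because $I$ is defined by the integral equation $I=I_n+\widehat{\mathcal{S}}_1[\mathcal{L}[I]]$ whose $\int_s^{s_1}$ correction vanishes at $s=s_1$ (Lemma~\ref{lem:deltag0inner}); with your normalisation of $I$ at infinity you would only get the identity up to an error, and you would have to either adopt the paper's normalisation or check that an approximate identity suffices for the matching in Section~\ref{sec:constants}. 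Everything else (the calibration of~\eqref{cond:b} against the size of $I(r_1\sqrt2)$, the role of $\rho$ small in the contraction, the splitting into $\gin_0$ and $\gin_1$) matches the paper's proof.
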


\subsection{Matching point and matching equations}\label{subsec:mathching1}

Observe that, given $0<\mu_0<\mu_1$, the results of Theorems \ref{th:outermatching} and \ref{th:innermatching} are valid for any value of $k$ of the form $k=\frac{\e}{q}=\frac{\mu}{q}e^{-\frac{\pi}{2nq}}$, $\mu\in [\mu_0,\mu_1]$ and $q$ small enough.
To finish the proof of  Theorem \ref{thm:main} we need to select the value of $\mu$, and therefore of $k$, which connects an outer  solution (given by  a particular value of $\Cout$ ) with an inner one (given by a particular value of $\Cin$).
To this end we need to have a non-empty matching region, for which we shall impose $r_2=r_1$, that is to say, $\e^{\a -1}=e^{\rho/q}/\sqrt{2}$.
Then, using that $\e=\mu e^{-\frac{\pi}{2qn}}$, one obtains
\begin{equation}\label{def:alpha}
\a=\alpha(\rho, \mu, q)=1-\frac{2n\rho}{\pi}\frac{1-\frac{q\ln(\sqrt{2})}{\rho}}{1-\frac{2nq\log(\mu)}{\pi}}.
\end{equation}
But, according to Theorem~\ref{th:outermatching}, it is also required that $\e^{1-\a}/q < e_0\ll 1$, which is equivalent to imposing that $q,\rho$ satisfy:
$$
q|\ln(e_0q \sqrt{2})|< \rho\, .
$$
Therefore, fixing any $\eta>0$, since by \eqref{eq:rho0th}, $0<\rho <\rho_0$, the condition for $q,\rho$ becomes:
\begin{equation}
    \label{ineq:rho_q}
q|\ln(e_0q / \sqrt{2})|< \rho <\rho_0\, .
\end{equation}
We rename
\begin{equation}
    \label{def:r0}
    r_0:=r_1=r_2=\frac{e^{\rho/q}}{\sqrt{2}}=\e^{\alpha -1}=\mu ^{\alpha-1}e^{\frac{\pi (1-\alpha)}{2qn}}\, ,
\end{equation}
and we take
\begin{equation}\label{def:rho}
\rho=\left (\frac{q}{|\log q|} \right )^{1/3},
\end{equation}
which satisfies the required inequalities in~\eqref{ineq:rho_q}.
Therefore Theorems~\ref{th:outermatching} and~\ref{th:innermatching} are in particular valid when taking $\alpha$ and $\rho$ as given in  \eqref{def:alpha} and \eqref{def:rho}, and $r_1=r_2$ as given in \eqref{def:r0}, since all these values satisfy conditions \eqref{cond:thouter0}, \eqref{cond:r2}, \eqref{eq:rho0th}, and \eqref{cond:r1}, if we take any $\Cout$ and $\Cin$ satisfying \eqref{cond:a}, \eqref{cond:b}, provided $q_0=q_0(\mu_0,\mu_1, \eta)$ is small enough (we take the minimum of both theorems).

Once we have chosen the parameters $\rho$ and $\alpha$ and the value of the matching point $r_0$, the next step is to prove that there exist $\Cout,\Cin,k$ or equivalently, since $k=\e/q=\mu e^{-\frac{\pi}{2qn}}$, $\Cout,\Cin,\mu$,  such that, for $q$ small enough,
\begin{equation}\label{eq:machingreal}
\begin{split}
\fout(r_0,\Cout;k,q)=& \fin(r_0,\Cin;k,q), \qquad \partial _r \fout(r_0,\Cout;k,q)=\partial_r \fin(r_0,\Cin;k,q),\\
 \vout(r_0,\Cout;k,q)=& \vin(r_0,\Cin;k,q).
\end{split}
\end{equation}
We stress that the existence results, Theorems~\ref{th:outermatching} and~\ref{th:innermatching}, depend on the set of constants $\mu_0,\mu_1,\eta$ that are not defined yet.
We shall fix them, in Section~\ref{sec:constants}, as follows:
\begin{itemize}
\item
First, we match the explicit dominant terms of the outer functions $\fout$, $\vout$,  (see \eqref{def:fvout} and \eqref{def:gout})  with dominant terms of the inner functions $\fin$, $\vin$ (see \eqref{def:fvin} and \eqref{def:gin}):
\begin{equation}\label{eq:firstordermatching}
\begin{aligned}
K_0(r_0\sqrt{2}) \Cout_0+\fdout(r_0;k,q)+\gout_0(r_0;k,q) &= I(r_0\sqrt{2}) \Cin_0+ \fdin(r_0) + \gin_0(r_0;k,q),
\\ \vdout(r_0;k,q) &= \vdin(r_0;k,q)
\end{aligned}
\end{equation}
and
\begin{equation}\label{eq:firstordermatchingder}
\begin{aligned}
    \sqrt{2}  K_0'(r_0\sqrt{2}) \Cout_0 +  \partial_r \fdout(r_0;k,q)+ &\partial_r \gout_0(r_0;k,q) \\ &= \sqrt{2} I'(r_0\sqrt{2}) \Cin_0 + \partial_r \fdin(r_0) + \partial_r \gin_0(r_0;k,q)
\end{aligned}
\end{equation}
This is done in Section \ref{sec:constants}, where, in Proposition \ref{prop:matching0}  we find $\Cout_0,\Cin_0$ and $\bar{\mu}$ such that, taking the approximate value of $k=\bar{\mu} q^{-1} e^{-\frac{\pi}{2qn}}$,  equations~\eqref{eq:firstordermatching} and~\eqref{eq:firstordermatchingder} are solved.
Moreover we fix two values $0<\mu_0<\mu_1$ such that, $\bar{\mu}\in [\mu_0,\mu_1]$.

\item The obtained solutions $\Cout_0,\Cin_0$ satisfy conditions \eqref{cond:a} and \eqref{cond:b} for a particular value of $\eta$. We will use these values, $\mu_0,\mu_1,\eta$ in Theorems~\ref{th:outermatching} and~\ref{th:innermatching} to obtain families of solutions $\fout,\vout$, $\fin, \vin$ of equations \eqref{originals}.
\end{itemize}

Finally, the existence of the constants $\Cout$, $\Cin$ and $\mu$ (that will be found to be close to $\Cout_0$, $\Cin_0$, $\bar{\mu}$) satisfying the matching conditions~\eqref{eq:machingreal} is provided
by means of a Brouwer's fixed point argument in Section~\ref{sec:matchingfinal} (see Theorem~\ref{thm:matchingtotal}).

\subsection{Matching the dominant terms: setting the constants $\mu_0,\mu_1,\eta $.} \label{sec:constants}
As we explained in the previous section, the purpose of this section is to choose the constants $\mu_0,\mu_1,\eta$ which appear in Theorems~\ref{th:outermatching} and~\ref{th:innermatching} to obtain the families of solutions $\fout,\vout$, $\fin, \vin$ of equations~\eqref{originals} satisfying the suitable boundary conditions.

Next proposition gives the existence of solutions of equations \eqref{eq:firstordermatching} and~\eqref{eq:firstordermatchingder}.
\begin{proposition}\label{prop:matching0}
Take $\mu_0= e^{-\frac{C_n}{n^2} - \gamma }$, $\mu_1 =3  e^{-\frac{C_n}{n^2} - \gamma }
$, where $C_n$ and $\gamma$ are given in Theorem~\ref{thm:main}.
Then, there exists $q_1^*=q_1^*(\mu_1\,\mu_2)$ and $\hat{M}(\mu_!,\mu_2)$ such that for $0<q<q_1^*$, equations~\eqref{eq:firstordermatching} and~\eqref{eq:firstordermatchingder} have a solution $(\Cout_0,\Cin_0,\bar \mu)$ satisfying:
$$
\bar  \mu \in [\mu_0,\mu_1], \quad |\Cout_0 | \leq \hat{M} \rho^2 r_0^{-3/2} e^{r_0 \sqrt{2}}, \qquad |\Cin_0 |\leq \hat{M} \rho^2 r_0^{-3/2} e^{-r_0 \sqrt{2}},
$$
where $\rho$ is given in \eqref{def:rho}.
\end{proposition}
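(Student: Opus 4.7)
The key observation is that the $v$-matching equation $\vdout(r_0;k,q)=\vdin(r_0;k,q)$ is decoupled from equations~\eqref{eq:firstordermatching}--\eqref{eq:firstordermatchingder} for the $f$-components, since $\vdout,\vdin$ depend only on $k=\mu e^{-\pi/(2qn)}/q$ and not on $\Cout_0,\Cin_0$. My plan is therefore to first solve this scalar equation for $\bar\mu$, and then treat the remaining pair as a $2\times 2$ linear system in $(\Cout_0,\Cin_0)$. Throughout I use $\log r_0=\rho/q-\log\sqrt 2$, the definition $\rho=(q/|\log q|)^{1/3}$ (so $\rho^3/q=|\log q|^{-1}\to 0$), and the fact that $kr_0$ is super-exponentially small.

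For the $v$-matching, inserting the expansions~\eqref{vdoutmaching_new} and~\eqref{prop:v0innermatching} at $r=r_0$ and multiplying by $-r_0/n$ yields, modulo terms that are super-exponentially small or bounded by $\mathcal O(q^2)$,
\[
\tan(\xi)\,[1+\mathcal O(q^2)]\;=\;n\rho - nq\log\sqrt 2 - \frac{qC_n}{n},
\]
where $\xi := nq\log r_0 + nq\log(\mu/2) + \gamma nq + \mathcal O(q^2) = n\rho + nq[\log(\mu/2)+\gamma-\log\sqrt 2]+\mathcal O(q^2)$. Since the right-hand side is $\mathcal O(\rho)$, one can invert via $\arctan(y)=y+\mathcal O(y^3)$ and divide by $nq$ to obtain
\[
\log(\mu/2)+\gamma \,=\, -\frac{C_n}{n^2}+\mathcal O(|\log q|^{-1}).
\]
The unique solution is $\bar\mu = 2\,e^{-\gamma-C_n/n^2}\bigl(1+\mathcal O(|\log q|^{-1})\bigr)$, which lies strictly inside $[\mu_0,\mu_1]=[e^{-\gamma-C_n/n^2},\,3\,e^{-\gamma-C_n/n^2}]$ for $q$ small; local uniqueness follows from strict monotonicity of $\tan(\xi)$ in $\mu$.

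With $k$ fixed by $\bar\mu$, I would solve the linear system~\eqref{eq:firstordermatching}--\eqref{eq:firstordermatchingder} by Cramer's rule. Its determinant $-\sqrt 2[K_0 I'-K_0'I](r_0\sqrt 2)$ equals $-1/r_0+\mathcal O(r_0^{-2})$: indeed, by~\eqref{asymKIn_new} the functions $K_0$ and $K_n$ agree up to $\mathcal O(1/z)$ corrections, while by~\eqref{propertyImathing} one has $[K_n I'-K_n'I](r_0\sqrt 2)=1/(r_0\sqrt 2)$. The right-hand sides are
\[
A=[\fdin-\fdout+\gin_0-\gout_0](r_0),\qquad B=[\partial_r\fdin-\partial_r\fdout+\partial_r\gin_0-\partial_r\gout_0](r_0).
\]
The crucial gain is that, because $\bar\mu$ satisfies the $v$-matching, $\vdout(r_0;k,q)^2=\mathcal O(\rho^2/r_0^2)$, and hence, using $\fdout=\sqrt{1-k^2V_0^2-n^2/r_0^2}$ together with $\fdin(r_0)=1-n^2/(2r_0^2)+\mathcal O(r_0^{-4})$, the leading $n^2/(2r_0^2)$ terms cancel and one obtains $[\fdin-\fdout](r_0)=\mathcal O(\rho^2/r_0^2)$; the contributions from $\gin_0$ and $\gout_0$ are subdominant by~\eqref{eq:fitagin} and~\eqref{eq:fitagout}. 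An analogous computation for the derivative gives $|B|=\mathcal O(\rho^2/r_0^3)$. Substituting these bounds into Cramer's rule together with $|I(r_0\sqrt 2)|,|I'(r_0\sqrt 2)|\le M r_0^{-1/2}e^{r_0\sqrt 2}$ and $|K_0(r_0\sqrt 2)|,|K_0'(r_0\sqrt 2)|\le M r_0^{-1/2}e^{-r_0\sqrt 2}$ from~\eqref{asymKIn_new} yields the claimed bounds on $\Cout_0$ and $\Cin_0$. I expect the main obstacle to be the careful bookkeeping of several small parameters of wildly different sizes ($q$, $\rho$, $kr_0$, $1/r_0$, $1/(qr_0)$), together with verifying that the $\rho^2$ cancellation in $A$ persists after differentiation to form $B$ and after accounting for the explicit corrections $\gin_0$ and $\gout_0$.
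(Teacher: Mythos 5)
Your proposal is correct and follows essentially the same route as the paper: expand the tangent in the $v$-matching to pin down $\bar\mu=2e^{-C_n/n^2-\gamma}\bigl(1+\mathcal O(|\log q|^{-1})\bigr)\in[\mu_0,\mu_1]$, then solve the remaining $2\times 2$ linear system by Cramer's rule with determinant controlled by $K_0I'-K_0'I\approx 1/(r_0\sqrt 2)$ and right-hand sides of size $\mathcal O(\rho^2/r_0^2)$ thanks to the cancellation of the $n^2/(2r_0^2)$ terms. Two small inaccuracies that do not affect the outcome: the bound $|\vdout(r_0;k,q)|=\mathcal O(\rho/r_0)$ holds for every $\mu\in[\mu_0,\mu_1]$ directly from~\eqref{vdoutmaching_new} (it is not a consequence of the $v$-matching being satisfied, which is what lets the paper treat the linear system for arbitrary $\bar\mu$ in the interval), and $\gin_0(r_0;k,q)$ is of the same order $q^2|\log r_0|^2/r_0^2=\rho^2/r_0^2$ as $\fdin-\fdout$ rather than subdominant, though both are absorbed into the constant $\hat M$.
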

\begin{proof}
We first note that, by definitions of $\rho$ and $r_0$ in \eqref{def:r0} and \eqref{def:rho}, respectively, we have
$$
|nq\log r_0 +nq\log (\mu/2) - \theta_{0,nq}|=\mathcal{O}(\rho) =\mathcal{O} \left (q/|\log q| \right )^{1/3} \ll 1.
$$
Then, using the asymptotic expressions~\eqref{vdoutmaching_new} and~\eqref{prop:v0innermatching}
for $\vdout$ and $\vdin$ at $r=r_0$ and recalling that $k=\e /q=\bar{\mu} q^{-1} e^{-\frac{\pi}{2nq}}$, we have that
\begin{align} \label{v0inoutmatching}
\vdin(r_0;k,q) &-\vdout(r_0;k,q) =
-qn^2 \frac{1+k^2}{r_0}\log r_0 +q \frac{C_n}{r_0} - q\frac{k^2}{2} r_0 \notag \\ &+  \frac{n}{r_0} \left (nq \log r_0 + n q \log \left (\frac{\bar{\mu}}{2}\right )-\theta_{0,nq}\right )  \notag
\\
& +q\mathcal{O}\left (\frac{ {\log r_0}}{r_0^{3}}\right ) + qk^2 \mathcal{O}(r_0^{-1}) +
\frac{1}{r_0}\mathcal{O}\left (\left |nq \log r_0 +n q \log \left (\frac{\bar{\mu}}{2}\right ) - \theta_{0,nq}\right |^3,q^2\right ) \notag
\\=
& -\frac{n^2 k^2 \rho}{r_0} + \frac{q}{r_0} \left (C_n+n^2 \log \left( \frac{\bar{\mu}}{2}\right) -n\theta_{0,nq}q^{-1} \right ) - q  \frac{k^2}{2}r_0 + q^3\mathcal{O}\left (\frac{ { (\log r_0)^3 }}{r_0}\right )\notag \\ & + \frac{1}{r_0} \mathcal{O}(q^2, q k^2) \notag \\
=& \frac{q}{r_0} \left (C_n+n^2   \log \left (\frac{\bar{\mu}}{2}\right ) - n \theta_{0,nq} q^{-1}\right) +{
\frac{q}{r_0} \mathcal{O}(|\log q|^{-1})}.
\end{align}
Therefore, the only possibility for $\bar{\mu}$ to solve $\vdin(r_0;k,q)-\vdout(r_0;k,q)=0$ is that
$$
C_n + n^2  \log \left (\frac{\bar{\mu}}{2}\right ) - n\theta_{0,nq}q^{-1} = \mathcal{O}(|\log q|^{-1}) \Longleftrightarrow  \bar{\mu} \\
= 2 e^{-\frac{C_n}{n^2} - \gamma + \mathcal{O}(|\log q|^{-1})},
 $$
 where we have used that $\theta_{0,nq}=-\gamma nq + \mathcal{O}(q^2)$, or equivalently
$$
\bar{\mu}= 2 e^{-\frac{C_n}{n^2} - \gamma}\left(1+\mathcal{O}(|\log q |^{-1})\right).
$$
This last equality suggests that the parameter $\bar{\mu}$ has to belong to $[\mu_0,\mu_1]$ with, for instance
\begin{equation}\label{defmu01:matching}
\mu_0= e^{-\frac{C_n}{n^2} - \gamma },\qquad \mu_1 =3  e^{-\frac{C_n}{n^2} - \gamma }.
\end{equation}

For any $\bar{\mu}\in [\mu_0,\mu_1]$, we introduce now the (independent of $\eta$) function
\begin{equation}\label{def:Delta0}
\Delta_0(r;k,q) =\fdin(r)-\fdout(r;k,q) + \gin_0(r;k,q)-\gout_0(r;k,q).
\end{equation}
Then $\Cout_0,\Cin_0$ satisfying~\eqref{eq:firstordermatching} and~\eqref{eq:firstordermatchingder} are given by
\begin{equation}\label{fixedpointequationab}
\left (\begin{array}{c} \Cout_0 \\ \Cin_0 \end{array}\right ) =\frac{1}{d(r_0)} \left ( \begin{array}{c}
I'(r_0\sqrt{2}) \Delta_0 (r_0;k,q) -
\frac{1}{\sqrt{2}}I(r_0 \sqrt{2}) \Delta'_0(r_0;k,q) \\
K'_0(r_0\sqrt{2}) \Delta_0(r_0;k,q) - \frac{1}{\sqrt{2}}K_0(r_0\sqrt{2}) \Delta'_0(r_0;k,q)
\end{array} \right ),
\end{equation}
with
$ d(r_0)=K_0(r_0\sqrt{2}) I'(r_0\sqrt{2}) - K_0'(r_0\sqrt{2})I(r_0\sqrt{2})$.

We first notice that by property~\eqref{propertyImathing} of the function $I$ and using the asymptotic expansion~\eqref{asymKIn_new} for $K_0(r)$ and $K_n(r)$ for $r\gg 1$, there exists $\hat{M}_1$ a constant such that
\begin{equation}\label{prop1matchingf}
0< \frac{1}{d(r_0)} = r_0\sqrt{2} \left (1 + \mathcal{O}\left (\frac{1}{r_0}\right )\right ) \leq r_0 \sqrt{2} + \hat{M}_1.
\end{equation}

Now we estimate $\Delta_0$. We first note that, by estimate~\eqref{vdoutmaching_new} of $\vdout$, if $q$ is small enough,
$$
|\vdout (r_0;k,q)|\leq \hat{M}_2 \frac{\rho}{r_0} \leq \frac{1}{4},
$$
with $\hat{M}_2$ a constant that only depends on $\mu_0,\mu_1$. Then, by item~\ref{item1dominant_inner} of Proposition~\ref{prop:f0v0_new} along with the  definition~\eqref{dominanttermouter_0}  of $\fdout$, we have that, for $q$ small enough,
\begin{align*}
\left | \fdin(r_0)-\fdout(r_0;k,q) \right |  & \leq \left | 1 - \frac{n^2}{2r_0^2} - \sqrt{1- (\vdout(r_0;k,q))^2 - \frac{n^2}{r_0^2}}\right | + \left | \fdin(r_0)-1 + \frac{n^2}{2r_0^2} \right | \\
& \leq  \hat{M}_3 |\vdout(r_0,k)|^2 + \frac{\hat{M}_4}{r_0^4}  \leq \hat{M}_5 \frac{\rho^2}{r_0^2}.
\end{align*}
The constant $\hat{M}_5$ only depends on $\mu_0,\mu_1$.
Therefore, by bounds \eqref{eq:fitagout} and \eqref{eq:fitagin} in Theorems~\eqref{th:outermatching} and~\eqref{th:innermatching}
\begin{equation}\label{prop2mathchingf}
\begin{aligned}
|\Delta_0(r_0;k,q)|&\leq |\fdin(r_0)-\fdout(r_0;k,q)| + |\gin_0(r_0;k,q)| + |\gout_0(r_0;k,q)| \\ &
\leq \hat{M}_5 \frac{\rho^2}{r_0^2} +
 M_0 q^2 \frac{|\log r_0|^2}{r_0^2} + M_0 \frac{\e^{1-\a}}{q r_0^2} \\ & \leq \hat{M}_6 \frac{\rho^2}{r_0^2},
\end{aligned}
\end{equation}
where we have used that
\begin{equation*}
 r_0^{-1}=\e^{1-\a}= e^{-\rho/q}=e^{-1/(q^{2/3}|\log(q)|^{1/3})}= \mathcal{O}(q^\ell),\quad \textrm{for any $\ell> 0$}.
\end{equation*}
Moreover, since, as established in Theorems~\ref{th:outermatching} and~\ref{th:innermatching}, for $0<q\leq q^*_0(\mu_0,\mu_1)$, $M_0$ only depends on $\mu_0,\mu_1$, again, the same happens to $\hat{M}_6$.
Analogously, one can check that, if $0<q\leq q^*_0(\mu_0,\mu_1)$, then
\begin{equation}\label{prop3matchingf}
|\partial \Delta_0(r_0;k,q)|\leq \hat{M}_7 \frac{\rho^2}{r_0^2}.
\end{equation}

By using estimates~\eqref{prop1matchingf}, \eqref{prop2mathchingf} and~\eqref{prop3matchingf}, the estimates~\eqref{propertyImathing} of $I$ and that, if $r\gg 1$, one has  $|K_0(r\sqrt{2})|, |K_0'(r\sqrt{2})| \leq M_K e^{-r\sqrt{2}} r^{-1/2}$,
we have that, as $k=\bar{\mu} q^{-1}e^{-\frac{\pi}{2nq}}$ with $\bar{\mu}\in [\mu_0,\mu_1]$, the solution  $(\Cout_0,\Cin_0)$ of~\eqref{fixedpointequationab} has to satisfy, for $q$ small enough,
\begin{align*}
|\Cout_0| &\leq \rho^2 \frac{1}{r_0^{3/2}}e^{r_0 \sqrt{2}} (\sqrt{2} + \hat{M}_1 r_0^{-1}) M_I\left [ \hat{M}_6 + \frac{1}{\sqrt{2}} \hat{M}_7\right ],
\\ |\Cin_0|& \leq \rho^2 \frac{1}{r_0^{3/2}} e^{-r_0\sqrt{2}} (\sqrt{2} + \hat{M}_1 r_0^{-1}) M_K \left [ \hat{M}_6 + \frac{1}{\sqrt{2}} \hat{M}_7\right ].
\end{align*}
Taking $q$ small enough, $M_1 r_0^{-1} \leq \sqrt{2}$ and, defining
$$
\hat{M}=2 \sqrt{2} \left [\hat{M}_6 + \frac{1}{\sqrt{2}}\hat{M}_7\right ]\max\{ M_I,M_K\} ,
$$
we conclude that there exist $q_1^*=q_1^*(\mu_1\,\mu_2)$ and $\hat{M}(q_1^*)$ such that for $0<q<q_1^*$,
$$
|\Cout_0 | \leq \hat{M} \rho^2 r_0^{-3/2} e^{r_0 \sqrt{2}}, \qquad |\Cin_0 |\leq \hat{M} \rho^2 r_0^{-3/2} e^{-r_0 \sqrt{2}},
$$
where $\rho$ is given in \eqref{def:rho}.
\end{proof}
We stress that, since $r_0=r_1=r_2$, the constants $\Cout_0,\Cin_0$, provided by proposition \ref{prop:matching0}
satisfy the conditions \eqref{cond:a} and \eqref{cond:b} in Theorems~\eqref{th:outermatching} and~\eqref{th:innermatching}
for any  $\eta \geq (\sqrt{2})^{3/2}\hat{M}$.
Recalling that $\hat{M}$ only depends on $\mu_0$ and $\mu_1$, we may set now
\begin{equation}\label{defeta:matching}
\eta = 2 \hat{M}.
\end{equation}
Proposition \ref{prop:matching0} provides good candidates to be approximate values for the  solutions $\Cout,\Cin,\mu$ of the matching equations~\eqref{eq:machingreal}. In particular they set the constants $\mu_0,\mu_1,\eta$ in~\eqref{defmu01:matching} and~\eqref{defeta:matching}.

Since $\Cout_0,\Cin_0$ have different sizes, for technical reasons we define the scaled constants $\hat{\Cout}_0, \hat{\Cin}_0$  as
$$
\hat{\Cout}_0= \Cout_0\,e^{-r_0 \sqrt{2}}r_0^{3/2} , \qquad
\hat{\Cin}_0 = \Cin_0\, \rho^{-2} e^{r_0 \sqrt{2}}r_0^{3/2},
$$
and we observe that they satisfy
\begin{equation}\label{eq:ab0}
|\hat{\Cout}_0|\leq \frac{\eta}{2} \rho^2 \leq \frac{\eta}{2},\qquad  |\hat{\Cin}_0|\leq \frac{\eta}{2}.
\end{equation}

\subsection{Matching the outer and inner solutions: end of the proof of Theorem \ref{thm:main} }
\label{sec:matchingfinal}

The main goal of this section is to obtain the parameters $\Cout$, $\Cin$ and $\mu$ which solve the matching equations~\eqref{eq:machingreal}.
Having solved these equations, which is the content of next Theorem \ref{thm:matchingtotal}, we have a value of $\mu$, and therefore of $k$ as defined in \eqref{def:formak}, for which the original system~\eqref{originals} has a solution $(f,v)$ satisfying the required boundary conditions~\eqref{eq:boundaryconditions}. Once this result is proven, in order to prove Theorem~\ref{thm:main} it will only remain to check that $f$ is a positive increasing function and that $v<0$ (see Proposition~\ref{prop:fincreasing} below).

We begin our construction by considering the families of solutions provided by Theorems~\ref{th:outermatching} and~\ref{th:innermatching} for the constants $\mu_0,\mu_1 ,\eta$, fixed in the previous section (Section~\ref{sec:constants}) and any values
$\Cout$ and $\Cin$ satisfying \eqref{cond:a} and \eqref{cond:b}. Namely, we consider $\mu\in [\mu_0,\mu_1]$, $\eta$, $r_0$, $\rho$ and $\a$ as given in~\eqref{defmu01:matching}, \eqref{defeta:matching}, \eqref{def:r0}, \eqref{def:rho}, and~\eqref{def:alpha} respectively, and $q\in [0,q_0]$. Along this section we call $q_0$ the minimum value provided by all the previous results, that is
Propositions~\ref{prop:dominantouter}, \ref{prop:f0v0_new},  and~\ref{prop:matching0} and Theorems~\ref{th:outermatching}, \ref{th:innermatching}.

Next theorem gives the desired result:
\begin{theorem}\label{thm:matchingtotal}
Take $\mu_0= e^{-\frac{C_n}{n^2} - \gamma }$,
$ \mu_1 =3  e^{-\frac{C_n}{n^2} - \gamma }$,
where $C_n$ and $\gamma$ are given in Theorem~\ref{thm:main} and $\eta$ as given in~\eqref{defeta:matching}.
Then, there exists $q^*$ such that for $q \in [0,q^*]$
equations~\eqref{eq:machingreal} have a solution $\Cout(q)$, $\Cin(q)$, $k(q)$ satisfying~\eqref{cond:a} and~\eqref{cond:b} and $k(q)=\mu e^{-\frac{\pi}{2nq}}$ with $\mu\in [\mu_0,\mu_1]$.

In addition,
$$
|\Cout(q) | \leq   \eta \rho^2 e^{r_0 \sqrt{2}} r_0^{-3/2}, \qquad | \Cin(q)  |\leq \eta \rho^2 e^{-r_0 \sqrt{2}} r_0^{-3/2}, \qquad \mu=\mu(q)= 2 e^{-\frac{C_n}{n^2}- \gamma} (1+ \mathcal{O}(|\log q|^{-1})).
$$
\end{theorem}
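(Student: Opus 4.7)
The plan is to recast the three matching equations in \eqref{eq:machingreal} as a fixed-point problem on a closed bounded set centered at the approximate solution $(\hat{\Cout}_0,\hat{\Cin}_0,\bar\mu)$ produced in Proposition~\ref{prop:matching0}, and to apply Brouwer's fixed point theorem. Substituting the decompositions \eqref{def:gout} and \eqref{def:gin} into the first two equations of \eqref{eq:machingreal} (the ones for $f$ and $\partial_r f$) produces a $2\times 2$ linear system in $(\Cout,\Cin)$ whose coefficient matrix is the Wronskian matrix of $K_0(\,\cdot\,\sqrt 2)$ and $I(\,\cdot\,\sqrt 2)$ at $r_0\sqrt 2$, with determinant $d(r_0)$ as in the proof of Proposition~\ref{prop:matching0}. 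Setting
$$
\Delta(r;\Cout,\Cin,k,q)=\fdin(r)-\fdout(r;k,q)+\gin_0(r;k,q)-\gout_0(r;k,q)+\gin_1(r,\Cin;k,q)-\gout_1(r,\Cout;k,q),
$$
this linear system gives the same formula as \eqref{fixedpointequationab} but with $\Delta_0$ replaced by $\Delta$, and so it defines the first two components of the fixed-point map $T$. Using the scalings $\hat\Cout=e^{-r_0\sqrt 2}r_0^{3/2}\Cout$ and $\hat\Cin=\rho^{-2}e^{r_0\sqrt 2}r_0^{3/2}\Cin$, the constraints \eqref{cond:a}, \eqref{cond:b} become simply $|\hat\Cout|,|\hat\Cin|\le\eta$.

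For the third matching equation, I would insert the outer/inner expansions exactly as in the derivation of \eqref{v0inoutmatching}, obtaining
$$
\tfrac{q}{r_0}\bigl[C_n+n^2\log(\mu/2)-n\theta_{0,nq}q^{-1}\bigr]+\tfrac{q}{r_0}\mathcal{O}(|\log q|^{-1})=\win(r_0,\Cin;k,q)-\wout_0(r_0,\Cout;k,q)-\wout_1(r_0,\Cout;k,q).
$$
The bounds on $\wout_0,\wout_1,\win$ supplied by Theorems~\ref{th:outermatching} and~\ref{th:innermatching} (using in particular $r_0^{-1}=e^{-\rho/q}=\mathcal{O}(q^\ell)$ for every $\ell>0$, together with $\rho=(q/|\log q|)^{1/3}$) force the right-hand side to be $\tfrac{q}{r_0}\mathcal{O}(|\log q|^{-1})$. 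Solving for $\log(\mu/2)$ and using $\theta_{0,nq}=-\gamma nq+\mathcal{O}(q^2)$ yields $\mu=2e^{-C_n/n^2-\gamma}(1+\mathcal{O}(|\log q|^{-1}))$; this explicit formula is taken as the third component of $T$.

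It remains to check that $T$ maps the closed set $B=\{(\hat\Cout,\hat\Cin,\mu):|\hat\Cout|,|\hat\Cin|\le\eta,\ \mu\in[\mu_0,\mu_1]\}$ into itself and is continuous, so that Brouwer's theorem applies. The key point is that the ``new'' perturbative terms $\gout_1,\gin_1$ and $\wout_0,\wout_1,\win$ are strictly smaller than the dominant terms already controlled in Proposition~\ref{prop:matching0}: bounds \eqref{eq:fitagout_bis} and the analogous bound from Theorem~\ref{th:innermatching} give $|\gout_1(r_0)|\le M\e^{1-\alpha}q^{-1}|\Cout|/r_0^2$ and $|\gin_1(r_0)|\le M q^2\rho^2|\log r_0|^2/r_0^2$, both of which are $o(1)$ times the sizes $\hat M\rho^2/r_0^2$ appearing in \eqref{prop2mathchingf}. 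Running exactly the estimates of the proof of Proposition~\ref{prop:matching0} with $\Delta$ in place of $\Delta_0$, one concludes that the first two components of $T$ land in $|\hat\Cout|,|\hat\Cin|\le\eta$, while the bound on the right-hand side of the third equation keeps $\mu$ inside $[\mu_0,\mu_1]$. Continuity of $T$ on $B$ follows from continuous dependence of $\gout_1,\gin_1,\wout,\win$ on their parameters (ensured by the fixed-point constructions of Theorems~\ref{th:outermatching} and~\ref{th:innermatching}).

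The main obstacle I anticipate is not any single estimate but the careful bookkeeping to verify that every perturbation introduced into $\Delta$ and into the third equation by allowing $(\Cout,\Cin,\mu)$ to vary in $B$ produces contributions that are genuinely of smaller order than the dominant ones used to set $\mu_0,\mu_1,\eta$ in Section~\ref{sec:constants}. Once this is done, Brouwer's theorem produces the desired triple $(\Cout(q),\Cin(q),k(q))$, and the stated size estimates are read off directly from the definition of $B$ and from the inversion of the third matching equation for $\mu$; finally, the solution of \eqref{originals} satisfying the boundary conditions \eqref{eq:boundaryconditions} is defined piecewise via \eqref{def:solution:outer_inner}, completing the existence portion of Theorem~\ref{thm:main} with $\kappa(q)=k(q)$.
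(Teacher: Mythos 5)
Your proposal is correct and follows essentially the same route as the paper: the same rescaling of $(\Cout,\Cin)$, the same Wronskian inversion of the two $f$-equations with $\Delta_0$ replaced by $\Delta_0+\gin_1-\gout_1$, the same reduction of the $v$-equation to an equation for $\log(\mu/2)$ via \eqref{v0inoutmatching} and the bounds on $\wout,\win$, and the same application of Brouwer's fixed point theorem on the box $|\hat{\Cout}|,|\hat{\Cin}|\le\eta$, $\mu\in[\mu_0,\mu_1]$. The "bookkeeping" you flag as the main obstacle is exactly what the paper carries out in estimates \eqref{propertyC1C2:matching}--\eqref{boundmatchingaposteriori}, and your identification of the perturbative terms as $o(1)$ relative to the dominant ones is the correct mechanism.
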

\begin{proof}
We define
\begin{equation}\label{def:coutcinhat}
\hat{\Cout}:= \Cout\, e^{-r_0 \sqrt{2}} r_0^{3/2} , \qquad
\hat{\Cin}:= \Cin\, e^{r_0 \sqrt{2}} r_0^{3/2} \rho^{-2} ,
\end{equation}
satisfying
$$
|\hat{\Cout}|, |\hat{\Cin}| \leq \eta.
$$

We impose that $\vin(r_0,\Cin;k,q)=\vout(r_0,\Cout;k,q)$ or equivalently
\begin{equation}\label{eqvoutvin}
\vdin(r_0;k,q) - \vdout(r_0;k,q) = \wout(r_0,\Cout;k,q) - \win(r_0,\Cin;k,q).
\end{equation}
By the results involving $\wout,\win$ in  Theorems~\ref{th:outermatching} and~\ref{th:innermatching} we have that
\begin{align*}
|\wout(r_0;k,q)-\win(r_0;k,q) |& \leq |\wout(r_0;k,q)|+|\win(r_0;k,q)|
\leq M\frac{1}{qr_0^2}   +Mq^3\frac{|\log r_0|^{3}}{r_0}  \\ &\leq M \frac{1}{qr_0^2} + M \frac{\rho^3}{r_0^2} \leq M\frac{1}{qr_0^2}.
\end{align*}
Therefore, by~\eqref{v0inoutmatching}, $\vin(r_0,\Cin;k,q)=\vout(r_0,\Cout;k,q)$
if and only if
$$
\log \left (\frac{\mu}{2}\right ) = -\frac{C_n}{n^2} - \gamma + \mathcal{C}_3(\Cout,\Cin,k;q), \qquad |\mathcal{C}_3(\Cout,\Cin,k;q) |\leq M |\log q|^{-1}.
$$

We recall definition~\eqref{def:coutcinhat} of $\hat{\Cout}, \hat{\Cin}$ and we introduce the function
$$
\mathcal{H}_3(\hat{\Cout},\hat{\Cin},\mu;q) = 2 e^{-\frac{C_n}{n^2} - \gamma}\left[ \text{exp }
\left (\mathcal{C}_3\left ( \hat{\Cout}e^{r_0\sqrt{2}} r_0^{-3/2}, \hat{\Cin} e^{-r_0\sqrt{2}}r_0^{3/2} \rho^2, \mu q^{-1} e^{-\frac{\pi}{2nq}};q\right )\right)-1\right].
$$
It is clear that equation~\eqref{eqvoutvin} is satisfied if and only if
\begin{equation}\label{def:H3matching}
\mu= 2 e^{-\frac{C_n}{n^2}-\gamma} +  \mathcal{H}_3(\hat{\Cout},\hat{\Cin}, \mu;q), \qquad  | \mathcal{H}_3(\hat{\Cout},\hat{\Cin}, \mu;q) |\leq M \leq M |\log q|^{-1}.
\end{equation}

We deal now with the (non-linear) system,
$$
\fout(r_0;k,q)=\fin(r_0;k,q),\qquad \partial_r \fout(r_0;k,q)=\partial_r\fin(r_0;k,q),
$$
which can be rewritten, using expressions for $\fout ,\fin $ in Theorems~\ref{th:outermatching} and~\ref{th:innermatching} as
\begin{equation*}
\begin{aligned}
K_0(r_0\sqrt{2}) \Cout
-I (r_0\sqrt{2}) \Cin &=  \Delta(r_0,\Cout,\Cin;k,q)=\Delta _0(r_0;k,q) + \Delta _1(r_0,\Cout,\Cin;k,q)   ,  \\
K_0'(r_0\sqrt{2}) \Cout
-I  '(r_0\sqrt{2}) \Cin &=  \frac{1}{\sqrt{2}} \partial_r\Delta(r_0,\Cout,\Cin;k,q)=
\frac{1}{\sqrt{2}} \left (  \partial_r\Delta_0(r_0;k,q) + \partial_r\Delta_1(r_0,\Cout,\Cin;k,q) \right )  ,
\end{aligned}
\end{equation*}
with $\Delta_0$ defined in~\eqref{def:Delta0} and
$$
\Delta_1(r,\Cout,\Cin;k,q)=\gin_1(r,\Cin;k,q)-\gout_1(r,\Cout;k,q).
$$
Therefore, $\Cout,\Cin$ satisfy the fixed point equation
\begin{equation}\label{fixedpointequationabtot}
\begin{aligned}
\left (\begin{array}{c} \Cout \\ \Cin \end{array}\right )
&= \mathcal{C}(\Cout,\Cin,k;q) \\ & :=\frac{1}{d(r_0)} \left ( \begin{array}{c}
I'(r_0\sqrt{2}) (\Delta (r_0,\Cout,\Cin;k,q)) -
\frac{1}{\sqrt{2}}I(r_0 \sqrt{2}) \partial_r\Delta(r_0,\Cout,\Cin;k,q) \\
-\partial_rK_0(r_0\sqrt{2}) \Delta(r_0,\Cout,\Cin;k,q) + \frac{1}{\sqrt{2}}K_0(r_0\sqrt{2}) \partial_r\Delta(r_0,\Cout,\Cin;k,q)
\end{array} \right ).
\end{aligned}
\end{equation}

Using the estimates in Theorems~\ref{th:outermatching} and~\ref{th:innermatching} for $\gout_1,\gin_1$, we obtain that
$$
|\Delta_1 (r_0,\Cout,\Cin;k,q)|\leq |\gin_1(r_0,\Cin;k,q)|+ |\gout_1(r_0,\Cout;k,q)|\leq M  \frac{\rho^4 }{r_0^2 },
$$
and $|r_0^2 \partial_r \Delta_1 (r_0,\Cout,\Cin;k,q) |\leq M| \rho^4 $, for any $\Cout$ and $\Cin$ satisfying \eqref{cond:a} and \eqref{cond:b}.

Recalling $\Cout_0$, $\Cin_0$ are defined in \eqref{fixedpointequationab} and using the above bounds for $\Delta_1$ and $\partial_r\Delta_1$ along with~\eqref{propertyImathing} and~\eqref{asymKIn_new} for $I$ and $K_0$,  and the bound for $d(r_0)$ \eqref{prop1matchingf} gives
\begin{equation}\label{propertyC1C2:matching}
| \mathcal{C}_1(\Cout,\Cin,k;q)- \Cout_0|\leq M e^{r_0 \sqrt{2}}  \rho^4 r_0^{-3/2} , \qquad
| \mathcal{C}_2(\Cout,\Cin,k;q)- \Cin_0|\leq M e^{-r_0 \sqrt{2}} \rho^4 r_0^{-3/2}.
\end{equation}
Recalling the definition of $\hat{\Cout}, \hat{\Cin}$  in \eqref{def:coutcinhat} we introduce
\begin{align*}
\mathcal{H}_1(\hat{\Cout},\hat{\Cin},\mu;q) &= e^{-r_0\sqrt{2}}r_0^{3/2} \mathcal{C}_1 (\hat{\Cout}e^{r_0\sqrt{2}} r_0^{-3/2},\hat{\Cin} \rho^2 e^{-r_0 \sqrt{2}} r_0^{-3/2},\mu q^{-1} e^{-\frac{\pi}{2nq}};q)-\hat{\Cout}_0,
\\
\mathcal{H}_2(\hat{\Cout},\hat{\Cin},\mu;q) &= e^{ r_0\sqrt{2}}r_0^{3/2} \rho^{-2} \mathcal{C}_2 (\hat{\Cout}e^{r_0\sqrt{2}} r_0^{-3/2},\hat{\Cin}  \rho^2 e^{-r_0 \sqrt{2}} r_0^{-3/2},\mu q^{-1} e^{-\frac{\pi}{2nq}};q)-\hat{\Cin}_0,
\end{align*}
and the fixed point equation~\eqref{fixedpointequationabtot} becomes
\begin{equation}\label{def:H2matching}
\left (\begin{array}{c} \hat{\Cout} \\ \hat{\Cin} \end{array}\right )
= \left (\begin{array}{c} \hat{\Cout}_0 + {\mathcal{H}}_1(\hat{\Cout},\hat{\Cin},\mu;q)\\
\hat{\Cin}_0 +
{\mathcal{H}}_2(\hat{\Cout},\hat{\Cin},\mu;q)\end{array}\right ).
\end{equation}
Using the bound~\eqref{propertyC1C2:matching} of $\mathcal{C}_1,\mathcal{C}_2$
\begin{equation}\label{boundmatchingaposteriori}
| {\mathcal{H}}_1(\hat{\Cout},\hat{\Cin},\mu;q)| \leq M\rho^4
,\qquad
|{\mathcal{H}}_2(\hat{\Cout},\hat{\Cin},\mu;q)| \leq  M\rho^4|.
\end{equation}

From~\eqref{def:H2matching} and~\eqref{def:H3matching} we have that the constants
$\hat{\Cout},\hat{\Cin}$ and $\mu$ have to satisfy the fixed point equation
\begin{equation}\label{fpematching}
(\hat{\Cout},\hat{\Cin},\mu) = H (\hat{\Cout},\hat{\Cin},\mu;q) :=
\left (\hat{\Cout}_0, \hat{\Cin}_0, 2 e^{\frac{-C_n}{n^2} - \gamma }\right ) + \mathcal{H}(\hat{\Cout},\hat{\Cin},\mu;q),
\end{equation}
with $\mathcal{H}=(\mathcal{H}_1,\mathcal{H}_2,\mathcal{H}_3)$.
We recall that as defined in \eqref{def:rho}, $\rho^3 = q|\log q|^{-1}$ and that the constants $\mu_0,\mu_1$ and $\eta$ were fixed at~\eqref{defmu01:matching} and~\eqref{defeta:matching} respectively.
The function $\mathcal{H}$ satisfies, for $|\hat{\Cout}|,|\hat{\Cin}| \leq \eta $ and $\mu \in [\mu_0,\mu_1]$:
$$
\|\mathcal{H}(\hat{\Cout},\hat{\Cin},\mu;q) \|\leq \max\{M\rho q|\log q|^{-1}, M|\log q|^{-1}\} =
M|\log q|^{-1}.
$$
As a consequence, since $\hat{\Cout}_0$ and $\hat{\Cin}_0$ satisfy \eqref{eq:ab0},  for $|\hat{\Cout}|,|\hat{\Cin}| \leq \eta $ and $\mu \in [\mu_0,\mu_1]$:
 $$
|H_{1,2} (\hat{\Cout},\hat{\Cin},\mu;q)| \leq \frac{\eta}{2} + M|\log q|^{-1} \leq \eta,
$$
and, taking $\mu_0,\mu_1$ as defined in \eqref{defmu01:matching}, one finds
$$
H_3(\hat{\Cout},\hat{\Cin},\mu;q) =   2 e^{\frac{-C_n}{n^2} - \gamma }+\mathcal{O}(|\log q|^{-1})\in [\mu_0,\mu_1].
$$
Therefore, for $q$ small enough, the map $H$ sends the closed ball
$$
B=\{ (\hat{\Cout},\hat{\Cin},\mu) \in \R^3\,:\,
|\hat{\Cout}|,|\hat{\Cin}|\leq \eta, \, \mu \in [\mu_0,\mu_1]\},
$$
into itself and the Brouwer's fixed point theorem concludes the existence of the parameters
$(\hat{\Cout},\hat{\Cin},\mu)=(\hat{\Cout}(q),\hat{\Cin}(q), \mu(q))$ satisfying the fixed point equation~\eqref{fpematching} and
$$
|\hat{\Cout}|\le \eta, \qquad |\hat{\Cin}|\le \eta, \qquad \mu \in [\mu_0,\mu_1].
$$
In addition, for this solution, using the bounds in~\eqref{boundmatchingaposteriori} and~\eqref{eq:ab0}, we have that, for $q$ small enough
$$
|\hat{\Cout}| \leq |\hat{\Cout}_0|+ |\mathcal{H}_1(\hat{\Cout},\hat{\Cin},\mu,q)|\leq \frac{\eta}{2}\rho^2+ M\rho^4 |\log q| \leq \eta \rho^2.
$$
and from~\eqref{def:H3matching},
$$
\left  | \mu - 2 e^{-\frac{C_n}{n^2} - \gamma} \right | \leq M |\log q|^{-1}.
$$
Going back to the original constants $\Cout$ and $\Cin$ using \eqref{def:coutcinhat} completes the proof.
\end{proof}

By Theorem~\ref{thm:matchingtotal}, we can define the solutions of~\eqref{originals} satisfying the boundary conditions~\eqref{eq:boundaryconditions} as in~\eqref{def:solution:outer_inner}:
\begin{equation}\label{defsolution}
(f(r;q),v(r;q)) :=
 \begin{cases}
\big (f^{\mathrm{in}}(r,\mathbf{b}(q);k(q),q), v^{\mathrm{in}}(r,\mathbf{b}(q);k(q),q)\big )  & \text{if  }  r\in [0,r_0], \\
\big (f^{\mathrm{out}}(r,\mathbf{a}(q);k(q),q), v^{\mathrm{out}}(r,\mathbf{a}(q);k(q),q)\big )  & \text{if  }  r\geq r_0.
\end{cases}
 \end{equation}
Therefore, in order to prove Theorem~\ref{thm:main} it only remains to check the additional properties on the solution $(f,v)$.
\begin{proposition}\label{prop:fincreasing} Let $(f(r;q),v(r;q))$ be the solution of~\eqref{originals} defined by~\eqref{defsolution}. There exists $q^*$ such that, for $q\in [0,q^*]$, and $r>0$, $f(r;Q)$ is an increasing function,
$$
0<f(r;q)<\sqrt{1-k^2(q)}, \qquad v(r;q)<0.
$$
\end{proposition}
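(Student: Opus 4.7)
The strategy is to treat the outer and inner regions separately and, on each, combine the sharp pointwise sign/monotonicity information for the dominant terms $f_0^{\mathrm{in/out}},v_0^{\mathrm{in/out}}$ from Propositions~\ref{prop:f0v0_new} and~\ref{prop:dominantouter} with the smallness of the remainders $g^{\mathrm{in/out}}(r,\cdot;k,q)$, $w^{\mathrm{in/out}}(r,\cdot;k,q)$ from Theorems~\ref{th:innermatching} and~\ref{th:outermatching}, specialized to the constants $\mathbf{a}(q),\mathbf{b}(q),k(q)$ provided by Theorem~\ref{thm:matchingtotal}. Throughout, I use that $\mathbf{a}(q),\mathbf{b}(q),k(q)$ satisfy the admissible bounds \eqref{cond:a}, \eqref{cond:b}, so the error estimates of Section~\ref{subsec:outer}--\ref{subsec:inner} apply.

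\textbf{Outer range $r\geq r_0$.} Here the claims are essentially mechanical. Item~(3) of Proposition~\ref{prop:dominantouter} yields $\partial_r f_0^{\mathrm{out}}>0$ and $v_0^{\mathrm{out}}<-k<0$; item~(4) gives $f_0^{\mathrm{out}}\geq 1/2$; and the expansion~\eqref{asymptoticandbound:V0_new} gives $\sqrt{1-k^2}-f_0^{\mathrm{out}}(r)\geq c k/(qr)$ for some $c>0$. Theorem~\ref{th:outermatching} then supplies $|g^{\mathrm{out}}|,|\partial g^{\mathrm{out}}|\leq M/r^2$ and $|w^{\mathrm{out}}|\leq Mq^{-1}(\eta+q^{-1}\varepsilon^{1-\alpha})/r^2$, all of which are dominated by the corresponding quantities above for $r\geq r_0$ and $q$ small. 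This gives $0<f<\sqrt{1-k^2}$, $\partial_r f>0$ and $v<0$ on $[r_0,\infty)$.

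\textbf{Inner range, $r\in(0,1]$.} From Proposition~\ref{prop:f0v0_new}(1)--(2) one has $f_0^{\mathrm{in}}(r)\sim c_f r^n$, $\partial_r f_0^{\mathrm{in}}(r)\sim n c_f r^{n-1}$ and $v_0^{\mathrm{in}}(r;k,q)\sim -qc_v(k)r$ with $c_v(k)\geq c_v^0/2>0$, while Theorem~\ref{th:innermatching} gives $|g^{\mathrm{in}}|,|\partial g^{\mathrm{in}}|\leq Mq^2 r^{n-1}$ and $|w^{\mathrm{in}}|\leq Mq^3 r$. For $q$ small the corrections are strictly dominated, yielding $f>0$, $\partial_r f>0$ and $v<0$; the inequality $f<\sqrt{1-k^2}$ is trivial on $[0,1]$ since $f_0^{\mathrm{in}}$ is bounded away from $1$ on compact sets while $\sqrt{1-k^2}\to 1$ as $q\to 0$.

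\textbf{Inner range, $r\in[1,r_0]$.} The monotonicity of $f_0^{\mathrm{in}}$ and the bound $|g^{\mathrm{in}}|\leq Mq^2$ give $f\geq f_0^{\mathrm{in}}(1)/2>0$. Since $r_0\ll n/(k\sqrt{2})$ (because $kr_0\to 0$, see below), Proposition~\ref{prop:f0v0_new}(2) provides $v_0^{\mathrm{in}}<0$, and the bound $|w^{\mathrm{in}}|\leq Mq^3|\log r|^3/r\leq M\rho^3/r$ from Theorem~\ref{th:innermatching} is absorbed into $|v_0^{\mathrm{in}}|\gtrsim q|\log r|/r$, so $v<0$. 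For $f<\sqrt{1-k^2}$ one uses the expansions
\begin{equation*}
\sqrt{1-k^2}-f_0^{\mathrm{in}}(r)=\frac{n^2}{2r^2}-\frac{k^2}{2}+\mathcal{O}(r^{-4})+\mathcal{O}(k^4),
\end{equation*}
together with $|g^{\mathrm{in}}(r)|\leq Mq^2|\log r|^2/r^2\leq M\rho^2/r^2$. Monotonicity on $[1,r_0]$ uses Proposition~\ref{prop:f0v0_new}(1), $\partial_r f_0^{\mathrm{in}}(r)=n^2/r^3+\mathcal{O}(r^{-5})>0$, and a bound $|\partial_r g^{\mathrm{in}}(r)|\leq Mq^2|\log r|^2/r^3$ which I would extract from the integral/fixed-point construction underlying Theorem~\ref{th:innermatching} by differentiating the defining equation and using that the Green kernel for the linearized operator improves one power of decay in $r$.

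\textbf{Main obstacle.} The only subtle step is the inequality $f<\sqrt{1-k^2}$ on $[1,r_0]$, since $f_0^{\mathrm{in}}(r)\to 1>\sqrt{1-k^2}$ as $r\to\infty$. Matters reduce to the comparison of two exponentially small quantities $1/r_0^2$ and $k^2$: with $k=\mu q^{-1}e^{-\pi/(2nq)}$, $r_0=e^{\rho/q}/\sqrt{2}$, one has $kr_0=(\mu/(q\sqrt{2}))\exp\bigl((\rho-\pi/(2n))/q\bigr)$, and the specific choice $\rho=(q/|\log q|)^{1/3}$ in~\eqref{def:rho} forces $\rho<\pi/(2n)$ for $q$ small, so $kr_0\to 0$ (in fact super-algebraically in $q$). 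Consequently $n^2/(2r^2)-k^2/2\geq n^2/(4r^2)$ on $[1,r_0]$, which dominates the $\mathcal{O}(\rho^2/r^2)$ error coming from $g^{\mathrm{in}}$; this is exactly the obstruction that the scale $\rho$ was designed to overcome, and is why the matching point cannot be pushed further out.
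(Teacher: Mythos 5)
The main claims ($f>0$, $v<0$, $f<\sqrt{1-k^2}$ away from the matching zone) are handled in essentially the same perturbative spirit as the paper, but your argument for the monotonicity $\partial_r f>0$ has a genuine gap, and it is precisely the step where the paper is forced to abandon perturbation theory. For $r$ large (already for $r\gtrsim q^{-2}|\log q|^{-3}$, and certainly near the matching point $r_0$) one has $\partial_r f_0^{\mathrm{in}}(r)\sim n^2/r^3$, while the available bound on the remainder is $|\partial_r g^{\mathrm{in}}(r)|\leq Mq^2|\log r|^2/r^2$; the improved bound $O(q^2|\log r|^2/r^3)$ that you propose to ``extract by differentiating, since the Green kernel improves one power of decay'' is not available: the kernel here is built from the modified Bessel functions $K_n(r\sqrt2),I_n(r\sqrt2)$, whose logarithmic derivatives tend to $\mp\sqrt2$, so differentiation of $\mathcal{S}_1[h]$ does \emph{not} gain a power of $r$ (compare Lemma~\ref{lem:Linear1inner}, where $\|\mathcal{S}_1[\psi]'\|$ carries the same weight as $\|\mathcal{S}_1[\psi]\|$). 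Worse, $g^{\mathrm{in}}$ contains the homogeneous mode $I(r\sqrt2)\,\Cin$, whose derivative near $r_0$ is of size $\rho^2/r_0^2\gg n^2/r_0^3$ and whose sign is not controlled; the same obstruction appears on the outer side, where $|\partial_r g^{\mathrm{out}}|\leq M/r^2$ while $\partial_r f_0^{\mathrm{out}}\sim n^2/r^3$. No domination argument can close this. The paper's proof instead establishes $f\geq 1/\sqrt3$ for $r\geq q^{-2}|\log q|^{-3}$ and then rules out an interior maximum of $f$ by the comparison principle of Lemma~\ref{wonderful} applied to the constant subsolution $m(r)=f(r_*;q)$; some such non-perturbative device is indispensable here.

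Two further points. First, in the outer region your bound $\sqrt{1-k^2}-f_0^{\mathrm{out}}(r)\geq ck/(qr)$ comes from \eqref{asymptoticandbound:V0_new}, which is only valid for $kqr\geq R_0$, i.e.\ $r\geq R_0/(kq)\gg r_0$; and even granting it, $ck/(qr)$ does not dominate $M/r^2$ until $r\gtrsim q/k=q^2e^{\pi/(2nq)}/\mu$, so the inequality $f<\sqrt{1-k^2}$ is unproved on the whole matching range $[r_0,q/(Mk)]$. The correct mechanism there is the explicit $-n^2/r^2$ inside $f_0^{\mathrm{out}}=\sqrt{1-(v_0^{\mathrm{out}})^2-n^2/r^2}$, which produces a gap of order $n^2/(2r^2)$ absorbing the $O(\rho^2/r^2)$ error, after which monotonicity of $v_0^{\mathrm{out}}$ (Proposition~\ref{prop:dominantouter}(\ref{item3dominantouter})) gives $\sqrt{1-(v_0^{\mathrm{out}})^2}\leq\sqrt{1-k^2}$; this is how the paper argues. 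Second, your region-by-region proof of $v<0$ compares $|w^{\mathrm{out}}|\leq Mq^{-1}/r^2$ against $|v_0^{\mathrm{out}}|>k$, which fails near $r_0$ since $q^{-1}r_0^{-2}\gg k$; one would need the sharper lower bound $|v_0^{\mathrm{out}}(r)|\gtrsim\rho/r$ (not stated in the propositions you cite, which give only upper bounds). The paper avoids this entirely by noting that, once $f^2<1-k^2$ is known, the exact representation $v(r;q)=-q(rf^2(r;q))^{-1}\int_0^r\xi f^2(\xi;q)(1-f^2(\xi;q)-k^2)\,d\xi$ is manifestly negative — a cleaner route you may wish to adopt. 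Your ``main obstacle'' analysis of $kr_0\to0$ and of $n^2/(2r^2)$ versus $k^2/2$ versus $\rho^2/r^2$ in the inner region is correct and matches the paper's computation.
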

\begin{proof}
We first prove that $f(r;q)>0$ for $r>0$. We start with the \emph{outer region}. In item~\ref{item4dominantouter} of Proposition~\ref{prop:dominantouter} we proved that
$f_0^{\mathrm{out}}(r;k(q),q)  \geq \frac{1}{2}$ for $r\geq r_0 =e^{\rho/q}/\sqrt{2}$. Therefore, by Theorem~\ref{th:outermatching}, when $r\geq r_0$,
\begin{equation}\label{lowerboundfout}
f (r;q) \geq f_0^{\mathrm{out}}(r;k(q),q) - |g ^{\mathrm{out}}(r,\Cout(q);k(q),q)| \geq \frac{1}{2} - M r^{-2} \geq \frac{1}{2} - M r_0^{-2} >0.
\end{equation}

In the \emph{inner region}, using item~\ref{item1dominant_inner} of Proposition~\ref{prop:f0v0_new} and Theorem~\ref{th:innermatching} we deduce that there exists $\varrho$ small enough but independent of $q$ such that if $r \in [0,\varrho]$,
$$
f (r ; q)= f_0^{\mathrm{in}}(r)+ g^{\mathrm{in}}(r,\Cin(q);k(q),q) = c_f r^n + o(r^{n}) + q^2 \O(r^n) >0,
$$
provided the constant $c_f$ is positive. Then, since $f_0^{\mathrm{in}}$ is positive, increasing and independent of $q$, again using Theorem~\ref{th:innermatching}, for $\varrho\leq r \leq r_0$,
$$
f (r ; q) \geq f_0^{\mathrm{in}}(\varrho) - |\gin(r,\Cin(q);k(q),q)| \geq f_0^{\mathrm{in}}(\varrho)+ \O(q^2) >0,
$$
if $q$ is small enough. This finishes the proof of $f$ being positive.

Now we check that $f(r;q) <\sqrt{1-k^2(q)}$.
We first note that, by \eqref{def:gout}, \eqref{eq:fitagout} and~\eqref{eq:fitagout_bis} in Theorem~\ref{th:outermatching} and using Theorem~\ref{thm:matchingtotal} to bound $\Cout(q)$ we have that
$
g(r;q):= f(r;q)- f_0^{\mathrm{out}}(r,\Cout(q);k(q),q)
$
satisfies that, for $r\geq r_0$:
$$
|r^2 g(r;q)|\leq |r^2\Cout(q) K_0(r)| + M \varepsilon^{1-\alpha} q^{-1} \leq \rho^2 \eta e^{-\sqrt{2}(r-r_0)} r^{3/2} r_0^{-3/2}+ M \varepsilon^{1-\alpha} q^{-1}   \leq M \rho^2,
$$
where we have used that, from definition~\eqref{def:rho} of $\rho$, $\varepsilon^{1-\alpha}q^{-1} = q^{-1} \sqrt{2} e^{-\rho/q} \ll \rho^2$ and the asymptotic expansion~\eqref{asymKIn_new} for the Bessel function $K_0$. Therefore,
$$
f(r;q) \leq \sqrt{1- (v_0^{\mathrm{out}}(r;k(q),q))^2  - \frac{n^2}{r^2}} + M\rho^2 \frac{1}{r^2} \leq \sqrt{1- (v_0^{\mathrm{out}}(r;k(q),q))^2}   -  M \frac{1 }{r^2}  ,
$$
where we have used that $v_0^{\mathrm{out }}(r;k(q),q) \leq M r_0^{-1} =M\varepsilon ^{1-\alpha} \ll 1$ and that $\rho \ll 1$. Then, $f(r;q) \leq \sqrt{1- (v_0^{\mathrm{out}}(r;k(q),q))^2}$ and as a consequence, since $v_0^{\mathrm{out}} \to -k(q)$ as $r\to \infty$ and it is increasing and negative (see item~\ref{item3dominantouter} in Proposition~\ref{prop:dominantouter}), we have that
$$
f(r;q) \leq \sqrt{1- k^2 (q)}, \qquad r\geq r_0.
$$

With respect to the \emph{inner region}, namely $r\in [0,r_0]$, using Proposition~\ref{prop:f0v0_new} there exists $\varrho \gg 1$ independent on $q$ such that for all $\varrho\leq r \leq r_0$,
$  (f_0^{\mathrm{in}})^2 (r) \leq  1 - \frac{n^2}{2 r^2}$.
Then, since by Theorem~\ref{th:innermatching}, $|\gin(r,\Cin;k,q)| \leq Mq^2 |\log r|^2 r^{-2}$ for $\varrho \leq r \leq r_0$ we have that
$$
f^2 (r;q) \leq 1 - \frac{n^2}{2 r^2} + M\rho^2\frac{1}{r^2}  \leq 1 - \frac{1}{2 r_0^2} (n^2 + M\rho^2) \leq  1 - M \varepsilon^{2(1-\alpha)} ,
$$
where we have used that $r_0=\varepsilon^{\alpha -1}$. Then, since  $\e^{\alpha-1} = \frac{1}{\sqrt{2}} e^{\rho/q}$ (see~\eqref{def:r0}) and using definition~\eqref{def:rho} of $\rho$, we conclude that $1-M \varepsilon^{2(1-\alpha)} \leq 1- k^2(q)$, taking if necessary $q$ small enough. As a consequence
$f(r;q) \leq \sqrt{1-k^2(q)}$ if $\varrho\leq r\leq r_0$.
It remains to check the property when $r\in [0,\varrho]$. From the fact that $f_0^{\mathrm{in}}(r)$ is an increasing function and using Theorem~\ref{th:innermatching},
$$
f(r;q) = f_0^{\mathrm{in}}(r) + g^{\mathrm{in}}(r;\Cin(q);k(q),q) \leq f_0^{\mathrm{in}}(\varrho) + M q^{2} < \sqrt{1-k^2(q)},
$$
provided $f_0^{\mathrm{in}}(\varrho) <1$, $\varrho$ is independent on $q$ and $q$ is small enough.

The negativeness of $v(r;q)<0$ for $r>0$ is straightforward from the previous property, $f(r;q) <\sqrt{1-k^2(q)}$. Indeed, using that $v(0;q)=0$, from the differential equations~\eqref{originals}, we have that
$$
v(r;q)= -q \frac{1}{r f^2 (r;q) } \int_{0}^r \xi f^2(\xi;q) (1- f^2(\xi;q) -k^2(q)) \, d\xi < 0.
$$

To finish we prove that $\partial_r f(r;q)>0$. We start with the \emph{inner region}. From Proposition~\ref{prop:f0v0_new}, there exist $0<\varrho_0 \ll  \varrho_1$ satisfying that
$$
\partial_r f_0^{\mathrm{in}}(r)\geq \frac{n}{2} c_f r^{n-1}, \quad \text{if }r\in[0,\varrho_0] \qquad\text{and}\qquad
\partial_r f_0^{\mathrm{in}}(r) \geq \frac{n^2}{2 r^3}, \quad \text{if }r\in [\varrho_1,r_0].
$$
Let $\overline{\varrho} \in [\varrho_0,\varrho_1]$ be such that $\partial_r f_0^{\mathrm{in}}(r) \geq \partial_r f_0^{\mathrm{in}}(\overline{\varrho})>0$ for all $r\in [\varrho_0,\varrho_1]$. Notice that the values of $\varrho_0,\varrho_1$ and $\overline{\varrho}$ are independent on $q$. Therefore, using Theorem~\ref{th:innermatching},
if $r\in [0, \varrho_0]$,
$$
\partial_r f(r;q) = \partial_r f_0^{\mathrm{in}}(r) + \partial_r g^{\mathrm{in}}(r,\Cin(q);k(q),q) \geq \frac{n}{2} c_f r^{n-1} - Mq^2 r^{n-1} >0.
$$
When $r\in [\varrho_0,\varrho_1]$
$$
\partial_r f(r;q) = \partial_r f_0^{\mathrm{in}}(r) + \partial_r g^{\mathrm{in}}(r,\Cin(q);k(q),q) \geq  \partial_r f_0^{\mathrm{in}}(\overline{\varrho}) - Mq^2 >0,
$$
taking, if necessary, $q$ small enough. When $r\geq \varrho_1$, Theorem~\ref{th:innermatching} says that
$$
\partial_r f(r;q) \geq \frac{n^2}{2r^3 } - Mq^2 \frac{|\log r|^2}{r^2},
$$
that is positive if $\varrho_1 \leq r \leq q^{-2}|\log q|^{-3}$, if $q$ small enough. In conclusion
$$
\partial_r f(r;q)>0, \qquad 0\leq r \leq \frac{1}{q^2 |\log q|^3}.
$$
To see that $\partial_r f(r;q)>0$ for bigger values of $r$ we first need to check that
\begin{equation}\label{lowerboundffin}
f(r;q) \geq  \frac{1}{\sqrt{3}}, \qquad r\geq \frac{1}{q^2 |\log q|^3}.
\end{equation}
Indeed, if $q^{-2} |\log q|^{-3} \leq r \leq r_0$, that is, when $r$ belongs to the \emph{inner region}, from Theorem~\ref{th:innermatching}
$$
f(r;q) \geq \fdin(r) - |\gin(r;\Cin(q);k(q),q)| \geq 1- \frac{n^2}{2r^2} + \O(r^{-4}) - M q^2 \frac{|\log r|^2}{r^2}
\geq 1- \O(q^2 |\log q|^3)\geq \frac{1}{\sqrt{3}}.
$$
When $r\geq r_0$ (that is in the \emph{outer region}), by~\eqref{lowerboundfout}, $f(r;q) \geq \frac{1}{3}$ and~\eqref{lowerboundffin} is proven.

We finish the argument to prove that $f$ is an increasing function for $r>0$, by contradiction.
Since we have proved that $\partial_r f(r;q)>0$ for $r>q^{-2}|\log q|^{-3}$ and $f^2(r;q) \leq 1-k^2(q) = \lim_{r\to \infty} f^2(r;q)$, if $f$ has an extreme at $r^*$, it has to have a maximum at some point less than $r^*$. Let $r_*\geq q^{-2} |\log q|^{-3}$ be the minimum value such that $f(r;q)$ has a maximum at $r=r_*$. That is $\partial_r f(r_*,q)=0$ and $\partial_r^2 f(r_*;q)\leq 0$. Therefore, since $f$ is a solution of~\eqref{originals}, we deduce that
\begin{equation}\label{original_r_ast}
f(r_*;q) \left [-\frac{n^2}{r_*^2}  + (1- f^2(r_*;q) - v^2(r_*;q)) \right ]\geq 0.
\end{equation}

Now we use the following comparison result: (see~\cite{protter})
\begin{lemma}\label{wonderful} \cite{protter} Let $(a, b)$ be an interval in $\R$, let $\Omega = \R^2 \times  (a, b)$, and let $\mathcal{H} \in \co^1(\Omega, \R)$. Suppose $h \in \co^2((a, b))$ satisfies $h''(r)+\mathcal{H}(h'(r), h(r), r) = 0$. If $\partial_h \mathcal{H} \leq  0$ on $\Omega$ and if there exist functions $M, m \in \co^2 ((a, b))$ satisfying $M''(r) + \mathcal{H}(M'(r), M(r), r) \leq 0$ and $m''(r) + \mathcal{H}(m'(r), m(r), r) \geq  0$, as well as the boundary conditions $m(a) \leq h(a) \leq  M(a)$ and $m(b) \leq h (b) \leq M(b)$, then for all $r\in (a, b)$ we have $m(r) \leq h (r) \leq M(r)$.
\end{lemma}

We set $(a,b)=(r_*,\infty)$ and we define
$$
\mathcal{H}(h',h,r)= \frac{h'}{r} - h \frac{n^2}{r^2} + h(1-h^2-v^2(r;q)), \qquad h\geq \frac{1}{\sqrt{3}},
$$
with $v(r;q)$ the solution we  have already found, and
$$
\mathcal{H}(h',h,r)= \frac{h'}{r} - h \frac{n^2}{r^2} -hv^2(r;q)) + \frac{2}{3\sqrt{3}} ,   \qquad h\leq \frac{1}{\sqrt{3}}.
$$
We have that $\mathcal{H} \in \mathcal{C}^1(\Omega,\mathbb{R})$ and $\partial_h\mathcal{H}\leq 0$.
By~\eqref{lowerboundffin}, for $r\geq r_* \geq q^{-2} |\log q|^{-3}$, $f(r;q)\geq \frac{1}{\sqrt{3}}$ so that $f(r;q)$ is a solution of $h'' + \mathcal{H}(h'(r),h(r),r)=0$.
Taking $m(r)=f(r_*;q)$ we have that $\lim_{r\to \infty} m(r) = f(r_*;q) \leq \lim_{r\to \infty} f(r;q)= \sqrt{1-k^2}$ and
\begin{align*}
m'' + \mathcal{H}(m'(r),m(r),r)&= -f(r_*;q) \frac{n^2}{r^2} + f(r_*;q) (1- f^2 (r_*;q) - v^2(r_*;q)) \\  &\geq -f(r_*;q) \frac{n^2}{r_*^2} + f(r_*;q) (1- f^2 (r_*;q) - v^2(r_*;q))\geq 0,
\end{align*}
where we have used~\eqref{original_r_ast} in the last inequality.
Then Lemma~\ref{wonderful} concludes that $f(r_*;q)=m(r) \leq f(r;q)$ for $r\geq r_*$. Therefore $r_*$ can not be a maximum and we have a contradiction.
\end{proof}

The rest of the work is devoted to prove the results about the existence of families of solutions in the outer and inner regions.
From now on, to avoid cumbersome notation, we will skip the dependence on the parameters $k,q$.

\section{Existence result in the {outer region}. Proof of Theorem~\ref{th:outermatching}}
\label{sec:provaOuter}

In this section we prove Theorem~\ref{th:outermatching}. To do so, by means of a fixed point equation setting, we look for solutions of equations~\eqref{outer_0} which are written in the \textit{outer variables} introduced in Section~\ref{subsec:outer_inner_variables} (see~\eqref{eq:canvi_0}). Namely, we look for solutions of the equations~\eqref{outer_0} with boundary conditions~\eqref{eq:boundaryconditionsouter_0} that are of the form $F_0 + G$, $V_0+W$ with $F_0,V_0$ defined in~\eqref{eq:F0_0} and~\eqref{eq:V0_0} respectively, that is, taking $\e=kq$,
\begin{equation}\label{defV0F0secouter}
V_0(R)=\frac{K_{inq}'(R)}{K_{inq}(R)},\qquad F_0(R) = \sqrt{1 - k^2 V_0^2(R) - \frac{\e^2 n^2}{R^2}}.
\end{equation}

We first introduce the Banach spaces we will work with.
For any given $R_\m>0$, we introduce the Banach spaces:
\begin{equation}
\label{def:banach}
    \mathcal{X}_{\ell}=\{ f:[R_\m,\infty)\to \R\,:\,\text{continuous}\,,\; \|f\|_\ell:=\sup_{R\in [R_\m,\infty)} |R^{\ell} f(R)| <\infty\},
\end{equation}
being $\mathcal{X}_0$ the Banach space of continuous bounded functions with the supremmum norm.

Notice that $\mathcal{X}_{\ell}=\mathcal{X}_\ell(R_\m)$ depends on $R_\m$ and so the norm of a function does.
However, if $R_{\m}\leq R_{\m}'$,
$\mathcal{X}_\ell(R_\m) \subset \mathcal{X}_\ell(R_\m')$ and
$$
\sup_{R\in [R_{\m},\infty)}|R^\ell f(R)| \geq \sup_{R\in [R_{\m}',\infty)}|R^\ell f(R)|.
$$
This fact allows us to take $R_\m'\geq R_\m$, if we are working in $\mathcal{X}_\ell(R_\m)$.
We will use this property along the work without any special mention.
\subsection{The fixed point equation}
Our goal in this section is to transform equations~\eqref{outerF_0},\eqref{outerV_0} in a fixed point equation in suitable Banach spaces.
For that, the first step is to write such equations in a suitable way.

Let $F=F_0+G$ and $V=V_0+W$.
The term $F(1-F^2-k^2V^2)$ in equation~\eqref{outerF_0} is:
\begin{align*}
F(1-F^2-k^2V^2) =&  -2 F_0^2 G  - 3 F_0 G^2 -G^3 - W k^2\big [2V_0 F_0 + F_0 W + 2V_0 G + WG\big ] \\
&+ (F_0+G) \frac{n^2 \e^2}{R^2}.
\end{align*}
Therefore, equation~\eqref{outerF_0} becomes
\begin{align*}
\e^2 \left (G'' + \frac{G'}{R} \right ) -&2F_0^2 (R) G =
-\e^2 \left (F_0''(R) + \frac{F_0'(R)}{R}  \right )
\\&+
3 F_0(R) G^2 + G^3 +Wk^2 \big [2V_0 (R)F_0(R) + F_0(R) W + 2V_0(R) G + WG\big ].
\end{align*}
In view of\eqref{asymptoticandbound:V0_new}, that in \emph{outer variables} reads as
$$
F_0(R)= \sqrt{1-k^2} \left (1 - \frac{k^2}{2R(1-k^2)} + \O\left (\frac{k^2}{R^2}\right )\right ),
$$
we introduce
\begin{equation}\label{def:F0hatouter}
F_0^2 (R)= 1+\frac{1}{2} \widehat{F}_0(R).
\end{equation}
Therefore we may write the above equation for $G$ as
\begin{equation}\label{eqfinal:G}
G'' + \frac{G'}{R} - G  \frac{2 }{\e^2}
=- \e^{-2}\mathcal{N}_1[G,W].
\end{equation}
with
\begin{equation}\label{def:N1outer}
\begin{aligned}
\mathcal{N}_1[G,W](R)=&\e^2 \left (F_0''(R) + \frac{F_0'(R)}{R} \right )  -\widehat{F}_0(R) G - 3 F_0(R) G^2 - G^3 \\
&-Wk^2 \big(2V_0 (R)F_0 (R)+ F_0(R) W + 2V_0 (R)G + WG\big).
\end{aligned}
\end{equation}

Now we compute the equation for $W$ from~\eqref{outerV_0}. We have that
\begin{equation*}
\begin{aligned}
&W' + \frac{W}{R} + 2 V_0 (R)W + W^2 +  V_0'(R) + \frac{V_0(R)}{R} +V_0(R)^2 -1 +\frac{n^2}{R^2}q^2 \\
 &=
\frac{q^2}{(F_0(R)+G)} \left (F_0''(R) + \frac{F_0'(R)}{R} + G''+ \frac{G'}{R}\right )
 - 2 (V_0(R)+W) \frac{F_0'(R) + G'}{F_0(R) + G}.
\end{aligned}
\end{equation*}
We recall that $V_0$ is a solution of~\eqref{Ricatti_0}.
Then
\begin{equation}\label{eqfinal:W}
W' + \frac{W}{R} + 2 V_0 W =  - \mathcal{N}_2(G,W)(R).
\end{equation}
with
\begin{equation}\label{def:N2outer}
\begin{aligned}
\mathcal{N}_2[G,W](R) =& W^2 -\frac{q^2}{F_0(R)+G} \left (F_0''(R) + \frac{F_0'(R)}{R} + G''+ \frac{G'(R)}{R}\right ) \\ &+2 (V_0(R)+W) \frac{F_0'(R) + G'}{F_0(R) + G} .
\end{aligned}
\end{equation}

We define the linear operators:
\begin{equation*}
\begin{aligned}
\mathcal{L}_1[G](R) &= G'' + \frac{G'}{R} - G \frac{2 }{\e^2}, \\
\mathcal{L}_2[W](R) & =W' + \frac{W}{R} + 2 V_0(R) W,
\end{aligned}
\end{equation*}
and rewrite equations~\eqref{eqfinal:G} and~\eqref{eqfinal:W} as
\begin{equation}\label{eqGW}
\mathcal{L}_1[G] = -\e^{-2}   \mathcal{N}_1[G,W] ,\qquad \mathcal{L}_2[W] = - \mathcal{N}_2[G,W].
\end{equation}

The strategy to prove the existence of solutions of~\eqref{eqGW} is to write them as a Gauss-Seidel fixed point equation and to prove that the fixed point theorem can be applied in suitable Banach spaces. For that, first, we need to compute a right inverse of $\mathcal{L}_1,\mathcal{L}_2$.

We start with $\mathcal{L}_1$.
Assume that we have
\begin{equation}\label{eq:L1houter}
\mathcal{L}_1 [G](R)=-h(R),
\end{equation}
where $h$ satisfies some conditions that we will specify later. We are interested in solutions of this equation such that
$\lim_{R\to \infty} G(R)=0$.

Just for doing computations, we  perform the scaling:
\begin{equation*}
s= \frac{R}{\e}\sqrt{2}, \qquad g(s) = G(s\e/\sqrt{2}),
\end{equation*}
and~\eqref{eq:L1houter} is transformed into
\begin{equation}\label{auxlinear}
g'' + \frac{g'}{s} - g= -\frac{\e^2}{2} h(s \e/\sqrt{2}).
\end{equation}
The homogeneous linear system associated, has a fundamental matrix
$$
\left (\begin{array}{cc} K_0(s) & I_0(s) \\ K_0'(s) & I'_0(s)\end{array}\right ),
$$
where $K_0,I_0$ are the modified Bessel functions \cite{abramowitz} of the first and second kind.
The Wronskian is given by $W(K_0(s),I_0(s)) = s^{-1}$ so that the solutions of~\eqref{auxlinear} are given by
$$
g(s) = K_0(s) \left [\Cout + \frac{\e^{2}}{2} \int_{s_0}^s \xi I_0(\xi) h(\xi \e/\sqrt{2})\dd \xi \right ]+
I_0(s) \left [\mathbf{b} - \frac{\e^{2}}{2}\int_{s_0}^s \xi K_0(\xi) h(\xi \e/\sqrt{2}) \dd \xi \right ].
$$
It is well known that $K_0(s) \to 0$ and $I_0(s) \to \infty$ as $s\to \infty$ (see \eqref{asymKIn_new}).
Then, in order to have solutions bounded as $s\to \infty$,
we have to impose
$$
\textbf{b} - \frac{\e^{2}}{2} \int_{s_0}^\infty \xi K_0(\xi) h(\xi \e/\sqrt{2}) \dd \xi=0.
$$
Therefore,
$$
g(s) = K_0(s) \left [\Cout + \frac{\e^{2}}{2} \int_{s_0}^s \xi I_0(\xi) h\left (\frac{\xi\e}{\sqrt{2}}\right )\dd \xi \right ]+
\frac{\e^{2}}{2} I_0(s) \int_{s}^\infty \xi K_0(\xi) h\left (\frac{\xi\e}{\sqrt{2}}\right ) \dd \xi,
$$
and, proceeding in the same way,
$$
g'(s) = K_0'(s) \left [\Cout + \frac{\e^{2}}{2} \int_{s_0}^s \xi I_0(\xi) h\left (\frac{\xi\e}{\sqrt{2}}\right )\dd \xi \right ]+
\frac{\e^{2}}{2} I_0'(s) \int_{s}^\infty \xi K_0(\xi) h\left (\frac{\xi\e}{\sqrt{2}}\right ) \dd \xi.
$$
Now we undo the change of variables that is: $R=s\e/\sqrt{2}$ and $G(R) = g(R\sqrt{2}/\e) $. We obtain the solution of~\eqref{eq:L1houter}
\begin{equation*}
G(R)=K_0\left (\frac{R\sqrt{2}}{\e}\right ) \left [\Cout+  \int_{R_\m}^{R} \xi I_0\left (\frac{\xi\sqrt{2}}{\e} \right ) h(\xi)\dd \xi \right ]+
I_0\left (\frac{R\sqrt{2}}{\e}\right ) \int_{R}^\infty \xi K_0\left (\frac{\xi \sqrt{2}}{\e} \right ) h(\xi) \dd \xi,
\end{equation*}
with $R_\m=s_0\e/\sqrt{2}$ to be determined later.

We introduce the linear operator
\begin{equation}\label{def:inverse1outer}
\mathcal{S}_1[h](R)=K_0\left (\frac{R\sqrt{2}}{\e}\right )\int_{R_\m}^{R} \xi I_0\left (\frac{\xi\sqrt{2}}{\e} \right ) h(\xi)\dd \xi  +
I_0\left (\frac{R\sqrt{2}}{\e}\right ) \int_{R}^\infty \xi K_0\left (\frac{\xi\sqrt{2}}{\e} \right ) h(\xi) \dd \xi.
\end{equation}
We have proven:
\begin{lemma}\label{lem:S1outer}
For any $\Cout\in \R$ we define
\begin{equation}\label{def:G0cteouter}
\mathbf{G}_0(R)=K_0\left (\frac{R\sqrt{2}}{\e}\right )\Cout.
\end{equation}
Then, if $G$ is a solution of~\eqref{eqfinal:G} satisfying $G(R) \to 0$ as $R\to \infty$ then there exists a constant $\Cout$ such that
$$
G=\mathbf{G}_0 +\mathcal{S}_1[\e^{-2}\mathcal{N}^{-1}[G,W]].
$$
\end{lemma}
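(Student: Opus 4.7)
The plan is to treat the statement as a characterization of the bounded (at infinity) solutions of the inhomogeneous linear ODE $\mathcal{L}_1[G]=-h$ where $h=\e^{-2}\mathcal{N}_1[G,W]$, and recover the formula by variation of parameters exactly along the lines of the calculation that precedes the lemma.

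First I would perform the scaling $s=R\sqrt{2}/\e$, $g(s)=G(s\e/\sqrt{2})$, which converts $\mathcal{L}_1[G]=-h$ into the inhomogeneous modified Bessel equation of order zero,
\begin{equation*}
g''(s)+\tfrac{1}{s}g'(s)-g(s)=-\tfrac{\e^2}{2}\,h(s\e/\sqrt{2}).
\end{equation*}
The pair $\{K_0,I_0\}$ is a fundamental system of the associated homogeneous equation; their Wronskian equals $1/s$ (see \cite{abramowitz}). By variation of parameters, every solution has the form
\begin{equation*}
g(s)=K_0(s)\Bigl[\Cout+\tfrac{\e^2}{2}\!\int_{s_0}^{s}\!\xi I_0(\xi)h(\xi\e/\sqrt{2})\,d\xi\Bigr]+I_0(s)\Bigl[\mathbf{b}-\tfrac{\e^2}{2}\!\int_{s_0}^{s}\!\xi K_0(\xi)h(\xi\e/\sqrt{2})\,d\xi\Bigr],
\end{equation*}
for some constants $\Cout,\mathbf{b}\in\R$, where $s_0=R_\m\sqrt{2}/\e$.

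Next I would use the asymptotics \eqref{asymKIn_new}: $K_0(s)\sim\sqrt{\pi/(2s)}\,e^{-s}$ and $I_0(s)\sim e^{s}/\sqrt{2\pi s}$ as $s\to\infty$. Since $G(R)\to 0$ as $R\to\infty$, and $K_0(s)\to 0$ while $I_0(s)\to\infty$, the bracket multiplying $I_0$ must tend to zero as $s\to\infty$. This forces the improper integral $\int_{s_0}^{\infty}\xi K_0(\xi)h(\xi\e/\sqrt{2})\,d\xi$ to exist (as a consequence of $G$ being a solution that tends to zero, not as an a priori hypothesis) and fixes
\begin{equation*}
\mathbf{b}=\tfrac{\e^2}{2}\int_{s_0}^{\infty}\xi K_0(\xi)h(\xi\e/\sqrt{2})\,d\xi.
\end{equation*}
Substituting this value and splitting the first bracket accordingly converts the $I_0$-term into $I_0(s)\cdot\frac{\e^2}{2}\int_{s}^{\infty}\xi K_0(\xi)h(\xi\e/\sqrt{2})\,d\xi$.

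Finally I would undo the scaling via $R=s\e/\sqrt{2}$, $\xi\mapsto\xi\sqrt{2}/\e$, obtaining
\begin{equation*}
G(R)=K_0\!\Bigl(\tfrac{R\sqrt{2}}{\e}\Bigr)\Cout+K_0\!\Bigl(\tfrac{R\sqrt{2}}{\e}\Bigr)\!\int_{R_\m}^{R}\!\xi I_0\!\Bigl(\tfrac{\xi\sqrt{2}}{\e}\Bigr)h(\xi)\,d\xi+I_0\!\Bigl(\tfrac{R\sqrt{2}}{\e}\Bigr)\!\int_{R}^{\infty}\!\xi K_0\!\Bigl(\tfrac{\xi\sqrt{2}}{\e}\Bigr)h(\xi)\,d\xi,
\end{equation*}
and identify the first summand as $\mathbf{G}_0(R)$ in \eqref{def:G0cteouter} and the remaining two as $\mathcal{S}_1[h](R)$ in \eqref{def:inverse1outer}, yielding $G=\mathbf{G}_0+\mathcal{S}_1[\e^{-2}\mathcal{N}_1[G,W]]$ as claimed.

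The only delicate point is the justification that the integral defining $\mathbf{b}$ converges, but this is automatic once one assumes $G(R)\to 0$: the bracketed expression in front of $I_0$ is forced to have a finite limit (in fact to vanish) because $I_0(s)$ grows exponentially while $g(s)\to 0$, so no extra decay hypothesis on $h$ is needed at this stage. All further quantitative estimates on $\mathcal{S}_1$ in the Banach spaces $\mathcal{X}_\ell$ are deferred to the subsequent fixed point argument.
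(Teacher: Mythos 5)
Your proposal is correct and follows essentially the same route as the paper: the scaling $s=R\sqrt{2}/\e$, variation of parameters with the fundamental system $\{K_0,I_0\}$ and Wronskian $1/s$, fixing $\mathbf{b}$ by the boundedness requirement at infinity, and undoing the scaling to identify $\mathbf{G}_0+\mathcal{S}_1[\e^{-2}\mathcal{N}_1[G,W]]$. Your added remark on why the improper integral defining $\mathbf{b}$ converges is a reasonable clarification of a point the paper leaves implicit, but does not change the argument.
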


Now we compute the right inverse of $\mathcal{L}_2$.
We consider the linear equation
\begin{equation}\label{eq:L2houter}
\mathcal{L}_2[W]=W' + W\left (\frac{1}{R} + 2 V_0\right ) = h.
\end{equation}
Since $V_0(R) = K_{inq}'(R) /K_{inq}(R)$, the solutions are given by:
$$
W(R) = \frac{1}{R K_{inq}^2(R)} \left ( c_0 + \int_{R_0}^R \xi K_{inq}^2(\xi) h(\xi)\right ),
$$
for any constant $c_0$.
In order for $W$ to be bounded as $R\to \infty$ it is required that
$$
c_0 + \int_{R_0}^\infty \xi K_{inq}^2(\xi) h(\xi)\dd \xi =0.
$$
Therefore
$$
W(R) = \frac{1}{R K_{inq}^2(R)} \int_{\infty}^R \xi K_{inq}^2(\xi) h(\xi) \dd \xi.
$$
As a result we have the following Lemma:
\begin{lemma}\label{lem:S2outer}
Any solution of~\eqref{eq:L2houter} bounded as $R\to \infty$ is of the form
$W=\mathcal{S}_2[h]$ with
\begin{equation}\label{def:inverse2outer}
\mathcal{S}_2[h]=\frac{1}{R K_{inq}^2(R)} \int_{\infty}^R \xi K_{inq}^2(\xi) h(\xi) \dd \xi.
\end{equation}
\end{lemma}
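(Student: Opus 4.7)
The plan is to treat equation~\eqref{eq:L2houter} as a scalar first-order linear ODE and solve it by the standard integrating-factor method, then use the asymptotic decay of $K_{inq}$ at infinity to pin down the constant of integration from the boundedness requirement.

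First I would locate an integrating factor. Since $V_0(R) = K_{inq}'(R)/K_{inq}(R)$, one has $2V_0(R) = \bigl(\log K_{inq}^2(R)\bigr)'$, and therefore
$$
\frac{1}{R} + 2V_0(R) = \bigl(\log R\bigr)' + \bigl(\log K_{inq}^2(R)\bigr)' = \bigl(\log(RK_{inq}^2(R))\bigr)'.
$$
Consequently $\mu(R) = R K_{inq}^2(R)$ is an integrating factor and the equation $\mathcal{L}_2[W] = h$ is equivalent to $\bigl(RK_{inq}^2(R) W(R)\bigr)' = RK_{inq}^2(R) h(R)$. Integrating from an auxiliary base point $R_0$ gives, for any solution,
$$
RK_{inq}^2(R)\, W(R) = c_0 + \int_{R_0}^R \xi K_{inq}^2(\xi) h(\xi)\,d\xi
$$
for some constant $c_0\in\mathbb{R}$.

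Next I would impose boundedness at infinity to determine $c_0$. Using~\eqref{asymKIn_new} with $\nu=inq$ (which is uniform for $|\nu|\le\nu_0$), we have $K_{inq}(R) \sim \sqrt{\pi/(2R)}\,e^{-R}$ as $R\to\infty$, and therefore $RK_{inq}^2(R) \sim (\pi/2)\, e^{-2R} \to 0$. Thus the only way for $W(R)$ to remain bounded as $R\to\infty$ is to have the numerator on the right-hand side also tend to zero, which forces
$$
c_0 + \int_{R_0}^{\infty} \xi K_{inq}^2(\xi) h(\xi)\,d\xi = 0,
$$
assuming the integral converges (which it will in our applications, since $h$ will have polynomial decay while $K_{inq}^2$ is exponentially small). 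Substituting this value of $c_0$ back yields
$$
W(R) = \frac{1}{RK_{inq}^2(R)} \int_{\infty}^{R} \xi K_{inq}^2(\xi) h(\xi)\,d\xi,
$$
which is precisely $\mathcal{S}_2[h](R)$ as defined in~\eqref{def:inverse2outer}.

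The step that needs a little care is the boundedness argument: one must check that a nonzero $c_0$ really does destroy boundedness because $1/(RK_{inq}^2(R))$ grows like $e^{2R}$, so any constant term produces an exponentially large contribution that cannot be cancelled by the polynomially-controlled integral. Everything else is routine first-order ODE computation, and, unlike the $\mathcal{L}_1$ case of Lemma~\ref{lem:S1outer}, no one-parameter family appears here because the associated homogeneous equation has the single solution $W_h = 1/(RK_{inq}^2)$, which is not bounded at infinity.
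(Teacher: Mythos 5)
Your proof is correct and follows essentially the same route as the paper: both identify $R K_{inq}^2(R)$ as the integrating factor (using $V_0=K_{inq}'/K_{inq}$), write the general solution with a free constant $c_0$, and fix $c_0$ by requiring boundedness at infinity, since $1/(RK_{inq}^2(R))$ grows like $e^{2R}$. The extra remarks you add (convergence of the integral, absence of a bounded homogeneous solution) are sound and merely make explicit what the paper leaves implicit.
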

From Lemmas~\ref{lem:S1outer} and~\ref{lem:S2outer} we can rewrite~\eqref{eqGW} as a fixed point equation $(G,W)=\op[G,W]$ defined by
\begin{equation*}
\begin{aligned}
G&= \op_1[G,W]: = \mathbf{G}_0 + \opg_1[ \e^{-2} \mathcal{N}_1[G,W]], \\
W&= \op_2[G,W]:=- \mathcal{S}_2 [ \mathcal{N}_2[G,W]],
\end{aligned}
\end{equation*}
where $\mathbf{G}_0$ depends on a constant $\Cout$. Notice that the nonlinear operator $\mathcal{N}_2$ defined in \eqref{def:N2outer} involves the derivatives $G',G''$. In order to avoid working with norms involving derivatives, we will take advantage of the differential properties of $\op_1$ and using that $G=\op_1[G,W]$ we rewrite the fixed point equation as
\begin{equation}\label{eqfxp}
\begin{aligned}
G&= \op_1[G,W]: = \mathbf{G}_0 + \opg_1[ \e^{-2} \mathcal{N}_1[G,W]], \\
W&= \op_2[G,W]:=- \mathcal{S}_2 [ \mathcal{N}_2[\op_1[G,W],W]],
\end{aligned}
\end{equation}
where $\mathcal{S}_1$ is defined in \eqref{def:inverse1outer}, $\mathcal{S}_2$ in \eqref{def:inverse2outer}, $ \mathcal{N}_1$ in \eqref{def:N1outer} and $\mathcal{N}_2$ in \eqref{def:N2outer}.

In Section~\ref{subsec:linear} we study the linear operators $\mathcal{S}_1$ and $\mathcal{S}_2$ (see~\eqref{def:inverse1outer} and~\eqref{def:inverse2outer}) and prove that they are bounded operators in $\mathcal{X}_\ell$ for $\ell\geq 0$.

Our goal is now to prove the following result which is a  reformulation of Theorem~\ref{th:outermatching}.
\begin{theorem} \label{prop:outerfixedpoint}
Let $\eta >0$, $0<\mu_0<\mu_1$
and take $\e=\mu e^{-\frac{\pi}{2nq}}$ with $\mu_0\le \mu \leq \mu_1$.
There exist $q_0=q_0(\mu_0,\mu_1,\eta)>0$ and $e_0=e_0(\mu_0,\mu_1,\eta)>0$, $M=M(\mu_0,\mu_1,\eta)>0$ such that, for any $q\in[0,q_0]$, $\alpha\in(0,1)$
satisfying
\begin{equation*}
q^{-1}\e ^{1-\alpha}<e_0,
\end{equation*}
and for any constant $\Cout$ satisfying
\begin{equation}
\label{eq:condicioc1}
 (\e^{\a} )^{3/2}e^{-\frac{\sqrt{2}}{ \e^{1-\a}}}|\Cout|\le  \eta \e^{3/2} ,
 \end{equation}
there exists a family of solutions $(G(R,\Cout),W(R,\Cout))$ of the fixed point equation~\eqref{eqfxp} defined for $R \geq R_\m^*=  \e^{\a}$ which satisfy
$$
\|G\|_2 + \e \|G'\|_2+\e \|W\|_2 \leq M \e^2 .
$$

Moreover $G(R,\Cout)=G^0(R) + G^1(R,\Cout)$ and $W(R,\Cout)=W^0(R,\Cout)+ W^1(R,\Cout)$ satisfying
\begin{itemize}
\item[(i)] there exists $q_0^*=q_0^*(\mu_0, \mu_1)>0$, and $M_0=M_0(\mu_0,\mu_1)$ such that, for $q\in[0,q_0^*]$,
\[
\|G^0\|_2 +\e\|(G^0)'\|_2\leq M_0 \e^{3-\a}q^{-1},
\]
\item[(ii)]  for $q\in[0,q_0]$, we can decompose
$G^1(R,\Cout)=K_0\left (\frac{R \sqrt{2} }{\e}\right )\Cout + \widehat{G}^1(R,\Cout)$
with
\begin{equation*}
\|\widehat{G}^1\|_2+\e \|(\widehat{G}^1)'\|_2 \leq M \frac{\e^{1-\a}}{q} \left \|
K_0\left (\frac{R \sqrt{2}}{\e }\right )\right \|_2 |\Cout| \leq M_1\e^{2}.
\end{equation*}
\item[(iii)] and for $q\in [0,q_0]$
$$
\e\|W^0\|_2  \leq M   \left \|K_0\left (\frac{R \sqrt{2}}{\e }\right )\right \|_2 |\Cout| \leq M_1\e^2
, \qquad
\e \|W^1\|_2 \leq M_1\frac{\e^{3-\alpha}}{q} .
$$
\end{itemize}
where $M_1=M_1(\mu_0,\mu_1,\eta)$ depends on $\mu_0,\mu_1$, and $\eta$.
\end{theorem}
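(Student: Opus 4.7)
The plan is to apply Banach's contraction principle to the fixed point equation~\eqref{eqfxp} in a closed ball of $\mathcal{X}_2 \times \mathcal{X}_2$ over the region $R \ge R_\m^* = \e^\alpha$, controlling in addition $\e\|G'\|_2$ so as to handle the derivatives of $G$ that appear inside $\mathcal{N}_2$ through the composition $\mathcal{N}_2[\op_1[G,W], W]$. On this domain $R\sqrt{2}/\e \ge \sqrt{2}\,\e^{\alpha-1}$ is large, so the Bessel asymptotics~\eqref{asymKIn_new} apply uniformly to $K_0$, $I_0$ and to $K_{inq}$ (which is $\O(1)$-close to $K_0$ because $|K_{inq}(R)|^2$ is analytic in $nq$ near $0$).

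Splitting the integrals defining $\mathcal{S}_1, \mathcal{S}_2$ in~\eqref{def:inverse1outer}--\eqref{def:inverse2outer} at the current point $R$ and using the matched decay and growth of $K_0$ versus $I_0$, I would first establish the linear bounds
\[
\|\mathcal{S}_1[h]\|_2 + \e\,\|(\mathcal{S}_1[h])'\|_2 \le M\,\e^2\,\|h\|_2,
\qquad
\|\mathcal{S}_2[h]\|_2 \le M\,\|h\|_2,
\]
the factor $\e^2$ being the expected gain from inverting the operator $\mathcal{L}_1$ whose principal symbol is $2\e^{-2}$. For the forcing, \eqref{def:N1outer} gives $\mathcal{N}_1[0,0] = \e^2(F_0'' + F_0'/R)$, and Proposition~\ref{prop:dominantouter} together with~\eqref{def:F0hatouter} yields $\|F_0'' + F_0'/R\|_2 \le M\e^{1-\alpha}/q$ on $R \ge R_\m^*$, the worst contribution coming from differentiating the $1/(qR)$ part of $F_0$; composing with $\mathcal{S}_1$ produces the $\Cout$-independent size $\O(\e^{3-\alpha}/q)$ claimed in item (i).

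For the Lipschitz estimates, whenever $G''$ or $G''+G'/R$ appears inside $\mathcal{N}_2$ I would substitute $G''+G'/R = 2G/\e^2 - \e^{-2}\mathcal{N}_1[G,W]$ from~\eqref{eqfinal:G}, so that $\mathcal{N}_1, \mathcal{N}_2$ reduce to polynomial/rational expressions in $(G, G', W)$ on which standard product estimates in the weighted norm $\|\cdot\|_2$ apply. On the ball $\|G\|_2 + \e\|G'\|_2 + \e\|W\|_2 \le M\e^2$ the resulting Lipschitz constant is of order $\e^{1-\alpha}/q$, which is smaller than $1/2$ under the hypothesis $q^{-1}\e^{1-\alpha} < e_0$ with $e_0$ small. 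Banach's theorem then yields the unique $(G, W)$ together with the norm bound of the theorem.

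To extract the decomposition I would re-run the contraction with $\Cout = 0$, producing the fixed point $(G^0, W^1)$; nothing in that sub-problem depends on $\eta$, so its size $M_0\e^{3-\alpha}/q$ can be obtained with $M_0 = M_0(\mu_0, \mu_1)$, which is item (i). Setting $G^1 := G - G^0$ and $W^0 := W - W^1$, the pair $(G^1, W^0)$ satisfies an affine equation whose source in the first component is exactly $\mathbf{G}_0 = K_0(R\sqrt{2}/\e)\Cout$; writing $G^1 = K_0(R\sqrt{2}/\e)\Cout + \widehat{G}^1$ and applying once more the Lipschitz bound gives the estimates on $\widehat{G}^1$ and $W^0$ in (ii)--(iii), with~\eqref{eq:condicioc1} converting $\|K_0(R\sqrt{2}/\e)\|_2 |\Cout|$ into the final $\O(\e^2)$ bound. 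The main obstacle is the bookkeeping in this last step: the sharp exponential weight $e^{-\sqrt{2}\,\e^{\alpha-1}}$ hidden inside $\|K_0(R\sqrt{2}/\e)\|_2$ must be preserved as $\Cout$ propagates through $\op_1$ and $\op_2$, which is secured by the fact that both linear operators, applied to a function proportional to $K_0(R\sqrt{2}/\e)$, return a function with the same exponential profile multiplied by the small gain $\e^{1-\alpha}/q$.
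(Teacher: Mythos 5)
Your overall architecture is the one the paper uses (contraction in the weighted spaces $\mathcal{X}_2$, Gauss--Seidel form so that $\mathcal{N}_2$ is evaluated on $\op_1[G,W]$, and the $\Cout=0$ sub-problem to produce the decomposition $G=G^0+G^1$, $W=W^0+W^1$), and most of your linear estimates are correct. The genuine gap is in your treatment of the second-derivative terms inside $\mathcal{N}_2$, i.e.\ of $-\frac{q^2}{F_0+G}\big(G''+\frac{G'}{R}\big)$ in \eqref{def:N2outer}. Substituting $G''+\frac{G'}{R}=\frac{2G}{\e^2}-\e^{-2}\mathcal{N}_1[G,W]$ from \eqref{eqfinal:G} introduces the prefactor $q^2\e^{-2}=k^{-2}=q^2\mu^{-2}e^{\pi/(nq)}$, which is exponentially large. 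The only information available on the ball is $\|G\|_2\le M\e^2$, so the resulting term $\frac{2G}{k^2(F_0+G)}$ can only be bounded in $\|\cdot\|_2$ by $M\e^2/k^2=Mq^2$, and after applying $\mathcal{S}_2$ (which gains nothing on $\mathcal{X}_2$) you obtain $\|W\|_2\lesssim q^2$. Since $\e=\mu e^{-\pi/(2nq)}$ is exponentially small, $q^2\gg\e$, so this is far weaker than the required $\e\|W\|_2\le M\e^2$; in particular the ball $\{\e\|W\|_2\le\kappa\e^2\}$ is not shown to be invariant, and the resulting bound on $w^{\mathrm{out}}$ would be useless for the matching in Section~\ref{subsec:mathching1}. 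The missing factor cannot be recovered from the algebraic identity alone: the true smallness of $\mathcal{S}_2\big[\tfrac{q^2}{F_0+G}(G''+\tfrac{G'}{R})\big]$ comes from the fact that $\mathcal{S}_2$ is an integration operator, so one must integrate by parts against the kernel $\xi K_{inq}^2(\xi)$ (the paper's Lemma~\ref{lem:S22}), which replaces $G''$ by $G'$ (of size $\e$ in $\|\cdot\|_2$) at the cost of a kernel-derivative term; that term is then controlled by the auxiliary operator $\mathcal{A}[h]=\mathcal{S}_2[hV_0]$ of Lemma~\ref{lem:AS2}, whose bound $\|\mathcal{A}[h]\|_\ell\le\tfrac12\|h\|_\ell$ rests on the identity $\xi K_{inq}^2V_0=\tfrac12\xi(K_{inq}^2)'$ and on the sign/monotonicity properties of $K_{inq}$ from Proposition~\ref{prop:dominantouter}, not on generic decay.

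A second, related omission: the term $2V_0\frac{F_0'+G'}{F_0+G}$ also cannot be handled by ``standard product estimates'' in $\|\cdot\|_2$, because $\sup_{R\ge\e^{\alpha}}|V_0(R)|\sim\e^{-\alpha}$, which would give $\|\mathcal{S}_2[V_0G'h]\|_2\lesssim\e^{1-\alpha}\gg\e$. Here again the factor $V_0$ must be absorbed into the kernel via $\mathcal{A}$, so that it costs nothing. Relatedly, your claim that $\|\mathcal{S}_2[h]\|_2\le M\|h\|_2$ follows from ``matched decay'' glosses over the actual mechanism: the paper proves $\tfrac{1}{K_{inq}^2(R)}\int_R^\infty K_{inq}^2\le\tfrac12$ (Lemma~\ref{lem:Kinq1_new}) using $V_0<-1$, which is a nontrivial monotonicity statement established in Appendix~\ref{sec:dominantouter}. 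These are not bookkeeping issues; without the integration-by-parts device and the operator $\mathcal{A}$, the contraction does not close in the stated norms.
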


The rest of this section is devoted to prove this theorem.
In Section~\ref{subsec:linear} we prove that the linear operators $\mathcal{S}_1$ and $\mathcal{S}_2$, defined in~\eqref{def:inverse1outer} and~\eqref{def:inverse2outer}, are bounded in $\mathcal{X}_\ell $, $\ell \geq 0$.
In Section~\ref{subsec:independent} we study
$\op[0,0]$
and finally, in Section~\ref{subsec:lip} we check that the operator $\op$ is Lipschitz in a suitable ball.

It is worth mentioning that the more technical part in this procedure comes from the study of the function
$V_0$ (and $K_{inq}$) done in Proposition~\ref{prop:dominantouter}.

From now on, we fix $\eta, \mu_0,\mu_1$, we will take $\e,q$ as small as needed, and $\Cout$ satisfying~\eqref{eq:condicioc1}. We also will denote by $M$ any constant independent of $\e,q$.

\subsection{The linear operators}\label{subsec:linear}
We prove that, $\mathcal{S}_1, \mathcal{S}_2$ are bounded operators in the Banach spaces $\mathcal{X}_\ell$ defined in \eqref{def:banach} along with important properties of such operators.

\subsubsection{The operator $\mathcal{S}_1$}
In this section we prove that $\mathcal{S}_1 :\mathcal{X}_\ell \to \mathcal{X}_\ell$ is a bounded operator. In addition we also provide bounds for
$(\mathcal{S}_1[h])', (\mathcal{S}_1[h])''$.

\begin{lemma}\label{lem:G1}
Take $R_\m \geq  \e z_0/\sqrt{2}$, with $z_0$ given in~\eqref{asymKIn_new}, and $\ell\ge 0$. Then, if $\e$ is small enough, the linear operator $\mathcal{S}_1:\mathcal{X}_\ell \to \mathcal{X}_\ell$  defined in~\eqref{def:inverse1outer} is a bounded operator. Moreover there exists a constant $M>0$ such that for $h\in \mathcal{X}_\ell$,
$$
\|\mathcal{S}_1[h]\|_\ell \leq M \e ^2 \|h\|_\ell.
$$
\end{lemma}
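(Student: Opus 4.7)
The plan is to estimate $\mathcal{S}_1[h](R)$ pointwise using the asymptotic expansions of $K_0$ and $I_0$ recorded in \eqref{asymKIn_new}. Since $R\ge R_\m\ge\e z_0/\sqrt{2}$, both arguments $R\sqrt{2}/\e$ and $\xi\sqrt{2}/\e$ (for $\xi\ge R_\m$) lie in the range where the asymptotic expansions apply, yielding
$$
\left|K_0\!\left(\tfrac{z\sqrt{2}}{\e}\right)\right|\le M\sqrt{\tfrac{\e}{z}}\,e^{-z\sqrt{2}/\e},\qquad
\left|I_0\!\left(\tfrac{z\sqrt{2}}{\e}\right)\right|\le M\sqrt{\tfrac{\e}{z}}\,e^{z\sqrt{2}/\e},
$$
uniformly for $z\ge R_\m$, where $M$ depends only on $z_0$.

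Using $|h(\xi)|\le \|h\|_\ell\,\xi^{-\ell}$ and splitting $\mathcal{S}_1[h]=\mathrm{I}+\mathrm{II}$ with $\mathrm{I}$ and $\mathrm{II}$ the two summands in \eqref{def:inverse1outer}, the two estimates reduce to
$$
|\mathrm{I}(R)|\le M\e\,\|h\|_\ell\sqrt{\tfrac{1}{R}}\,e^{-R\sqrt{2}/\e}\!\int_{R_\m}^R\!\xi^{1/2-\ell}\,e^{\xi\sqrt{2}/\e}\,\mathrm d\xi,
$$
$$
|\mathrm{II}(R)|\le M\e\,\|h\|_\ell\sqrt{\tfrac{1}{R}}\,e^{R\sqrt{2}/\e}\!\int_{R}^\infty\!\xi^{1/2-\ell}\,e^{-\xi\sqrt{2}/\e}\,\mathrm d\xi.
$$

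Next I would evaluate the two exponential integrals by one integration by parts (taking $e^{\pm\xi\sqrt{2}/\e}\,\mathrm d\xi$ as the integrable factor). In both cases the boundary term at $\xi=R$ dominates, giving
$$
\int_{R_\m}^R\!\xi^{1/2-\ell}e^{\xi\sqrt{2}/\e}\,\mathrm d\xi\le M\e\,R^{1/2-\ell}e^{R\sqrt{2}/\e},\qquad
\int_{R}^\infty\!\xi^{1/2-\ell}e^{-\xi\sqrt{2}/\e}\,\mathrm d\xi\le M\e\,R^{1/2-\ell}e^{-R\sqrt{2}/\e},
$$
where the remainder terms from the integration by parts are of order $\e^2$ (smaller) and can be absorbed into $M$ for $\e$ small. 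Equivalently, one may change variables $u=\xi\sqrt{2}/\e$ and invoke the standard asymptotics $\Gamma(\sigma,x)\sim x^{\sigma-1}e^{-x}$ for the upper incomplete gamma function.

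Substituting these bounds back gives $|\mathrm{I}(R)|,\ |\mathrm{II}(R)|\le M\e^2\,\|h\|_\ell\,R^{-\ell}$, and hence $\|\mathcal{S}_1[h]\|_\ell\le M\e^2\|h\|_\ell$ as claimed. The only subtle point is the uniformity in $\ell\ge 0$: the integration-by-parts remainder produces a prefactor depending on $(1/2-\ell)$, but it is multiplied by an extra $\e$, so for any fixed $\ell\ge 0$ (or even uniformly in bounded ranges of $\ell$) it remains absorbed in the constant $M$ once $\e$ is small. This is the essentially routine but bookkeeping-heavy core of the estimate; no further structural ingredient is needed.
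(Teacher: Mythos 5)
Your proposal is correct and follows essentially the same route as the paper: bound $K_0$ and $I_0$ via \eqref{asymKIn_new} and show each Laplace-type integral contributes a factor $\e$ (the paper rescales $t=\xi\sqrt{2}/\e$ and bounds the resulting function $\mathcal{M}(z)$ on $[z_0,\infty)$ using its limit $1$ at infinity, which is exactly your incomplete-gamma alternative). One small caution: the integration-by-parts remainder is not ``of order $\e^2$'' uniformly, because for $R$ near $R_\m\sim\e z_0/\sqrt{2}$ it only gains the bounded factor $\e/\xi\le \sqrt{2}/z_0$ over the original integral, not an extra power of $\e$; the estimate still closes (by absorbing the remainder when $|\ell-1/2|<z_0$, by iterating, or by the $\Gamma(\sigma,x)$/$\mathcal{M}(z)$ boundedness argument), but the justification as written is imprecise at the lower endpoint.
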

\begin{proof}
Since $R_\m$ is such that $\frac{R_\m\sqrt{2}}{\e}  >z_0$, by~\eqref{asymKIn_new}, for any $R\geq R_\m$
\begin{equation}\label{asymptoticsKn}
K_0\left (\frac{R\sqrt{2}}{\e}\right )= \sqrt{\frac{\pi\e}{2\sqrt{2}R}} e^{-\frac{R\sqrt{2}}{\e}} \left (1+ \O\left (\frac{\e }{R}\right )\right ) ,
\end{equation}
and
$$
I_0\left (\frac{R\sqrt{2}}{\e}\right )= \sqrt{\frac{\e}{2\sqrt{2}R\pi}} e^{\frac{R\sqrt{2}}{\e}} \left (1+ \O \left (\frac{\e}{R}\right )\right ).
$$

Let now $h\in \mathcal{X}_\ell$, that is $|h(\xi) | \leq  \xi^{-\ell} \|h\|_\ell$. Then:
\begin{align*}
\left | R^\ell {\opg}_1[h](R) \right |
&\leq C R^{\ell-1/2} \left(\frac{\e}{\sqrt{2}}\right)  \| h\|_\ell
\left [ e^{-\frac{R\sqrt{2}}{\e }}\int_{R_\m}^R \frac{e^{\frac{\xi\sqrt{2}}{\e}}}{\xi^{\ell-1/2}} \dd \xi
+ e^{\frac{R\sqrt{2}}{\e}}\int_{R}^\infty \frac{e^{\frac{-\xi\sqrt{2}}{\e }}}{\xi^{\ell-1/2}} \dd \xi\right ] \\
& \leq  C \left (\sqrt{2}\frac{R}{\e}\right )^{\ell-1/2} \left(\frac{\e}{\sqrt{2}}\right) ^{2}  \| h\|_\ell\left [ e^{-\frac{R\sqrt{2}}{\e }} \int_{z_0}^{\frac{R\sqrt{2}}{\e}} \frac{e^t}{t^{\ell-1/2}} \dd t
+ e^{\frac{R\sqrt{2}}{\e}}\int_{\frac{R\sqrt{2}}{\e}}^\infty \frac{e^{-t}}{t^{\ell-1/2}} \dd t\right ]\\
&= C  \left(\frac{\e}{\sqrt{2}}\right)^{2}  \| h\|_\ell \mathcal{M}\left (\frac{R\sqrt{2}}{\e}\right ).
\end{align*}
where
\begin{align*}
\mathcal{M}(z)&=
z^{\ell-1/2}\left [ e^{-z} \int_{z_0}^{z} \frac{e^t}{t^{\ell-1/2}} \dd t
+ e^{z}\int_{z}^\infty \frac{e^{-t}}{t^{\ell-1/2}} \dd t\right ],
\end{align*}
and one can easily see that $\lim_{z\to \infty}\mathcal{M}(z)=1$. Therefore there exists a constant $M>0$ such that $|\mathcal{M}(z)|\le M$ for $z\geq z_0$ and
consequently:
$$
\left | R^\ell  {\opg}_1[h](R) \right | \leq C M\e^{2} \|h \|_\ell.
$$
\end{proof}

\begin{corollary}\label{cor:L1}
Let $R_\m\geq  \e z_0 /\sqrt{2}$ and $\ell\geq 0$. Then for $\e$  small enough and $h\in \mathcal{X}_\ell$, the function  ${\opg}_1[h]$ belongs to $\mathcal{C}^2 ([R_\m,\infty))$. In addition, there exists a constant $M>0$ such that:
$$
\left \|\big ( {\opg}_1[h]\big )' \right \|_\ell \leq M\e  \|h\|_\ell , \qquad \left \|\big ( {\opg}_1 [h]\big )'' \right \|_\ell \leq M \|h\|_\ell.
$$
\end{corollary}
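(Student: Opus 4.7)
The plan is to reduce both derivative bounds to tools already at hand, avoiding any new delicate integral estimate. For $(\mathcal{S}_1[h])'$, I would differentiate~\eqref{def:inverse1outer} under the integral sign; a short calculation shows that the two boundary contributions at $\xi=R$ cancel exactly:
\[
R\,K_0\!\left(\tfrac{R\sqrt{2}}{\e}\right)I_0\!\left(\tfrac{R\sqrt{2}}{\e}\right)h(R)-R\,I_0\!\left(\tfrac{R\sqrt{2}}{\e}\right)K_0\!\left(\tfrac{R\sqrt{2}}{\e}\right)h(R)=0.
\]
Consequently
\[
(\mathcal{S}_1[h])'(R)=\tfrac{\sqrt{2}}{\e}\!\left[K_0'\!\left(\tfrac{R\sqrt{2}}{\e}\right)\!\int_{R_\m}^{R}\!\xi I_0\!\left(\tfrac{\xi\sqrt{2}}{\e}\right)h(\xi)\,\dd\xi+I_0'\!\left(\tfrac{R\sqrt{2}}{\e}\right)\!\int_{R}^{\infty}\!\xi K_0\!\left(\tfrac{\xi\sqrt{2}}{\e}\right)h(\xi)\,\dd\xi\right].
\]
The bracket is structurally identical to $\mathcal{S}_1[h]$ with $K_0,I_0$ replaced by $K_0',I_0'$. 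Since $K_0'=-K_1$ and $I_0'=I_1$ share, up to a sign, the leading asymptotics~\eqref{asymKIn_new}, the computation of Lemma~\ref{lem:G1} applies verbatim and bounds the bracket by $M\e^2\|h\|_\ell$; the prefactor $\sqrt{2}/\e$ then yields $\|(\mathcal{S}_1[h])'\|_\ell\leq M\e\|h\|_\ell$.

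For the second derivative I would \emph{not} differentiate the explicit formula again. Instead, since $G:=\mathcal{S}_1[h]$ was constructed precisely as the decaying solution of $\mathcal{L}_1[G]=-h$, the identity
\[
G''(R)=-h(R)-\tfrac{1}{R}\,G'(R)+\tfrac{2}{\e^2}\,G(R),\qquad R\geq R_\m,
\]
holds pointwise. Multiplying by $R^\ell$ and taking the supremum, each of the three pieces is $O(\|h\|_\ell)$: the term $h$ is trivial; the hypothesis $R\geq R_\m\geq \e z_0/\sqrt{2}$ gives $1/R\leq \sqrt{2}/(\e z_0)$, and combined with the just-proved first-derivative bound one obtains $\|G'/R\|_\ell\leq\|G'\|_\ell/R_\m\leq M\|h\|_\ell$; finally, Lemma~\ref{lem:G1} yields $\|2G/\e^2\|_\ell\leq(2/\e^2)M\e^2\|h\|_\ell=M\|h\|_\ell$. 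Adding the three, $\|(\mathcal{S}_1[h])''\|_\ell\leq M\|h\|_\ell$. The $\mathcal{C}^2$ regularity is then automatic, since $G,G',G''$ are expressed as sums and products of continuous functions.

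I do not anticipate a serious obstacle; the subtle conceptual step is to \emph{refuse} to differentiate the explicit formula twice. A direct estimate of $G''$ would naively cost an extra $\e^{-1}$ per derivative and produce a useless $\e^0$ bound; reading $G''$ off the ODE keeps the degenerate $\e^{-2}$ factor only once, where it is precisely absorbed by the $\e^2$ smallness of $\|G\|_\ell$ guaranteed by Lemma~\ref{lem:G1}, while the singular coefficient $1/R$ is tamed by the lower bound $R\geq R_\m$ of order $\e$.
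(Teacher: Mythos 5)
Your proposal is correct and follows essentially the same route as the paper: the paper also differentiates the explicit formula once (with the boundary terms at $\xi=R$ cancelling), invokes the fact that $K_0'$, $I_0'$ share the asymptotics \eqref{asymKIn_new} so that the argument of Lemma~\ref{lem:G1} applies verbatim, and then recovers $(\mathcal{S}_1[h])''$ from the ODE $\varphi''+\varphi'/R-2\varphi/\e^2=-h$ rather than differentiating the integral formula a second time. The only cosmetic difference is that you make the identities $K_0'=-K_1$, $I_0'=I_1$ explicit where the paper merely cites the common asymptotic expansion.
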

\begin{proof}
Let $\varphi=  {\opg}_1(h)$.
Notice that
\begin{align*}
\varphi'(R) = \frac{\sqrt{2}}{\e} \left [K_0'\left (\frac{R\sqrt{2}}{\e}\right )   \int_{R_\m}^{R} \xi I_0\left (\frac{\xi\sqrt{2}}{\e} \right ) h(\xi)\dd \xi  +
I_0'\left (\frac{R\sqrt{2}}{\e}\right ) \int_{R}^\infty \xi K_0\left (\frac{\xi\sqrt{2}}{\e} \right ) h(\xi) \dd \xi \right ].
\end{align*}
That is $\varphi$ is differentiable if $h$ is continuous (by definition). Moreover, since $K_0'(z),I_0'(z)$ have the same asymptotic expansions as $K_0,I_0$ (in~\eqref{asymKIn_new}) performing the same computations as in the proof of Lemma~\ref{lem:G1} we obtain the result for $\varphi'$.

We note that $\varphi'$ is differentiable if $h$ is continuous (again simply by definition). Then $\varphi$ is $\mathcal{C}^2$. Moreover,
$$
\varphi'' + \frac{\varphi'}{R} -   2\frac{\varphi}{\e^2} =-h,
$$
and therefore
$$
|R^\ell \varphi''(R) | \leq   M\| h \|_\ell \left (3+ \frac{\e }{R} \right ) \leq M \|h\|_\ell.
$$
\end{proof}

\subsubsection{The operator $\mathcal{S}_2$}
Let us first provide a technical lemma.
\begin{lemma}\label{lem:Kinq1_new}
There exists $q_0>0$, such that for any $0<q<q_0$,
if $R\geq 2e^2  e^{-\frac{\pi}{2qn}}$:
\begin{equation*}
\frac{1}{K_{inq}^2(R)} \int_{R}^{\infty} K_{inq}^2(\xi) \dd \xi \leq \frac{1}{2}.
\end{equation*}
\end{lemma}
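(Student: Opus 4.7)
The plan is to reformulate the ratio as the solution of a first-order linear ODE and extract the sign from the negativity of $1+V_0$, where $V_0=K_{inq}'/K_{inq}$, which is already available from item~(3) of Proposition~\ref{prop:dominantouter}.

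Concretely, I would set $\phi(R)=K_{inq}^{-2}(R)\int_R^\infty K_{inq}^2(\xi)\,d\xi$. A direct differentiation, together with the identity $(\log K_{inq}^2)'=2V_0$, yields the linear ODE $\phi'(R)+2V_0(R)\phi(R)=-1$. Using the asymptotic expansion~\eqref{asymKIn_new}, squaring it, and integrating by parts one obtains
\[
K_{inq}^2(R)\sim \frac{\pi}{2R}e^{-2R},\qquad \int_R^\infty K_{inq}^2(\xi)\,d\xi\sim \frac{\pi}{4R}e^{-2R},\qquad R\to\infty,
\]
so $\phi(R)\to \tfrac12$ as $R\to\infty$.

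Then I would use $K_{inq}^2$ itself as integrating factor. Multiplying the ODE by $K_{inq}^2$ and rewriting gives $\bigl(K_{inq}^2(R)(\phi(R)-\tfrac12)\bigr)'=-K_{inq}^2(R)(1+V_0(R))$. Integrating from $R$ to $\infty$ and using $K_{inq}^2(\infty)=0$ together with $\phi(\infty)=\tfrac12$ yields the explicit representation
\[
\phi(R)-\tfrac12=\frac{1}{K_{inq}^2(R)}\int_R^\infty K_{inq}^2(\xi)\bigl(1+V_0(\xi)\bigr)\,d\xi.
\]
By item~(3) of Proposition~\ref{prop:dominantouter}, $v_0^{\mathrm{out}}(r;k,q)<-k$ whenever $kqr\ge 2e^{-\pi/(2nq)}$, which in the outer variable $R=kqr$ reads $V_0(\xi)<-1$ for all $\xi\ge 2e^{-\pi/(2nq)}$. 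Since the hypothesis $R\ge 2e^2e^{-\pi/(2nq)}$ guarantees $[R,\infty)\subset [2e^{-\pi/(2nq)},\infty)$, the factor $1+V_0(\xi)$ is strictly negative throughout the integration range, and therefore $\phi(R)<\tfrac12\le \tfrac12$, as required.

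The only technical point is justifying the limit $\phi(R)\to\tfrac12$, for which one must push the asymptotics of $K_{inq}$ one order further than the leading term in~\eqref{asymKIn_new}; this is routine, since the remainder bound in~\eqref{asymKIn_new} together with an integration by parts controls the next-order contribution uniformly for $nq$ in a bounded range. The extra factor $e^2$ in the hypothesis is not essential for this argument (any $R$ past the first zero of $K_{inq}$ would do); it just leaves an explicit margin.
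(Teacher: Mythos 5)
Your argument is correct and is essentially the paper's own: the paper sets $\psi(R)=\int_R^\infty K_{inq}^2(\xi)\,\dd\xi-\tfrac12K_{inq}^2(R)$, which is exactly your $K_{inq}^2(R)\bigl(\phi(R)-\tfrac12\bigr)$, computes $\psi'(R)=-K_{inq}^2(R)\bigl(1+V_0(R)\bigr)>0$ using item~\ref{item3dominantouter} of Proposition~\ref{prop:dominantouter}, and concludes from $\psi(R)\leq\lim_{R\to\infty}\psi(R)=0$; your integral representation is just the integrated form of this monotonicity argument. The only superfluous step is the verification that $\phi(R)\to\tfrac12$: the boundary term in your integration only requires $\psi(R)\to 0$, which follows immediately from $K_{inq}^2(R)\to 0$ and $\int_R^\infty K_{inq}^2(\xi)\,\dd\xi\to 0$, with no need for second-order asymptotics of $K_{inq}$.
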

\begin{proof}
The proof is straightforward from item~\ref{item3dominantouter} of  Proposition~\ref{prop:dominantouter}. Indeed, we first recall that $V_0(R)=\vdout(R/\e)$ and hence $V_0(R)<-1$. Then, we consider the function $\psi(R) = \int_{R}^\infty K_{inq}^2(\xi)\dd \xi - \frac{1}{2} K_{inq}^2(R)$ and we point out that we just need to prove that $\psi(R)\leq 0$ if $R\geq 2 e^2 e^{-\frac{\pi}{2n q}}$. We have that
\begin{align*}
\psi'(R) & = - K_{inq}^2(R) - K_{inq}(R) K_{inq}'(R) = -K_{inq}^2(R)  \left [ 1 + \frac{K'_{inq}(R)}{K_{inq}(R)}\right ]
\\ &=-K_{inq}^2(R) [1 +V_0(R)].
\end{align*}
Therefore, since $V_0(R)<-1$ for $R\geq 2 e^2 e^{-\frac{\pi}{2 nq }}$, then $\psi'(R)>0$ and using that $\psi(R) \leq \lim_{R\to \infty} \psi(R) = 0$ the result is proven.
\end{proof}

The following lemma, provides bounds for the norm of the linear operator
$\opg_2$, defined in \eqref{def:inverse2outer}.
\begin{lemma}\label{lem:W2}
There exists $q_0>0$, such that for any $0<q<q_0$,
taking $R_\m\geq 2 e^2 e^{-\frac{\pi}{2qn}}$, the operator $\mathcal{S}_2 : \mathcal{X}_\ell \to \mathcal{X}_\ell$, defined in \eqref{def:inverse2outer} is bounded for all $\ell \geq 1$. Moreover, if $h\in \mathcal{X}_\ell$, $\ell=1,2$
$$
\|\opg_2[h] \|_\ell\leq \frac{1}{2} \|h \|_\ell .
$$

In addition, when $h\in \mathcal{X}_3$,
\begin{equation}
\label{eq:lem47b}
 \|\opg_2[h]\|_2 \leq  \|h\|_3.
\end{equation}

\end{lemma}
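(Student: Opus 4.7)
The plan is to estimate $|R^\ell \opg_2[h](R)|$ pointwise and reduce each bound to a comparison of integrals of the form $K_{inq}^{-2}(R)\int_R^\infty \xi^{-s}K_{inq}^2(\xi)\,\dd\xi$, exploiting Lemma~\ref{lem:Kinq1_new} and the sign information on $V_0$ already proved in Proposition~\ref{prop:dominantouter}.

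For the first part, starting from definition~\eqref{def:inverse2outer} and using $|h(\xi)|\leq \xi^{-\ell}\|h\|_\ell$ for $h\in\mathcal{X}_\ell$ with $\ell\geq 1$, one gets
$$
|R^\ell \opg_2[h](R)|\leq \frac{R^{\ell-1}}{K_{inq}^2(R)} \int_R^\infty \xi^{1-\ell} K_{inq}^2(\xi)\,\dd\xi\,\|h\|_\ell.
$$
Since $1-\ell\leq 0$ and $\xi\geq R$, the elementary bound $\xi^{1-\ell}\leq R^{1-\ell}$ collapses the right-hand side to $K_{inq}^{-2}(R)\int_R^\infty K_{inq}^2(\xi)\,\dd\xi\cdot\|h\|_\ell$, which by Lemma~\ref{lem:Kinq1_new} is at most $\tfrac12\|h\|_\ell$. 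This gives $\|\opg_2[h]\|_\ell\leq \tfrac12\|h\|_\ell$ for every $\ell\geq 1$, in particular for $\ell=1,2$.

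The refined estimate \eqref{eq:lem47b} requires extracting an extra factor $R^{-1}$ from $\int_R^\infty \xi^{-2}K_{inq}^2(\xi)\,\dd\xi$, since merely using $|h(\xi)|\leq \xi^{-3}\|h\|_3$ and the trivial $\xi^{-2}\leq R^{-1}\xi^{-1}$ yields only $\|h\|_3/(2R)$, which is hopelessly large on $[R_\m,\infty)$ because $R_\m$ is exponentially small in $1/q$. The trick is to integrate by parts. Using $(\xi^{-1}K_{inq}^2)'=-\xi^{-2}K_{inq}^2+2\xi^{-1}K_{inq}K_{inq}'$, the relation $K_{inq}'/K_{inq}=V_0$ from~\eqref{defV0F0secouter}, and the exponential decay of $K_{inq}$ at infinity~\eqref{asymKIn_new},
$$
\int_R^\infty \xi^{-2} K_{inq}^2(\xi)\,\dd\xi = R^{-1}K_{inq}^2(R) + 2\int_R^\infty \xi^{-1} K_{inq}^2(\xi) V_0(\xi)\,\dd\xi.
$$
By item~\ref{item3dominantouter} of Proposition~\ref{prop:dominantouter}, $V_0(\xi)<-1$ throughout $[R_\m,\infty)$, so the last summand is non-positive and the integral is bounded by $R^{-1}K_{inq}^2(R)$. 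Substituting into $|R^2\opg_2[h](R)|\leq (R/K_{inq}^2(R))\int_R^\infty \xi^{-2}K_{inq}^2(\xi)\,\dd\xi\,\|h\|_3$ produces exactly $\|\opg_2[h]\|_2\leq \|h\|_3$.

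The only non-routine step is this integration by parts combined with the sign analysis of $V_0$: without the strict bound $V_0<-1$, the remainder term cannot be discarded and one falls back to the useless $1/(2R)$ estimate. Thus the hypothesis $R_\m\geq 2 e^2 e^{-\pi/(2qn)}$ plays a double role, entering through Lemma~\ref{lem:Kinq1_new} in the first part and through the pointwise inequality $V_0(\xi)<-1$ of Proposition~\ref{prop:dominantouter} in the refined estimate.
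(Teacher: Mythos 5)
Your proof is correct. The first part (the bound $\|\opg_2[h]\|_\ell\leq\tfrac12\|h\|_\ell$ for $\ell\geq 1$) is exactly the paper's argument: pull out $\xi^{1-\ell}\leq R^{1-\ell}$ and invoke Lemma~\ref{lem:Kinq1_new}. For the refined estimate \eqref{eq:lem47b} you take a genuinely different route. The paper simply uses that $K_{inq}$ is positive and decreasing on $[R_\m,\infty)$ (Lemma~\ref{lem:Kinq_der}), so $K_{inq}^2(\xi)\leq K_{inq}^2(R)$ for $\xi\geq R$ and
$$
\frac{R}{K_{inq}^2(R)}\int_R^\infty \frac{K_{inq}^2(\xi)}{\xi^2}\,\dd\xi\leq R\int_R^\infty\frac{\dd\xi}{\xi^2}=1,
$$
in one line. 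You instead integrate by parts and discard the cross term $2\int_R^\infty \xi^{-1}K_{inq}^2V_0\,\dd\xi$ using the sign of $V_0$. Both arguments rest on the same underlying fact, $K_{inq}'<0$ on the relevant range (your sign condition $K_{inq}K_{inq}'<0$ is exactly the monotonicity the paper uses), so neither is more general; yours is just a longer path to the same inequality. Two small remarks: you only need $V_0\leq 0$, not the strict bound $V_0<-1$, to discard the remainder, so your closing claim that the argument would collapse without $V_0<-1$ overstates what is required; and your diagnosis that the "naive" estimate gives only $\|h\|_3/(2R)$ is right in spirit but slightly off in emphasis --- the issue is not that $R_\m$ is exponentially small per se, but that the target is a bound uniform in $R$ on $[R_\m,\infty)$, and $1/(2R)$ blows up as $R\downarrow R_\m$.
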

\begin{proof}
Let $\ell \geq 1$ and $h\in \mathcal{X}_\ell $. Then , by Lemma~\ref{lem:Kinq1_new}
$$
\left |R^\ell \opg_2[h](R) \right | \leq \frac{R^{\ell-1}\|h\|_\ell}{K_{inq}^2(R)} \int_{R}^\infty \frac{K_{inq}^2(\xi)}{\xi^{\ell-1}}\dd \xi  \leq
 \frac{\|h\|_\ell}{K_{inq}^2(R)} \int_{R}^\infty K_{inq}^2(\xi)\dd \xi \leq \frac{1}{2} \|h\|_\ell.
$$

When $h\in \mathcal{X}_3$, then since $K_{inq}>0$ and decreasing:
$$
\left |R^2 \opg_2[h](R) \right | \leq \frac{R\|h\|_3}{K_{inq}^2(R) } \int_{R}^\infty \frac{K_{inq}^2(\xi)}{\xi^2} \dd \xi \leq
  \|h \|_3 R \int_{R}^\infty \frac{1}{\xi^2}\dd \xi \leq  \|h\|_3.
$$
\end{proof}

Because in the definition of the operator $\mathcal{N}_2 $ (see \eqref{def:N2outer}) there are  some derivatives involved, we need a more accurate control about how the operator
$\opg_2$ acts on a special type of functions. In particular we shall need to control $\mathcal{S}_2[h V_0]$, where we recall that $V_0= K'_{inq}(R) (K_{inq}(R))^{-1}$. For this reason we study first the auxiliary linear operator defined by
\begin{equation}
\label{def:A}
\mathcal{A}[h](R)=\mathcal{S}_2[h V_0](R)= \frac{1}{R K_{inq}^2(R)} \int_\infty^{R} \xi h(\xi) K_{inq}'(\xi) K_{inq}(\xi) \dd \xi.
\end{equation}
\begin{lemma}\label{lem:AS2}
With the same hypotheses as in Lemma~\ref{lem:W2}, for any $h\in \mathcal{X}_\ell $,
$$
\|\mathcal{A}[h]\|_\ell \leq \frac{1}{2}\|h\|_\ell.
$$
\end{lemma}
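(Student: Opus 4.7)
My plan rests on a single algebraic observation: since $V_0 = K_{inq}'/K_{inq}$, the integrand in the definition of $\mathcal{A}[h]$ can be rewritten as an exact derivative. Concretely,
\begin{equation*}
2\,K_{inq}(\xi)K_{inq}'(\xi) \;=\; \frac{d}{d\xi}\bigl[K_{inq}^2(\xi)\bigr],
\end{equation*}
so that
\begin{equation*}
\mathcal{A}[h](R) \;=\; \frac{1}{2\,R\,K_{inq}^2(R)} \int_{\infty}^{R} \xi\, h(\xi)\,\frac{d}{d\xi}\bigl[K_{inq}^2(\xi)\bigr]\,d\xi.
\end{equation*}
Crucially, this rewriting avoids any integration by parts, which would have required regularity of $h$ that we do not assume.

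Next I would use the sign information on $K_{inq}$. From the hypothesis $R_\m \geq 2e^2 e^{-\pi/(2nq)}$, we have already established in the proof of Lemma~\ref{lem:Kinq1_new} that $V_0(\xi) < -1$ for $\xi \geq R_\m$; in particular $K_{inq}'(\xi) < 0$, so $K_{inq}^2$ is strictly decreasing on $[R_\m,\infty)$ and $\bigl|\tfrac{d}{d\xi}[K_{inq}^2(\xi)]\bigr| = -\tfrac{d}{d\xi}[K_{inq}^2(\xi)]$. Combined with $|h(\xi)| \leq \|h\|_\ell\,\xi^{-\ell}$, this yields
\begin{equation*}
|\mathcal{A}[h](R)| \;\leq\; \frac{\|h\|_\ell}{2\,R\,K_{inq}^2(R)} \int_{R}^{\infty} \xi^{1-\ell}\,\Bigl(-\tfrac{d}{d\xi}[K_{inq}^2(\xi)]\Bigr)\,d\xi.
\end{equation*}

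Finally, for $\ell\geq 1$ the factor $\xi^{1-\ell}$ is non-increasing in $\xi$, so on $[R,\infty)$ it is bounded by $R^{1-\ell}$ and can be pulled out of the integral. What remains is the telescoping identity
\begin{equation*}
\int_{R}^{\infty}\Bigl(-\tfrac{d}{d\xi}[K_{inq}^2(\xi)]\Bigr)\,d\xi \;=\; K_{inq}^2(R) - \lim_{\xi\to\infty}K_{inq}^2(\xi) \;=\; K_{inq}^2(R),
\end{equation*}
where the limit vanishes by the asymptotics~\eqref{asymKIn_new}. Combining these bounds,
\begin{equation*}
R^\ell|\mathcal{A}[h](R)| \;\leq\; \frac{R^\ell\cdot R^{1-\ell}\,\|h\|_\ell}{2\,R\,K_{inq}^2(R)}\,K_{inq}^2(R) \;=\; \frac{1}{2}\|h\|_\ell,
\end{equation*}
and taking the supremum over $R\in[R_\m,\infty)$ gives $\|\mathcal{A}[h]\|_\ell \leq \tfrac{1}{2}\|h\|_\ell$, as desired. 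There is no real analytic obstacle here; the entire argument hinges on spotting the exact-derivative structure, after which the estimate is immediate.
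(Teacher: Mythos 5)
Your proof is correct and is essentially the paper's own argument: bound $|h(\xi)|$ by $\|h\|_\ell\,\xi^{-\ell}$, use $\xi^{1-\ell}\leq R^{1-\ell}$ for $\ell\geq 1$, and evaluate $\int_R^\infty\bigl(-K_{inq}'K_{inq}\bigr)\,d\xi=\tfrac12 K_{inq}^2(R)$ using the sign of $V_0$ and the decay of $K_{inq}$ at infinity. The only cosmetic difference is that you write the integrand explicitly as $\tfrac12\tfrac{d}{d\xi}[K_{inq}^2]$, which the paper leaves implicit.
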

\begin{proof}
Let $h\in \mathcal{X}_\ell$. Then
\begin{align*}
|R^\ell \mathcal{A}[h](R)| &\leq \frac{R^{\ell-1} \|h\|_\ell  }{K_{inq}^2(R)} \int_R^\infty (-K_{inq}'(\xi) K_{inq}(\xi) )\frac{1}{\xi^{\ell -1}} \dd \xi
\leq \frac{\|h\|_\ell }{K_{inq}^2 (R) }\int_R^\infty -K_{inq}'(\xi) K_{inq}(\xi) \dd \xi \\ & = \frac12 \|h\|_\ell .
\end{align*}
\end{proof}

\begin{lemma}\label{lem:S22} Let $h_1,h_2$ be bounded differentiable functions. Then:
\begin{align*}
\opg_2[h_1 h_2'](R)  =  h_1(R) h_2(R) - \opg_2[h_1' h_2] - \opg_2[\hat h](R)
-2\mathcal{A}[h_1h_2](R),
\end{align*}
where $\hat{h}(R)= h_1(R) h_2(R) R^{-1}$.
If $(\xi \hat{h}_1)'=\xi \hat{h}_2$ and $h$ is a differentiable bounded function, then
$$
\opg_2[\hat{h}_2 h] (R)= \hat{h}_1(R) h(R) - \opg_2[h'\hat{h}_1](R) - 2\mathcal{A}[\hat{h}_1 h](R).
$$
\end{lemma}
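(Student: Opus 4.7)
The plan is to prove both identities by straightforward integration by parts in the defining integral of $\mathcal{S}_2$, exploiting the exponential decay of $K_{inq}$ at infinity given by \eqref{asymKIn_new} to kill the boundary term.

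For the first identity, I start from the definition
\[
\mathcal{S}_2[h_1 h_2'](R) = \frac{1}{R K_{inq}^2(R)} \int_{\infty}^{R} \xi K_{inq}^2(\xi)\, h_1(\xi)\, h_2'(\xi)\, d\xi,
\]
and integrate by parts with $u(\xi) = \xi K_{inq}^2(\xi) h_1(\xi)$, $dv = h_2'(\xi)\, d\xi$. Differentiating $u$ produces three contributions, namely $K_{inq}^2 h_1$, $2\xi K_{inq} K_{inq}' h_1$, and $\xi K_{inq}^2 h_1'$. The boundary term at $\infty$ is $[\xi K_{inq}^2(\xi) h_1(\xi) h_2(\xi)]$, which vanishes because $K_{inq}(\xi)$ decays exponentially (so $\xi K_{inq}^2 \to 0$) while $h_1, h_2$ are bounded. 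Dividing through by $R K_{inq}^2(R)$, the evaluated part produces $h_1(R) h_2(R)$, the $\xi K_{inq}^2 h_1'$ piece yields $\mathcal{S}_2[h_1' h_2]$, the $2\xi K_{inq} K_{inq}' h_1$ piece is exactly $2\mathcal{A}[h_1 h_2]$ by definition \eqref{def:A}, and the remaining $K_{inq}^2 h_1 h_2$ piece rewrites as $\xi K_{inq}^2 \cdot (h_1 h_2/\xi) = \xi K_{inq}^2 \hat{h}$, yielding $\mathcal{S}_2[\hat{h}]$. Collecting signs gives the claimed identity.

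For the second identity, since $(\xi \hat{h}_1)'(\xi) = \xi \hat{h}_2(\xi)$ by hypothesis, I rewrite
\[
\mathcal{S}_2[\hat{h}_2 h](R) = \frac{1}{R K_{inq}^2(R)} \int_{\infty}^{R} K_{inq}^2(\xi)(\xi \hat{h}_1(\xi))'\, h(\xi)\, d\xi.
\]
I then integrate by parts with $u(\xi) = K_{inq}^2(\xi) h(\xi)$, $dv = (\xi \hat{h}_1(\xi))'\, d\xi$, so $v = \xi \hat{h}_1(\xi)$. The boundary term at infinity again vanishes by the exponential decay of $K_{inq}$ together with the boundedness of $\hat{h}_1$ and $h$. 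Differentiating $u$ gives $2 K_{inq} K_{inq}' h + K_{inq}^2 h'$, producing $2\mathcal{A}[\hat{h}_1 h]$ and $\mathcal{S}_2[\hat{h}_1 h']$ respectively after division by $R K_{inq}^2(R)$. This yields the stated identity.

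Since both proofs reduce to a single integration by parts, there is no real obstacle beyond bookkeeping; the only point that requires (brief) justification is the vanishing of the boundary term at $\infty$, for which one invokes \eqref{asymKIn_new} to conclude $\xi K_{inq}^2(\xi) \to 0$ exponentially, and the hypothesis that the functions $h_1, h_2, h, \hat{h}_1$ are bounded. The differentiability hypotheses ensure that all the integrands involved are well defined and integrable near $R$, so the integration by parts is legitimate.
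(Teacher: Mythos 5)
Your proof is correct and follows essentially the same route as the paper's: a single integration by parts in the defining integral of $\mathcal{S}_2$, with the boundary term at infinity killed by the exponential decay of $K_{inq}$ and the boundedness of the functions involved, and the three pieces of the derivative of $\xi K_{inq}^2 h_1$ identified respectively with $\mathcal{S}_2[\hat h]$, $2\mathcal{A}[h_1h_2]$ and $\mathcal{S}_2[h_1'h_2]$ (and analogously for the second identity). Your explicit justification of the vanishing boundary term is a point the paper leaves implicit, but the argument is the same.
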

\begin{proof}
We prove both properties by integrating by parts. Indeed, since $h_1,h_2$ are bounded functions
\begin{align*}
\int_\infty^R \xi h_1(\xi) & h_2'(\xi) K_{inq}^2(\xi) \dd \xi =  R h_1 (R) h_2(R) K_{inq}^2 (R)
\\ & - \int_{\infty}^R h_2(\xi) \big[h_1 (\xi)K_{inq}^2 (\xi) + \xi h_1'(\xi)K_{inq}^2(\xi) + 2 \xi h_1(\xi ) K_{inq}'(\xi)K_{inq}(\xi) \dd \xi\big].
\end{align*}
Therefore
$$
\opg_2[h_1 h_2'](R) = \frac{1}{R K_{inq}^2(R)}\int_\infty^R \xi h_1(\xi) h_2'(\xi) K_{inq}^2 (\xi) \dd \xi,
$$
satisfies the statement.

With respect to the second equality. Again by doing parts:
\begin{align*}
\opg_2[\hat{h}_2 h] (R)=& \frac{1}{R K_{inq}^2(R)} \int_\infty^R (\xi \hat{h}_1(\xi))' h(\xi) K_{inq}^2 (\xi) \dd \xi
\\ =& \hat{h}_1(R)h(R)  - \frac{1}{R K_{inq}^2(R)} \int_{\infty}^R \xi \hat{h}_1(\xi) \big [h'(\xi) K_{inq}^2(\xi) + 2 h(\xi) K_{inq}'(\xi) K_{inq}(\xi) \big ] \dd \xi.
\end{align*}
\end{proof}
\subsection{The independent term}\label{subsec:independent}
We study now which the independent term of the fixed point equation~\eqref{eqfxp}, that is $ \op[0,0]=( \op_1[0,0], \op_2[0,0])$. We recall that
\begin{equation}\label{eq:primeraitouter}
\begin{aligned}
\op_1[0,0]&= \mathbf{G}_0+\opg_1[ \e^{-2}\mathcal{N}_1[0,0] ], \\
\op_2[0,0]&=-\opg_2[\mathcal{N}_2[ \op_1[0,0],0]]\end{aligned},
\end{equation}
and $\mathcal{N}_1,\mathcal{N}_2,\mathbf{G}_0,\mathcal{S}_1,\mathcal{S}_2$ are defined in~\eqref{def:N1outer} and~\eqref{def:N2outer}, \eqref{def:G0cteouter}, \eqref{def:inverse1outer}, \eqref{def:inverse2outer} respectively.

Before starting with the study of \eqref{eq:primeraitouter} we state a straightforward corollary of items~\ref{item3dominantouter} and~\ref{item4dominantouter} of Proposition~\ref{prop:dominantouter} about the behaviour of $F_0,V_0$ (see~\eqref{defV0F0secouter}).
\begin{corollary}\label{lem:Kinq2_new}
Let $R_\m = \e^{\a}$ with $\a \in (0,1)$.
Then there exist $q_0 >0$ and a constant $
M>0$ such that for any $0<q<q_0$ and $R\in [R_\m,+\infty)$, $V_0'(R)>0$, $V_0(R)<-1$,
$$
|kV_0(R)|, |k V_0'(R)R|,|kV''(R)R^2|\leq M \e^{1-\a},
$$
and
$$
|R (V_0(R)+1)|,|R^2V_0'(R)|, |R^3 V_0''(R)|\leq M .
$$
With respect to $F_0$, we have that $F_0(R)\geq 1/2$, $F_0'(R)>0$ and
$$
|F'_0(R) R^2|, |F''_0(R) R^3| \leq  C k \e^{1-\a},\qquad |1-F_0(R)|, |F_0'(R)R|, |F_0''(R)R^2|\leq C \e^{2(1-\a)}.
$$
\end{corollary}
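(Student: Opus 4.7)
The plan is simply to translate items~\ref{item3dominantouter} and~\ref{item4dominantouter} of Proposition~\ref{prop:dominantouter} from the inner variable $r$ to the outer variable $R=\e r$, using the identifications
\[
\vdout(r;k,q)=kV_0(R),\qquad \fdout(r;k,q)=F_0(R),\qquad \partial_r=\e\,\partial_R,\qquad \e=kq,
\]
so that $r^j\partial_r^j=R^j\partial_R^j$ for every $j\ge 0$. The first thing I would check is that the hypotheses of items~\ref{item3dominantouter}--\ref{item4dominantouter} are met in our range. Setting $R_\m=\e^\a$ corresponds to $r_\m=\e^{\a-1}$, which automatically gives $kq\,r_\m=\e^\a=(kq)^\a$, matching the upper bound in item~\ref{item4dominantouter} with equality. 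The lower bound $kq\,r_\m\ge 2e^2\,e^{-\pi/(2qn)}$ becomes, after using $kq=\mu\,e^{-\pi/(2nq)}$, the condition $\e^{\a-1}\ge 2e^2/\mu$; since $\a<1$ and $\mu\in[\mu_0,\mu_1]$, this is satisfied whenever $q$ is small enough.

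Next I would read off the sign statements. The assertions $V_0'(R)>0$, $V_0(R)<-1$ and $F_0'(R)>0$ are immediate from item~\ref{item3dominantouter}, because $\partial_r\vdout=\e k V_0'(R)$, $\vdout=kV_0(R)$ and $\partial_r\fdout=\e F_0'(R)$, and $k,\e>0$. The lower bound $F_0(R)\ge 1/2$ is precisely the first inequality in~\eqref{morepropertiesfout}.

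The magnitude estimates all follow by the same chain-rule bookkeeping. The first line of~\eqref{morepropertiesvout}, with right-hand side $Mr_\m^{-1}=M\e^{1-\a}$, translates directly to $|kV_0|, |RkV_0'|, |R^2 kV_0''|\le M\e^{1-\a}$. The second line of~\eqref{morepropertiesvout} gives, e.g., $|(R/\e)\,k(V_0(R)+1)|\le Mq^{-1}$; multiplying through by $\e/k=q$ yields $|R(V_0+1)|\le M$, and the derivative bounds transform identically. For $F_0$, the bounds involving $Mq^{-1}r_\m^{-1}$ in~\eqref{morepropertiesfout} pick up one factor of $\e$ in passing from $\partial_r$ to $\partial_R$ and become $Mk\e^{1-\a}$ via $\e/q=k$, while the bounds involving $Mr_\m^{-2}=M\e^{2(1-\a)}$ transfer with no rescaling. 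The whole argument is a one-line translation for each inequality, so I anticipate no real obstacle beyond verifying the hypothesis of item~\ref{item4dominantouter}, which was done in the opening paragraph.
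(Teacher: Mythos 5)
Your proposal is correct and is exactly the argument the paper intends: the corollary is obtained by rewriting items~\ref{item3dominantouter} and~\ref{item4dominantouter} of Proposition~\ref{prop:dominantouter} in the outer variable $R=\e r$ via $\vdout(r)=kV_0(R)$, $\fdout(r)=F_0(R)$, $r^j\partial_r^j=R^j\partial_R^j$, after checking that $R_\m=\e^\a$ corresponds to $kq\,r_\m=(kq)^\a$ and that the lower constraint $2e^2e^{-\pi/(2qn)}\le kq\,r_\m$ holds for $q$ small. Your bookkeeping of the factors $k$ and $\e/q=k$ matches what the paper's statement requires, so there is nothing to add.
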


From now on we then take $R_\m =\e^\a$ with $0<\alpha<1$ satisfying $\e^{1-\alpha}/q$ small enough. These conditions will ensure that $\e/R_\m \ll 1$. The following proposition provides the size of $\mathcal{F}[0,0]$ in~\eqref{eq:primeraitouter}.

\begin{lemma}
\label{lem:independentouter}
Let $0<\mu_0<\mu_1$
and take $\e=\mu e^{-\frac{\pi}{2nq}}$ with $\mu_0\le \mu \leq \mu_1$.
There exist $q^*_0=q^*_0(\mu_0,\mu_1)>0$, $M=M(\mu_0,\mu_1)>0$ such that, for any $q\in[0,q^*_0]$ and $\alpha\in(0,1)$
satisfying $\e^{1-\alpha}/q<1$, $R_\m= \e^\a$, given $\eta >0$ and $\Cout$ satisfying~\eqref{eq:condicioc1} in the definition of $\mathbf{G}_0$ provided in \eqref{def:G0cteouter} we have:
\begin{equation}
\label{eq:boundsG0}
    \|G_0\|_2+ \e\|G_0'\|_2 + \e^{2}\|G_0''\|_2 \leq \|\mathbf{G}_0\|_2 + M\e^{4-2\a}\leq M(1+ {\eta})\e^2,
\end{equation}
with $G_0= \op_1[0,0]$. As a consequence, there exists $q_1^*(\mu_0,\mu_1,\eta)$ such that, if $q\in[0,q_1^*]$ then $F_0(R)+G_0(R)\geq 1/4$.

Let $W_0=\op_2[0,0]$. Then there exists $q_2^*(\mu_0,\mu_1,\eta)$ such that for $q\in[0,q_2^*]$
$$
\|W_0\|_2 \leq M \e^{2-\a} q^{-1} + M \eta\e  \le M(1+\eta)\e.
$$
\end{lemma}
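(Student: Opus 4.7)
The plan is to compute $\op[0,0]=(G_0,W_0)$ componentwise, exploiting the explicit form $\mathcal{N}_1[0,0]=\e^2(F_0''+F_0'/R)$ from~\eqref{def:N1outer} and the splitting of $\mathcal{N}_2[G_0,0]$ from~\eqref{def:N2outer} into pieces carrying either the factor $V_0$ or $q^2$; the size control of $F_0,V_0,K_{inq}$ comes from Corollary~\ref{lem:Kinq2_new}, while that of $\mathbf{G}_0$ comes from the asymptotics~\eqref{asymKIn_new} combined with the standing condition~\eqref{eq:condicioc1} on $\Cout$.

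For $G_0=\mathbf{G}_0+\opg_1[\e^{-2}\mathcal{N}_1[0,0]]$, the second group of bounds in Corollary~\ref{lem:Kinq2_new}, namely $|RF_0'|,|R^2F_0''|\leq C\e^{2(1-\alpha)}$, yields $\|F_0''+F_0'/R\|_2\leq C\e^{2(1-\alpha)}$; Lemma~\ref{lem:G1} and Corollary~\ref{cor:L1} then give $\|\opg_1[\e^{-2}\mathcal{N}_1[0,0]]\|_2+\e\|(\opg_1[\e^{-2}\mathcal{N}_1[0,0]])'\|_2+\e^2\|(\opg_1[\e^{-2}\mathcal{N}_1[0,0]])''\|_2\leq M\e^{4-2\alpha}$. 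For $\mathbf{G}_0=K_0(R\sqrt{2}/\e)\Cout$, the asymptotic~\eqref{asymKIn_new} gives $|R^2\mathbf{G}_0(R)|\leq CR^{3/2}\sqrt{\e}\,e^{-R\sqrt{2}/\e}|\Cout|$, with analogous bounds for $\e\mathbf{G}_0'$ and $\e^2\mathbf{G}_0''$ since each extra derivative costs a factor $\sqrt{2}/\e$ while preserving the Bessel decay; as $R_\m=\e^\alpha\gg\e$, the maximum on $[R_\m,\infty)$ is attained at $R=R_\m$, and~\eqref{eq:condicioc1} then yields $\|\mathbf{G}_0\|_2+\e\|\mathbf{G}_0'\|_2+\e^2\|\mathbf{G}_0''\|_2\leq M\eta\e^2$. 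Summing gives~\eqref{eq:boundsG0}, using $\e^{4-2\alpha}\leq\e^2$ for $\alpha<1$. The lower bound $F_0+G_0\geq 1/4$ then follows from $F_0\geq 1/2$ (Corollary~\ref{lem:Kinq2_new}) and $|G_0(R)|\leq\|G_0\|_2 R^{-2}\leq M(1+\eta)\e^{2(1-\alpha)}$ for $R\geq R_\m$, made smaller than $1/4$ by taking $q$ (and hence $\e$) small.

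For $W_0=-\opg_2[\mathcal{N}_2[G_0,0]]$, I would expand $\mathcal{N}_2[G_0,0]$ into the four pieces from~\eqref{def:N2outer} and bound each. The two pieces carrying $V_0$ become applications of the operator $\mathcal{A}$ defined in~\eqref{def:A}; by Lemma~\ref{lem:AS2}, $\|\opg_2[V_0 F_0'/(F_0+G_0)]\|_2\leq M\|F_0'\|_2\leq M\e^{2-\alpha}/q$ using the sharper Corollary bound $|R^2F_0'|\leq Ck\e^{1-\alpha}$, producing the $M\e^{2-\alpha}/q$ contribution, while $\|\opg_2[V_0 G_0'/(F_0+G_0)]\|_2\leq M\|G_0'\|_2\leq M(\eta\e+\e^{3-2\alpha})$, with the $\e^{3-2\alpha}$ part absorbed into $M\e^{2-\alpha}/q$ since $\e^{1-\alpha}/q\leq e_0$ implies $\e^{3-2\alpha}\leq e_0 q\e^{2-\alpha}$. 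For $q^2(F_0''+F_0'/R)/(F_0+G_0)$, I would place the integrand in $\mathcal{X}_3$ via the first Corollary bounds $|R^3F_0''|,|R^2F_0'|\leq Ck\e^{1-\alpha}$ and apply~\eqref{eq:lem47b}, producing $Mq\e^{2-\alpha}\leq M\e^{2-\alpha}/q$. The delicate piece is $q^2(G_0''+G_0'/R)/(F_0+G_0)$: a direct estimate would involve $\|G_0''\|_2=\mathcal{O}(\eta)$ and produce an intolerable $q^2\eta$ term. To bypass this, I would apply Lemma~\ref{lem:S22} with $h_1=q^2/(F_0+G_0)$ and $h_2=G_0'$ (so $h_1h_2'=q^2G_0''/(F_0+G_0)$), trading the second derivative of $G_0$ for a derivative of $1/(F_0+G_0)$ (harmless since $F_0+G_0\geq 1/4$); the resulting pieces only involve $G_0'$ and products like $F_0'G_0'$ or $(G_0')^2$, all bounded (directly in $\mathcal{X}_2$ or via~\eqref{eq:lem47b} in $\mathcal{X}_3$) by $Mq^2(\eta\e+\e^{3-2\alpha})$, which fits inside $M\eta\e+M\e^{2-\alpha}/q$ for small $q$. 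Combining gives $\|W_0\|_2\leq M\e^{2-\alpha}q^{-1}+M\eta\e$, and the final inequality follows from $\e^{2-\alpha}q^{-1}=\e\cdot\e^{1-\alpha}/q\leq e_0\e$. The only non-routine ingredient is the integration-by-parts identity of Lemma~\ref{lem:S22} applied to the $q^2G_0''$ piece; the main technical obstacle is ensuring that every $\e$-power arising after the integration by parts fits into the target $M\e^{2-\alpha}q^{-1}+M\eta\e$, which relies on the running hypothesis $\e^{1-\alpha}/q\leq e_0$.
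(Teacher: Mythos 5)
Your proposal is correct and follows essentially the same route as the paper: the same bounds on $\mathbf{G}_0$ and on $\opg_1[\e^{-2}\mathcal{N}_1[0,0]]$ for the first part, and the same term-by-term treatment of $\mathcal{N}_2[G_0,0]$ for the second, with the $V_0$-carrying terms routed through the operator $\mathcal{A}$ and the dangerous $q^2G_0''$ term defused by the integration-by-parts identity of Lemma~\ref{lem:S22}. The only cosmetic difference is that you invoke the first identity of Lemma~\ref{lem:S22} (with $h_1=q^2/(F_0+G_0)$, $h_2=G_0'$) where the paper uses the second; both yield the same decomposition and the same final bounds.
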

We divide the proof of this lemma in two parts, the first one, in Section~\ref{sec:independentouterF} corresponds to the bound for $G_0$ and the second one, in Section~\ref{sec:independentouterW} corresponds to the bound for $W_0$.

\subsubsection{A bound for the norm of $G_0$ and its derivatives: }\label{sec:independentouterF}
Recall that $G_0=\op_1[0,0]$ as given in~\eqref{eq:primeraitouter}.
We start bounding $\|\mathbf{G}_0\|_2,\|\mathbf{G}_0'\|_2,\|\mathbf{G}_0''\|_2$, with $\mathbf{G}_0$ given in \eqref{def:G0cteouter}.
By~\eqref{asymptoticsKn} it is clear that, for $R\geq R_\m =  \e^\alpha$,
$$
|R^2 \mathbf{G}_0(R)| = \left |R^2 K_0\left (\frac{R\sqrt{2}}{\e }\right ) \Cout \right | \leq   M |\Cout| \sqrt{\e}
R_\m^{3/2}
e^{-\frac{R_\m\sqrt{2}}{\e}} \leq M|\Cout| \sqrt{\e} (\e^\alpha)^{3/2}
e^{-\frac{\sqrt{2}}{\e^{1-\alpha}}},
$$
if $0<q<q^*_0$, for $q_0^*=q^*_0(\mu_0,\mu_1)$. Therefore, using that $\Cout$ satisfies~\eqref{eq:condicioc1} we conclude that
$\|\mathbf{G}_0\|_2 \leq M  {\eta}\e^2$. In addition it is clear that
$\e \|\mathbf{G}_0'\|_2+\e^2 \|\mathbf{G}_0''\|_2 \leq M \|\mathbf{G}_0\|_2 \leq M {\eta}\e^2$, and thus
\begin{equation}\label{eq:fitesG0}
\|\mathbf{G}_0\|_2+\e \|\mathbf{G}_0'\|_2+\e^2 \|\mathbf{G}_0''\|_2 \leq M {\eta}\e^2\, .
\end{equation}

To deal with $\mathcal{S}_1 [\e^{-2} \mathcal{N}_1[0,0]]$ (see \eqref{def:N1outer}) we first bound
\begin{equation*}
\mathcal{F}_0(R)=  \mathcal{N}_1[0,0](R)=\e^2 \left (F_0''(R) + \frac{F_0'(R)}{R}  \right ).
\end{equation*}
By Corollary~\ref{lem:Kinq2_new}
$$
|R^2 \e^{-2} \mathcal{F}_0(R)| \leq M \e^{2(1-\a)},
$$
and applying Lemma~\ref{lem:G1} we obtain $\|\opg_1 (\e^{-2} \mathcal{F}_0(R))\|_2 \leq  C \e^{4-2\a}$, which gives:
$$
\|G_0\|_2 \le \|\mathbf{G}_0\|_2 + M\e^{4-2\a} \le M ( {\eta}\e^2+\e^{4-2\a}) \le M(1+ {\eta})\e ^2.
$$
Using Corollary~\ref{cor:L1} we obtain the bounds for the derivatives:
\begin{equation}
\label{eq:boundsDGO}
\e \|  G'_0\|_2+ \e^2 \| G''_0\|_2 \le M ( {\eta}\e^2+\e^{4-2\a}) \le M(1+ {\eta})\e ^2,
\end{equation}
and \eqref{eq:boundsG0} is proved.

To finish we notice that by Corollary~\ref{lem:Kinq2_new}, there exists $q_1^*(\mu_0,\mu_1,\eta)$ such that, if $q\in[0,q_1^*]$
\begin{equation}
\label{eq:boundFG}
F_0(R) + G_0(R) \geq \frac{1}{2} - M (1+{\eta})\frac{\e^2}{R^2}  \geq \frac{1}{2} - M(1+{\eta})\e^{2(1-\a)}  \geq \frac{1}{4}.
\end{equation}

\subsubsection{A bound for $\|W_0\|_2$}\label{sec:independentouterW}

We recall that $W_0=\mathcal{S}_2[\mathcal{N}_2[\opF_1[0,0],0]]=\mathcal{S}_2[\mathcal{N}_2[G_0,0]]$ where $\mathcal{N}_2$ is defined in~\eqref{def:N2outer}, namely
\begin{equation*}
\mathcal{N}_2[G_0,0] =  2V_0 \frac{F_0'+G_0'}{F_0+G_0} -q^2 \frac{1}{F_0+G_0}\left (F_0'' + G_0''+  \frac{F_0'+G_0'}{R}\right ).
\end{equation*}

By definition~\ref{def:A} of $\mathcal{A}$
$$
\opg_2\left [V_0 \frac{F_0'+G_0'}{F_0+G_0}\right] =
\mathcal{A}\left [\frac{F_0'+G_0'}{F_0+G_0}\right].
$$
Therefore, for $0<q<q_1^*(\mu_0,\mu_1,\eta)$, using Lemma~\ref{lem:AS2}, Corollary~\ref{lem:Kinq2_new} and bounds ~\eqref{eq:boundFG} and~\eqref{eq:boundsDGO},
\begin{align*}
\left \|\opg_2\left [V_0 \frac{F_0'+G_0'}{F_0+G_0} \right ]\right \|_2 & \leq \left \|
\frac{F_0'+G_0'}{F_0+G_0}\right \|_2\leq
M(k \e^{1-\a} + \e^{3-2\a}+\e \eta  )
\\ & \leq M (\e^{2-\a} q^{-1} + + \e^{3-2\a}+ \e \eta) \leq M(\e^{2-\a} q^{-1} + \e \eta),
\end{align*}
where we have used that $k \e^{1-\a}=\e q^{-1}\e^{1-\a} \leq  \e $. In the rest of the proof we will reduce the value of $q_1^*$, if necessary, without changing the notation.
In addition, by Corollary~\ref{lem:Kinq2_new} since
$$
q^2 \left |R^3 \frac{1}{F_0+G_0}\left (F_0'' + \frac{F_0'}{R}\right )\right |  \leq M q^2 k\e^{1-\a} = M q \e^{2-\a},
$$
we also have that by inequality \eqref{eq:lem47b} in Lemma~\ref{lem:W2},
$$
q^2\left \|\opg_2\left [\frac{1}{F_0+G_0}\left (F_0'' + \frac{F_0'}{R}\right ) \right ]\right \|_2 \leq M q\e^{2-\a} .
$$
To bound the last term in $\mathcal{W}_0$, we use the second statement of Lemma~\ref{lem:S22} with
$$
h = \frac{1}{F_0+G_0}, \qquad \hat{h}_2  = G_0'' + \frac{G_0'}{R}, \qquad \hat{h}_1 =  G_0'.
$$
Then
$$
\left \|\opg_2 \left [ \frac{1}{F_0+G_0} \left ( G_0'' + \frac{G_0'}{R} \right )\right ] \right \|_2 \leq  \left \| \frac{G_0'}{F_0+G_0} \right \|_2 + \|\opg_2 [h' G_0']\|_2 + 2 \left \|\mathcal{A}\left [\frac{G_0'}{F_0+G_0}\right ] \right \|_2.
$$
By bounds \eqref{eq:boundsDGO} and \eqref{eq:boundFG},
$$
\left \| \frac{G_0'}{F_0+G_0} \right \|_2   \leq  M\eta \e + M\e^{3-2\a} ,
$$
and as a consequence, by Lemma~\ref{lem:AS2},
$$
\left \|\mathcal{A}\left [\frac{G_0'}{F_0+G_0}\right ] \right \|_2 \leq   M\eta \e + M\e^{3-2\a}.
$$
By bound \eqref{eq:boundFG} and since $R\geq R_{\min}=\e^{\alpha}$:
\begin{align*}
| G_0'(R) h'(R) | &\leq |G_0'(R)| \frac{|F_0'(R)| + |G'_0(R)|}{|F_0(R) + G_0(R)|^2}
\leq
M \left (\frac{\e^{3-2\a}+\eta\e }{R^2} \right )
\frac{\e^{2-\a}q^{-1}+\e^{3-2\a}+\eta \e }{R^2}
\\ & \leq \frac{M}{R^3} \left(
\e^{4-3\a}  + \eta\e^{2-\a}\right),
\end{align*}
where we have used that $\varepsilon^{1-\alpha}/q\leq 1$. Then, using Lemma~\ref{lem:W2} $\|\opg_2 [h'G_0']\|_2 \leq \|h' G_0'\|_3 $ and therefore,
$\|q^2 \opg_2 [h' G_0' \|_2 \leq M   (q^2\e^{4-3\a} + q^2\eta\e^{2-\a})$.
We conclude that
\begin{equation*}
\|W_0\|_2 \leq M \e^{2-\a} q^{-1} + M\eta \e  \leq M(1+ \eta) \e.
\end{equation*}

\subsection{The contraction mapping}\label{subsec:lip}

In Lemma~\ref{lem:independentouter} we have proven that the independent term $(G_0,W_0)= \op[0,0]$ (defined in \eqref{eq:primeraitouter}) satisfies $\|G_0\|_2 + \e \|W_0\|_2 \leq M{(1+\eta) }\e^2$. In other words, the independent term belongs to the Banach space $\mathcal{X}_2 \times \mathcal{X}_2$ endowed  with the norm
$$
\llfloor(G,W)\rrfloor = \|G\|_2 + \e  \|W\|_2.
$$
Let
\begin{equation}\label{def:kappa0}
\kappa_0=\kappa_0(\mu_0,\mu_1,\eta)=\llfloor(G_0,W_0)\rrfloor \e^{-2}.
\end{equation}
Along this section we will prove the following result.

\begin{lemma}\label{lem:Lipschitzouter}
Let $\eta >0$, $0<\mu_0<\mu_1$
and take $\e=\mu e^{-\frac{\pi}{2nq}}$ with $\mu_0\le \mu \leq \mu_1$. Take $\kappa \geq 2\kappa_0$, where $\kappa_0$ is defined in \eqref{def:kappa0}, and $\Cout$ satisfying the condition~\eqref{eq:condicioc1}.
There exist $q_0=q_0(\mu_0,\mu_1,\eta)>0$ and $M=M(\mu_0,\mu_1,\eta)>0$ such that, for any $q\in[0,q_0]$ and $\alpha\in(0,1)$
satisfying $q^{-1}\e ^{1-\alpha}<1$, taking $R_\m\geq  \e^{\a}$,
if $ (G_1,W_1),  (G_2,W_2) \in \mathcal{X}_2 \times \mathcal{X}_2$ with $\llfloor(G_1,W_1)\rrfloor,\llfloor(G_2,W_2)\rrfloor \leq \kappa \e^2$ then
\begin{equation}
\label{eq:Lips}
    \llfloor \op[G_1,W_1]- \op[G_2,W_2] \rrfloor  \leq  M\e^{1-\a} q^{-1} \llfloor(G_1,W_1)-(G_2,W_2)\rrfloor.
\end{equation}
where the operator $\op$ is defined in \eqref{eqfxp}.

If moreover $\|G_1'\|_2, \|G_2'\|_2\leq \kappa \e$.
\begin{equation}\label{eq:caca}
\begin{aligned}
\e \|\mathcal{S}_2[\ \mathcal{N}_2[G_1, W_1] ]-
  \mathcal{S}_2 [ \mathcal{N}_2[G_2, W_2]] \|_2 \leq & M  \e^{2-\a} \|W_1 - W_2 \|_2 +
 M\e^{1-\a} \|G_1 - G_2 \|_2 \\ & + M\e \|G_1' - G_2'\|_2 ,
\end{aligned}
\end{equation}
with $\mathcal{S}_2$ defined in \eqref{def:inverse2outer} and $\mathcal{N}_2$ in \eqref{def:N2outer}.
Also,
\begin{equation}
\label{eq:lipF1outer}
\e\| \big (\op_1[G_1,W_1]-\op_1[G_2,W_2]\big )'\|_2 \leq M \e^{1-\a} q^{-1} \llfloor(G_1,W_1)-(G_2,W_2)\rrfloor.
\end{equation}
\end{lemma}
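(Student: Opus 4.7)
The plan is to establish~\eqref{eq:Lips} by treating $\op_1$ and $\op_2$ separately, reducing in each case to weighted sup-norm estimates on $\mathcal{N}_1$ and $\mathcal{N}_2$ that can be combined with the linear-operator bounds of Section~\ref{subsec:linear}. The key inputs are: Corollary~\ref{lem:Kinq2_new}, which yields $|kV_0|\leq M\e^{1-\a}$, $|\widehat{F}_0|\leq M\e^{2(1-\a)}$ and $F_0\geq 1/2$; the ball hypothesis combined with $R\geq R_\m=\e^{\a}$, which gives the pointwise bounds $|G_i(R)|\leq \kappa\e^{2(1-\a)}$ and $|W_i(R)|\leq \kappa\e R^{-2}$ (so $\|W_i\|_1\leq \kappa\e^{1-\a}$); and the identity $k=\e/q$, which is what ultimately produces the factor $\e^{1-\a}/q$ in the Lipschitz constant.

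First I would control $\op_1$. Since $\op_1[G_1,W_1]-\op_1[G_2,W_2]=\mathcal{S}_1[\e^{-2}(\mathcal{N}_1[G_1,W_1]-\mathcal{N}_1[G_2,W_2])]$, Lemma~\ref{lem:G1} provides a free factor $\e^2$ and it suffices to bound the $\mathcal{X}_2$-norm of the $\mathcal{N}_1$ difference. Expanding~\eqref{def:N1outer} with $a^2-b^2=(a+b)(a-b)$ and $a^3-b^3=(a-b)(a^2+ab+b^2)$, each term carries either a small coefficient ($\widehat{F}_0$ or $kV_0$) or a small pointwise factor from the ball; the critical $W$-linear term $2k^2V_0F_0(W_1-W_2)$ has coefficient $|k|\,|kV_0|\,|F_0|\leq M\e^{2-\a}/q$, which after multiplication by $\e$ and division by $\e^2$ gives exactly the factor $M\e^{1-\a}/q$ per unit of $\e\|W_1-W_2\|_2$, while all other terms absorb the $k^2=\e^2/q^2$ coefficients using the smallness hypothesis $\e^{1-\a}/q<1$. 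The derivative estimate~\eqref{eq:lipF1outer} is then immediate from Corollary~\ref{cor:L1}, which costs only a factor $\e^{-1}$.

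Next I would prove the auxiliary bound~\eqref{eq:caca} for $\mathcal{S}_2\circ\mathcal{N}_2$. The quadratic term $W_1^2-W_2^2=(W_1+W_2)(W_1-W_2)$ is bounded in $\mathcal{X}_3$ by $\|W_1+W_2\|_1\|W_1-W_2\|_2\leq 2\kappa\e^{1-\a}\|W_1-W_2\|_2$, and then the bound~\eqref{eq:lem47b} in Lemma~\ref{lem:W2} transfers it to $\mathcal{X}_2$ with the correct scaling $M\e^{1-\a}\|W_1-W_2\|_2$. The term $2(V_0+W)(F_0'+G')/(F_0+G)$ splits into a $V_0$-piece (handled via Lemma~\ref{lem:AS2} and the auxiliary operator $\mathcal{A}$) and the remaining pieces (handled by Lemma~\ref{lem:W2}), using the Taylor expansion $1/(F_0+G_1)-1/(F_0+G_2)=(G_2-G_1)/((F_0+G_1)(F_0+G_2))$, which is valid because $F_0+G_i\geq 1/4$ by~\eqref{eq:boundFG}. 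The delicate obstruction is $q^2(G''+G'/R)/(F_0+G)$: I would apply the integration-by-parts identity of Lemma~\ref{lem:S22} with $h_1=q^2/(F_0+G)$ and $h_2=G'$ (so $h_2'=G''$), trading $G''$ for boundary terms involving only $G'$, $h_1'$ and an $\mathcal{A}$-contribution, all of which are controllable by $\|G'\|_2$ via Lemmas~\ref{lem:W2} and~\ref{lem:AS2}. Collecting scalings yields~\eqref{eq:caca}.

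Finally I would assemble~\eqref{eq:Lips}. Setting $\tilde G_i=\op_1[G_i,W_i]$, the first two steps give $\|\tilde G_1-\tilde G_2\|_2+\e\|\tilde G_1'-\tilde G_2'\|_2\leq M(\e^{1-\a}/q)\llfloor(G_1,W_1)-(G_2,W_2)\rrfloor$, while Lemma~\ref{lem:independentouter} combined with Corollary~\ref{cor:L1} shows $\|\tilde G_i'\|_2\leq \kappa\e$ so~\eqref{eq:caca} is applicable to $(\tilde G_i,W_i)$. This produces
\begin{equation*}
\e\|\op_2[G_1,W_1]-\op_2[G_2,W_2]\|_2\leq M\e^{2-\a}\|W_1-W_2\|_2+M\e^{1-\a}\|\tilde G_1-\tilde G_2\|_2+M\e\|\tilde G_1'-\tilde G_2'\|_2,
\end{equation*}
and since $\e^{2-\a}\leq M(\e^{1-\a}/q)\cdot\e$ and $\e^{1-\a}\leq M(\e^{1-\a}/q)$ for $q\leq q_0$ small enough, all three contributions merge into $M(\e^{1-\a}/q)\llfloor\cdot\rrfloor$, completing the proof. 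The main technical obstacle is the $G''$-term in step three: since the coefficient $q^2/(F_0+G)$ depends nonlinearly on $G$, Lemma~\ref{lem:S22} must be combined with a careful Taylor expansion of the denominator, and every manipulation must preserve the critical scaling $\e^{1-\a}/q$ without spending it prematurely on the quadratic or singular pieces.
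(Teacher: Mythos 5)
Your proposal is correct and follows essentially the same route as the paper's proof: bounding the $\mathcal{N}_1$ difference term by term (the paper uses the mean value theorem where you use algebraic factorizations, a cosmetic difference), establishing the intermediate bound~\eqref{eq:caca} by splitting $\mathcal{N}_2$ into its $W$, $G$, $G'$, $G''$ contributions with Lemma~\ref{lem:S22} absorbing the $G''$ term and the operator $\mathcal{A}$ handling the $V_0\,G'$ piece, and finally composing through the Gauss--Seidel structure with $\tilde G_i=\op_1[G_i,W_i]$. All the critical scalings ($k=\e/q$, $|kV_0|\leq M\e^{1-\a}$, $F_0+G\geq 1/4$) are invoked exactly where the paper uses them.
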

Next section is devoted to prove Theorem~\ref{prop:outerfixedpoint} from the above results and Lemma~\ref{lem:Lipschitzouter}. We postpone the proof of this lemma to Section~\ref{sec:provalemma}.

\subsection{Proof of Theorem \ref{prop:outerfixedpoint}} \label{sec:proofofTheoremouterfixedpoint}
Lemma \ref{lem:Lipschitzouter}, for $0<q<q_0$, gives us the Lipschitz constant of $ \op$ with the norm $\llfloor \cdot \rrfloor$ on $\mathcal{B}_{\kappa\e^2}$, the closed ball of $ \mathcal{X}_2\times \mathcal{X}_2$ of radius $\kappa \e ^2$. Indeed, the Lipschitz constant is $M\e^{1-\a}q^{-1}\leq 1/2$ if  $\e^{1-\a}q^{-1}<e_0:=1/(2M)$.
Then the operator $ \op$ is a contraction.
Moreover, if $(G,W)\in \mathcal{B}_{\kappa\e^2}$, it is clear that
$$
\llfloor \op[G,W]\rrfloor \leq \llfloor \op[G,W]-  \op[0,0]\rrfloor+ \llfloor \op[0,0]\rrfloor \leq \frac{1}{2}\llfloor(G,W)\rrfloor + \kappa_0 \e^2 \leq
\frac{1}{2} \kappa \e^2 + \frac{\kappa}{2} \e ^2\leq \kappa \e ^2.
$$
Then, the existence of a solution of the fixed point equation~\eqref{eqfxp}, namely $(G,W)= \op[G,W]$, belonging to $\mathcal{B}_{\kappa\e^2}$ is guaranteed by the Banach fixed point theorem.

Moreover, as
$$
\|G\|_2= \| \op_1[G,W] \|_2 \leq
\kappa \e^2  ,
$$
using \eqref{eq:lipF1outer},
one can easily see:
$$
\|G'\|_2=\|\big ( {\op}_1[G,W]\big )'\|_2 \leq \kappa \e,\qquad \|G''\|_2 =\|\big ( {\op}_1[G,W]\big )''\|_2 \leq \kappa.
$$

We introduce the auxiliary operator
$$
\widehat{\mathcal{F}}[G,W] =(\widehat{\mathcal{F}}_1,\widehat{\mathcal{F}}_2)[G,W]:= (\e^{-2} \mathcal{S}_1[\mathcal{N}_1[G,W]], -\mathcal{S}_2[\mathcal{N}_2[\widehat{\mathcal{F}}_1[G,W],W]]).
$$
Observe that $\widehat{\mathcal{F}}[G,W] = \mathcal{F}[G,W]$ for $\Cout=0$.
We denote by $(G^0, W^0)$, the solution of the fixed point equation $(G,W)= \widehat{\mathcal{F}}[G,W]$.
We point out that, by Lemma~\ref{lem:independentouter} and recalling that $\e^{1-\a}\leq q/2$, for $0<q\le q_0^*(\mu_0,\mu_1)$, we have:
$$
\llfloor \widehat{\mathcal{F}}[0,0] \rrfloor
\leq M (\e^{4-2\a} + \e^{3-\a}q^{-1}) \leq M \e^{3-\a}q^{-1}.
$$
Therefore, in this case, $\bar \kappa_0 = \kappa_0(\mu_0,\mu_1,0)= \e^{-2} \llfloor \widehat{\mathcal{F}}[0,0]\rrfloor \leq M\e^{1-\a} q^{-1}$, with $\kappa_0$ defined in~\eqref{def:kappa0}, and that implies that
$$
\llfloor (G^0, W^0) \rrfloor \leq  2\bar\kappa_0 \e^2 \leq 2 M \e^{3-\a}q^{-1}.
$$
Denoting by $M_0=2M$ (which only depend on $\mu_0,\mu_1$) the proof of first item of Theorem~\ref{prop:outerfixedpoint} is done.

Let now $(G,W)$ be the solution for a given $\Cout$ satisfying~\eqref{eq:condicioc1}. We
have that
\begin{align*}
G=\mathcal{F}_1[G,W]=& \mathbf{G}_0 + \widehat{\mathcal{F}}_1[G,W], \\
W= \mathcal{F}_2[G,W]=& -\mathcal{S}_2[\mathcal{N}_2[\mathcal{F}_1[G,W],W]] \\ =&
-\mathcal{S}_2[\mathcal{N}_2[\mathbf{G}_0+\widehat{\mathcal{F}}_1[G,W],W]] + \mathcal{S}_2[\mathcal{N}_2[\widehat{\mathcal{F}}_1[G,W],W]]
- \widehat{\mathcal{F}}_2[G,W].
\end{align*}
Therefore, using that $(G^0, W^0)=\widehat{\mathcal{F}}[G^0,W^0]$, we have that, using \eqref{eq:Lips} and \eqref{eq:caca}:
\begin{align*}
\llfloor (G ,W) - (G^0, W^0)\rrfloor  \leq & \|\mathbf{G}_0 \|_2 + \llfloor \widehat{\mathcal{F}}[G,W] - \widehat{\mathcal{F}}[G^0,W^0] \rrfloor \\
&+
\e \|\mathcal{S}_2[\mathcal{N}_2[\mathbf{G}_0+\widehat{\mathcal{F}}_1[G,W],W]] - \mathcal{S}_2[\mathcal{N}_2[\widehat{\mathcal{F}}_1[G,W],W]] \|_2 \\
\leq &
\|\mathbf{G}_0 \|_2 + M\e^{1-\a} q^{-1}
\llfloor (G,W) - (G^0, W^0)\rrfloor + M\e^{1-\a}\|\mathbf{G}_0\|_2 + M\e \|\mathbf{G}_0'\|_2 \\
\leq & M \|\mathbf{G}_0\|_2 + M\e \|\mathbf{G}_0'\|_2
+M\e^{1-\a} q^{-1}
\llfloor (G,W) - (G^0, W^0)\rrfloor .
\end{align*}
As a consequence, using that, by \eqref{eq:fitesG0}, $\|\mathbf{G}_0\|_2+\e \|\mathbf{G}_0'\|_2 \leq M \e ^2$, we obtain
$$
\llfloor (G,W) - (G^0, W^0)\rrfloor  \leq M \e^2.
$$
Then
\begin{align*}
\|G - \mathbf{G}_0 - G^0 \|_2  & =
\|\widehat{\mathcal{F}}_1[G,W] - \widehat{\mathcal{F}}_1[G^0,W^0]\|_2\leq M\e^{1-\a}q^{-1} \llfloor (G,W) - (G^0, W^0)\rrfloor  \\ & \leq M \e^{3-\a}q^{-1}  .
\end{align*}

The bounds for $\|(G^0)'\|_2$ and $\|G'-(G^0)'-\mathbf{G}_0'\|_2$, follow from the bound \eqref{eq:lipF1outer}, and an analogous expression for $\widehat{F}_1$, along with expression \eqref{eq:boundsDGO}. Denoting by $\widehat{G}^1=G-G^0-\mathbf{G}_0$, Theorem~\ref{prop:outerfixedpoint} is proven.

\subsection{Proof of Lemma \ref{lem:Lipschitzouter}}\label{sec:provalemma}

The proof of Lemma~\ref{lem:Lipschitzouter}  is divided into two parts, in Sections~\ref{sec:LipT1outer} we prove inequality \eqref{eq:Lips} and in Section~\ref{sec:LipT2outer} we prove \eqref{eq:caca} and \eqref{eq:lipF1outer}.

\subsubsection{The Lipschitz constant of $\op_1$}\label{sec:LipT1outer}

Let $(G_1,W_1), (G_2,W_2)\in \mathcal{X}_2 \times \mathcal{X}_2$ belonging to the closed ball of radius $\kappa \e^2$, that is
$\|(G_1,W_1)\|,\|(G_1,W_1)\|\leq \kappa \e^2$.
We have that, using Lemma~\ref{lem:G1},
\begin{equation}\label{lipT1R1outer}
\begin{aligned}
\|\op_1[G_1,W_1]-\op_1[G_2,W_2]\|_2 & = \e^{-2} \|\opg_1\big [\mathcal{N}_1 [G_1,W_1]- \mathcal{N}_1 [G_2,W_2]\big ]\|_2
\\ & \leq M  \|\mathcal{N}_1 (G_1,W_1)- \mathcal{N}_1 (G_2,W_2)\|_2.
\end{aligned}
\end{equation}
Then to compute the Lipschitz constant of $ \op_1$ it is enough to deal with the Lipschitz constant of $\mathcal{N}_1$.

Now we write $\eta(\lambda)=(1-\lambda) (G_1,W_1)+ \lambda(G_2,W_2)$ and, for any $R\geq R_\m=\e^\a$:
\begin{align*}
\mathcal{N}_1[G_2,W_2](R)-\mathcal{N}_1 [G_1,W_1] (R)=&  \int_{0}^1 \partial_G \mathcal{N}_1 [\eta(\lambda)](R) (G_2(R)-G_1(R))
\\ & + \int_0^1 \partial_W \mathcal{N}_1[\eta(\lambda)](R)(W_2(R)-W_1(R)) \dd \lambda.
\end{align*}
Then, since $\|\eta(\lambda)\|_2 \leq \kappa \e^2$ to bound the Lipschitz constant of $\mathcal{N}_1$ it is enough to bound $|\partial_G\mathcal{N}_1[G,W]|$ and $|\partial_W\mathcal{N}_1[G,W]|$ for $\|(G,W)\|_2\leq \kappa \e^2$.

We now recall that $\widehat{F}_0$ in~\eqref{def:F0hatouter} is defined as  $F_0^2 = 1+ \widehat{F}_0/2$. Then, since by Corollary~\ref{lem:Kinq2_new} $|kV_0(R)| \leq M\e^{1-\a}$ and $F_0^2 = 1-k^2 V_0^2-\e^2 n^2 R^{-2}$ we have that, using that $R\geq R_\m =  \e^{\a}$
\begin{equation}\label{boundhatF0outer}
|\widehat{F}_0(R)| \leq  M k^2 |V^2_0(R)| + M\e^2 R^{-2} \leq M \e^{2-2\a} .
\end{equation}
Then, if $|G(R)| \leq \kappa \e^{2} R^{-2} \leq M \e^{2-2\a}$:
\begin{equation}\label{boundF0+GG}
|F_0(R) + G(R)|\leq 1 + \O(\e^{2-2\a}) \leq 2,
\end{equation}
if $q$ is small enough.

We claim that, if $\|(G,W)\|_2 \leq \kappa \e^2$, then
\begin{equation}\label{bound:DR}
| \partial_{G} \mathcal{N}_1 [G,W](R) |  \leq M\e^{2-2\a} , \qquad |\partial_{W} \mathcal{N}_1 [G,W](R)|\leq Mk \e^{1-\a}.
\end{equation}
Indeed, we have that
$$
\partial_{G} \mathcal{N}_1 (G,W)=- \widehat{F}_0 - 6 F_0 G - 3 G^2 - 2 k^2 W V_0 - k^2 W^2,
$$
where $\mathcal{N}_1$ is given in \eqref{def:N1outer}.
Then, using~\eqref{boundhatF0outer}, that $|G(R)|\leq \kappa \e^2 R^{-2}$ and $|W(R)| \leq \kappa \e R^{-2}$
\begin{align*}
|\partial_{G} \mathcal{N}_1 [G,W]| & \leq M \left (\e^{2-2\a} + \kappa \frac{\e^2}{R^2} + \kappa^2 \frac{\e^4}{R^4} + \kappa k^2 |V_0(R)| \frac{\e}{R^2} +
\kappa^2 k^2 \frac{\e^2}{R^4}\right ) \\
& \leq M \left (\e^{2-2\a} + \kappa \e^{2-2\a} + \kappa^2 \e^{4-4\a} + \kappa k \e^{-\a} \e^{2-2\a} + \kappa^2 k^2 \e^{-2\a} \e^{2-2\a}\right ) \\
& \leq M e^{2-2\a} \left (1 + \kappa + \kappa^2 \e^{2-2\a} + \kappa  \e^{1-\a}/q  + \kappa ^2 q^{-2} \e^{2-2\a} \right ) \leq M\e^{2-2\a},
\end{align*}
where we have used again that $\e^{1-\a}/q\leq 1$.
With respect to $\partial_{W} \mathcal{N}_1[G,W]$, we have that:
$$
\partial_{W} \mathcal{N}_1 [G,W]=-2 k^2 V_0 (F_0+G) - 2 k^2  W(F_0+G).
$$
Then, using~\eqref{boundF0+GG}:
$$
|\partial_{W} \mathcal{N}_1 [G,W]| \leq M \left (k \e^{1-\a} + k^2 \frac{\e}{R^2}\right) \leq
M \left(k \e^{1-\a} +k^2\e^{1-2\a}\right)  \leq Mk \e^{1-\a}(1+\e^{1-\a}/q),
$$
provided $\e^{1-\alpha}/q<1$, and~\eqref{bound:DR} is proven.

Finally, using bounds~\eqref{bound:DR} of $\partial_W \mathcal{N}_1, \partial_G \mathcal{N}_2$:
$$
|\mathcal{N}_1[G_2,W_2](R)-\mathcal{N}_1 [G_1,W_1](R) |   \leq M \e^{2-2\a} |G_1(R)-G_2(R) | + M k \e^{1-\a}   |W_1(R) -W_2(R)|,
$$
and therefore,
\begin{align*}
\| \mathcal{N}_1(G_2,W_2)-\mathcal{N}_1 (G_1,W_1)\|_2 &\leq M \e^{2-2\a} \|G_1 - G_2\|_2 + M k \e^{1-\a}  \|W_1-W_2\|_2
\\ & \leq M \e^{1-\a} q^{-1} \llfloor (G_1,W_1)-(G_2,W_2)\rrfloor.
\end{align*}
This bound and~\eqref{lipT1R1outer} lead to the Lipschitz constant of $ \op_1$, which is $M \e^{1-\a}/q$.

From these computations we also deduce expression~\eqref{eq:lipF1outer} using  Corollary~\ref{cor:L1}.

\subsubsection{The Lipschitz constant of $ \op_2$}\label{sec:LipT2outer}

Now we deal with $\op_2[G,W]$ which is defined by
$$
\op_2 [G,W]= \opg_2 (\mathcal{N}_2 [\op_1[G,W],W]]).
$$
Recall that $\mathcal{N}_2$ was introduced at~\eqref{def:N2outer}:
\begin{align*}
\mathcal{N}_2[G,W](R) =& W^2 -  \frac{q^2}{F_0(R)+G(R)} \left (F_0''(R) + G''(R) + \frac{F_0'(R)+G'(R)}{R} \right ) \\ &+ 2 (V_0(R) + W)
\frac{F_0'(R)+G'(R)}{F_0(R)+ G(R)},
\end{align*}
We have to deal with each term of the difference
\begin{align*}
\opg_2  \big [\mathcal{N}_2 [\op_1[G_1,W_1],W_1] - \mathcal{N}_2 [\op_1[G_2,W_2],W_2]\big ]
\end{align*}
separating in a similar way as we did for computing the
norm of $W_0$ in Lemma~\ref{lem:independentouter}.
Along this proof we will use without special mention the first item of Lemma~\ref{lem:Lipschitzouter} (already proven) and the bounds in~\eqref{eq:lipF1outer}.

Take
$(G_1,W_1),(G_2,W_2) \in \mathcal{X}_2 \times \mathcal{X}_2$ with $\llfloor(G_1,W_1)\rrfloor, \llfloor (G_2,W_2)\rrfloor\leq \kappa \e^2$ and $\|G_1'\|_2, \|G_2'\|_2\leq \kappa \e$.
We first prove
\begin{equation}\label{boundlipS2N2outer}
\begin{aligned}
\e \|\mathcal{S}_2[\ \mathcal{N}_2[G_1, W_1] ]-
  \mathcal{S}_2 [ \mathcal{N}_2[G_2, W_2]] \|_2 \leq & M\e \e^{1-\a} \|W_1 - W_2 \|_2 +
 Mq^2\e^{1-\a} \|G_1 - G_2 \|_2 \\ & + Mq\e \|G_1' - G_2'\|_2 .
\end{aligned}
\end{equation}
We define $G_\lambda= (1-\lambda)G_2 + \lambda G_1$ and
$W_\lambda = (1-\lambda)W_2 + \lambda W_1$ and we notice that the operator $\mathcal{N}_2$ can be written as
$$
\mathcal{N}_2[G,W] = \widetilde{\mathcal{N}}_2[G,G',G'',W].
$$
By the mean's value theorem
\begin{align*}
\mathcal{N}_2[G_1,W_1]- \mathcal{N}_2[G_2,W_2] =&
(W_1-W_2) \int_{0}^1 \partial_W \widetilde{\mathcal{N}}_2[G_\lambda, G_\lambda',G_\lambda'',W_\lambda] \,d \lambda  \\ &+ (G_1-G_2)\int_{0}^1 \partial_G \widetilde{\mathcal{N}}_2[G_\lambda, G_\lambda',G_\lambda'',W_\lambda] \,d \lambda
\\ &+ (G_1'-G_2')\int_{0}^1 \partial_{G'} \widetilde{\mathcal{N}}_2[G_\lambda, G_\lambda',G_\lambda'',W_\lambda] \,d \lambda
\\ &+ (G_1''-G_2'')\int_{0}^1 \partial_{G''} \widetilde{\mathcal{N}}_2[G_\lambda, G_\lambda',G_\lambda'',W_\lambda] \,d \lambda
\\ =:& N_1+N_2+N_3+N_4.
\end{align*}
We start with $\e \mathcal{S}_2[N_1]$.
We have that
$\partial_W \widetilde{\mathcal{N}}_2 [G,G',G'',W] = 2 W+2\frac{F'_0+G'}{F_0+G}$ and therefore, using the bounds for $F_0, F_0'$ in Corollary~\ref{lem:Kinq2_new}:
\begin{align*}
\e |N_1(R) | &\leq  \e \|W_1 - W_2\|_2 \left(\frac{\e M}{R^4}+\frac{\e^{1-\a} k}{R^4}\right)
\leq  \e \|W_1 - W_2\|_2 \left(\frac{\e M}{R^4}+\frac{\e^{2-\a}q^{-1} }{R^4}\right)
\\ & \leq
M \e \|W_1 - W_2\|_2 \frac{\e^{1-\a}}{R^3}
\end{align*}
where we have used that $\e^{1-\a}/q< 1$.
Then, by Lemma~\ref{lem:W2},
$$
\e \| \mathcal{S}_2[N_1]\|_2 \leq \e M \|N_1\|_3  \leq M \e \e^{1-\a}\|W_1- W_1\|_2.
$$
We follow with $N_2$. It is clear that
\begin{align*}
\e \big |\partial_{G}\widetilde{\mathcal{N}}_2[G,G',G'',W](R)\big |  = & \frac{\e }{(F_0(R) + G(R))^2} \left |
q^2 \left (F_0''(R) + G''(R) + \frac{F_0'(R) + G'(R)}{R} \right )
\right .\\ & \left .- 2 (V_0(R) + W(R)) (F_0'(R) + G'(R))
 \right |. \\
\end{align*}
We use now that $k\e^{1-\a} \leq \e$ and that $\e R^{-2} \leq \e^{1- 2\a} \leq \e^{1-\a}k^{-1}$ and we obtain that
\begin{align*}
\e \big |R^2 \partial_{G}\widetilde{\mathcal{N}}_2[G,G',G'',W](R)\big | &\leq  M \e \left [q \e^{2(1-\a)} + q^2
 + \e ^{1-\a}q^2  +  \e^{2-2\a} + q\e^{1-\a}+
 q^{-1} \e^{3-3\a}+ \e^{2-2\a} \right ] \\ &\leq
M\e q^2,
\end{align*}
where again we have used that $\e^{1-\a}\leq q$.
This gives:
$$
\e \big |R \partial_{G}\widetilde{\mathcal{N}}_2[G,G',G'',W](R)\big | \leq M\e^{1-\a} q^2.
$$
Therefore
$$
\e |R^3 N_2(R) | \leq Mq^2\e^{1-\a}\|G_1- G_2\|_2 ,
$$
and we obtain $\e \|\mathcal{S}_2[ N_2]\|_2 \leq \e \|N_2\|_3 \leq M \e^{1-\a} q^2  \|G_1- G_2\|_2$.

With respect to $N_3$, we have that
\begin{align*}
\partial N_\lambda(R)&:=\partial_{G'} \widetilde{\mathcal{N}}_2 [G_\lambda,G_\lambda',G_\lambda'', W_\lambda ](R)  - 2 \frac{V_0(R)}{F_0(R)+G_\lambda(R)} \\ & = -\frac{q^2}{R(F_0(R)+ G_\lambda(R))} + 2\frac{W_\lambda(R)}{F_0(R)+G_\lambda(R)}.
\end{align*}
Then
$$
\e \big  | \partial N_\lambda(R)  \big |
\leq \frac{Mq^2 \e }{R} + \frac{M\e^2 }{R^2}
\leq M \e (q^2 + \e^{1-\a})\frac{1}{R}
\leq M\e q \frac{1}{R},
$$
that implies that
\begin{align*}
\e \big |R^3 \partial N_\lambda(R) \big |   |G_1'(R) - G_2'(R)|  \leq  M\e q \|G_1'-G_2'\|_2 ,
\end{align*}
and therefore
\begin{equation}\label{boundN31outer}
\e \left \| \mathcal{S}_2 \left [
(G_1' - G_2')\int_{0}^1 \partial N_\lambda d\lambda
\right ] \right \|_2 \leq \e\left  \|(G_1' - G_2') \int_{0}^1 \partial N_\lambda d\lambda  \right  \|_3 \leq M  \e q \|G_1'-G_2'\|_2.
\end{equation}
We point out that
$$
\mathcal{S}_2 \left [ V_0(R) (G_1'- G_2') \int_{0}^1 \frac{1}{F_0+ G_\lambda} d\lambda \right ]
= \mathcal{A} \left [  (G_1'- G_2') \int_{0}^1 \frac{1}{F_0+ G_\lambda} d\lambda \right ],
$$
and then
\begin{equation}\label{boundN32outer}
\e  \left \|\mathcal{S}_2 \left [ V_0(R) (G_1'- G_2') \int_{0}^1 \frac{1}{F_0+ G_\lambda} d\lambda \right ] \right \|_2 \leq \e \|G_1' - G_2'\|_2 .
\end{equation}
Bounds~\eqref{boundN31outer} and~\eqref{boundN32outer} imply $\e\|\mathcal{S}_2[N_3]\|_2 \leq M \e \|G_1' -G_2'\|_2$.

Finally we deal with $N_4$. Using Lemma~\ref{lem:S22} with
$$
h(R)= \int_{0}^1 \frac{d\lambda }{F_0+G_\lambda}, \qquad \hat{h}_2(R) = G_1''- G_2''  , \qquad \hat{h}_1 =  G_1'-G_2',
$$
we have that
$$
\e \|\opg_2 [N_4] \|_2 \leq  \e q^2 \| h \hat{h}_1  \|_2 + \e q^2 \|\opg_2 [h' \hat{h}_1]\|_2 + 2 \e q^2 \|\mathcal{A}[\hat{h}_1 h]  \|_2.
$$
Then,
we obtain
$$
 \e q^2\| h \hat{h}_1  \|_2 \leq M \e q^2  \|G_1'-G_2'\|_2 ,
$$
and by Lemma~\ref{lem:AS2},
$$
\e q^2\|\mathcal{A}[\hat{h}_1 h]\|_2 \leq  M \e q^2 \|G_1'-G_2'\|_2.
$$
In addition,
\begin{align*}
\e |h'(R) \hat{h}_1(R)| & \leq \e  |G_1'(R)-G_2'(R)| \int_{0}^1 \frac{|F_0'(R)| + |G_\lambda'(R)|}{|F_0(R)+ G_\lambda(R)|^2 }  d\lambda \\
&\leq M  \e  \frac{k \e^{1-\a} + \e }{R^4} \|G_1'-G_2'\|_2\\ &  \leq
M \e q \frac{1}{R^3} \|G_1'-G_2'\|_2.
\end{align*}
Then, using Lemma~\ref{lem:W2} $\|\opg_2 [h'\hat{h}_1]\|_2 \leq \|h' \hat{h_1}\|_3$, we obtain
$$
\e\|\opg_2[N_4] \|_2 \leq M  \e q  \|G_1'-G_2'\|_2,
$$
which finishes the proof of bound~\eqref{boundlipS2N2outer}.

Now we define $\varphi_1 =  \op_1[G_1,W_1]$, $\varphi_2= \op_1[G_2,W_2]$. By bound~\eqref{boundlipS2N2outer}, using that the Lipschitz constant of $\op_1$ is $M\e^{1-\a}/q$ and also \eqref{eq:lipF1outer}, we have that
\begin{align*}
\e \|\mathcal{S}_2[\mathcal{N}_2[\varphi_1,W_1]]
- \mathcal{S}_2[\mathcal{N}_2[\varphi_2,W_2]] \|_2\leq &M \e^{1-\a} \llfloor (G_1,W_1)-(G_2,W_2)\rrfloor + \e^{1-\a} \|\varphi_1-\varphi_2\|_2 \\&+ \e \|\varphi_1'-\varphi_2'\|_2 \\
\leq &M\e^{1-\a} \llfloor (G_1,W_1)-(G_2,W_2)\rrfloor \\&+ \e^{1-\a} q^{-1}
\llfloor (G_1,W_1)-(G_2,W_2)\rrfloor,
\end{align*}
and the proof of Lemma~\ref{lem:Lipschitzouter} is finished.

\section{Existence result in the inner region. Proof of Theorem~\ref{th:innermatching}}
\label{sec:proofInner}
We want to find solutions of~\eqref{originals} departing the origin that remain close to $(\fdin(r), \vdin(r))=(\f0(r),q\v0(r))$ defined by~\eqref{dominantterminner_0} where we recall that $\f0(r)$ is the unique solution of~\eqref{eq:introf0_0} and $\v0(r)$ is the solution of~\eqref{eq:introv0_0}:
\begin{equation}
\label{eq:f0v0}
\begin{aligned}
&\f0'' + \frac{\f0'}{r} - \f0\frac{n^2}{r^2} + \f0(1-\f0^2)=0, \qquad \f0(0), \qquad \lim_{r\to \infty} \f0(r)=1\\
 &\v0'+\frac{\v0}{r} + 2 \v0 \frac{\f0'}{\f0} + \esvi (1-\f0^2-k^2)=0,\qquad \v0(0)=0.
 \end{aligned}
\end{equation}
Then $\v0$ can be expressed (see~\eqref{expr:v_0}) as a function of $\f0(r)$ by writing
$$
\v0(r)=-\frac{1}{\esv r\f0^2(r)}\int_0^r \xi \f0^2(\xi) (1- \f0^2(\xi)- k^2) \dd \xi.
$$

The asymptotic and regularity properties of $\f0,\v0$ are given in Proposition~\ref{prop:f0v0_new} and  will be used along the proof of Theorem~\ref{th:innermatching}. Again, as in the previous section, Section~\ref{sec:provaOuter}, the proof of such result relies on a fixed point argument.

Let us now introduce the Banach spaces we shall be working with. For any $0<s_1$ and $\c>0$ we define $w(s)=\f0'(s/\sqrt{2})>0$,
$w_0(s)=\v0^2(s)\f0(s)>0$ and
$$
\Xin= \left\{\psi: [0,s_1]\to\mathbb{R}, \quad\psi\in\mathcal{C}^0( [0,s_1]),\quad \sup_{s\in [0,s_1]}\left|\frac{\psi(s)}{w(s)+\c w_0(s)}\right|<\infty\right\},
$$
endowed with the norm
$$
\|\psi\|=\sup_{s\in[0,s_1]} \left|\frac{\psi(s)}{w(s)+\c w_0(s)}\right|.
$$
We stress that, in $\Xin$ the norm $\|\cdot \|$ and
$$
\|\psi \|_{aux} = \sup_{s\in [0,s_*]} \frac{|\psi(s)|}{s^{n-1}} +
\sup_{s\in [s_*,s_1]} \left (\frac{1}{s^{3}}  + \c\frac{ |\log s|^2}{s^2} \right )^{-1}|\psi(s)|,
$$
for any given $s_*\in (0,s_1)$
are equivalent (see Lemma~\ref{prop:f0v0_new}).
We also introduce the Banach space
$$
\Yin= \left\{\psi: [0,s_1]\to\mathbb{R}, \quad\psi\in\mathcal{C}^0( [0,s_1]),\quad \|\psi\|_n<\infty \right\},
$$
where the norm $\|\cdot \|_n$ is defined by
$$
\|\psi \|_n= \sup_{s\in [0,s_*]} \frac{|\psi(s)|}{s^{n}} +
\sup_{s\in [s_*,s_1]} \left (\frac{1}{s^{3}}  +  \c \frac{ |\log s|^2}{s^2} \right )^{-1}|\psi(s)|,
$$
which satisfies that $\Yin \subset \Xin$.

Finally, for any fixed $m,l,\nu>0$, we define
$$
\Zin_{m}^{l,\nu}=\left\{\psi: [0,s_1]\to\mathbb{R}, \quad\psi\in\mathcal{C}^0( [0,s_1]),\quad \|\psi\|_{m}^{l,\nu}<\infty \right\},
$$
and the norm
$$
\|\psi \|_{m}^{l,\nu}= \sup_{s\in [0,s_*]} \frac{|\psi(s)|}{s^m}
+
\sup_{s\in [s_*,s_1]} \frac{|\psi(s)|s^{l}}{|\log s|^{\nu}}.
$$

From now on we will fix $s_*$ (independent of $q$  and $k$) as the minimum value which guarantees that, for $s\geq s_*$,
$\f0(s) \geq 1/2$ and the asymptotic expression~\eqref{asymKIn_new} is satisfied for $s\geq s_*$, namely
\begin{equation}\label{asympKIninner}
K_n(s)= \sqrt{\frac{\pi}{2 s}}e^{-s} \left (1+ \O\left (\frac{1}{s}\right )\right ),\qquad
I_n(s)= \sqrt{\frac{1}{2\pi s}}e^{s} \left (1+ \O\left (\frac{1}{s}\right ) \right ),
\qquad s\geq s_*.
\end{equation}

\subsection{The fixed point equation}
We denote by $\vb=v/q$ and we shall derive a system of two coupled fixed point equations equivalent to
\begin{subequations}
\begin{align}
f''+\frac{f'}{r}-f\frac{n^2}{r^2}+f(1-f^2-q^2\esvq \vb^2)&=0,\label{eqf}\\
f\vb'+f\frac{\vb}{r}+2\vb f'+ \esvi f(1-f^2-k^2)&=0. \label{eqv2}
\end{align}
\end{subequations}

We thus start by noting that since $q$ is small, we may write $(f,\vb)$ as a perturbation around $(\f0(r),\v0(r))$ of the form $(f,\vb)=(\f0(r)+g,\v0(r) + w)$. Therefore, using that $\f0,\v0$ are solutions of~\eqref{eq:f0v0}, equation~\eqref{eqf} can be expressed as
\begin{equation}
\label{Gin}
g''+\frac{g'}{r}-g\frac{n^2}{r^2}+g(1-3\f0^2(r))=\hat{H}[g,w],
\end{equation}
with
\[
\widehat{H}[g,w](r)=g^3+3g^2\f0(r)+q^2 \esvq(\v0(r) + w)^2  \left(g+\f0(r)\right),
\]
along with the initial condition $g(0)=0$. We also have that equation~\eqref{eqv2} can be written like:
\begin{equation}
\label{Vin}
w'+\frac{w}{r} + w\frac{\f0'}{\f0} = \esvi g(g+ 2 \f0) -
\frac{\v0 + w}{\f0(\f0 + g)}\left ( \f0 g' - \f0' g\right ) ,
\end{equation}
along with $w(0)=0$.

We now write the differential equations \eqref{Gin} and~\eqref{Vin} as a fixed point equation. We start by pointing out that, equivalently to what happens for the outer equations, one cannot explicitly solve the homogeneous linear problem associated to \eqref{Gin}. However, we shall conveniently modify the equation \eqref{Gin} to obtain a set of dominant linear terms at the left-hand-side for which we will have explicit solutions.

We first note that, as shown in \cite{Aguareles2011}, $\f0(r)$ very rapidly approaches the value of 1. Inspired by this, we define
\[
\widehat{\mathcal{E}}[g]:=g''+\frac{g'}{r}-g\frac{n^2}{r^2}+3g(1-\f0^2(r)),
\]
and therefore, \eqref{Gin} reads $\widehat{\mathcal{E}}[g]-2g =\widehat{H}[g,w](r)$, which motivates to perform the change $g=-\widehat{H}[0,0]/2+\Delta g$ into \eqref{Gin}.
Denoting by
$$
h_0=-\widehat{H}[0,0]/2=\frac12 q^2 \esvq\v0^2 \f0,
$$
$\Delta g$ is found to satisfy
\begin{equation*}
\Delta g''+\frac{\Delta g'}{r}-\Delta g\frac{n^2}{r^2}-2\Delta g= \widehat{H}[h_0+\Delta g]-\widehat{H}[0,0]-\widehat{\mathcal{E}}[h_0]-3\Delta g(1-\f0^2(r)),
\end{equation*}
along with $\Delta g(0)=0$.
Now we perform the change $s=\sqrt{2}r$ and we denote by
$\dg(s)=  \Delta g(s/\sqrt{2})$, $\dv(s)=w(s/\sqrt{2})$, $\ft_0(s)=\f0(s/\sqrt{2})$,
$\vt_0(s)=\v0(s/\sqrt{2})$ and $\tilde{h}_0(s)=h_0(s/\sqrt{2})$.
Therefore,
\begin{equation}
\label{dGins}
\dg''+\frac{\dg'}{s}-\dg\left(1+\frac{n^2}{s^2}\right)=\opN_1[\dg,\dv],
\end{equation}
where
\begin{equation}
\label{eq:N1}
\begin{split}
\opN_1[\dg,\dv](s)=& -\frac{3}{2} (1-\ft_0^2(s)) \dg + \frac{1}{2} \left ( {H}[\dg+\tilde{h}_0, \dv]- {H}[0,0]\right ) - \frac{1}{2}  {\mathcal{E}}[\tilde{h}_0],
\end{split}
\end{equation}
with
\begin{align*}
 {H}[g,\dv](s) &= g^3 + 3 g^2 \ft_0(s) + q^2 \esvq (\vt_0(s) + \dv)^2
(\ft_0(s) + g),
\\
{\mathcal{E}}[\tilde{h}_0](s)& =    \widehat{\mathcal{E}}[h_0](s\sqrt{2}) =2\left( \tilde{h}_0'' + \frac{\tilde{h}_0'}{s} - \frac{n^2}{s^2} \tilde{h}_0 \right)+
3 \tilde{h}_0 (1- \ft_0^2(s)).
\end{align*}

The homogeneous linear equation associated to~\eqref{dGins}, namely
$$
\dg'' + \frac{\dg'}{s} - \dg\left (1+ \frac{n^2}{s^2}\right ) =0,
$$
has solutions $K_n,I_n$, the modified Bessel functions. They satisfy that their wronskian is $1/s$. Therefore,   equation~\eqref{dGins} may also be written, for any $s_1>0$, like
\[
\begin{split}
\dg(s)& = K_n(s) \left (c_1 + \int_{s_1}^s \xi I_n(\xi) \opN_1[\dg,\dv](\xi)\dd \xi \right )+
I_n(s) \left (c_2- \int_{s_1}^s \xi K_n(\xi) \opN_1[\dg,\dv](\xi) \dd \xi \right ),\\
\dg'(s)& = K_n'(s) \left (c_1 + \int_{s_1}^s \xi I_n(\xi)\opN_1 [\dg,\dv](\xi)\dd \xi \right )+
I_n'(s) \left (c_2- \int_{s_1}^s \xi K_n(\xi) \opN_1[\dg,\dv](\xi) \dd \xi \right ),
\end{split}
\]
where $c_1,c_2$ are so far free parameters.
It is well known (see~\eqref{asymKIn0_new}) that $K_n(s) \to \infty$ and $I_n(s)$ is zero as $s\to 0$, if $n\geq 1$. Then, in order to have solutions bounded at $s=0$ we have to impose
$$
c_1 - \int_{0}^{s_1} \xi I_n(\xi) \opN_1[\dg,\dv](\xi)\dd \xi=0.
$$
Therefore,
\begin{equation*}
\dg(s) = K_n(s)\int_{0}^s \xi I_n(\xi) \opN_1[\dg,\dv](\xi)\dd \xi+
I_n(s) \left (c_2+ \int_{s}^{s_1} \xi K_n(\xi) \opN_1[\dg,\dv](\xi) \dd \xi \right ).
\end{equation*}
For any $s_1>0$, we introduce the linear operator
\begin{equation*}
\widehat{\mathcal{S}}_1[\psi](s) =  K_n(s)\int_{0}^s \xi I_n(\xi)\psi (\xi)\dd \xi+
I_n(s) \int_{s}^{s_1} \xi K_n(\xi) \psi(\xi) \dd \xi.
\end{equation*}
We have proven the following result:
\begin{lemma}\label{lem:S1inner}
For any $\Cin\in \R$ we define
$$
 \mathbf{\delta \widehat{g}}_0 (s)=I_n(s) \Cin.
$$
Then, if $\dg$ is a solution of~\eqref{dGins} satisfying $\dg(0)=0$, there exists $\Cin$ such that
\begin{equation}\label{fixedpoint:firstgpre}
\dg = \mathbf{\delta \widehat{g}}_0 + \widehat{\mathcal{S}}_1\circ\opN_1[\dg,\dv].
\end{equation}
\end{lemma}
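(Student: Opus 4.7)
The statement is essentially a reformulation of the variation-of-parameters computation carried out in the two paragraphs immediately before the lemma; my plan is to organize that derivation into a clean verification in four short steps.

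First, I would observe that the homogeneous operator $Ly := y'' + y'/s - (1 + n^2/s^2) y$ associated with~\eqref{dGins} is the modified Bessel operator of order $n$, so $\{K_n, I_n\}$ forms a fundamental system with Wronskian $W(K_n, I_n)(s) = 1/s$. Standard variation of parameters then produces, for any reference point $s_1 \in (0, \infty)$, the two-parameter family
\begin{align*}
\dg(s) = {} & K_n(s)\Bigl(c_1 + \int_{s_1}^s \xi I_n(\xi)\,\opN_1[\dg,\dv](\xi)\,d\xi\Bigr) \\
& + I_n(s)\Bigl(c_2 - \int_{s_1}^s \xi K_n(\xi)\,\opN_1[\dg,\dv](\xi)\,d\xi\Bigr),
\end{align*}
valid as soon as both integrands are locally integrable.

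Second, I would verify integrability at the origin. Using the expansions $K_n(\xi) = O(\xi^{-n})$ and $I_n(\xi) = O(\xi^n)$ for $n \geq 1$, together with the regularity of $\dg$ and $\dv$ dictated by the Banach spaces $\Xin$ and $\Yin$ (which force $\dg$ to vanish at $0$ with order at least $s^{n-1}$), a direct bookkeeping on the definition of $\opN_1$ in~\eqref{eq:N1} shows that $\opN_1[\dg,\dv]$ decays fast enough near $0$ so that both $\int_0^{s_1}\xi I_n(\xi)\,\opN_1\,d\xi$ and $\int_0^{s_1}\xi K_n(\xi)\,\opN_1\,d\xi$ converge absolutely.

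Third, I would impose $\dg(0) = 0$. Since $K_n(s)$ blows up like $s^{-n}$ as $s\to 0^+$, boundedness of $\dg$ at the origin requires the first bracket to vanish at $s = 0$, which uniquely determines
$$ c_1 = \int_0^{s_1}\xi I_n(\xi)\,\opN_1[\dg,\dv](\xi)\,d\xi. $$
After this choice the first bracket becomes $\int_0^s \xi I_n \opN_1\,d\xi$, and one verifies directly that $K_n(s)\int_0^s \xi I_n \opN_1\,d\xi = O(s^2)$ as $s\to 0$, so $\dg(0) = 0$ indeed holds; meanwhile the $I_n$-contribution is automatically $O(s^n)$ and vanishes at $0$ as well. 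Finally, renaming $\Cin := c_2$ and using $-\int_{s_1}^s = \int_s^{s_1}$ in the remaining integral yields the claimed identity
$$ \dg(s) = I_n(s)\,\Cin + K_n(s)\int_0^s \xi I_n(\xi)\opN_1(\xi)\,d\xi + I_n(s)\int_s^{s_1}\xi K_n(\xi)\opN_1(\xi)\,d\xi = \mathbf{\delta \widehat{g}}_0(s) + \widehat{\mathcal{S}}_1\!\circ\opN_1[\dg,\dv](s). $$
The only subtle point is the integrability analysis used to single out $c_1$; it is not a real obstacle here, but will be revisited in the sequel when establishing that $\widehat{\mathcal{S}}_1$ is a bounded operator on the Banach spaces $\Xin, \Yin$ that govern the fixed-point argument.
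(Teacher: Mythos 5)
Your proposal is correct and follows essentially the same route as the paper: variation of parameters with the fundamental system $\{K_n,I_n\}$ and Wronskian $1/s$, then fixing $c_1$ so that the coefficient multiplying the singular solution $K_n$ vanishes as $s\to 0$, and renaming $c_2=\Cin$. The only (minor) addition is that you make explicit the absolute convergence of $\int_0^{s}\xi I_n\opN_1$ and $\int_s^{s_1}\xi K_n\opN_1$ near the origin and the verification that $\dg(0)=0$ after the choice of $c_1$, points the paper leaves implicit.
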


We emphasize that $\opN_1$, given in~\eqref{eq:N1}, has linear terms in $\dg$. In fact, we decompose
$$
\opN_1[\dg,\dv]=\mathcal{L}[\dgt] + \opR_1[\dg,\dv],
$$
with
\begin{equation}\label{def:R1inner}
\begin{aligned}
\mathcal{L}[\dg](s)& =-\frac{3}{2} (1- \ft_0^2(s)) \dg(s) ,  \\
\opR_1[\dg,\dv](s)&= \frac{1}{2}\left ({H}[\dg + \tilde{h}_0,\dv ] - {H}[0,0] \right ) - \frac{1}{2} {\mathcal{E}}[\tilde{h}_0].
\end{aligned}
\end{equation}
Therefore equation~\eqref{fixedpoint:firstgpre} is rewritten as
\begin{equation}\label{fixedpoint:firstgpre2}
\dg=\mathbf{\delta \widehat{g}}_0 + \widehat{\mathcal{S}}_1\circ \mathcal{L}[\dg] + \widehat{\mathcal{S}}_1\circ \opR_1[\dg,\dv],
\end{equation}
with $\mathbf{\delta \widehat{g}}_0$ defined in Lemma~\ref{lem:S1inner}.
\begin{lemma}\label{lem:opTinner}
There exist $0<c,L\leq 1$ such that for any $0<s_*<s_1$  the linear operator $\opT:=\widehat{\mathcal{S}}_1 \circ \mathcal{L}$ satisfies that $\opT: \Xin\to \Xin $
with $\|\opT\| \leq L<1$.

As a consequence $\mathrm{Id} - \opT$ is invertible.
\end{lemma}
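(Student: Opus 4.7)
The plan is to reduce the operator-norm bound $\|\opT\|\le L<1$ to a pointwise supersolution inequality for the weight $\phi(s) := w(s) + c\,w_0(s)$. Since $I_n, K_n > 0$ on $(0,s_1]$, the integral operator $\widehat{\mathcal{S}}_1$ is positivity preserving, and $\mathcal L$ is multiplication by the sign-definite factor $-\tfrac{3}{2}(1-\ft_0^2)$. Consequently, for every $\psi\in\Xin$,
\[
|\opT[\psi](s)| \le \tfrac{3}{2}\,\widehat{\mathcal{S}}_1\bigl[(1-\ft_0^2)\,|\psi|\bigr](s) \le \tfrac{3}{2}\,\|\psi\|\,\widehat{\mathcal{S}}_1\bigl[(1-\ft_0^2)\,\phi\bigr](s),
\]
so the lemma reduces to establishing the pointwise inequality
\[
\tfrac{3}{2}\,\widehat{\mathcal{S}}_1\bigl[(1-\ft_0^2)\,\phi\bigr](s) \le L\,\phi(s), \qquad s\in[0,s_1],
\]
for some constants $c,L\in(0,1]$ independent of $s_1$.

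To produce this bound I will exploit that $w$ is (up to a scaling factor) the derivative $\ft_0'$. Differentiating the rescaled ODE for $\ft_0$ from \eqref{eq:f0v0} and rearranging, a short calculation gives
\[
\widehat{\mathcal L}[\ft_0'] = -\tfrac{3}{2}(1-\ft_0^2)\,\ft_0' + \frac{\ft_0'}{s^2} - \frac{2n^2\,\ft_0}{s^3},
\]
where $\widehat{\mathcal L}[\dg]=\dg''+\dg'/s-\dg(1+n^2/s^2)$ is precisely the operator inverted by $\widehat{\mathcal{S}}_1$. Applying $\widehat{\mathcal{S}}_1$ and absorbing the resulting homogeneous $I_n$-contribution into a boundary correction at $s=s_1$ (allowed because $\ft_0'$ is bounded at $0$ and decays like $s^{-3}$ at infinity) yields an identity of the form
\[
\tfrac{3}{2}\,\widehat{\mathcal{S}}_1\bigl[(1-\ft_0^2)\,w\bigr] = w - \widehat{\mathcal{S}}_1\bigl[\ft_0'/s^2 - 2n^2\ft_0/s^3\bigr] + \mathcal R_w,
\]
with $\mathcal R_w$ exponentially small in $s_1$. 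A parallel computation, starting from the ODE for $\vt_0$ in \eqref{eq:f0v0}, produces the analogous relation for $w_0 = \vt_0^2\,\ft_0$ modulo an explicit residual that Proposition~\ref{prop:f0v0_new} controls.

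The final step is to bound the two residuals against $\phi$. Using Proposition~\ref{prop:f0v0_new} and the classical asymptotics of $I_n, K_n$ near $0$ and near $\infty$, one checks that $\widehat{\mathcal{S}}_1[\ft_0'/s^2 - 2n^2\ft_0/s^3]$ produces an $s^{-2}\log s$-type contribution at intermediate scales, which cannot be absorbed by $w\sim s^{-3}$ alone; this is precisely why the second piece $c\,w_0\sim c(\log s)^2/s^2$ is included in the weight. A universal choice of $c>0$ large enough then makes the full sum of residuals a strict fraction of $\phi$, yielding the required $L<1$, and the invertibility of $I-\opT$ on $\Xin$ follows at once from the Neumann series.

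The main technical obstacle is the uniform choice of the constants $c$ and $L$, since the estimate must hold simultaneously in three distinct regimes: the small-$s$ regime, where $\phi$ degenerates like $s^{n-1}$ and the Bessel kernel of $\widehat{\mathcal{S}}_1$ is singular; the intermediate regime, where $c\,w_0$ must take over from $w$; and the large-$s$ regime near $s_1$, where the finite truncation of $\widehat{\mathcal{S}}_1$ produces the boundary correction $\mathcal R_w$ that has to be shown negligible.
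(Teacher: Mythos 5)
Your reduction of the operator bound to the pointwise inequality $\tfrac32\widehat{\mathcal{S}}_1[(1-\ft_0^2)\phi]\le L\phi$ for the weight $\phi=w+\c w_0$ is the same starting point as the paper's, but the way you then try to establish that inequality has a genuine gap at its core. The ODE identity you derive for $w=\ft_0'$ expresses $\tfrac32\widehat{\mathcal{S}}_1[(1-\ft_0^2)w]$ as $\pm w$ plus a correction $\widehat{\mathcal{S}}_1[\ft_0'/s^2-2n^2\ft_0/s^3]$ plus homogeneous Bessel terms; since the leading coefficient is exactly $1$ in absolute value, getting a contraction constant $L<1$ requires the correction to have a \emph{definite sign and a definite lower bound} comparable to $w$, uniformly in $s$. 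You never verify this, and it is far from automatic: the forcing $\ft_0'/s^2-2n^2\ft_0/s^3$ changes character across the three regimes (near $s=0$ it behaves like $s^{n-3}-s^{n-3}$, at infinity the second term dominates and is negative), and the homogeneous coefficients produced by the truncation at $s_1$ and the behavior at $s=0$ also enter with signs you have not pinned down. In effect your identity only re-encodes the difficulty; it does not resolve it. The paper avoids this entirely by \emph{importing} the strict contraction $\|\opTt\|_w=\widetilde{L}<1$ for the purely $w$-weighted operator on $[0,\infty)$ from~\cite{AgBaSe2016}, and then only has to estimate the new pieces: the $\c w_0$ contribution $R(s)$ and the truncation to $[0,s_1]$.

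The second problem is your choice of $\c$ \emph{large}. This cannot work, and it is the opposite of what the paper does. Near $s=0$ one has $w\sim s^{n-1}$ while $w_0\sim s^{2n+1}$, so $\phi\approx w$ there no matter how large $\c$ is; hence the required inequality in that region is essentially $\tfrac32\widehat{\mathcal{S}}_1[(1-\ft_0^2)w]\le Lw$, i.e.\ the $w$-part contraction must hold on its own, and enlarging $\c$ buys nothing. (It also violates the constraint $\c\le 1$ in the statement.) What actually happens is the reverse: a direct computation shows that the image of the $\c w_0$-piece satisfies $R(s)\le \c M s^n$ for $s\le s_*$ and $R(s)\le \c M s^{-3}$ for $s\ge s_*$, i.e.\ it is bounded by $\c\, b_0\,(w+\c w_0)$ with $b_0$ independent of $\c$ — the operator \emph{gains} on the $w_0$-piece, mapping it into something of order $w$. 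One then takes $\c\le(1-\widetilde{L})/(2b_0)$ \emph{small} so that $L=\widetilde{L}+\c b_0<1$. Relatedly, your heuristic that the residual of the $w$-identity forces the $(\log s)^2/s^2$ weight is off: that weight is needed to accommodate the size of the forcing term $h_0\sim q^2\vt_0^2\ft_0$ in the nonlinear problem, not to close the linear contraction. To repair your proof you would either have to carry out the supersolution analysis for the $w$-part in full (essentially reproving the result of~\cite{AgBaSe2016}), or cite it as the paper does, and in either case switch to taking $\c$ small.
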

\begin{proof}
In~\cite{AgBaSe2016} it is proven that the linear operator
$$
\opTt[h](s) := \frac{3}{2} K_{n}(s) \int_0^s  \xi I_n(\xi) (1- \ft_0^2(\xi)) h (\xi) \dd s +
\frac{3}{2} I_{n}(s) \int_s^\infty  \xi K_n(\xi) (1- \ft_0^2(\xi)) h(\xi)\dd s,
$$
is contractive, in the Banach space defined by
$$
\widetilde{\Xin} = \left \{ \psi:[0,\infty)\to \R,\; \psi\in C^0[0,\infty), \|\psi\|_{w}:=\sup_{s\geq 0} \frac{|\psi(s)|}{w(s)} <\infty\right \}.
$$
The proof is based on the fact that
\begin{equation*}
\begin{split}
\vert \opTt[h](s)\vert \leq &  \frac{3}{2} K_{n}(s)\int_{0}^{s}\xi I_{n}(\xi)
(1-\ft_0^2(\xi))
\| h \|_w w(\xi) \, d\xi
\\&+\frac{3}{2} I_{n}(s)\int_{s}^{\infty}\xi K_{n}(\xi)
(1-\ft_0^2(\xi)) \| h\|_w w(\xi)\, d\xi
\\&\leq  \| h \|_w T(s) \, ,
\end{split}
\end{equation*}
where the function $T$ is defined by
\begin{equation*}
T(s) :=  \frac{3}{2} K_{n}(s)\int_{0}^{s}\xi I_{n}(\xi) (1-\ft_0^2(\xi)) w(\xi)\, d\xi
+\frac{3}{2} I_{n}(s)\int_{s}^{\infty}\xi K_{n}(\xi) (1-\ft_0^2(\xi))\w(\xi)\, d\xi.
\end{equation*}
and satisfies $\|T\|_w=\widetilde{L} <1$.

Let now $h\in \Xin$:
\begin{equation}\label{TinnTR}
\begin{split}
\vert \opT [h](s) \vert \leq &  \frac{3}{2} K_{n}(s)\| h\| \int_{0}^{s}\xi I_{n}(\xi)
(1-\ft_0^2(\xi))
(w(\xi)+\c w_0(\xi)) \, d\xi
\\&+\frac{3}{2} I_{n}(s)\| h\| \int_{s}^{s_1}\xi K_{n}(\xi)
(1-\ft_0^2(\xi))  (w(\xi)+\c w_0(\xi) )\, d\xi
\\ \leq  & \| h \| \big (T(s) + R(s)\big ),
\end{split}
\end{equation}
where
$$
R(s) = \frac{3\c }{2} K_{n}(s)\int_{0}^{s}\xi I_{n}(\xi)
(1-\ft_0^2(\xi)) w_0(\xi) \, d\xi
+\frac{3\c }{2} I_{n}(s)\int_{s}^{s_1}\xi K_{n}(\xi)
(1-\ft_0^2(\xi))  w_0(\xi)\, d\xi.
$$
When $s\in [0,s_*]$,
$$
R(s) \leq \c M \left ( s^{-n}\int_{0}^s \xi^{2n+3}\,d\xi +
s^{n} \int_{s}^{s_*} \xi^{2}\,d\xi
+ s^n \int_{s_*}^\infty \xi K_{n}(\xi) \frac{|\log \xi|^2}{\xi^{2}}\, d\xi\right ) \leq \c Ms^n.
$$

For $s\in [s_*,s_1]$, using that $1-\f0^2(s) =\O(s^{-2})$
\begin{align*}
R(s) &\leq \c M \left (\frac{e^{-s}}{\sqrt{s}} \int_{0}^{s*} \xi^{2n+3}\, d\xi +
\frac{e^{-s}}{\sqrt{s}} \int_{s_*}^s e^{\xi} \frac{|\log \xi|^2}{\xi^{7/2}} \,d\xi +
\frac{e^s}{\sqrt{s}} \int_{s}^{s_1} e^{-\xi}
\frac{|\log \xi|^2}{\xi^{7/2}} \,d\xi \right)
\\ & \leq
\c M\left (\frac{e^{-s}}{\sqrt{s}} +\frac{|\log s|^2}{s^4}\right ) \leq \c M \frac{1}{s^3} \leq \c M (w(s) + \c w_0(s)).
\end{align*}
Therefore, using \eqref{TinnTR} one obtains
$$
\|\opT[h]\| \leq \|h\| (\widetilde{L} + \c b_0),
$$
where $b_0$ is a constant which is independent on $\c$.

Taking
$
\c  \leq \min \left \{ 1, \frac{1-\widetilde{L}}{2b_0}\right \}
$
so that
$L:=\widetilde{L} + \c b_0 \leq  \frac{\widetilde{L}+1}{2}<1$, the proof is finished.
\end{proof}

As a consequence of this lemma, equation~\eqref{fixedpoint:firstgpre2} can be expressed as
\begin{equation}\label{fixedpoint:firstg}
\dg=\mathbf{\delta g}_0 + \mathcal{S}_1 \circ \opR_1[\dg,\dv],\qquad
\mathbf{\delta g}_0:=\big (\mathrm{Id}-\opT\big )^{-1}[\mathbf{\delta \widehat{g}}_0],\qquad\mathcal{S}_1:=\big (\mathrm{Id} - \opT\big )^{-1} \circ \widehat{\mathcal{S}}_1,
\end{equation}
and we recall that $\mathbf{\delta \widehat{g}}_0 $ was defined in Lemma~\ref{lem:S1inner}.
\begin{lemma}\label{lem:deltag0inner}
There exists a function $I(s)$ satisfying
$$
I'(s_1)K_n(s_1)- I(s_1)K_n'(s_1)=\frac{1}{s_1}, \qquad |I(s_1)|,|I'(s_1)|\leq M \frac{1}{\sqrt{s_1}} e^{s_1},
$$
such that $\mathbf{\dg}_0(s)=I(s) \Cin$.
\end{lemma}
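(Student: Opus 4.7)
The plan is to exploit the linearity of $\opT$ and the special structure of $\widehat{\mathcal{S}}_1$ at the right endpoint $s_1$. First I define $I(s)$ to be the unique fixed point in $\Xin$ of the linear equation $I = I_n + \opT[I]$; this is well posed since by Lemma~\ref{lem:opTinner} $\opT$ is a contraction with constant $L<1$, and one checks that $I_n\in \Xin$ (the ratio $I_n(s)/(w(s)+\c w_0(s))$ is bounded on $[0,s_*]$ by $I_n(s)\sim s^n$ against $w(s)\sim s^{n-1}$). Since $\opT$ is linear and $\mathbf{\delta \widehat{g}}_0(s) = I_n(s)\Cin$, applying $(\mathrm{Id}-\opT)^{-1}$ yields $\mathbf{\dg}_0(s) = I(s)\Cin$.

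To derive the Wronskian-type identity, I would exploit that the second integral defining $\widehat{\mathcal{S}}_1[\psi]$ vanishes at $s=s_1$ and that upon differentiation the two boundary contributions cancel identically (they are both equal to $s\psi(s) I_n(s) K_n(s)$). This gives the clean formulas
\begin{equation*}
\widehat{\mathcal{S}}_1[\psi](s_1) = K_n(s_1)\,\alpha[\psi], \qquad (\widehat{\mathcal{S}}_1[\psi])'(s_1) = K_n'(s_1)\,\alpha[\psi], \qquad \alpha[\psi] := \int_0^{s_1}\xi I_n(\xi)\psi(\xi)\,d\xi.
\end{equation*}
Setting $\alpha_0 = \alpha[\mathcal{L}[I]]$ and applying this with $\psi = \mathcal{L}[I]$, the fixed-point equation yields $I(s_1) = I_n(s_1) + K_n(s_1)\alpha_0$ and $I'(s_1) = I_n'(s_1) + K_n'(s_1)\alpha_0$. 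In the combination $I'(s_1)K_n(s_1) - I(s_1)K_n'(s_1)$ the $\alpha_0$-contributions cancel, leaving exactly the classical Wronskian identity $I_n'(s_1)K_n(s_1) - I_n(s_1)K_n'(s_1) = 1/s_1$.

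For the size bounds I would estimate $\|I_n\|$ in the weighted norm of $\Xin$ using the large-$s$ asymptotics~\eqref{asympKIninner} together with $w(s)\sim n^2/s^3$ and $w_0(s)\sim (\log s)^2/s^2$ as $s\to\infty$, which follow from Proposition~\ref{prop:f0v0_new}. The ratio $I_n(s)/(w(s)+\c w_0(s))$ is increasing on $[s_*,s_1]$ and attains its maximum $\|I_n\| = O(s_1^{3/2}e^{s_1}/(\log s_1)^2)$ at $s=s_1$. From $\|I\| \le \|I_n\|/(1-L)$ one then obtains
\begin{equation*}
|I(s_1)|\le (w(s_1)+\c w_0(s_1))\|I\| \le M\,e^{s_1}/\sqrt{s_1};
\end{equation*}
substituting this into the just-proven identity $I'(s_1) = (s_1^{-1} + I(s_1)K_n'(s_1))/K_n(s_1)$ and using $|K_n(s_1)|,|K_n'(s_1)| \sim e^{-s_1}/\sqrt{s_1}$ gives the analogous bound on $|I'(s_1)|$. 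The delicate step is the weighted-norm computation, where the logarithmic factor in the weight $w_0$ is essential: without it the estimate would pick up a spurious power of $s_1$ relative to the target $e^{s_1}/\sqrt{s_1}$, which is precisely the reason the weight was tuned this way in the definition of $\Xin$.
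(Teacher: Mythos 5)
Your proof is correct and follows essentially the same route as the paper's: you define $I$ as the resolvent $(\mathrm{Id}-\opT)^{-1}$ applied to $I_n$, obtain the Wronskian identity by evaluating the fixed-point relation and its derivative at $s=s_1$ (where the $\int_s^{s_1}$ term vanishes and the boundary terms of the differentiated integrals cancel, so the $\alpha_0$-contributions drop out of the Wronskian), and bound $|I(s_1)|$ through $\|I\|\leq M\|I_n\|$ together with the monotonicity of $I_n(s)/(w(s)+\c w_0(s))$, recovering $|I(s_1)|\leq M I_n(s_1)\leq M s_1^{-1/2}e^{s_1}$ and then $|I'(s_1)|$ from the identity $I'(s_1)=\bigl(s_1^{-1}+I(s_1)K_n'(s_1)\bigr)/K_n(s_1)$. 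The only cosmetic remark is that the logarithmic factor in $w_0$ is not actually essential for this particular bound, since whatever the weight, multiplying $\|I\|\leq M\|I_n\|$ by the weight evaluated at $s_1$ returns $|I(s_1)|\leq M I_n(s_1)$ once the sup defining $\|I_n\|$ is attained at $s_1$.
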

\begin{proof}
Recall that
$$
\mathcal{T}[h](s)=-\frac{3}{2} K_n(s) \int_{0}^s \xi I_n(\xi) (1- \ft_0(\xi))h(\xi) \dd \xi - \frac{3}{2} I_n(s) \int_{s}^{s_1}  \xi K_n(\xi) (1- \ft_0(\xi))h(\xi) \dd \xi.
$$
Since $\mathbf{\delta g}_0=\big (\mathrm{Id}-\opT\big )^{-1}[\mathbf{\delta \widehat{g}}_0]$, by definition of the operator $\mathcal{T}$ it is clear that
$$
\mathbf{\dg}_0(s) = \sum_{m\geq 0} \mathcal{T}^m[\mathbf{\delta \widehat{g}}_0](s),
$$
and therefore, $\mathbf{\dg}_0(s)=I(s) \Cin$. Notice that if $\Cin=0$, one can take $I(s)=I_n(s)$ and we are done. Assume then that $\Cin\neq 0$. Then,
from $\mathbf{\dg}_0-\mathcal{T}[\mathbf{\dg}_0]=\delta \widehat{\mathbf{g}}_0 = I_n(s) \Cin$, one deduce that
\begin{align*}
I_n(s) & =
I(s)   -\frac{3}{2} K_n(s)   \int_{0}^s \xi I_n(\xi) (1- \ft_0(\xi))I(s)  \dd \xi - \frac{3}{2} I_n(s)  \int_{s}^{s_1}  \xi K_n(\xi) (1- \ft_0(\xi))I(s) \dd \xi ,\\
I_n'(s) & =
I'(s)   -\frac{3}{2} K'_n(s)  \int_{0}^s \xi I_n(\xi) (1- \ft_0(\xi))I(s)  \dd \xi - \frac{3}{2} I_n'(s)  \int_{s}^{s_1}  \xi K_n(\xi) (1- \ft_0(\xi))I(s) \dd \xi.
\end{align*}
Therefore
$$
I'(s_1) K_n(s_1) - I(s_1)K_n'(s_1)=I_n'(s_1)K_n(s_1) - I_n(s_1)K_n(s_1)=s^{-1}_1.
$$
To finish, we observe that $\|\mathbf{\dg_0}\| \leq M \|\delta \widehat{\mathbf{g}}_0\| = M\|I_n\| \Cin$.
That is, $\|I\| \leq M \|I_n\|$. Then, from the asymptotic expression of $I_n$ in~\eqref{asympKIninner}, we deduce that $I_n (s) (w(s) + c w_0(s))^{-1}$ is an increasing function and then we have that
$\|I_n\| = (w(s_1) + cw_0(s_1))^{-1} I_n(s_1)$ and then
$$
|(w(s_1)+ c w_0(s_1))^{-1} I(s_1)| \leq M (w(s_1) + cw_0(s_1))^{-1} I_n(s_1),
$$
that implies that $|I(s_1) | \leq M I_n(s_1) \leq Ms_1^{-1/2} e^{s_1}$. The bound for $|I'(s_1)|$ comes from~\eqref{asympKIninner} and the fact that $I'(s_1)=  \big [s_1^{-1} + I(s_1)K_n'(s_1) \big ] (K_n(s_1))^{-1}$.
\end{proof}

Now we deal with equation~\eqref{Vin} which, along with the initial condition $w(0)=0$, is equivalent to
\begin{equation*}
w(r) = \frac{1}{ r \f0^2(r)}\int_{0}^r
\xi \f0^2(\xi) \left [\esvi g(g+ 2 \f0) -
\frac{\v0 + w}{\f0(\f0 + g)}\left ( \f0 g' - \f0' g\right ) \right ]\, d\xi.
\end{equation*}
Therefore, recalling that $g=h_0+\Delta g$, the function $\dv(s) = w(s/\sqrt{2})$ satisfies
\begin{equation}\label{fixedpoint:firstv}
\dv(s) =   \mathcal{S}_2 \circ  \opR_2[\dg,\dv],
\end{equation}
with
\begin{equation}\label{def:S2}
\mathcal{S}_2[\psi](s) = \frac{\sqrt{2}}{2 s \ft_0^2(s)}\int_{0}^s \xi \ft^2_0(\xi) \psi(\xi) \,d\xi,
\end{equation}
and
\begin{equation}\label{def:R2tilde}
\begin{aligned}
 {\opR}_2[\dg,\dv](s)  =&
 \esvi (\tilde{h}_0 + \dg) (2 \ft_0 + \tilde{h}_0 + \dg) \\ &+
 \frac{\vt_0 + \dv}{\ft_0 (\ft_0 + \tilde{h}_0 + \dg)}
 \left [\ft_0 (\tilde{h}_0' + \dg') -\ft_0' (\tilde{h}_0 + \dg) \right ].
 \end{aligned}
\end{equation}
In view of~\eqref{fixedpoint:firstg} and~\eqref{fixedpoint:firstv}, we are looking for solutions of the fixed point equation associated. However, as for the \textit{outer region}, for several technical reasons, we consider instead the equivalent Gauss-Seidel version of the fixed point equation given by
\begin{equation}\label{fixedpointinner}
(\dg ,\dv )= \opF[\dg ,\dv ],
\end{equation}
where $\opF=(\opF_1,\opF_2)$ is
\begin{equation}\label{def:fixedpointoperatorinner}
\opF_1 =  \mathbf{\delta g_0} +
\opL_1\circ \opR_1 ,\qquad
\opF_2[\delta g, \delta v]= \opL_2 \circ  \opR_2[ \opF_1[\dg,\dv], \dv]
.
\end{equation}

\begin{remark}\label{rem:ho}
We note that $\ft_0 + \tilde{h}_0 +\dg >0$ in the definition domain of $s$ in order for the operator $ {\opN}_2$ to be well defined. The following bounds, which are a straightforward consequence of Proposition~\ref{prop:f0v0_new}, will be crucial to guarantee the well-posedness of $ {\opN}_2$:
\begin{equation*}
\begin{aligned}
&|\tilde{h}_0(s) |\leq Mq^2 s^{n+2} ,\quad s\to 0,\qquad |\tilde{h}_0(s)|\leq M q^2 \frac{|\log s|^2}{s^2},\quad s\gg 1, \\
& {\mathcal{E}}[\tilde{h}_0](s)\sim M q^2 s^{n} ,\quad s\to 0,\qquad | {\mathcal{E}}[\tilde{h}_0](s)|\leq M q^2 \frac{|\log s|^2}{s^4} \leq M q^2 \frac{1}{s^3},\quad s\gg 1.
\end{aligned}
\end{equation*}
Moreover $|\tilde{h}_0'(s)|\leq Mq^2 |\log s|^2 s^{-3}$ for $s \gg 1$.
\end{remark}
In what follows, we simplify the notation by dropping the symbol $\,\widetilde{\,}\,$ of $\ft_0,\vt_0$ and $\tilde{h}_0$.
Now we reformulate Theorem~\ref{th:innermatching} to adapt it to the fixed point setting.
\begin{theorem}  \label{prop:innerfixedpoint}
 Let $\eta >0$, $0<\mu_0<\mu_1$
and take $\e=\mu e^{-\frac{\pi}{2nq}}$ with $\mu_0\le \mu \leq \mu_1$.
There exist $q_0=q_0(\mu_0,\mu_1,\eta)>0$ and
$\rho_0=\rho_0(\mu_0,\mu_1,\eta)>0$, $M=M(\mu_0,\mu_1,\eta)>0$ such that, for any $q\in[0,q_0]$ and
\begin{equation*}
0<\rho<\rho_0,
\end{equation*}
 taking $s_1$ as:
 \begin{equation*}
 s_1=e^{\frac{\rho}{q}},
\end{equation*}
if $\Cin$ satisfies
\begin{equation}
\label{eq:condicioc2}
s_1^{3/2} e^{s_1} |\Cin|\le  \eta \rho^2    \ ,
\end{equation}
then there exists a family of solutions $(\dg(s,\Cin),\dv(s,\Cin))$ of the fixed point equation~\eqref{fixedpointinner} defined for $0\leq s \leq s_1$ which satisfy
$$
 \|\dg\| +  \|\dg'\|+\|\dv\|_1^{1,3} \leq M q^2.
$$
The function $\dg $ can be decomposed as
$$
\dg(s,\Cin)=  \dg_0(s,\Cin)+\dg_1(s,\Cin),
$$
with $\dg_0(s,\Cin)= I(s) \Cin + \widetilde{\dg}_0(s)$ and $I(s)$ is a function satisfying $I'(s_1) K_n(s_1) - I(s_1) K_n'(s_1)=s^{-1}_1$. Moreover
\begin{itemize}
    \item [(i)]there exists $q_*=q_*(\mu_0,\mu_1)$ and $M_0=M_0(\mu_0,\mu_1)$ such that for $q\in [0,q_0*]$
    $$
    \|\widetilde{\dg}_0\|+ \|\widetilde{\dg}_0'\| \leq M_0 q^2,
    $$
    \item[(ii)]and for $q\in [0,q_0]$
    $$
\|\dg_1\|,\,\|\dg_1'\|\leq M q^2 \rho^2.
$$
\end{itemize}
\end{theorem}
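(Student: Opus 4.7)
The plan is to prove Theorem~\ref{prop:innerfixedpoint} by applying the Banach fixed point theorem to the operator $\mathcal{F}=(\mathcal{F}_1,\mathcal{F}_2)$ defined in~\eqref{def:fixedpointoperatorinner} on a closed ball in the product Banach space $\mathcal{X}^{\mathrm{in}} \times \mathcal{Z}^{1,3}_1$. The choice of these spaces is guided by the expected size of $\delta g$ and $\delta v$: from Remark~\ref{rem:ho} one has $|\tilde h_0(s)| \lesssim q^2 s^{n+2}$ near $0$ and $|\tilde h_0(s)| \lesssim q^2 |\log s|^2 s^{-2}$ for $s\gg 1$, so the $\delta g$-component should live in a space with norm $\lesssim q^2$ in $\mathcal{X}^{\mathrm{in}}$ (with weight $w(s)+\mathbf{c} w_0(s)\sim s^{n-1}$ near $0$ and $\sim s^{-3}+ \mathbf{c} |\log s|^2/s^2$ for large $s$), while $\delta v$ should be of size $\lesssim q^3$ in $\mathcal{Z}^{1,3}_1$, matching the bound $q^3|\log r|^3 /r$ announced in Theorem~\ref{th:innermatching}.

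First, I would establish boundedness estimates for the linear operators $\mathcal{S}_1$ (obtained from $\widehat{\mathcal{S}}_1$ via Lemma~\ref{lem:opTinner}) and $\mathcal{S}_2$ as given in~\eqref{def:S2}. For $\mathcal{S}_1$, the key point is that Lemma~\ref{lem:opTinner} already provides invertibility of $\mathrm{Id}-\mathcal{T}$, so one can transfer estimates from $\widehat{\mathcal{S}}_1$: using the asymptotics~\eqref{asympKIninner} of $K_n, I_n$ and their behavior near zero~\eqref{asymKIn0_new}, one shows that $\widehat{\mathcal{S}}_1$ sends $\mathcal{Z}^{1,3}_{n+2}$-type data to functions in $\mathcal{X}^{\mathrm{in}}$ with a controlled gain in size (heuristically, it integrates against Bessel kernels which produce exponential decay). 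Analogously, $\mathcal{S}_2$ benefits from the factor $1/(s f_0^2(s))$: using the lower bound $f_0(s) \geq c_f s^n$ near $0$ and $f_0(s)\geq 1/2$ for $s\geq s_*$, one checks that $\mathcal{S}_2$ maps norms of $\mathcal{R}_2$-type data (of size $q^3$) into the $\mathcal{Z}^{1,3}_1$ norm. I would also derive parallel estimates for $(\mathcal{S}_1[\psi])'$ by differentiating under the integral sign and using that $K_n', I_n'$ share the asymptotics of $K_n, I_n$.

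Next, I would estimate the independent term $\mathcal{F}[0,0]$. Using~\eqref{def:R1inner}, $\mathcal{R}_1[0,0]= \tfrac{1}{2}(H[\tilde h_0, 0] - H[0,0]) - \tfrac12 \mathcal{E}[\tilde h_0]$, and the bounds in Remark~\ref{rem:ho} for $\tilde h_0$ and $\mathcal{E}[\tilde h_0]$ give $\mathcal{R}_1[0,0]=\mathcal{O}(q^2)$ in the appropriate weighted norm. The decomposition $\mathcal{F}_1[\delta g,\delta v]=\delta\mathbf{g}_0 + \mathcal{S}_1\circ \mathcal{R}_1$ then produces $\widetilde{\delta g}_0 = \mathcal{S}_1\circ \mathcal{R}_1[0,0]$, which is independent of $\eta$ (item~(i)). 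By Lemma~\ref{lem:deltag0inner}, the contribution of the boundary parameter is $I(s)\mathbf{b}$, and the constraint~\eqref{eq:condicioc2} together with the bound $|I(s_1)|\lesssim s_1^{-1/2} e^{s_1}$ ensures $\|I(\cdot)\mathbf{b}\|\lesssim q^2\rho^2$. For $\mathcal{F}_2[0,0]=\mathcal{S}_2\circ \mathcal{R}_2[\mathcal{F}_1[0,0], 0]$, I would use Proposition~\ref{prop:f0v0_new} (which gives $|\tilde v_0|\lesssim q|\log s|/s$ for $s\gg 1$) and the estimates on $\mathcal{F}_1[0,0]$ just obtained to conclude $\|\mathcal{F}_2[0,0]\|_1^{1,3}\lesssim q^3$.

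Finally, I would verify that $\mathcal{F}$ is a contraction on the ball $\{\|\delta g\|+\|\delta g'\|\leq \kappa q^2,\ \|\delta v\|_1^{1,3}\leq \kappa q^3\}$ for some $\kappa>0$, provided $q$ is small enough. The Lipschitz constants of $\mathcal{R}_1$ and $\mathcal{R}_2$ with respect to $\delta g, \delta v$ will carry a small factor coming from $q^2 (v_0+\delta v)^2 = \mathcal{O}(q^2 |\log s|^2/s^2)$ inside $H$, and a factor $\rho$ or $q$ from the rescaling $s\in[0,s_1]$ with $s_1=e^{\rho/q}$; together with the gain of $\mathcal{S}_1, \mathcal{S}_2$ this yields a Lipschitz constant strictly less than $1/2$ if $\rho_0, q_0$ are small enough. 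The main obstacle, and the delicate point, is handling the term $\bigl(\tilde v_0+\delta v\bigr)\bigl[\tilde f_0(\tilde h_0'+\delta g')-\tilde f_0'(\tilde h_0+\delta g)\bigr]/[\tilde f_0(\tilde f_0+\tilde h_0+\delta g)]$ in $\mathcal{R}_2$, because it involves $\delta g'$: following the approach of Section~\ref{subsec:lip}, I would substitute $\delta g= \mathcal{F}_1[\delta g,\delta v]$ and use the extra bound on $(\mathcal{F}_1[\delta g,\delta v])'$ obtained from differentiating $\widehat{\mathcal{S}}_1$ to close the estimate without tracking a separate norm on derivatives. Once the contraction is established, the fixed point gives $(\delta g, \delta v)$ with the announced bounds; the decomposition $\delta g = I(s)\mathbf{b}+\widetilde{\delta g}_0+\delta g_1$ in items~(i)--(ii) follows by subtracting the $\mathbf{b}=0$ solution and controlling the Lipschitz-type difference, exactly as in the proof of Theorem~\ref{prop:outerfixedpoint}.
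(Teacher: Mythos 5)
Your overall strategy coincides with the paper's: a Banach fixed point argument for the Gauss--Seidel operator $\opF$ of \eqref{def:fixedpointoperatorinner} on a product of the weighted spaces $\Xin\times\Zin_1^{1,3}$, with the substitution $\dg=\opF_1[\dg,\dv]$ inside $\opR_2$ to avoid tracking a norm on $\dg'$, and the decomposition of items (i)--(ii) obtained by isolating the $\Cin$-dependent part. However, there is a normalization error in the sizes you assign to the second component, and it breaks the invariance-of-the-ball step as you have set it up. The fixed point equation \eqref{fixedpointinner} is written for the correction $\dv$ to $\v0$, where $v=q\bar v$ and $\bar v=\v0+w$; hence $\win=qw$, and the bound $|\win|\leq Mq^3|\log r|^3/r$ of Theorem~\ref{th:innermatching} corresponds to $\|\dv\|_1^{1,3}\leq Mq^2$ --- which is exactly what the statement you are proving asserts --- not to $\|\dv\|_1^{1,3}\leq Mq^3$. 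For the same reason the function $\vt_0$ appearing in $\opR_2$ is $\O(|\log s|/s)$ \emph{uniformly in} $q$: the bound $Mq|\log r|/r$ of Proposition~\ref{prop:f0v0_new} is for $\vdin=q\v0$, not for $\v0$ itself. Consequently $\opR_2[\opF_1[0,0],0]$ is genuinely of size $q^2$ (for instance, the product of $\vt_0=\O(|\log s|/s)$ with $\ft_0(\tilde{h}_0'+\dg_0')-\ft_0'(\tilde{h}_0+\dg_0)=\O(q^2)$ already contributes at order $q^2$), so $\|\opF_2[0,0]\|_1^{1,3}\sim q^2$ and your proposed ball $\{\|\dv\|_1^{1,3}\leq\kappa q^3\}$ is not mapped into itself by $\opF$.

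The repair is immediate: take the ball of radius $\kappa q^2$ in \emph{both} components, i.e.\ work with $\llfloor(\dg,\dv)\rrfloor=\|\dg\|+\|\dv\|_1^{1,3}\leq 2\kappa_0 q^2$ as the paper does; the Lipschitz constants (of order $\max\{Mq^2,\,M\c^{-1}\rho^2\}$, made smaller than $1/2$ by shrinking $\rho_0$ and $q_0$) are unaffected by this change, and the rest of your outline, including the treatment of the $\dg'$-terms in $\opR_2$ and the extraction of $I(s)\Cin$ via the condition \eqref{eq:condicioc2}, then goes through. But as written, the quantitative backbone of your argument for the $\dv$-component is off by a factor of $q$, and the self-mapping verification would fail at the independent term.
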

As we did in the \textit{outer region}, we prove this proposition in three main steps.
We first study the continuity of the linear operators $\opL_1,\opL_2$ in Section~\ref{subsec:linearinner} in the defined Banach spaces. After that, in Section~\ref{subsec:independentinner} we study
$\op[0,0]$
and finally, in Section~\ref{subsec:lipinner} we prove that the operator $\mathcal{F}$ is Lipschitz.

From now on, we fix $\eta,\mu_0,\mu_1$, we will take $q_0,\rho_0$ as small as we need and $\Cin$ a constant  satisfying~\eqref{eq:condicioc2}. As a convention, in the proof there appear a number of different constants, depending on $\eta,\mu_0,\mu_1$ but independent of $q$ which, to simplify the notation, will all be simply denoted as $M$.

\subsection{The linear operators} \label{subsec:linearinner}
The following results provide bounds and differentiability properties of the linear operators $\opL_1,\opL_2$ defined in \eqref{fixedpoint:firstg} and \eqref{def:R2tilde}.
\begin{lemma}\label{lem:Linear1inner}
Let  $s_1, c$ be such that $0<s_*<s_1$ and $0<\c\leq 1$, and let $\psi \in \Xin$. Then, the function $\opL_1[\psi]$ is a differentiable function in $(0,s_1)$ such that $\opL_1[\psi]\in \Yin\subset \Xin$, $\opL_1[\psi]' \in \Xin$ and
$$
\|\opL_1[\psi] \|_n \leq M\|\opL_1[\psi] \|\leq M\|\psi\|,\qquad  \|\opL_1[\psi]'\| \leq M \|\psi\|,
$$
for $M$ a constant independent of $s_1,s_0,\c$.
\end{lemma}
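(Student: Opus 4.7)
\medskip
\noindent\textbf{Proof plan.} Recall that $\opL_1=(\mathrm{Id}-\opT)^{-1}\circ \widehat{\mathcal{S}}_1$ with $\opT=\widehat{\mathcal{S}}_1\circ \mathcal{L}$ and $\mathcal{L}[\phi]=-\tfrac{3}{2}(1-\ft_0^2)\phi$. By the previous lemma, $\opT:\Xin\to\Xin$ has operator norm $\leq L<1$, so $(\mathrm{Id}-\opT)^{-1}$ is bounded on $\Xin$ by $1/(1-L)$. Consequently, $\opL_1$ can be analyzed through the identity
\begin{equation*}
\opL_1[\psi]=\widehat{\mathcal{S}}_1[\psi]+\opT\bigl[\opL_1[\psi]\bigr]=\widehat{\mathcal{S}}_1\bigl[\psi+\mathcal{L}\circ\opL_1[\psi]\bigr].
\end{equation*}
The strategy is therefore to first prove that $\widehat{\mathcal{S}}_1$ already enjoys the improved regularity $\widehat{\mathcal{S}}_1:\Xin\to\Yin$ with $\|\widehat{\mathcal{S}}_1[\psi]\|_n\leq M\|\psi\|$, and then to bootstrap from the contractivity of $\opT$ in $\Xin$.

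\medskip
\noindent\textbf{Step 1 (core estimate on $\widehat{\mathcal{S}}_1$).} Given $\psi\in\Xin$, use $|\psi(s)|\leq \|\psi\|(w(s)+\c w_0(s))$ where, by Proposition~\ref{prop:f0v0_new}, $w(s)\sim s^{n-1}$ and $w_0(s)\sim s^{n+2}$ as $s\to 0$, whereas for $s\geq s_*$ both terms decay like $s^{-3}$ and $q^2|\log s|^2 s^{-2}$ respectively. Combine this with $I_n(\xi)\sim\xi^n$, $K_n(\xi)\sim \xi^{-n}$ near $0$ and the exponential asymptotics~\eqref{asympKIninner} for $\xi\geq s_*$. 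Splitting the integrals in the definition of $\widehat{\mathcal{S}}_1$ at $s_*$ one obtains, for $s\in[0,s_*]$, $|\widehat{\mathcal{S}}_1[\psi](s)|\leq M\|\psi\|s^n$ (the extra factor of $s$ over $w$ comes precisely from the integration of $\xi I_n(\xi)$ against $\xi^{n-1}$), while for $s\in[s_*,s_1]$, after standard Laplace-type estimates of $\int e^{\pm\xi}\xi^{-m}\,d\xi$, the factor $K_n(s)\sim e^{-s}/\sqrt{s}$ times the left integral and $I_n(s)\sim e^s/\sqrt{s}$ times the right integral both reproduce the profile $\bigl(s^{-3}+\c |\log s|^2 s^{-2}\bigr)$. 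Thus $\|\widehat{\mathcal{S}}_1[\psi]\|_n\leq M\|\psi\|$.

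\medskip
\noindent\textbf{Step 2 (differentiability of $\widehat{\mathcal{S}}_1[\psi]$).} Differentiation under the integral sign produces four terms, and the two boundary contributions cancel exactly by the Wronskian identity $I_n(s)K_n'(s)-I_n'(s)K_n(s)=-1/s$; consequently
\begin{equation*}
\widehat{\mathcal{S}}_1[\psi]'(s)=K_n'(s)\!\int_{0}^{s}\!\xi I_n(\xi)\psi(\xi)\,d\xi+I_n'(s)\!\int_{s}^{s_1}\!\xi K_n(\xi)\psi(\xi)\,d\xi.
\end{equation*}
Since $I_n',K_n'$ share the same exponential and power asymptotics as $I_n,K_n$, repeating the computation of Step 1 yields $\|\widehat{\mathcal{S}}_1[\psi]'\|\leq M\|\psi\|$; here one only needs to check that the output is controlled by $w(s)+\c w_0(s)$ rather than by the smaller profile in $\|\cdot\|_n$, which is automatic from the inclusion of growth rates.

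\medskip
\noindent\textbf{Step 3 (inversion and bootstrap).} Lemma~\ref{lem:opTinner} gives $\|(\mathrm{Id}-\opT)^{-1}\|_{\Xin\to\Xin}\leq (1-L)^{-1}$, hence $\|\opL_1[\psi]\|\leq M\|\widehat{\mathcal{S}}_1[\psi]\|\leq M\|\psi\|$ (using that the $\Xin$ norm is dominated by the $\Yin$ norm via Step 1 together with Proposition~\ref{prop:f0v0_new}). Now plug $\phi:=\psi+\mathcal{L}\circ\opL_1[\psi]$ into the identity $\opL_1[\psi]=\widehat{\mathcal{S}}_1[\phi]$. Since $\mathcal{L}:\Xin\to\Xin$ is bounded (the factor $1-\ft_0^2$ is uniformly bounded on $[0,s_1]$), $\|\phi\|\leq \|\psi\|+M\|\opL_1[\psi]\|$; applying Steps 1 and 2 gives simultaneously
\begin{equation*}
\|\opL_1[\psi]\|_n\leq M\|\phi\|\leq M\bigl(\|\psi\|+\|\opL_1[\psi]\|\bigr),\qquad \|\opL_1[\psi]'\|\leq M\bigl(\|\psi\|+\|\opL_1[\psi]\|\bigr),
\end{equation*}
which combined with the $\Xin$-bound from the Neumann series yields the desired inequalities, in particular $\|\opL_1[\psi]\|_n\leq M\|\opL_1[\psi]\|\leq M\|\psi\|$ and $\|\opL_1[\psi]'\|\leq M\|\psi\|$.

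\medskip
\noindent\textbf{Main obstacle.} The delicate point is Step 1: obtaining constants independent of $s_1$, $s_*$ and $\c\in(0,1]$ while juggling the two disparate profiles contained in $w+\c w_0$ (polynomial at the origin versus the $\c|\log s|^2 s^{-2}$ tail). Both the integration of $\xi I_n(\xi)\psi$ against the algebraic profile near $0$ and the Laplace-type estimates for $\int_{s_*}^{s_1}e^{\pm\xi}|\log\xi|^\nu \xi^{-m}\,d\xi$ must be handled carefully so that the exact profile $s^{-3}+\c|\log s|^2 s^{-2}$ reappears in the output—otherwise one would pick up extra factors that grow with $s_1$, spoiling the uniformity that is essential for the subsequent fixed point argument.
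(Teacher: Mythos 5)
Your proof is correct and follows essentially the same route as the paper's: the pointwise estimate of $\widehat{\mathcal{S}}_1$ against the profile $w+\c w_0$ split at $s_*$, the derivative formula (where, incidentally, the boundary terms cancel identically upon differentiating under the integral sign rather than via the Wronskian, which is only needed to check that $\widehat{\mathcal{S}}_1$ is a right inverse of the second-order operator), and the bootstrap through $\opL_1[\psi]=\widehat{\mathcal{S}}_1\big[\psi+\mathcal{L}\circ\opL_1[\psi]\big]$, which is precisely what the paper's terse ``we have used $\|(\mathrm{Id}-\opT)^{-1}\|\leq M$'' encodes. The only blemish is the stray factor $q^2$ in your asymptotics of $w_0$ for $s\geq s_*$ (one has $w_0=\v0^2\f0\sim |\log s|^2 s^{-2}$ with no $q^2$, consistent with the weight in $\|\cdot\|_{aux}$), which does not affect the argument.
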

\begin{proof}
Let $\psi \in \Xin$. One has that
$$
\big |\opL_1[\psi](s)\big | \leq M\|\psi\| \left [
K_n(s) \int_{0}^s \xi I_n(\xi) (w(\xi)+\c w_0(\xi))d\xi +
I_n(s) \int_{s}^{s_1}\xi K_n(\xi)(w(\xi)+\c w_0(\xi)) d\xi\right ],
$$
where we have used that $\|\big (\mathrm{Id}-\opT\big )^{-1}\|\leq M$.
If $s\in [0,s_*]$, then
$$
\big |K_{n}(s)\big | \leq M s^{-n}, \qquad
\big |I_n(s) \big |\leq M s^{n}, \qquad w(s) + \c w_0(s) \leq M s^{n-1},
$$
and therefore,
\begin{equation*}
\begin{aligned}
\big | \opL_1[\psi](s) \big | &\leq M\| \psi\| \left (s^{-n} \int_{0}^s \xi^{2n}\,d\xi + s^n \int_{s}^{s_*} 1 \,d\xi +
s^n \int_{s_*}^{s_1}  \xi K_{n}(\xi) \, d\xi \right )
\leq M\|\psi\| s^n,
\end{aligned}
\end{equation*}
where we have used that
$$
\int_{s_*}^{s_1}  \xi K_{n}(\xi) \, d\xi \leq
\int_{s_*}^{\infty}  \xi K_{n}(\xi) \, d\xi
\leq M.
$$
When $s\in [s_*,s_1]$
\begin{align*}
\big | \opL_1[\psi](s) \big | \leq & M\| \psi\| \left (\frac{e^{-s}}{\sqrt{s}} \int_{0}^{s_*} \xi^{2n}\,d\xi +
\frac{e^{-s}}{\sqrt{s}} \int_{s_*}^{s} \sqrt{\xi} e^{\xi}
\left (\frac{1}{\xi^3} + \c\frac{(\log \xi)^2}{\xi^2}\right )
 \,d\xi \right .\\ & \left .+
\frac{e^s}{\sqrt{s}}\int_{s}^{s_1} \sqrt{\xi}e^{-\xi}
\left (\frac{1}{\xi^3} + \c  \frac{(\log \xi)^2}{\xi^2}\right )
\,d\xi \right )\\
& \leq M\| \psi\| \left (
\frac{1}{s^{3}} + \c \frac{|\log s|^2}{s^2} \right )
\leq M\| \psi\| (w(s) + cw_0(s)),
\end{align*}
which easily follows upon using that for any $\nu,l\in \N$,
$$
\int_{s_*}^s e^{\xi} \frac{|\log \xi|^l}{\xi^{\nu}}\, d\xi \leq Me^s \frac{|\log s|^{l}}{s^{\nu}},\qquad
\int_{s}^{s_1}e^{-\xi}\frac{|\log \xi|^l}{\xi^{\nu}}\,d\xi \leq M e^{-s}\frac{|\log s|^{l}}{s^{\nu}}.
$$
Therefore, $\|\opL_1[\psi]\|_n\leq  M \|\psi\|$.

As for $\opL_1[\psi]'$, we notice that
$$
 \big (\mathrm{Id}-\opT\big )\circ \opL_1[\psi]'(s) =  K_n'(s)\int_0^s \xi I_n(\xi) \psi(\xi)\, d\xi + I_n'(s) \int_{s}^{s_1} \xi K_n(\xi) \psi(\xi)\, d\xi,
$$
and so analogous computations as the ones for $\opL_1[\psi]$ lead to the result.
\end{proof}
\begin{lemma}\label{lem:Linear2inner}
Les us fix $s_1$ such that $0<s_*<s_1$.
Then if $\psi \in \Zin_{0}^{2,l}$, the function $\opL_2[\psi]$, defined in \eqref{def:S2}, is a differentiable function in $(0,s_1)$ such that $\opL_2[\psi]\in \Zin_{1}^{1,l+1}$  and
$$
\|\opL_2[\psi] \|_1^{1,l+1} \leq  M\|\psi\|_0^{2,l}.
$$
In addition, if $\psi \in \Zin_{0}^{\nu,l}$, with $\nu>2$, the function $\opL_2[\psi]$ is a differentiable function in $(0,s_1)$ such that $\opL_2[\psi]\in \Zin_{1}^{1,0}$ and
$$
\|\opL_2[\psi] \|_1^{1,0} \leq  M\|\psi\|_0^{\nu,l}.
$$
The constant $M>0$ does not depend on $s_1$.
\end{lemma}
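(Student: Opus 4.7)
The plan is a direct calculation using the explicit formula
$$
\opL_2[\psi](s) = \frac{\sqrt{2}}{2s\,\ft_0^2(s)}\int_0^s \xi\,\ft_0^2(\xi)\,\psi(\xi)\,d\xi,
$$
together with the asymptotic behavior of $\ft_0$ from Proposition~\ref{prop:f0v0_new}: namely $\ft_0(s)\sim c_f (s/\sqrt 2)^n$ as $s\to 0$, and $\ft_0(s)\geq 1/2$ (bounded away from $0$ and above) for $s\geq s_*$. Differentiability of $\opL_2[\psi]$ on $(0,s_1)$ is immediate from continuity of $\psi$ and the fact that $\ft_0$ is smooth and strictly positive on $(0,s_1]$, so I would focus the proof on the norm estimates.

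For the first statement ($\psi\in \Zin_0^{2,l}$), I would split the target interval into $[0,s_*]$ and $[s_*,s_1]$. On $[0,s_*]$, using $\ft_0(\xi)\asymp \xi^n$ and $|\psi(\xi)|\leq \|\psi\|_0^{2,l}$, the integrand is $\O(\xi^{2n+1})$, so the factor $1/(s\ft_0^2(s))\sim s^{-2n-1}$ yields $|\opL_2[\psi](s)|\leq M\|\psi\|_0^{2,l}\,s$, which gives the $\Zin_1^{1,l+1}$ norm on $[0,s_*]$. On $[s_*,s_1]$, I would split the integral at $s_*$: the piece $\int_0^{s_*}$ is bounded by a constant multiple of $\|\psi\|_0^{2,l}$, while on $[s_*,s]$ one has $\ft_0^2\leq M$ and $|\psi(\xi)|\leq \|\psi\|_0^{2,l}|\log\xi|^l/\xi^2$, so
$$
\int_{s_*}^s \xi\,\ft_0^2(\xi)|\psi(\xi)|\,d\xi \leq M\|\psi\|_0^{2,l}\int_{s_*}^s \frac{|\log\xi|^l}{\xi}\,d\xi \leq M\|\psi\|_0^{2,l}|\log s|^{l+1}.
$$
Dividing by $s\ft_0^2(s)\asymp s$ produces the bound $|\opL_2[\psi](s)|\leq M\|\psi\|_0^{2,l}|\log s|^{l+1}/s$, which is exactly what the $\Zin_1^{1,l+1}$ norm requires.

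For the second statement ($\psi\in\Zin_0^{\nu,l}$ with $\nu>2$) the estimate on $[0,s_*]$ is identical. On $[s_*,s_1]$ the only change is the integrand $|\log\xi|^l/\xi^{\nu-1}$, and since $\nu-1>1$ the integral $\int_{s_*}^\infty|\log\xi|^l\xi^{-(\nu-1)}\,d\xi$ converges to a finite constant independent of $s_1$. Hence $|\opL_2[\psi](s)|\leq M\|\psi\|_0^{\nu,l}/s$, giving the $\Zin_1^{1,0}$ bound.

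There is no real obstacle here beyond bookkeeping; the key observations are (i) that the lower-limit singularity at $\xi=0$ is harmless because $\xi\ft_0^2(\xi)=\O(\xi^{2n+1})$ compensates for the $s^{-2n-1}$ blowup of $(s\ft_0^2(s))^{-1}$, and (ii) that $s_1$-independence of $M$ follows since in the $\nu=2$ case the growing integral exactly produces one extra logarithmic power (absorbed in $l+1$), and in the $\nu>2$ case the integral converges on $[s_*,\infty)$. Both observations depend only on the qualitative information about $\ft_0$ from Proposition~\ref{prop:f0v0_new}, not on $s_1$.
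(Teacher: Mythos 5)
Your proposal is correct and follows essentially the same route as the paper: a direct estimate splitting at $s_*$, using $\ft_0(\xi)\asymp\xi^n$ near the origin to control the apparent singularity of $(s\ft_0^2(s))^{-1}$, and the boundedness of $\ft_0$ away from zero on $[s_*,s_1]$ to reduce the outer piece to the elementary integrals $\int_{s_*}^s|\log\xi|^l\xi^{-1}\,d\xi\leq M|\log s|^{l+1}$ and $\int_{s_*}^\infty|\log\xi|^l\xi^{-(\nu-1)}\,d\xi<\infty$. Nothing further is needed.
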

\begin{proof}
Let $\psi\in \Zin_{0}^{2,l}$. We have that, if $s\in [0,s_*]$
$$
\left |\opL_2[\psi] \right |\leq \frac{\sqrt{2}}{2 s \f0^2(s)}\int_{0}^s \xi \f0^2(\xi)|\psi(\xi)|\, d\xi
\leq M \|\psi\|_{0}^{2,l} \frac{1}{s^{2n+1}} \int_{0}^s \xi^{2n+1} \, d\xi
\leq M \|\psi\|_{0}^{2,l} s.
$$
When $s\in [s_*,s_1]$,
\begin{align*}
\left |\opL_2[\psi] \right |\leq &\frac{1}{s \f0^2(s)}\int_{0}^{s_*} \xi \f0^2(\xi)|\psi(\xi)|\, d\xi + \frac{1}{s \f0^2(s)}\int_{s_*}^s \xi \f0^2(\xi)|\psi(\xi)|\, d\xi \\
\leq &  \frac{M}{s}\|\psi\|_0^{2,l}+ \frac{M}{s} \|\psi\|_0^{2,l}\int_{s_*}^s
\frac{(\log \xi)^l}{\xi}\, d\xi \leq M \|\psi\|_0^{2,l} \left (\frac{1}{s}+ \frac{|\log s|^{l+1}}{s} \right ).
\end{align*}
Finally, let $\psi \in \Zin_{0}^{\nu,l}$ with $\nu>2$. Then for $s\in [0,s_*]$
$$
\left |\opL_2[\psi] \right |\leq \frac{1}{s \f0^2(s)}\int_{0}^s \xi \f0^2(\xi)|\psi(\xi)|\, d\xi
\leq M \|\psi\|_{0}^{\nu,l} \frac{1}{s^{2n+1}} \int_{0}^s \xi^{2n+1} \, d\xi
\leq M \|\psi\|_{0}^{\nu,l} s,
$$
and if $s\in [s_*,s_1]$,
\begin{align*}
\left |\opL_2[\psi] \right |\leq &\frac{1}{s \f0^2(s)}\int_{0}^{s_*} \xi \f0^2(\xi)|\psi(\xi)|\, d\xi + \frac{1}{s \f0^2(s)}\int_{s_*}^s \xi \f0^2(\xi)|\psi(\xi)|\, d\xi \\
\leq &  \frac{M}{s} \|\psi\|_{0}^{\nu,l}+ \frac{M}{s}\|\psi\|_{0}^{\nu,l} \int_{s_*}^s
\frac{(\log \xi)^l}{\xi^{\nu-1}}\, d\xi  \leq  \|\psi\|_{0}^{\nu,l} \frac{M}{s}.
\end{align*}
\end{proof}

\subsection{The independent term}\label{subsec:independentinner}
We now deal with the first iteration of the fixed point procedure given by the equation~\eqref{fixedpointinner}, namely we study $\mathcal{F}[0,0]$.
\begin{lemma}\label{lem:independentterminner}
 Let $0<\c\leq 1$ as in Lemma~\ref{lem:opTinner},
let  $0<\mu_0<\mu_1$
and take $\e=\mu e^{-\frac{\pi}{2nq}}$ with $\mu_0\le \mu \leq \mu_1$.
There exist $q^*=q^*(\mu_0,\mu_1)>0$ and  $M=M(\mu_0,\mu_1)>0$ such that, for any $q\in[0,q^*]$ and $0<\rho<\frac{\pi}{2n}$, for  $0<s_*<s_1\leq e^{\frac{\rho}{q}}$, given $\eta>0$ and $\Cin$ satisfying~\eqref{eq:condicioc2}, the function $(\dg_0, \dv_0)=\opF[0,0]$ belongs to $\Xin \times \Zin_{1}^{1,3}$, $\dg_0$ is a differentiable function belonging to $\Xin$ and
$$
\|\dg_0'\|,\|\dg_0\| \leq M(1+\eta)  q^2   ,\qquad \|\dv_0\|_{1}^{1,3} \leq
M(1+\eta)q^2 \esvi .
$$
Furthermore, $\dg_0\in \Yin $ with
$\|\dg_0\|_n \leq  M(1+\eta)q^2 $, and $\dv_0 \in \Zin_{1}^{1,1}$ with
$\|\dv_0\|_{1}^{1,1} \leq M \rho^2\esvi$.
\end{lemma}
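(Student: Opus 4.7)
The plan is to estimate $\dg_0=\opF_1[0,0]$ and $\dv_0=\opF_2[0,0]$ separately, combining the linear-operator bounds in Lemmas~\ref{lem:Linear1inner}--\ref{lem:Linear2inner} with the size estimates for $h_0$ and $\mathcal{E}[h_0]$ from Remark~\ref{rem:ho}. Using~\eqref{def:fixedpointoperatorinner} and~\eqref{fixedpoint:firstg}, I first decompose $\dg_0 = I(s)\Cin + \opL_1[\opR_1[0,0]]$. For the first summand, arguing as in the proof of Lemma~\ref{lem:deltag0inner}: the ratio $I_n(s)/(w(s)+\c w_0(s))$ is increasing, so its supremum is attained at $s=s_1$; together with $w(s_1)+\c w_0(s_1)\sim \c(\log s_1)^2/s_1^2 = \c\rho^2/(q^2 s_1^2)$ and~\eqref{asympKIninner} this yields $\|I\|\leq Mq^2 s_1^{3/2}e^{s_1}/\rho^2$. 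Condition~\eqref{eq:condicioc2} then gives $\|I(\cdot)\Cin\|\leq M\eta q^2$, with the analogous bound for the derivative. For the second summand, expanding
$$
\opR_1[0,0] = \tfrac12 h_0^{3} + \tfrac32 h_0^{2}\ft_0 + \tfrac12 q^{2}\vt_0^{2}h_0 - \tfrac12\mathcal{E}[h_0],
$$
Remark~\ref{rem:ho} shows that the dominant piece is $-\mathcal{E}[h_0]/2$, of size $Mq^2 s^n$ near $0$ and $Mq^2/s^3$ at infinity. Comparing with $w(s)+\c w_0(s)$ delivers $\|\opR_1[0,0]\|\leq Mq^2$, and Lemma~\ref{lem:Linear1inner} gives $\|\opL_1[\opR_1[0,0]]\|_n + \|\opL_1[\opR_1[0,0]]'\|\leq Mq^2$. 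Adding the two pieces produces the claimed bounds on $\dg_0$.

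For $\dv_0 = \opL_2[\opR_2[\dg_0,0]]$ I expand $\opR_2[\dg_0,0]$ according to~\eqref{def:R2tilde} and estimate term by term using Proposition~\ref{prop:f0v0_new}, Remark~\ref{rem:ho} and the bounds just obtained for $\dg_0$ and $\dg_0'$. Near $s=0$ the cancellation $\ft_0 h_0'-\ft_0'h_0 = O(q^2 s^{2n+1})$ (and its analogue with $\dg_0$) keeps $\opR_2[\dg_0,0]$ bounded; on $[s_*,s_1]$ the first summand is dominant and one gets $|\opR_2[\dg_0,0](s)|\leq M(1+\eta)q^2 (\log s)^2/s^2$. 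Hence $\|\opR_2[\dg_0,0]\|_0^{2,2}\leq M(1+\eta)q^2$, and the first statement of Lemma~\ref{lem:Linear2inner} with $l=2$ yields $\|\dv_0\|_1^{1,3}\leq M(1+\eta)q^2$.

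To obtain the sharper bound $\|\dv_0\|_1^{1,1}\leq M\rho^2$, split $\opR_2[\dg_0,0] = \opR_2^{\sharp} + \opR_2^{\flat}$, where $\opR_2^{\sharp}$ collects all contributions built from $h_0$ and $\opL_1[\opR_1[0,0]]$ (the deterministic part of $\dg_0$), while $\opR_2^{\flat}$ collects all $\Cin$-dependent terms arising from $I(s)\Cin$. For $\opR_2^{\sharp}$ one has $|\opR_2^{\sharp}(s)|\leq Mq^2 (\log s)^2/s^2$ on $[s_*,s_1]$ \emph{without} any $\eta$ factor, so $\|\opR_2^{\sharp}\|_0^{2,0}\leq Mq^2(\log s_1)^2 = M\rho^2$ by $s_1=e^{\rho/q}$, and Lemma~\ref{lem:Linear2inner} with $l=0$ gives $\|\opL_2[\opR_2^{\sharp}]\|_1^{1,1}\leq M\rho^2$. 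For $\opR_2^{\flat}$ one uses that $I(s)\Cin$ is exponentially concentrated near $s=s_1$: a direct Laplace-type computation of $\opL_2[\opR_2^{\flat}](s) \sim (\Cin/s)\int_0^{s}\xi^{1/2}e^{\xi}\,d\xi \sim \Cin s^{-1/2}e^{s}$ shows that $|\opL_2[\opR_2^{\flat}](s_1)| \lesssim \eta\rho^2/s_1^2$, so the seminorm $|{\cdot}|s/|\log s|$ of this piece is of order $\eta\rho q/s_1$, which is negligible compared to $\rho^2$.

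The main obstacle is precisely the last computation: the $\Cin$-dependent part of $\dg_0$ is of size $\eta q^2$ in the primary norm---comparable to the deterministic part---so the generic application of Lemma~\ref{lem:Linear2inner} to $\opR_2^{\flat}$ only yields $\|\opL_2[\opR_2^{\flat}]\|_1^{1,1}\leq M\eta\rho^2$, which would reintroduce the unwanted $\eta$ factor. Eliminating it requires exploiting the sharper pointwise behaviour of $I(s)$ away from $s=s_1$ (rather than just its norm control), together with the averaging effect of $\opL_2$, whose kernel $\xi\ft_0^2(\xi)/[s\ft_0^2(s)]$ weights the integrand so that the exponentially growing factor $I(\xi)$ contributes effectively only in a window of size $O(1)$ around $\xi=s_1$. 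Carrying this out rigorously is the most delicate step, and it is essential because the $\rho^2$ estimate on $\|\dv_0\|_1^{1,1}$ is later used in the matching argument to control the error independently of the free constant $\Cin$.
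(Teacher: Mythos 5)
Your treatment of $\dg_0$ and of the bound $\|\dv_0\|_1^{1,3}\leq M(1+\eta)q^2$ follows the paper's proof essentially step for step: the same decomposition $\dg_0=I(\cdot)\Cin+\opL_1[\opR_1[0,0]]$ with the monotonicity of $I_n(s)/(w(s)+\c w_0(s))$ and condition~\eqref{eq:condicioc2} controlling the $\Cin$--part, the same use of Remark~\ref{rem:ho} to identify $-\tfrac12\mathcal{E}[h_0]$ as the dominant piece of $\opR_1[0,0]$, and the same application of Lemmas~\ref{lem:Linear1inner} and~\ref{lem:Linear2inner} (the latter with $l=2$) after checking $\ft_0+h_0+\dg_0>0$. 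Up to there the proposal is correct and coincides with the paper.

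The last step is where the proposal goes astray. The estimate $\|\dv_0\|_1^{1,1}\lesssim\rho^2$ needs no splitting of $\opR_2$ into a deterministic and a $\Cin$--dependent part, and no Laplace-type analysis of $\opL_2$ acting on $I(\xi)\Cin$: it is an immediate consequence of the bound you already have. Indeed, for $s\in[s_*,s_1]$ with $s_1\leq e^{\rho/q}$ one has $q|\log s|\leq\rho$, hence
$$
|\dv_0(s)|\leq M(1+\eta)\,q^2\,\frac{|\log s|^3}{s}=M(1+\eta)\,\bigl(q|\log s|\bigr)^2\,\frac{|\log s|}{s}\leq M(1+\eta)\,\rho^2\,\frac{|\log s|}{s},
$$
and the portion on $[0,s_*]$ is already $\leq M(1+\eta)q^2 s\leq M(1+\eta)\rho^2 s$ (since $q\lesssim\rho$ in the regime where the lemma is used). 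This one-line conversion is exactly how the paper concludes. Concerning the $\eta$--dependence that motivated your detour: the paper's own proof carries the factor $(1+\eta)$ into the $\rho^2$ estimate (the $\eta$--free constant in the statement is an imprecision of the paper, harmless because $\eta$ is ultimately fixed as a function of $\mu_0,\mu_1$ in~\eqref{defeta:matching}). So your proposed refinement attacks a constraint that is not actually needed, and it is precisely the step you concede you have not carried out; as written, the proposal therefore leaves its self-declared ``most delicate step'' unproved, while the intended argument requires nothing beyond the $\Zin_1^{1,3}$ estimate and the inequality $q|\log s_1|\leq\rho$.
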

\begin{proof}
Notice that $s_1k<1$ if $q$ is small enough.
We have that $\dg_0= \mathbf{\delta g}_0+\opL_1 \circ \opR_1[0,0]$. We recall that $\mathbf{\delta g}_0 (s)= \big (\mathrm{Id}- \mathcal{T})^{-1}[ \mathbf{\delta \widehat{g}}_0]$ where $\mathbf{\delta \widehat{g}}_0(s)=  I_{n}(s)\Cin$.
Using that $I_n$ is an increasing positive function, that the norms $\| \cdot \|, \|\cdot \|_{aux}$ are equivalent  and that $I_n(s)=\O(s^n)$ as $s\to 0$,
$$
 \|\mathbf{\delta g}_0\|_n \leq M  \|\mathbf{\delta \widehat{g}}_0\| \leq M |\Cin| I_n(s_1)
(w(s_1)+c w_0(s_1))^{-1} \leq M |\Cin| I_n(s_1)
\left (\frac{1}{s_1^3} + \c \frac{|\log s_1|^2}{s_1^2}\right )^{-1}.
$$
Since $s_1>s_*$ the asymptotic expression ~\eqref{asympKIninner} for $I_n(s_1)$ applies and then, since $\Cin$ satisfies~\eqref{eq:condicioc2} we conclude that
$\|\mathbf{\delta g}_0\| \leq M  {\eta} q^2 $.

We now compute $\opR_1[0,0]$ (see \eqref{def:R1inner}):
\begin{align*}
\opR_1[0,0] &= -\frac{1}{2} \mathcal{E}[h_0] + \frac{1}{2} \big (H[h_0,0]-H[0,0] \big ) \\
    &= -\frac{1}{2} \mathcal{E}[h_0] +
    \frac{1}{2} \big (h_0^3 + 3 h_0^2f_0 + q^2 \esvq \v0^2  h_0 \big ).
\end{align*}
Therefore, using the estimates for $\f0,\v0,h_0$ and $\mathcal{E}[h_0]$ in Proposition~\ref{prop:f0v0_new} and Remark~\ref{rem:ho} we have that
$$
\sup_{s\in [0,s_*]} |\opR_1[0,0](s)|\leq Mq^2  s^n, \qquad
\sup_{s\in [s_*,s_1]} |\opR_1[0,0](s)|\leq M\frac{q^2|\log s|^2}{s^4} + M\frac{q^4|\log s|^4}{s^4}.
$$
Using that for any $l\in \Z$, $|\log s|^l s^{-1}$ is bounded if $s\in (2,s_1)$ and that $s^{-3} \leq M w(s)$ we have that
$$
\sup_{s\in [s_*,s_1]} |\opR_1[0,0](s)| \leq Mq^2 \frac{1}{s^3}
\leq Mq^2 (w(s)+\c w_0(s)).
$$
As a consequence $\opR_1[0,0] \in \Yin \subset \Xin$,
$\|\opR_1[0,0]\|\leq C q^2$ and using Lemmas~\ref{lem:opTinner} and~\ref{lem:Linear1inner}
$$
\big \|\opL_1\big [ \opR_1[0,0]\big ] \big \|_n \leq M \big \|\opL_1\big [ \opR_1[0,0]\big ] \big \|\leq
M \|\opR_1[0,0]\| \leq Mq^2 .
$$
Moreover, $\big \|\opL_1\big [ \opR_1[0,0]\big ]' \big \|\leq Mq^2$.

We deal now with $\dv_0$. First we notice that
$\f0+ h_0+\dg_0>0$. Indeed, we have that, for $s\in [0,s_*]$ $\f0(s)\geq M|s|^{n}$ for some positive
constant $M$ (see Proposition~\ref{prop:f0v0_new}). Therefore, if $q$ is small enough:
$$
\f0(s)+h_0(s) + \dg_0(s) \geq Cs^{n} - M q^2 |s|^{n+2} - M q^2 |s|^{n}>0.
$$
For $s\geq s_*$ since $\f0(s) \geq 1/2$, taking $q$ small enough:
$$
\f0(s)+h_0(s) + \dg_0(s) \geq \frac{1}{2} - Mq^2 \frac{|\log s|^2}{s^2} - Mq^2 \frac{1}{s^3}-Mq^2 \frac{|\log s|^2}{s^2}>\frac{1}{4}.
$$
We conclude then that $\dv_0$ is well defined. Now we are going to prove that it belongs to $\Zin_{1}^{1,3}$. By definition $\dv_0=\opF_2[0,0]=\opL_2 \circ \opR_2[\dg_0,0]$ with $\opR_2$ defined by~\eqref{def:R2tilde}:
$$
\opR_2[\dg_0,0]= \esvi (h_0 +\dg_0)(2 \f0 +h_0+\dg_0)+
\frac{\v0}{\f0(\f0+h_0 + \dg_0)}
\big [\f0 (h_0'+ \dg_0') - \f0'( h_0+ \dg_0)\big ].
$$
Therefore, using that $\dg_0 \in \Yin$, for $s\in [0,s_*]$ we have that
$$
\big |\opR_2[\dg_0,0](s) \big |\leq M(1+{\eta})^2 \esvi (s^{2n}+1) \leq M(1+ {\eta}) q^2 \esvi.
$$
On the other hand, for $s\in [s_*,s_1]$,
$$
\big |\opR_2[\dg_0,0](s) \big |\leq M(1+ {\eta})\frac{q^2 |\log s|^2 }{\esv s^2 } +
M (1+ {\eta})\frac{q^2 |\log s|^3  }{\esv s^3}
\leq  M(1+ {\eta})\frac{q^2 |\log s|^2 }{\esv s^2 }.
$$
As a consequence $\opR_2[\dg_0,0]\in \Zin_{0}^{2,2}$ with norm
$\big \|\opR_2[\dg_0,0]\big \|_0^{2,2} \leq M(1+ {\eta}) q^2 \esvi $. Therefore, by Lemma~\ref{lem:Linear2inner}
$\dv_0 \in \Zin_{1}^{1,3}$ with norm $\|\dv_0\|_{1}^{1,3} \leq M(1+ {\eta}) q^2 \esvi $,
and thus,  for $s\leq s_1 \leq e^{\frac{\rho}{q}} $
$$
|\dv_0(s)|\leq M(1+ {\eta})q^2\frac{|\log s|^3}{\esv  s} \leq M(1+ {\eta}) \rho^2\frac{|\log s|}{\esv s}.
$$
\end{proof}
\subsection{The contraction mapping}\label{subsec:lipinner}
In what follows we shall show that the fixed point equation~\eqref{fixedpointinner} is a contraction in a suitable Banach space. We define the norm
$$
\llfloor (\dg,\dv)\rrfloor = \|\dg\|+ \|\dv\|_{1}^{1,3},
$$
in the product space $\Xin \times \Zin_{1}^{1,3}$ and
we notice that, under the conditions of Lemma~\ref{lem:independentterminner}, we have proved that  $\llfloor (\dg_0,\dv_0)\rrfloor \leq \kappa_0 q^2$, where $\kappa_0=\kappa_0(\mu_0,\mu_1,\eta)$.

\begin{lemma}\label{lem:lipFinner}
Let $\mu_0,\mu_,\eta,\Cin$ and $\mu$ as in Lemma~\ref{lem:independentterminner}
and take $\e=\mu e^{-\frac{\pi}{2nq}}$ with $\mu_0\le \mu \leq \mu_1$.
There exist $q_0=q_0(\mu_0,\mu_1,\eta)>0$ and  $M=M(\mu_0,\mu_1,\eta)>0$ such that, for any $q\in[0,q_0]$,  $0<\rho< \frac{\pi}{2n}$ and $0<s_*<s_1\leq e^{\frac{\rho}{q}}$, we have that if
$(\dg_1,\dv_1),(\dg_2,\dv_2)\in \Xin \times \Zin_{1}^{1,3}$ satisfying $\llfloor (\dg_1,\dv_1)\rrfloor, \llfloor (\dg_2,\dv_2)\rrfloor \leq 2\kappa_0 q^2$,
then
\begin{enumerate}
\item with respect to $\mathcal{F}_1$
$$
\|\opF_1[\dg_1,\dv_1] - \opF_1[\dg_2,\dv_2]\|
\leq  Mq^2 \|\dg_1 - \dg_2\| + M \c^{-1} \rho^2 \|\dv_1 - \dv_2\|_{1}^{1,3}.
$$
\item and for $\mathcal{F}_2$
$$
\|\opF_2[\dg_1,\dv_1] - \opF_2[\dg_2,\dv_2]\|
\leq M q^2  \esvi \|\dg_1 - \dg_2\| + M( \rho^2 \c^{-1} + q^2) \|\dv_1 -\dv_2\|_{1}^{1,3}.
$$
\end{enumerate}
\end{lemma}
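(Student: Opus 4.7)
The plan is to mimic the outer-region strategy of Sections~\ref{sec:LipT1outer}--\ref{sec:LipT2outer}: apply the mean value theorem to $\opR_1$ and $\opR_2$, bound their partial derivatives pointwise using Proposition~\ref{prop:f0v0_new} and Remark~\ref{rem:ho}, and then transport these pointwise bounds to the Banach-space norms by means of the already-established continuity properties of $\opL_1$ (Lemma~\ref{lem:Linear1inner}) and $\opL_2$ (Lemma~\ref{lem:Linear2inner}).

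\textbf{Step 1: Lipschitz constant of $\opF_1$.} Since $\mathbf{\dg}_0$ does not depend on $(\dg,\dv)$ and $\opF_1=\mathbf{\dg}_0+\opL_1\circ\opR_1$, Lemma~\ref{lem:Linear1inner} reduces the problem to estimating the $\|\cdot\|$-norm of the $\opR_1$-difference. Direct differentiation of~\eqref{def:R1inner} yields
\[
\partial_{\dg}\opR_1 = \tfrac12\bigl[3(\dg+\tilde h_0)^2+6(\dg+\tilde h_0)\ft_0+q^2(\vt_0+\dv)^2\bigr], \qquad \partial_{\dv}\opR_1 = q^2(\vt_0+\dv)(\ft_0+\dg+\tilde h_0).
\]
Setting $\eta(\lambda)=(1-\lambda)(\dg_2,\dv_2)+\lambda(\dg_1,\dv_1)$ and combining the mean-value representation with the a priori bounds $\|\dg_i\|,\|\dv_i\|_1^{1,3}\le 2\kappa_0 q^2$ together with the pointwise estimates of Proposition~\ref{prop:f0v0_new} and Remark~\ref{rem:ho}, one checks that the $\partial_{\dg}$-contribution is uniformly bounded by $Mq^2(w+cw_0)(s)\|\dg_1-\dg_2\|$, giving the $\dg$-Lipschitz constant $Mq^2$. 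For the $\dv$-contribution, the dominant large-$s$ estimate reads
\[
\frac{|\partial_{\dv}\opR_1(s)|\,|\dv_1(s)-\dv_2(s)|}{w(s)+cw_0(s)} \;\lesssim\; \frac{q^2|\log s|/s\cdot|\log s|^3/s}{c|\log s|^2/s^2}\,\|\dv_1-\dv_2\|_1^{1,3} = c^{-1}q^2|\log s|^2\,\|\dv_1-\dv_2\|_1^{1,3},
\]
which is bounded by $Mc^{-1}\rho^2\,\|\dv_1-\dv_2\|_1^{1,3}$ because $s\le s_1=e^{\rho/q}$ implies $q|\log s|\le\rho$; the small-$s$ regime contributes only $Mq^2$.

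\textbf{Step 2: Lipschitz constant of $\opF_2$.} Writing $G_i=\opF_1[\dg_i,\dv_i]$, we have $\opF_2[\dg_1,\dv_1]-\opF_2[\dg_2,\dv_2]=\opL_2\bigl(\opR_2[G_1,\dv_1]-\opR_2[G_2,\dv_2]\bigr)$, and the mean value theorem in the three arguments $(G,G',\dv)$ of $\opR_2$ produces three contributions. Both $\|G_1-G_2\|$ and $\|G_1'-G_2'\|$ are controlled by Step~1, since Lemma~\ref{lem:Linear1inner} bounds $\opL_1[\psi]$ and $\opL_1[\psi]'$ with the same constant. The partial derivatives $\partial_G\opR_2,\partial_{G'}\opR_2,\partial_{\dv}\opR_2$ computed from~\eqref{def:R2tilde} are bounded pointwise using the lower bound $\ft_0+\tilde h_0+G\ge 1/4$ (the argument in Lemma~\ref{lem:independentterminner} applies verbatim to any $G$ in the ball) together with Proposition~\ref{prop:f0v0_new} and Remark~\ref{rem:ho}. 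The contributions through $G$ and $G'$ land in $\Zin_0^{2,2}$, so the first part of Lemma~\ref{lem:Linear2inner} delivers them in $\Zin_1^{1,3}$ with coefficients $Mq^2\|\dg_1-\dg_2\|+Mc^{-1}\rho^2\|\dv_1-\dv_2\|_1^{1,3}$ inherited from Step~1. The direct contribution $\partial_{\dv}\opR_2\cdot(\dv_1-\dv_2)$ decays at infinity like $q^2|\log s|^5/s^4$, hence lies in $\Zin_0^{4,5}$ with $\nu=4>2$, so the second part of Lemma~\ref{lem:Linear2inner} delivers it in $\Zin_1^{1,0}\subset\Zin_1^{1,3}$ with coefficient $Mq^2\|\dv_1-\dv_2\|_1^{1,3}$. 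Summing the three contributions produces the stated bound.

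\textbf{Main obstacle.} The genuine technical difficulty lies in the careful bookkeeping of powers of $s$ and $|\log s|$ across the three different Banach-space weights $w+cw_0$, $|\log s|^\nu/s^l$ and $s^m$: each term in $\partial_{\dg}\opR_1,\partial_{\dv}\opR_1,\partial_G\opR_2,\partial_{G'}\opR_2,\partial_{\dv}\opR_2$ must be paired with the correct asymptotic profile of the corresponding increment so that the product falls into the appropriate $\Zin_0^{\nu,l}$ space with the claimed coefficient, and in particular the potentially large factor $|\log s|^2$ appearing in the $\dv$-Lipschitz estimate for $\opF_1$ must be absorbed precisely by the matching-scale relation $q|\log s|\le\rho$ with $s_1=e^{\rho/q}$.
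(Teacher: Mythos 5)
Your proposal is correct and follows essentially the same route as the paper's proof in Sections~\ref{Lip:inner:technica:1}--\ref{Lip:inner:technica:2}: mean value theorem on the nonlinearities, separate pointwise bounds on $[0,s_*]$ and $[s_*,s_1]$, absorption of $|\log s|^2$ via $q|\log s|\le\rho$ against the weight $\c|\log s|^2/s^2$ to produce the $\c^{-1}\rho^2$ factor, control of the $G'$-difference through Lemma~\ref{lem:Linear1inner}, and the two regimes of Lemma~\ref{lem:Linear2inner} according to the decay rate. The only discrepancy is a harmless miscount in the decay of the direct $\dv$-contribution ($s^{-4}$ versus the paper's $s^{-3}$), which does not affect the conclusion since both exceed the threshold $\nu>2$.
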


The remaining part of this section is devoted to prove Theorem~\ref{prop:innerfixedpoint} (Section~\ref{subsec:proofofTheorem} below) and Lemma~\ref{lem:lipFinner} whose proof is divided into two technical sections, Sections~\ref{Lip:inner:technica:1} and~\ref{Lip:inner:technica:2}.

\subsection{Proof of Theorem~\ref{prop:innerfixedpoint}}\label{subsec:proofofTheorem}
The proof of the result is a straightforward consequence of the previous analysis.
We define $\mathcal{B}=\{(\dg,\dv)\in \Xin\times \mathcal{Z}_{1}^{2,3}, \; \llfloor (\dg,\dv)\rrfloor \leq 2\kappa_0 q^2\}$.
The Lipschitz constant of $\opF$ in $\mathcal{B}$, $\mathrm{lip }\opF $, satisfies that
$$
\mathrm{lip }\opF \leq M(\mu_0,\mu_1,\eta)\max\{q^2, \c^{-1} \rho^2 \} \leq \frac{1}{2},
$$
provided $q$ is small enough and $\c^{-1} \rho^2 <1/2$,
so that $\opF$ is a contraction.
In addition, if $\llfloor (\dg,\dv)\rrfloor \leq 2\kappa_0 q^2$, then
\begin{align*}
\llfloor \opF[\dg,\dv]\rrfloor & \leq \llfloor \opF[0,0] \rrfloor + \llfloor \opF[\dg,\dv]-\opF[0,0]\rrfloor
\leq \kappa_0 q^2  + \frac{1}{2} \llfloor (\dg,\dv)\rrfloor \\ & \leq \kappa_0 q^2 + \frac{1}{2}2\kappa_0 q^2 \leq 2\kappa_0q^2 .
\end{align*}
Therefore the operator $\opF$ sends $\mathcal{B}$ to itself.  The fixed point theorem assures the existence of solutions $(\dg,\dv) \in \mathcal{B}$, consequently satisfies:
$$
\llfloor (\dg,\dv)\rrfloor = \llfloor \opF[\dg,\dv]\rrfloor \leq 2\kappa_0 q^2,
$$
and, if $(\dg,\dv)=\mathcal{F}[\dg,\dv]$, then $\dg_1=\mathcal{F}_1[\dg,\dv]-\mathcal{F}_1[0,0]$ satisfies
$$
\|\mathcal{F}_1[\dg,\dv]-\mathcal{F}_1[0,0]\| \leq Mq^2 \|\dg\| + Mc^{-1} \rho^2 \|\dv\|_1^{1,3} \leq Mc^{-1}\rho^2 q^2,
$$
provided $q\ll \rho $.
The bound for $\|\dg_1'\|$ follows from the previous bound and Lemma~\ref{lem:Linear1inner}.
Therefore, also using Lemma~\ref{lem:deltag0inner},  Theorem~\ref{prop:innerfixedpoint} is now proven.

\subsection{Proof of Lemma~\ref{lem:lipFinner}  }

\subsubsection{The Lipschitz constant for $\mathcal{F}_1$}\label{Lip:inner:technica:1}

Let $(\dg_1,\dv_1), (\dg_2,\dv_2)$ belonging to $\mathcal{X}\times \mathcal{Z}_1^{1,3}$, be such that
$\llfloor(\dg_1,\dv_1)\rrfloor, \llfloor (\dg_2,\dv_2)\rrfloor \leq 2\kappa_0 q^2$.

From the definition of $\opF_1$ provided in \eqref{def:fixedpointoperatorinner}, definition of $\mathcal{R}_1 $ in~\eqref{def:R1inner} and by Lemma~\ref{lem:Linear1inner}, we have that
\begin{equation*}
   \|\opF_1[\dg_1,\dv_1] - \opF_1[\dg_2,\dv_2]\|  \leq
   M\|H[h_0 + \dg_1 , \dv_1] - H[h_0 + \dg_2 , \dv_2]\|.
\end{equation*}
Let
$\dg(\lambda)= \dg_2 + \lambda(\dg_1- \dg_2)$ and
$\dv(\lambda)= \dv_2 + \lambda(\dv_1 - \dv_2)$. Using the mean's value theorem:
\begin{equation}\label{meanHinner}
\begin{aligned}
H[h_0 + \dg_1 , \dv_1](s) - &H[h_0 + \dg_2 , \dv_2](s) = \\
&\int_{0}^1 \partial_1 H[h_0+\dg(\lambda),\dv(\lambda)](s) \big (\dg_1(s) - \dg_2(s)\big )\,d\lambda \\
&+\int_{0}^1 \partial_2 H[h_0+\dg(\lambda),\dv(\lambda)](s) \big (\dv_1(s) - \dv_2(s)\big )\,d\lambda.
\end{aligned}
\end{equation}
We have that $\|\dg(\lambda)\|\leq A q^2$
$\|\dv(\lambda)\|_{1}^{1,3}\leq B q^2$
and
\begin{align*}
\partial_1 H[h_0+\dg(\lambda),\dv(\lambda)](s)  = &3 (h_0(s)+\dg(\lambda)(s))^2  +6 (h_0(s) +\dg(\lambda)(s))\f0(s) \\ & +q^2 \esvq (\v0(s)+\dv(\lambda)(s))^2, \\
\partial_2 H[h_0+\dg(\lambda),\dv(\lambda)](s)  =&
2 q^2 \esvq (v_0(s) + \dv(\lambda)(s))(\f0(s) + h_0(s) + \dg(s)).
\end{align*}
Then, recalling that $\|h_0\|_{n+2}^{2,2}\leq M q^2$, we obtain that, if $s\in [0,s_*]$,
\begin{align*}
\big | \partial_1 H[h_0+\dg(\lambda),\dv(\lambda)](s)\big | \leq&  Mq^4  s^{2n-2} + M q^2 s^{n-1}
+M q^2 s^2  \leq M q^2 , \\
\big | \partial_2 H[h_0+\dg(\lambda),\dv(\lambda)](s)\big |\leq & Mq^2
\esv s^{n},
\end{align*}
and for $s\in [s_*,s_1]$, noticing that,
$$
|\v0(s) + \delta v(\lambda)(s)| \leq M(\frac{|\log s|}{s} + q^2 \frac{|\log s|^3}{s}) \leq M\frac{|\log s|}{s}  (1+ \rho^2 ) \leq
M\frac{|\log s|}{s}.
$$
Then
\begin{align*}
\big | \partial_1 H[h_0+\dg(\lambda),\dv(\lambda)](s)\big | \leq &
Mq^4 \frac{|\log s|^4}{s^4} + Mq^2 \frac{|\log s|^2}{s^2} +
M q^2 \frac{|\log s|^2}{s^2}\\  \leq &M q^2 \frac{|\log s|^2}{s^2}, \\
\big | \partial_2 H[h_0+\dg(\lambda),\dv(\lambda)](s)\big | \leq & M q^2 \esv  \frac{|\log s|}{s}.
\end{align*}

Using all these bounds in~\eqref{meanHinner} one finds that, for $s\in [0,s_*]$
\begin{align*}
\big |H[h_0 + \dg_1 , \dv_1](s) - H[h_0 + \dg_2 , \dv_2](s) \big |
    & \leq Mq^2   s^{n-1} \|\dg_1 - \dg_2\| +  M q^2 s^{n+1} \|\dv_1-\dv_2\|_1^{1,3} \\ & \leq M q^2 \esv  s^{n-1}   \big( \|\dg_1 - \dg_2\| +
    \|\dv_1 - \dv_2\|_{1}^{1,3}\big )\\
    & \leq M q^2 \esv    \big( \|\dg_1 - \dg_2\| +
    \|\dv_1 - \dv_2\|_{1}^{1,3}\big ),
\end{align*}
and for $s\in [s_*,s_1]$
\begin{align*}
\big |H[h_0 + \dg_1 , \dv_1](s) - H[h_0 + \dg_2 , \dv_2](s) \big |
     \leq &
Mq^2  \frac{|\log s|^2}{s^2} (w(s)+ \c w_0(s)) \|\dg_1 - \dg_2\| \\ &
+ M q^2 \esv \frac{|\log s|^4}{s^2} \|\dv_1-\dv_2\|_{1}^{1,3}.
\end{align*}
Notice that, for $s_*<s<s_1 $
$$
\frac{|\log s|^2}{s^2} \big(w(s) + \c w_0(s)\big )^{-1} \le M\frac{|\log s|^2}{s^2} \left (\frac{1}{s^3}+ \c \frac{|\log s|^2}{s^2}\right )^{-1} \leq M\left (\frac{1}{s |\log s|^2 } + \c\right )^{-1} \leq M
\frac{1}{\c}.
$$
In addition, if $s_1\leq e^{\frac{\rho}{q}}$, then
$$
q^2 |\log s|^2 \leq \rho^2.
$$
Therefore, since $|\log s|^2\leq Ms^2$,
\begin{align*}
\big |H[h_0 + \dg_1 , \dv_1](s) -& H[h_0 + \dg_2 , \dv_2](s) \big |
\leq M (w(s) + \c w_0(s)) q^2 \|\dg_1 - \dg_2\| \\&+
M \c^{-1} \rho^2 (w(s)+\c w_0(s)) \|\dv_1 - \dv_2\|_{1}^{1,3},
\end{align*}
which proves the first item in Lemma~\ref{lem:lipFinner}.

\begin{remark}
As a consequence, using Lemma \ref{lem:opTinner}, if $\dg,\dv\in \Xin \times \Zin_{1}^{1,3}$ with $\|\dg\|\leq  2\kappa_0 q^2 $, $\|\dv\|_{1}^{1,3} \leq  2 \kappa_0 q^2 $,
$$
\|
\opF_1[\dg,\dv]\|\leq \|\opF_1[0,0]\|+\|\opF_1[\dg,\dv]-
\opF_1[0,0]\|\leq   \kappa _0 q^{2} + M q^2\| \dg\| + Mc^{-1}\rho \|\dv\|_1^{1,3}\leq
2 \kappa_0 q^2,
$$
if $q$ is small enough.  The bound for the derivative is a consequence of Lemma~\ref{lem:Linear1inner}.
\end{remark}

\subsubsection{The Lipschitz constant for $\mathcal{F}_2$}\label{Lip:inner:technica:2}

We recall that $\opF_2[\dg,\dv]=\opL_2\circ \opR_2[\mathcal{F}_1[\dg,\dv],\dv]$ where the operator $\opR_2$, defined in~\eqref{def:R2tilde}.
We rewrite $\opR_2=\opP+ \opP_1 \cdot \opP_2$ with
\begin{align*}
\opP[\dg,\dv]&=(h_0+\dg)(2\f0+h_0+\dg), \\
\opP_1[\dg,\dv] &= \frac{\v0+\dv}{\f0(\f0+h_0+\dg)},
\\
\opP_2[\dg] &= \f0(h_0'+\dg') - \f0'(h_0+\dg).
\end{align*}
For $(\dg_1,\dv_1), (\dg_2,\dv_2)$ be such that
$\llfloor(\dg_1,\dv_1)\rrfloor, \llfloor (\dg_2,\dv_2)\rrfloor \leq 2\kappa_0 q^2$, 
we denote $\overline{\dg}_j = \opF_1[\dg_j,\dv_j]$, $j=1,2$.

We recall that  $\|h_0\|_{n+2}^{2,2}\leq M q^2$ and we shall deal separately with $\opP,\opP_1\cdot \opP_2$. Starting with $\opP$,
\begin{align*}
\big |\opP[\overline{\dg}_1,\dv_1](s)- \opP[\overline{\dg}_2,\dv_2](s)\big | \leq &
\esvi\big[2 |\overline{\dg}_1(s)-\overline{\dg}_2(s)|\cdot |\f0(s)+h_0(s)| \\ &+ |\overline{\dg}_1(s)+\overline{\dg}_2(s)|\cdot |\overline{\dg}_1(s)-\overline{\dg}_2(s)| \big ].
\end{align*}
Therefore, when $s\in[0,s_*]$,
$$
\big |\opP{}[\overline{\dg}_1,\dv_1](s)- \opP[\overline{\dg}_2,\dv_2](s)\big |\leq
M\esvi \|\overline{\dg}_1-\overline{\dg}_2\|s^{2n-2}\leq
M\esvi \|\overline{\dg}_1-\overline{\dg}_2\|,
$$
and for $s\in [s_*,s_1]$
\begin{align*}
\big |\opP[\overline{\dg}_1,\dv_1](s)- \opP[\overline{\dg}_2,\dv_2](s)\big |& \leq
M\esvi\|\overline{\dg}_1-\overline{\dg}_2\| (w(s) + \c w_0(s)) \\ & \leq
M\esvi\|\overline{\dg}_1-\overline{\dg}_2\|
\left (\frac{1}{s^3} + \c \frac{|\log s|^2}{s^2}\right ).
\end{align*}
As a consequence
$$
\|\opP[\overline{\dg}_1,\dv_1](s)- \opP[\overline{\dg}_2,\dv_2]\|_{0}^{2,2} \leq M\esvi \|\overline{\dg}_1 - \overline{\dg}_2\|,
$$
and by Lemma~\ref{lem:Linear2inner} and the first item of Lemma~\ref{lem:lipFinner},
\begin{equation}\label{bound:N1innerLip}
\|\opL_2\big [\opP[\overline{\dg}_1,\dv_1]\big ]- \opL_2\big [\opP[\overline{\dg}_2,\dv_2]\big ]\|_{1}^{1,3} \leq  M\esvi q^2  \|\dg_1 - \dg_2\| +M \c^{-1}\rho^2 \|\dv_1-\dv_2\|_{1}^{1,3}.
\end{equation}

Now we deal with $\widehat{\opP}:=\opP_1\cdot \opP_2$.
Using the mean value Theorem as described in~\eqref{meanHinner} yields:
\begin{equation}\label{def:incrementN2}
\begin{aligned}
\widehat{\opP}[\overline{\dg}_1,\dv_1]&- \widehat{\opP}[\overline{\dg}_2,\dv_2] =
\opP_1[\overline{\dg}_1,\dv_1]  \big(\opP_2[\overline{\dg}_1]- \opP_2[\overline{\dg}_2]\big ) \\ &+ \opP_2[\overline{\dg}_2] \big  (\opP_1[\overline{\dg}_1,\dv_1]- \opP_1[\overline{\dg}_2,\dv_2]\big ) \\
&=\opP_1[\overline{\dg}_1,\dv_1] \big (\f0(\overline{\dg}_1'-\overline{\dg}_2') -
\f0'(\overline{\dg}_1-\overline{\dg}_2)\big ) \\
&+\opP_2[\overline{\dg}_2] \left((\overline{\dg}_1-\overline{\dg}_2) \int_{0}^1
\partial_1 \opP_1[\overline{\dg}(\lambda) ,\dv(\lambda)] \, d\lambda
\right.\\
& +\left.(\dv_1-\dv_2)\int_{0}^1
\partial_2 \opP_1[\overline{\dg}(\lambda) ,\dv(\lambda)]\right) \, d\lambda ,
\end{aligned}
\end{equation}
where we denote by
$
\overline{\dg}(\lambda)=\lambda \overline{\dg}_1+(1-\lambda)\overline{\dg}_2$ and analogously for $\dv(\lambda)$.
We emphasize now that $\overline{\dg}_j$ is a differentiable function since
$\overline{\dg}_j=\opF_1[\dg_j,\dv_j] = \opL_1\circ \opR_1 [\dg_j,\dv_j]$ and by
Lemma~\ref{lem:Linear1inner}, the linear operator $\opL_1$ converts continuous functions into differentiable ones. Moreover, $\overline{\dg}_j \in \Yin$ and this implies that
for $s\in[0,s_*]$
$$
\f0(s)+h_0(s)+\overline{\dg}(s) \geq M s^n,
$$
while for $s\in [s_*,s_1]$, using that $\f0(s)\geq 1/2$,
we have that $\f0(s)+h_0(s)+\overline{\dg}(s) \geq 1/4$ if $q$ is small enough. Taking this into account one can now bound the terms in ~\eqref{def:incrementN2}.

For $s\in [0,s_*]$
\begin{align*}
\big |\opP_1[\overline{\dg}_1,\dv_1](s)\f0(s)(\overline{\dg}_1'(s)-\overline{\dg}_2'(s)) \big |
&\leq M\esvi \|\overline{\dg}_1'-\overline{\dg}_2'\|
\leq M\esvi \|\overline{\dg}_1-\overline{\dg}_2\|,\\
\big |\opP_1[\overline{\dg}_1,\dv_1](s) \f0'(s)(\overline{\dg}_1(s)-\overline{\dg}_2(s))\big |
&\leq M\esvi \|\overline{\dg}_1-\overline{\dg}_2\|,
\end{align*}
and
\begin{align*}
\left |\opP_2[\overline{\dg}_2](s) (\overline{\dg}_1(s)-\overline{\dg}_2(s)) \int_{0}^1
\partial_1 \opP_1[\overline{\dg}(\lambda) ,\dv(\lambda)](s) \, d\lambda \right | &\leq
M\esvi q^2
\|\overline{\dg}_1-\overline{\dg}_2\| ,\\
\left| \opP_2[\overline{\dg}_2](s) (\dv_1(s)-\dv_2(s)) \int_{0}^1
\partial_2 \opP_1[\overline{\dg}(\lambda) ,\dv(\lambda)](s)\, d\lambda\right | &\leq
Mq^2
\|\dv_1 - \dv_2\|_{1}^{1,3}.
\end{align*}
Then for $s\in [0,s_*]$
\begin{equation}\label{bound:N2inner}
\begin{aligned}
\big |\widehat{\opP}[\overline{\dg}_1,\dv_1](s)- \widehat{\opP}[\overline{\dg}_2,\dv_2](s)\big | \leq
M\esvi \|\overline{\dg}_1-\overline{\dg}_2\| +
Mq^2
\|\dv_1 - \dv_2\|_{1}^{1,3} .
\end{aligned}
\end{equation}
When $s\in [s_*,s_1]$, using that $s_1=e^{\frac{\rho}{q}}$ and that
$$
|\dv_j(s)| \leq 2\kappa_0 q^2  |\log s|^3 s^{-1} \leq 2\kappa_0 \rho^2  |\log s|s^{-1},
$$
we obtain that
\begin{align*}
\big |\opP_1[\overline{\dg}_1,\dv_1](s)\f0(s)(\overline{\dg}_1'(s)-\overline{\dg}_2'(s)) \big |
&\leq M\esvi \frac{|\log s|^3}{s^3} \|\overline{\dg}_1'-\overline{\dg}_2'\|
\leq M\esvi \frac{|\log s|^3}{s^3} \|\overline{\dg}_1-\overline{\dg}_2\,\\
\big |\opP_1[\overline{\dg}_1,\dv_1](s) \f0'(s)(\overline{\dg}_1(s)-\overline{\dg}_2(s))\big |
&\leq M\esvi
\frac{|\log s|^3}{s^6}\|\overline{\dg}_1-\overline{\dg}_2\|,
\end{align*}
and
\begin{align*}
\left |\opP_2[\overline{\dg}_2](s) (\overline{\dg}_1(s)-\overline{\dg}_2(s)) \int_{0}^1
\partial_1 \opP_1[\overline{\dg}(\lambda) ,\dv(\lambda)](s) \, d\lambda \right | &\leq
M\esvi q^2   \frac{|\log s|^5}{s^5} \|\overline{\dg}_1-\overline{\dg}_2\| \\
\left| \opP_2[\overline{\dg}_2](s) (\dv_1(s)-\dv_2(s)) \int_{0}^1
\partial_2 \opP_1[\overline{\dg}(\lambda) ,\dv(\lambda)](s)\, d\lambda\right | &\leq
Mq^2  \frac{|\log s|^5}{s^3} \|\dv_1 - \dv_2\|_{1}^{1,3}.
\end{align*}
Then for $s\in [s_*,s_1]$, increasing $s_*$, if necessary
\begin{equation}\label{bound:N2inner2}
\begin{aligned}
\big |\widehat{\opP}[\overline{\dg}_1,\dv_1](s)- \widehat{\opP}[\overline{\dg}_2,\dv_2](s)\big |  \leq
\frac{M}{\esv s^{5/2} } \|\overline{\dg}_1-\overline{\dg}_2\| +
Mq^2  \frac{1}{s^{5/2}}\|\dv_1 - \dv_2\|_{1}^{1,3} .
\end{aligned}
\end{equation}
By bounds~\eqref{bound:N2inner} and~\eqref{bound:N2inner2},
we have that
$$
\|\widehat{\opP}[\opF_1[\dg_1,\dv_1],\dv_1]- \widehat{\opP}[\opF_1[\dg_2,\dv_2],\dv_2]\|_{0}^{5/2,0} \leq M\|\overline{\dg}_1 - \overline{\dg}_2 \| + Mq^2  \|\dv_1-\dv_2\| .
$$
We use now Lemma~\ref{lem:Linear2inner}, that $\| \cdot \|_{1}^{1,3} \leq \| \cdot \|_{1}^{1,0}$ and  again the first item in Lemma~\ref{lem:lipFinner} to conclude
\begin{align*}
\left \|\opL_2\big [ \widehat{\opP}[\opF_1[\dg_1,\dv_1],\dv_1]\big ]- \opL_2\big [\widehat{\opP}[\opF_1[\dg_2,\dv_2],\dv_2]\big ] \right \|_{1}^{1,3} \leq &
M \esvi q^2   \|\dg_1 - \dg_2\| \\ &+ M(\rho^2 \c^{-1}  +q^2 ) \|\dv_1 -\dv_2\|_{1}^{1,3}.
\end{align*}
Finally, also by the bound in~\eqref{bound:N1innerLip}, since $\opR_2=\opP+ \widehat{\opP}$, the second item of Lemma~\ref{lem:lipFinner} is proven.

\appendix
\section{The dominant solutions in the outer region. Proof of Proposition~\ref{prop:dominantouter}}\label{sec:dominantouter}

Along this section we will work with \emph{outer variables} (see~\eqref{eq:canvi_0}) namely $R=kqr$ and, according to definition~\eqref{dominanttermouter_0} and~\eqref{def:epsilon},
$$
F_0(R)=F_0(R;k,q)=\fdout(R/\e), \qquad V_0(R)=V_0(R;q) = k^{-1} \vdout(R/\e),\qquad \e=kq.
$$
We also recall that, $V_0(R)=K'_{inq}(R)/K_{inq}(R)$ (see~\eqref{eq:V0_0}), and $F_0$ was defined in~\eqref{eq:F0_0}.

The proof of Proposition~\ref{prop:dominantouter} requires a thorough analysis, among other things, of the Bessel function $K_{inq}$. We separate it into different subsections which correspond to the different items in the Proposition.

\subsection{The asymptotic behaviour of $\fdout,\vout$ for $kqr\gg 1$}
This short section corresponds to the first item. {Consider $q<\frac12$,}
using the asymptotic expansions~\eqref{asymKIn_new} for $K_{inq}$, we have that
\begin{equation}\label{asympV0_proof_new}
V_0(R) = \frac{K_{inq}'(R)}{K_{inq}(R)}=- \frac{\displaystyle{1 + \frac{c_1}{R} + \O\left (\frac{1}{R^2}\right )}}{\displaystyle{1 + \frac{\overline{c}_1}{R} +
\O\left (\frac{1}{R^2}\right )}}
=- 1 -\frac{c_1}{R}+\frac{\overline{c}_1}{R} + \O\left (\frac{1}{R^2}\right ), \qquad \textrm{as $R\to \infty$,}
\end{equation}
with
$$
\overline{c}_1 - c_1= \frac{4(inq)^2 -1}{8} -\frac{4(inq)^2 +3}{8} = -\frac{1}{2},
$$
and the claim is proved. This expansion is valide for $R\geq R_0$ with $R_0$ independent of $q$. The expansion for $F_0$ is:
\begin{equation*}
\begin{aligned}
F_0(R) &= \sqrt{1-k^2 V_0^2-\e^2 \frac{n^2}{R^2}} = \sqrt{1- k^2 \left (1 + \frac{1}{R} + \O\left (\frac{1}{R^2} \right ) \right )- \e^2 \frac{n^2}{R^2}} \label{defF0appendix}
\\ &=\sqrt{1-k^2} \sqrt{1 - \frac{k^2}{R(1-k^2)} + \O\left (\frac{k^2}{R^2} \right )}
\\ &=\sqrt{1-k^2} \left (1 - \frac{k^2}{2R(1-k^2)} + \O\left (\frac{k^2}{R^2}\right )\right ),
\end{aligned}
\end{equation*}
where we have also used that $k=\e/q$ is small (compare with~\eqref{eq:asymptoticsinfinityR_0}). Going back to the original variables we obtain the result.

\subsection{Asymptotic expression of $\vdout$ for $2  e^{-\frac{n}{2nq}}\leq kqr\leq (qn)^2 $}
Now we deal with the asymptotic expression in~\eqref{vdoutmaching_new} (item~\ref{item2dominantouter}) which in \emph{outer variables} reads as:
\begin{equation}\label{firstasV0appendix}
V_0(R) = \frac{nq}{R} \mathrm{cotan} \left (nq \log \left (\frac{R}{2}\right ) -\theta_{0,nq}\right ) [1+ \mathcal{O}(q^2)], \qquad 2 e^{-\frac{\pi}{2nq}} \leq R \leq q^2n^2,
\end{equation}
with $\theta_{0,nq}=\mathrm{arg} \Gamma(1+inq)=-\gamma nq + \mathcal{O}(q^2)$ and $\gamma$ the Euler's constant.

Let $\nu=nq$.
We first recall some properties of $K_{i\nu}$ with $\nu>0$, see~\cite{dunster, Olverhand}. For $x\in \R$ (in fact the formula is also satisfied for some complex domains), we have that
\begin{equation}\label{KI}
K_{i\nu}(x)= -\frac{\pi i}{2\sinh (\nu \pi)} \big [I_{-i\nu} (x) - I_{i\nu}(x)\big ],
\qquad I_{\eta }(x) = \left (\frac{x}{2}\right )^{\eta}
\sum_{k\geq 0} \left (\frac{x^2}{2}\right )^{k} \frac{1}{k! \Gamma(\eta + k + 1)},
\end{equation}
where $\Gamma(z)$ is the Euler Gamma function
\[
\Gamma(z)=\int_0^\infty t^{z-1} \exp(-t)\,\dd t.
\]
Using that
\begin{equation}\label{formulaGamma}
\Gamma(1+k+\nu i)= (k+\nu i) \cdots (1+\nu i)\Gamma(1+\nu i), \qquad
|\Gamma(1\pm i \nu)|^2 = \frac{\pi \nu}{\sinh(\pi \nu)},
\end{equation}
and denoting $\theta_{k,\nu} =\arg(\Gamma(1+k+i\nu))$,
from~\eqref{KI} we deduce that
\begin{equation}\label{Kserie}
K_{i\nu}(x) = -\frac{1}{\nu} \left ( \frac{\nu \pi}{\sinh \pi \nu}\right )^{1/2}
\sum_{k\geq 0} \left (\frac{x^2}{2}\right )^{k}\frac{\sin \left (\nu \log \left (\frac{x}{2}\right ) - \theta_{k,\nu}\right )}
{k! \big [(k^2 + \nu^2) \cdots (1+ \nu^2)\big ]^{1/2}}.
\end{equation}
By convention,  when $k=0$, $k! \big [(k^2 + \nu^2) \cdots (1+ \nu^2)\big ]^{1/2}=1$.

By formula~\eqref{formulaGamma}, we have that
$$
\arg (\Gamma(1+k+\nu i))= \sum_{l=1}^k \arg(l+\nu i) + \arg(\Gamma(1+\nu i)).
$$
Now we notice that
\begin{equation}\label{ap:theta0nu}
-\theta_{0,\nu} = -\arg \Gamma(1+ \nu i)=\gamma \nu + \O(\nu^2),
\end{equation}
being $\gamma$ the Euler's constant. Indeed, it is well known (\cite{abramowitz}) that
$$
\log \Gamma(1+z)=-\log(1+z) +z(1-\gamma) + \O(z^2).
$$
Then
\begin{align*}
\Gamma(1+i\nu) & = \frac{1}{1+i\nu}  e^{i\nu(1-\gamma)+\O(\nu^2)} = (1 - i\nu +\O(\nu^2))(1+ i\nu(1-\gamma) + \O(\nu^2))\\& = 1 -\gamma i \nu +\O(\nu^2),
\end{align*}
and henceforth, $\arg \Gamma(1+i\nu)=-\gamma \nu + \O(\nu^2)$ as we wanted to check.

We use the expansion~\eqref{Kserie} for $K_{i\nu}$ which has a decomposition
\begin{equation}\label{der0K}
K_{i\nu}(x)= \frac{1}{\nu} \left [\frac{\nu \pi}{\sinh \nu \pi}\right ]^{1/2} \left \{
-\sin\left (\nu \log \left (\frac{x}{2}\right )- \theta_{0,\nu}\right ) + h(x) \right \},
\end{equation}
with $h(x)$ satisfying that $|h(x)|\leq C |x|^2$, $|h'(x)| \leq C|x|$ and $|h''(x)|\leq C$. Therefore
\begin{equation}\label{der1K}
K'_{i\nu}(x) = \left [\frac{\nu \pi}{\sinh \nu \pi}\right ]^{1/2}
\left \{ -\frac{1}{x} \cos \left (\nu \log \left (\frac{x}{2}\right )- \theta_{0,\nu}\right ) +\frac{h'(x)}{\nu}\right \},
\end{equation}
and as a consequence
$$
V_0(R) = \frac{nq}{R } \frac{\cos \left (nq \log \left (\frac{R}{2}\right ) - \theta_{0,nq}\right ) - (nq)^{-1}R h'(R)}
{\sin \left (nq \log \left (\frac{R }{2}\right ) - \theta_{0,nq}\right ) - h(R )} ,\\
$$
with $|h(x)|, |xh'(x)|\leq Cx^2$ and $\theta_{0,nq}= \mathrm{arg} \Gamma(1+inq) = -nq \gamma + \O(q^2)$.

We notice now that when
$2 e^{-\frac{\pi}{2\nu}}\leq x\leq \nu^2$
$$
-\frac{\pi}{2} + \nu \gamma + \O(\nu^2) \leq \nu \log \left (\frac{x}{2}\right ) - \theta_{0,\nu} \leq  -2 \nu |\log \nu| (1+ \O(|\log \nu|^{-1}).
$$
Then, taking $\nu=nq$ we deduce that, for $2  e^{-\frac{n}{2nq}}\leq R\leq (qn)^2 $
\begin{equation}\label{ap:V0ab}
\begin{aligned}
a(R) &:=\frac{Rh'(R)}{nq \cos \left (nq \log \left (\frac{R}{2}\right ) - \theta_{0,nq}\right )} \leq C (nq)^4 \frac{1}{(nq)^2 \gamma} (1+ \O(q^2)) \leq C(nq)^2, \\
|b(R)| &:=\left|\frac{h(R)}{\sin \left (nq \log \left (\frac{R}{2}\right ) - \theta_{0,nq}\right )} \right|
\leq C (nq)^4 \frac{1}{q|\log q|} (1+ \O(|\log q|^{-1})
\leq C(nq)^3
\end{aligned}
\end{equation}
and therefore
\begin{equation}\label{ap:V0Rq2}
V_0(R)=\frac{nq}{R} \mathrm{cotan} \left (nq \log \left (\frac{R}{2}\right ) - \theta_{0,nq}\right )\frac{1-a(R)}{1-b(R)}.
\end{equation}
The result in~\eqref{firstasV0appendix} (and consequently item~\ref{item2dominantouter} of Proposition~\ref{prop:dominantouter}) follows from~\eqref{ap:V0Rq2} and~\eqref{ap:V0ab}.

\subsection{Monotonicity of $\vdout$ and $\fdout$}

This section is devoted to prove item~\ref{item3dominantouter} in Proposition~\ref{prop:dominantouter}.
Fisrt we will see that $\vdout,\fdout$ are increasing functions for $2  e^{-\frac{n}{2nq}}\leq kqr $.
It is equivalent to prove that $V_0'(R), F_0'(R)$ are increasing functions in the corresponding domain $2  e^{-\frac{n}{2nq}}\leq R $.

We begin by $V_0$. Using expansion~\eqref{asymKIn_new} for $K_{inq}$ and the corresponding expansions for $K_{inq}', K_{inq}''$, we have that for $R\gg 1$
$$
V_0'(R)=\frac{1}{2R^2} + \O\left (\frac{1}{R^3}\right ),
$$
so that $V_0'(R)>0$ if $R\gg 1$.

Assume then that there exists {$R_*>2  e^{-\frac{n}{2nq}}$} such that
$V'_0(R_*)=0$ and take the larger $R_*$ critical point.
That is $V'_0(R)\neq 0$ if $R>R_*$. Notice that,
using that $V_0(R) \to -1$ as $R\to \infty$ and $V_0'(R)>0$ if {$R\gg R_*$} we deduce that $V_0(R_*)<-1$ and $V''_0(R_*)\geq 0$, indeed, if $V''_0(R_*)<0$,
it should be a maximum which is a contradiction. Then, since $V_0$ is solution of~\eqref{Ricatti_0}:
$$
\frac{V_0(R_*)}{R_*} + V_0^2(R_*) + \frac{q^2n^2}{R_*^2} - 1=0,
$$
or equivalently
\begin{equation*}
V_0(R_*) = v_{\pm}(R_*):=\frac{1}{2} \left [ -\frac{1}{R_*} \pm \sqrt{\frac{1}{R_*^2} + 4\left (1 - \frac{q^2n^2}{R_*^2}\right)}\right ]
= \frac{1}{2} \left [ -\frac{1}{R_*} \pm \sqrt{\frac{1}{R_*^2} (1-4q^2n^2)  + 4}\right ] .
\end{equation*}
Note that, when $q$ is small enough, $v_{\pm}(R)$ are defined for all $R>0$, and
$$
\lim_{R\to 0}v_{\pm}(R)=-\infty, \qquad \lim_{R \to \infty}v_{+}(R)=1, \qquad \lim_{R \to \infty}v_{-}(R)=-1,
$$
$v_{-}(R)<v_+(R)$ for all $R>0$. We also have that $V_0(R)<-1$, $v_-(R)< V_0(R)<v_{+}(R)$ if $R\gg 1$.

We emphasize that, differentiating equation~\eqref{Ricatti_0}, we obtain that
$$
V_0''(R) + \frac{V'_0(R)}{R} - \frac{V_0(R)}{R^2} + 2 V_0 V_0' - 2 \frac{q^2 n^2}{R^3}=0.
$$
Evaluating at $R=R_*$ we have that
$$
V_0''(R_*) - \frac{V_0(R_*)}{R_*^2} - 2 \frac{q^2 n^2}{R_*^3} =0 \Longleftrightarrow V_0''(R_*)= \frac{V_0(R_*)}{R_*^2} + 2 \frac{q^2 n^2}{R_*^3}.
$$
That is, assuming that $V_0(R_*)=v_-(R_*)$, we obtain:
$$
V_0''(R_*) = \frac{1}{2R_*^2}  \left [ -\frac{1}{R_*} -\sqrt{\frac{1}{R_*^2} + 4\left (1 - \frac{q^2n^2}{R_*^2}\right)}\right ]+ 2 \frac{q^2 n^2}{R_*^3},
$$
and it is clear that, if $q$ is small enough, $V_0''(R_*)<0$ and therefore we have a contradiction with the fact that
$R_*$ can not be a maximum.
We conclude then that $V_0(R_*)=v_+(R_*)$. In this case, $V_0(R_*)<-1$ if and only if
$$
-1+\frac{1}{2R_*}  > \frac{1}{2} \sqrt{\frac{1}{R_*^2} + 4\left (1 - \frac{q^2n^2}{R_*^2}\right)},
$$
which implies that $R_*<1/2$ and
$$
1-\frac{1}{R_*} > 1-\frac{q^2n^2}{R_*^2} \Longleftrightarrow R_*<q^2n^2.
$$
We recall that $V_0''(R_*)>0$. Therefore,using again that
\begin{equation}\label{boundV0R*}
V_0''(R_*)= \frac{V_0(R_*)}{R_*^2} + 2 \frac{q^2 n^2}{R_*^3}>0 \Rightarrow V_0(R_*) > -2 \frac{q^2 n^2}{R_*}.
\end{equation}
Since $2 e^{-\frac{\pi}{2nq}}< R_*<q^2n^2$, using~\eqref{ap:V0Rq2}, we rewrite $V_0(R_*)$ as:
$$
V_0(R_*) = \frac{nq}{R_*} \frac{\cos \left (nq \log \left (\frac{R_*}{2}\right ) - \theta_{0,nq}\right )}{\sin \left (nq \log \left (\frac{R_*}{2}\right ) - \theta_{0,nq}\right )} \frac{1+ a(R_*)}{1+b(R_*)}.
$$
Using~\eqref{ap:V0ab} and that the function $\cos(x)/\sin(x)$ is a decreasing function if
$x\in [-\pi/2,0]$, we have that,
\begin{align*}
V_0(R_*) &\leq \frac{nq}{R_*} \frac{1+ a(R_*)}{1+b(R_*)}
\frac{\cos \left (nq \log \left (\frac{(nq)^2}{2}\right ) - \theta_{0,nq}\right )}{\sin \left (nq \log \left (\frac{(nq)^2}{2}\right ) - \theta_{0,nq}\right )} =
\frac{nq}{R_*} \frac{1}{2nq \log (nq)}(1+ \O(q^2|\log q|^2)) \\ &= - \frac{1}{2 R_* |\log (nq)|} (1+ \O(q^2|\log q^2))
\end{align*}
which is a contradiction with~\eqref{boundV0R*}. Then we conclude that $V_0'(R)>0$ for {$ 2 e^{-\frac{\pi}{2nq}}< R$}.

Note that since we have proved that $V'_0(R)>0$ for $R\geq 2 e^{-\frac{\pi}{2\nu}}$ then
by~\eqref{asympV0_proof_new} $V_0(R)=-1 - \frac{1}{2R} + \O(1/R^2)$ if $R\gg 1$ which implies that $V_0(R) \to -1$ when $R\to \infty$ and hence
$V_0(R)<-1$ in the same domain.

Differentiating the expression for $F_0$ (see for instance~\ref{defF0appendix}) and using that $V'_0(R)>0$ we easily obtain that $F_0'(R)>0$.

Going back to the original variables, item~\ref{item3dominantouter} of Proposition~\ref{prop:dominantouter} is proven.

\subsection{Bounds for $\vdout$ and $\fdout$}

This section is devoted to prove the the  bounds for $\vdout$ and $\fdout$ and its derivatives given in item~\ref{item4dominantouter} of Proposition~\ref{prop:dominantouter}.

Let us first provide a technical lemma whose proof is postponed to the end of this section.
\begin{lemma}\label{lem:Kinq_der}
There exists $q_0>0$, such that  if $0<q<q_0$,
the modified Bessel function $K_{inq}(R)$ satisfies:
\begin{equation*}
K_{inq}(R)>0, \qquad K'_{inq}(R)<0, \qquad K''_{inq}(R)>0, \qquad
\text{for all } \, {R\geq 2 e^2 e^{-\frac{\pi}{2qn}}.}
\end{equation*}
\end{lemma}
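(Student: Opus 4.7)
The plan is to derive the three inequalities in sequence, leveraging item~\ref{item3dominantouter} of Proposition~\ref{prop:dominantouter}, together with the modified Bessel equation and the asymptotic expansion of item~\ref{item2dominantouter}.

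For the positivity $K_{inq}(R)>0$ and the negativity $K'_{inq}(R)<0$, the argument is quick. The asymptotic \eqref{asymKIn_new} gives $K_{inq}(R)\sim\sqrt{\pi/(2R)}\,e^{-R}>0$ for all $R$ sufficiently large, uniformly in $q$. If $K_{inq}$ vanished at some $R^{*}\geq R_{0}:=2e^{2}e^{-\pi/(2nq)}$, then $V_{0}(R)=K'_{inq}(R)/K_{inq}(R)$ would diverge as $R\to R^{*}$, contradicting the uniform bound $V_{0}(R)<-1$ already established in item~\ref{item3dominantouter}. Hence $K_{inq}$ has no zero on $[R_{0},\infty)$, and by continuity $K_{inq}(R)>0$ throughout. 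The inequality $K'_{inq}(R)<0$ is then immediate from $V_{0}(R)<-1<0$ combined with $K_{inq}(R)>0$.

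For the second derivative, I would use the modified Bessel equation, which for $K_{inq}$ reads
\begin{equation*}
K''_{inq}(R)=\Bigl(1-\frac{(nq)^{2}}{R^{2}}\Bigr)K_{inq}(R)-\frac{K'_{inq}(R)}{R}.
\end{equation*}
When $R\geq nq$ the bracket is non-negative and both summands are non-negative (strictly so for the second, by the previous step), so $K''_{inq}(R)>0$ immediately. For $R_{0}\leq R<nq$, after dividing by $K_{inq}(R)>0$ the inequality $K''_{inq}(R)>0$ becomes
\begin{equation*}
-V_{0}(R)\,>\,\frac{(nq)^{2}-R^{2}}{R}.
\end{equation*}
On the sub-range $R\in[(nq)^{2},nq)$, the right-hand side is a decreasing function of $R$ bounded above by $1-(nq)^{2}<1$, while item~\ref{item3dominantouter} yields $-V_{0}(R)>1$, so this case is done.

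The remaining range $R\in[R_{0},(nq)^{2}]$ is where the argument is delicate and where the specific constant $2e^{2}$ in $R_{0}$ is used. Here item~\ref{item2dominantouter} provides
\begin{equation*}
V_{0}(R)=\frac{nq}{R}\,\cot(\phi(R))\,(1+\mathcal{O}(q^{2})),\qquad \phi(R):=nq\log(R/2)-\theta_{0,nq},
\end{equation*}
with $\theta_{0,nq}=-\gamma nq+\mathcal{O}(q^{2})$. Evaluating at $R=R_{0}$ yields $\phi(R_{0})=-\pi/2+(2+\gamma)nq+\mathcal{O}(q^{2})$, whence $|\cot(\phi(R_{0}))|=(2+\gamma)nq+\mathcal{O}(q^{2})$. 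Since $\phi$ is strictly increasing in $R$ and remains in $(-\pi/2,0)$ on $[R_{0},(nq)^{2}]$ (as $\phi((nq)^{2})=2nq\log(nq)+\mathcal{O}(q)<0$ for small $q$), and since $-\cot$ is strictly increasing on $(-\pi/2,0)$, the minimum of $|\cot(\phi(R))|$ on the sub-interval is attained at $R=R_{0}$. Therefore
\begin{equation*}
-V_{0}(R)\geq(2+\gamma)\frac{(nq)^{2}}{R}(1+\mathcal{O}(q))>\frac{(nq)^{2}}{R}\geq\frac{(nq)^{2}-R^{2}}{R}
\end{equation*}
for $q$ small enough, by virtue of $2+\gamma>1$. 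The main obstacle is precisely this quantitative safety margin: the $\mathcal{O}(q)$ remainders must not swallow the gap $(2+\gamma)-1=1+\gamma>0$. It is exactly to secure this gap that $R_{0}$ carries the extra factor $e^{2}$, which contributes the summand $2nq$ inside $\phi(R_{0})$ and ultimately produces the $(2+\gamma)$ coefficient.
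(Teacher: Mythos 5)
Your argument is correct in outline but takes a genuinely different route from the paper, and it has one small gap plus one structural caveat worth spelling out. The paper proves the lemma in a self-contained way: it establishes $K''_{i\nu}(x)>0$ first --- via the integral representation $K_{i\nu}(x)=\int_0^\infty e^{-x\cosh t}\cos(\nu t)\,dt$ (splitting the integral at $t=\pi/(4\nu)$) for $x\geq\nu^2$, and via the series expansion~\eqref{der2K} for $2e^2e^{-\pi/(2\nu)}\leq x\leq\nu^2$, where the bound $\cos(\nu\log(x/2)-\theta_{0,\nu})\geq(1+\gamma/2)\nu$ plays exactly the role of your $(2+\gamma)$ margin --- and only then reads off the signs of $K$ and $K'$ from the asymptotics at infinity. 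You invert the order: you get $K>0$, $K'<0$ from item~\ref{item3dominantouter}, and then obtain $K''>0$ cheaply from the Bessel ODE on $R\geq(nq)^2$ and from item~\ref{item2dominantouter} on $[R_0,(nq)^2]$ (which, after dividing by $K$, is the same trigonometric estimate as the paper's, just phrased through $V_0$). Your route is shorter and avoids the integral-splitting computation entirely; what it costs is self-containedness. The paper keeps this lemma independent of Proposition~\ref{prop:dominantouter} for a reason: the proofs of items~\ref{item2dominantouter} and~\ref{item3dominantouter} tacitly treat $V_0=K'_{inq}/K_{inq}$ as a smooth, pole-free function on the whole range, i.e.\ they tacitly assume the nonvanishing of $K_{inq}$ that your first step is trying to prove. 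Your argument can be rescued from this near-circularity, but only by parsing item~\ref{item3dominantouter} carefully: its proof genuinely establishes $V_0'>0$ and $V_0<-1$ on the maximal interval $(R^*,\infty)$ to the right of the largest zero $R^*$ of $K_{inq}$, and the contradiction you need must be located in that interval.

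This brings up the concrete gap: as written, ``$V_0$ would diverge, contradicting $V_0<-1$'' is not a contradiction, since divergence to $-\infty$ is perfectly compatible with $V_0<-1$. You need the sign. At a zero $R^*$ of $K_{inq}$ one has $K'_{inq}(R^*)\neq 0$ (otherwise $K_{inq}\equiv 0$ by uniqueness for the Bessel ODE, contradicting~\eqref{asymKIn_new}), so $V_0(R)=\tfrac{1}{R-R^*}\bigl(1+\mathcal{O}(R-R^*)\bigr)$ near $R^*$; hence $V_0(R)\to+\infty$ as $R\to R^{*+}$, and it is this positivity just to the \emph{right} of $R^*$ that contradicts $V_0<-1$ on $(R^*,\infty)$. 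With that sentence added, and with the dependency on items~\ref{item2dominantouter} and~\ref{item3dominantouter} acknowledged as above, your proof goes through; the remaining computations (the three ranges for $K''>0$, the monotonicity of $-\cot\circ\phi$ on $[R_0,(nq)^2]$, and the identification of the $e^2$ in $R_0$ with the $(2+\gamma)nq$ offset in $\phi(R_0)$) are correct and match the quantitative content of the paper's series argument.
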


 We point out that, in \emph{outer variables}, in order to prove the bounds in items~\ref{item3dominantouter} and~\ref{item4dominantouter}, it is enough to prove the following result (see also Corollary~\ref{lem:Kinq2_new}):
\begin{lemma}
Let
$\a \in (0,1)$. There exists $q_0=q_0(\a) >0$ and a constant $M>0$ such that for any $0<q<q_0$ and $R\in [R_\m,+\infty)$ with $R_\m$ satisfying $2 e^2 e^{-\frac{\pi}{2qn}}\leq R_\m \leq \e^{\a}$, where $\e=kq$, one has
$$
|kV_0(R)|, \ |k V_0'(R)R|, \ |kV''(R)R^2|\leq M \e R_\m^{-1},
$$
and
$$
|R (V_0(R)+1)|,|R^2V_0'(R)|, |R^3 V_0''(R)|\leq M .
$$
With respect to $F_0$, we have that $
F_0(R) \geq 1/2$
and
$$
|F'_0(R) R^2|, |F''_0(R) R^3| \leq  M k \e R_\m^{-1},\qquad |1-F_0(R)|, |F_0'(R)R|, |F_0''(R)R^2|\leq M \e^2 R_\m^{-2}.
$$
\end{lemma}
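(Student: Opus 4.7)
The proof works in the outer variable $R$ and combines four tools: (a) the cotangent representation of item~\ref{item2dominantouter}, valid on $[2e^{-\pi/(2nq)}, (qn)^2]$; (b) the asymptotic expansion from item~\ref{item3dominantouter} (in conjunction with the first-item estimates), valid on $[R_0,\infty)$ for a fixed constant $R_0$; (c) the monotonicity $V_0' > 0$ and $V_0 < -1$ for $R \geq 2e^2 e^{-\pi/(2nq)}$; and (d) the Riccati equation $V_0' + V_0/R + V_0^2 + q^2n^2/R^2 = 1$. The plan is to first bound $V_0$ uniformly on $[R_\m,\infty)$, then transfer these to $V_0'$ and $V_0''$ via the Riccati identity and its $R$-derivative, and finally deduce the estimates on $F_0$ from its explicit formula.

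For the uniform bound on $V_0$, monotonicity gives $|V_0(R)| \leq |V_0(R_\m)|$ for every $R\geq R_\m$. Since $R_\m \in [2e^2 e^{-\pi/(2nq)}, \varepsilon^\alpha]$, the cot argument $x_\m := nq\log(R_\m/2) - \theta_{0,nq}$ lies in $[-\pi/2 + O(q),\, -\alpha\pi/2 + O(q)]$, uniformly bounded away from the singularities of $\cot$ by a distance depending only on $\alpha$; hence $|\cot x_\m| \leq M_\alpha$ and $|V_0(R_\m)| \leq M_\alpha nq/R_\m$, yielding $|kV_0(R)| \leq M_\alpha \varepsilon R_\m^{-1}$. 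For the second bound $R|V_0+1| \leq M$ I split $[R_\m,\infty)$ into three regimes: on $[R_\m,(qn)^2]$ the cotangent representation gives $RV_0(R) = nq\cot(x_R)[1+O(q^2)]$, which is bounded either because $|\cot|$ is small near $x_R = -\pi/2$, or by the compensation between the $nq$ prefactor and the blow-up $|\cot x_R|\sim 1/|x_R|$ near $x_R = 0^-$; on $[R_0,\infty)$ the expansion $V_0(R) = -1 - 1/(2R) + O(R^{-2})$ gives the bound immediately; and on the compact intermediate range $[(qn)^2,R_0]$ continuous dependence of $K_{inq}$ on the index (with $K_{inq}\to K_0$ in $C^1$ uniformly on compact subsets of $(0,\infty)$ and $K_0>0$) controls $V_0$ uniformly.

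The derivative estimates follow from the Riccati identity $V_0' = 1 - V_0/R - V_0^2 - q^2n^2/R^2$ and its $R$-derivative. On $[R_0,\infty)$ the crucial cancellation $1 - V_0^2 = -1/R + O(1/R^2)$ (from the expansion of $V_0$ to order $1/R$) gives $V_0' = O(1/R^2)$ and analogously $V_0'' = O(1/R^3)$; on the complement, differentiating the cot formula yields $R^2V_0'(R) = -RV_0 - (nq)^2\csc^2(x_R) + O(q^2)$, with $(nq)^2\csc^2(x_R)$ bounded throughout the cotangent range by the same compensation. Multiplying by $k$ and using the first step yields the $k$-weighted bounds. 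For $F_0$, the estimate $k^2V_0^2 \leq M\varepsilon^2/R_\m^2 \leq M\varepsilon^{2(1-\alpha)} \ll 1$ gives $F_0^2 \geq 1/2$ and $|1-F_0| \leq M(k^2V_0^2 + \varepsilon^2 n^2/R^2) \leq M\varepsilon^2/R_\m^2$; differentiating $F_0 = \sqrt{1 - k^2V_0^2 - \varepsilon^2n^2/R^2}$ and combining the previous bounds with $F_0 \geq 1/2$ yields the remaining estimates on $F_0'$ and $F_0''$ in both the unweighted and $k$-weighted forms.

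The main obstacle is the intermediate regime $R \in [(qn)^2, R_0]$, where neither the cotangent representation nor the large-$R$ asymptotics applies directly. Handling it requires a separate uniform-in-$q$ control of $K_{inq}$ and $K'_{inq}$ on compact subsets of $(0,\infty)$, for instance via the convergence $K_{inq} \to K_0$ in $C^1$ as $q\to 0$ combined with a positive uniform lower bound on $K_{inq}$. A second delicate point is verifying the compensation between the $nq$ prefactor and the blow-up of $\cot x_R$ as $x_R \to 0^-$ at the upper end of the cotangent range, so that $RV_0$ and $(nq)^2\csc^2(x_R)$ both remain uniformly bounded; once these two points are secured, the remainder of the proof reduces to routine algebraic manipulation.
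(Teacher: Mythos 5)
Your overall architecture (monotonicity at $R_\m$ for the $k$-weighted bounds, the Riccati identity for transferring bounds to $V_0'$ and $V_0''$, and the explicit formula for $F_0$) matches the paper, and your treatment of the regimes $[R_\m,(qn)^2]$ and $[R_0,\infty)$ is sound. The genuine gap is exactly where you flag it: the intermediate regime. The set $[(qn)^2,R_0]$ is \emph{not} a fixed compact subset of $(0,\infty)$ — its lower endpoint tends to $0$ as $q\to 0$ — so uniform $C^1$ convergence $K_{inq}\to K_0$ on compact subsets does not cover it. Worse, $V_0$ itself is unbounded there (near $R=0$ one has $V_0(R)\sim -1/(R\log(1/R))$), so no uniform bound on $V_0$ exists on that range; only the products $RV_0$, $R^2V_0'$, $R^3V_0''$ stay bounded, and your proposed argument does not establish that. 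The same issue recurs for $R^2V_0'$, since differentiating the cotangent formula is again only licensed for $R\le (qn)^2$.

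The paper closes this with a different, global device that you should adopt. Set $g(R)=RV_0(R)$ and show $g'$ never vanishes on $[R_\m,\infty)$: at a critical point the Riccati equation forces $V_0^2(R_*)=1-q^2n^2/R_*^2<1$, contradicting $V_0<-1$ (item~(3) of Proposition~\ref{prop:dominantouter}). Since $g'(R)\to -1$ at infinity, $g'<0$ throughout, so $|g|=|RV_0|$ is increasing and the bound at a fixed large $R_0$ (where the asymptotics apply) propagates down to all of $[R_\m,R_0]$, giving $|R(V_0+1)|\le R_0|V_0(R_0)|+R_0\le M$ with no analysis of $K_{inq}$ near the origin. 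For $R^2V_0'$ on $[R_\m,R_0]$ the paper likewise avoids the cotangent formula, using instead $0<V_0'\le K_{inq}''/K_{inq}$, the Bessel ODE, and the signs $K_{inq}>0$, $K_{inq}'<0$, $K_{inq}''>0$ (Lemma~\ref{lem:Kinq_der}) to get $R^2V_0'\le R|V_0|+R^2$, which is then controlled by the monotonicity step; the $k$-weighted bound $|kRV_0'|\le -kV_0$ follows directly from the Riccati equation and the sign of $V_0^2-1$. Without some substitute of this kind your intermediate regime remains open.
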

\begin{proof}
Because of item~\ref{item3dominantouter} of Proposition~\ref{prop:dominantouter}, $V_0$ is an increasing and negative function on  $[2 e^{-\frac{\pi}{2qn}},\infty]$ and therefore in $[R_\m,\infty)$. Therefore we have that $|kV_0(R)|\leq k |V_0(R_\m)|$. We notice that, from~\eqref{der0K} and~\eqref{der1K}
\begin{align*}
 V_0(R_\m) &= \frac{K'_{inq}(R_\m)}{K_{inq}(R_\m)}=-
\frac{-\frac{1}{R_\m} \cos \left (nq \log \left (\frac{R_\m}{2}\right )- \theta_{0,nq}\right ) +\frac{h'(R_\m)}{nq}}
{\frac{1}{nq}\left \{
-\sin\left (nq \log \left (\frac{R_\m}{2}\right )- \theta_{0,nq}\right ) + h(R_\m) \right \}} \\
&= \frac{nq}{R_\m}
\frac {\cos \left (nq\log \left (\frac{R_\m}{2}\right )- \theta_{0,nq}\right ) - R_\m \frac{h'(R_\m)}{nq}}
{\sin\left (nq \log \left (\frac{R_\m}{2}\right )- \theta_{0,nq}\right ) - h(R_\m) },
\end{align*}
with $h(R)$ satisfying that $|h(R)|\leq M |R|^2$ and $|h'(R)| \leq M|R|$. We recall that $ \e=kq =\mu e^{-\frac{\pi}{2nq}}$ and $-\theta_{0,nq}= \gamma nq + \O(q^2)$. Then, since $R_\m \leq \e^{\a}\ll q$
$$
| kV_0(R_\m) | \leq    k \frac{nq}{R_\m}\frac{1+ \O(R_\m)}
{\left |\sin \left(-\frac{\pi \a }{2} + \O(q) \right ) \right |+ \O(R_\m^2)}
\leq
M \e R_\m^{-1}.
$$

Define now the function $g(R) = R V_0(R)$. We want to see that, for $R\ge R_\m$, $g'(R)\ne 0$.
Assume that, for some $R_*$, the function has a critical point, namely, $g'(R_*)=R_*V_0'(R_*) + V_0(R_*) =0$. Then
using the equation \eqref{Ricatti_0} satisfied by $V_0$ we get:
$$
V_0^2 (R_*) -1 +\frac{q^2n^2}{R_*^2} =0 \Longleftrightarrow V_0^2 (R_*)=1 -  \frac{q^2n^2}{R_*^2},
$$
which is a contradiction with the fact that $V_0(R)<-1$.
Therefore, $g'(R) = RV'_0(R)+ V_0(R) \ne 0$ for $R\ge R_\m$.

Recall that, for $R\gg 1$,
$$
V_0(R) = -1 - \frac{1}{2R} + \O\left (\frac{1}{R^2}\right ).
$$
As a consequence
$$
g'(R) = -1 - \O(R^{-2}) \to -1 ,\qquad \text{as  } R\to \infty
$$
and therefore, $g'(R)<0$ for all $R\geq R_\m$.

Then $g(R_1)\leq g(R_2)$ if $R_1 \geq R_2$ and using that $g(R)<0$,
we conclude that $| g(R_2)|\leq |g(R_1)|$ when $R_1 \geq R_2$.
On the other hand, $|R(V_0(R)+1)| \leq M$ when $R\geq R_0$ if $R_0$ is big enough (but independent of $q$). Thus,
if $R_\m \leq R \leq R_0$,
$$
|R(V_0(R)+1)| = |RV_0(R)| -R\leq R |V_0(R)| \leq R_0 |V_0(R_0)| \leq M \e R_\m^{-1} .
$$

With respect to $V'_0(R)$ we have that $|R^2 V'_0(R) |\leq M$ if $R\geq R_0$ with $R_0$ big enough.
Take now $R_{\m} \leq  R \leq R_0$. We
recall that $V_0(R)=K'_{inq}(R) / K_{inq}(R)<0$ and we notice that
$$
0<V'_0(R) = \frac{K_{inq}''(R)}{K_{inq}(R)} - \left (\frac{K'_{inq}(R)}{K_{inq}(R)}\right )^2 \leq \frac{K_{inq}''(R)}{K_{inq}(R)}.
$$
The modified Bessel function $K_{inq}$ satisfies the linear differential equation
$$
K_{inq}'' + \frac{K'_{inq}(R)}{R} - K_{inq}(R) \left (1- \frac{n^2q^2}{R^2}\right )=0.
$$
Then, using that, by Lemma~\ref{lem:Kinq_der}, for $R\geq R_\m \ge 2e^2 e^{-\frac{\pi}{2nq}}$ we know that $K_{inq}(R)>0$, $K_{inq}'(R)<0$ and $K_{inq}''(R)>0$ and therefore:
$$
0<K''_{inq}(R) = -\frac{K'_{inq}(R)}{R} + K_{inq}(R) \left (1- \frac{n^2q^2}{R^2}\right ) \leq -\frac{K'_{inq}(R)}{R} + K_{inq}(R).
$$
Therefore, if $R_{\m} \leq  R \leq R_0$
$$
|R^2 V'_0(R)| = R^2 V'_0(R) \leq -R \frac{K'_{inq}(R)}{ K_{inq}(R)} + R^2 = R|V_0(R)| +R^2 \leq R_0|V_0(R_0)| +R_0^2 \leq M.
$$
In addition, using that $V_0$ satisfies equation~\eqref{Ricatti_0}
$$
0<k R V'_0(R) =-k V_0(R) -kR(V_0^2(R)-1) - k \frac{q^2n^2}{R} \leq -kV_0(R) \leq M \e R_{\m}^{-1}.
$$

Now we deal with $V''_0(R)$. We have that, when $R\geq R_0$ with $R_0$ big enough (but independent of $q$), $|R^3 V_0''(R)|\leq M$. For
$R_\m \leq R\leq R_0$,
$$
V_0''(R) = \frac{V_0}{R^2 } - \frac{V'_0}{R} + 2 V_0 V'_0(R) + \frac{n^2q^2}{R^3}.
$$
Therefore, using that $|RV_0(R)|$ and $|R^2 V_0'(R)|\leq M$ for $R_\m \leq R\leq R_0$ we obtain:
$$
|R^3 V_0''(R)| \leq M.
$$
Moreover, using that $kq R^{-1}_{\m} \leq \e^{1-\a}$
$$
|kR^2 V_0''(R) | \leq |k V_0(R) | + |kR V_0'(R)| + 2 k |V_0(R) | |R^2 V_0'(R)| +k \frac{n^2 q^2}{R}  \leq M \e R_{\m}^{-1}.
$$

Now we deal with the properties of $F_0$ and its derivatives.
Since $|kV_0(R)| \leq M \e^{1-\alpha}$ and $F_0(R) = \sqrt{1-k^2 V_0^2-\e^2 n^2 R^{-2}}$,
we have that
$$
F_0(R)= 1- \sum_{n\geq 1} a_n B_0(R)^{n} , \qquad a_n>0,
$$
with
$$
B_0(R)=  k^2 V_0^2(R) +\frac{\e^2 n^2}{R^2} .
$$
Then
\begin{align*}
    F_0'(R)&=-\sum_{n\geq 1} na_n B_0^{n-1}(R) B_0'(R) , \\
    F_0''(R) & -\sum_{n\geq 1 } na_n \big [(n-1)B_0^{n-2}(R) (B_0'(R))^2  + B_0^{n-1}(R) B_0''(R) \big ].
\end{align*}
Using the properties for $V_0$, we deduce from the above expression, the corresponding ones for $F_0$.
\end{proof}

To finish the proof of Proposition~\ref{prop:dominantouter} we prove Lemma~\ref{lem:Kinq_der}.
\begin{proof}[Proof of Lemma~\ref{lem:Kinq_der}]
We take $\nu=nq \le \nu_0$.
Besides expression~\eqref{Kserie} of $K_{i\nu}$ we also have the integral expression:
\begin{equation}
\label{KInt}
K_{i\nu}(x)=\int_0^\infty \exp (-x\cosh t)\cos(\nu t)\dd t ,
\end{equation}
from which we deduce that $K_{i\nu}(x)$ is real if $x\in \R$.

Notice that, from the asymptotic expression (see~\eqref{asymKIn_new}), there exists $x_0$ only depending on $\nu_0$ such that $\forall x\ge x_0$ :
\begin{equation}
\label{eqAsympt}
K_{i\nu}(x)= \sqrt{\frac{\pi}{2 x}}e^{-x} \left (1+ \O\left (\frac{1}{x}\right )\right ) >0,\qquad
K_{i\nu}'(x)= -\sqrt{\frac{\pi}{2 x}}e^{-x} \left (1+ \O\left (\frac{1}{x}\right )\right )<0,
\end{equation}
therefore, we only need to prove that $K_{i\nu}''(x)>0$.

We first claim that $K_{i\nu}''(x)>0$ if $x\geq \nu^2 $ and $\nu>0$. Indeed, differentiating twice the expression~\eqref{KInt}:
$$
K''_{i\nu}(x) =  \int_{0}^\infty  \exp (-x\cosh t) \cosh^2 t \cos(\nu t)  \dd t .
$$
For $0\leq \nu t\leq \frac{\pi}{4}$, we have that $\cos(\nu t) \geq \frac{\sqrt{2}}{2}$ and then, also using that
$e^t \leq 2 \cosh t\leq e^t + 1 \leq 2 e^t$ for $t\geq 0$, we obtain
\begin{align*}
K''_{i\nu}(x)  & \geq \frac{\sqrt{2}}{2} \int_0^{\frac{\pi}{4\nu}} \exp(-x\cosh (t)) \cosh^2 t\dd t - \int_{\frac{\pi}{4\nu}}^{\infty}
\exp(-x\cosh (t)) \cosh^2 t\dd t  \\
& \geq \frac{\sqrt{2}}{8} \int_0^{\frac{\pi}{4\nu}} \exp\left (-x \frac{e^t+1}{2} \right ) e^{2t}\dd t -
 \int_{\frac{\pi}{4\nu}}^{\infty} \exp \left (-x \frac{e^t}{2}\right )e^{2t} \dd t
 \\ &= \frac{\sqrt{2}}{8} \exp\left (- \frac{x}{2}\right )\int_0^{\frac{\pi}{4\nu}} \exp\left (- \frac{x}{2}e^t \right ) e^{2t}\dd t -
 \int_{\frac{\pi}{4\nu}}^{\infty} \exp \left (-\frac{x}{2}e^t\right )e^{2t} \dd t .
\end{align*}
Note that, performing the obvious change $u=e^t$:
\begin{align*}
\int \exp \left (-x \frac{e^t}{2}\right )e^{2t} \dd t &= \int \exp \left (-\frac{x}{2} u\right ) u \dd u
=-\frac{2}{x} \exp \left (-\frac{x}{2} u\right ) u + \frac{2}{x} \int \exp \left (-\frac{x}{2} u\right ) \dd u
\\ &= -\frac{2}{x} \exp \left (-\frac{x}{2} u\right ) u - \frac{4}{x^2} \exp \left (-\frac{x}{2} u\right )
\\ &= -\frac{2}{x} \exp \left (-\frac{x}{2} e^t\right ) e^t - \frac{4}{x^2} \exp \left (-\frac{x}{2} e^t\right )
\\ & =-  \frac{2}{x^2}\exp \left (-\frac{x}{2} e^t\right ) \left [x e^t + 2\right ]  =:-F(t).
\end{align*}
We obtain then
$$
K''_{i\nu}(x)   \geq \left [F(0) - F\left (\frac{\pi}{4\nu}\right )\right] \frac{\sqrt{2}}{8} e^{-x/2} - F\left (\frac{\pi}{4\nu}\right ).
$$
In order to check that $K''_{i\nu}(x)>0$, we have to prove that
$$
F(0) > F\left (\frac{\pi}{4\nu}\right ) \left [1 + \frac{8}{\sqrt{2}} e^{x/2}\right ].
$$
Since $x\geq 0$, it is enough to check that
$$
2  > \exp \left ( -\frac{x}{2} \left (e^{\frac{\pi}{4 \nu}} -1\right)\right ) \left (x e^{\frac{\pi}{4 \nu}} +2 \right )
\left (1+\frac{8}{\sqrt{2}}e^{x/2}\right ).
$$
On one hand, $x\geq \nu^2$ with $\nu$ small enough, implies that $2 \leq \nu^2 e^{\frac{\pi}{4 \nu}} \leq x e^{\frac{\pi}{4 \nu}}$.
On the other hand, it is clear that $1 \leq e^{x/2}$ if $x>0$ and $x\leq e^x$. Therefore, the above inequality is satisfied if
$$
2 > 6\frac{8}{\sqrt{2}} \exp \left ( -\frac{x}{2} \left (e^{\frac{\pi}{4 \nu}} -1\right)\right )e^x e^{\frac{\pi}{4 \nu}}e^{x/2}
\Longleftrightarrow \frac{\sqrt{2}}{24} >  \exp \left (-\frac{x}{2} \left (e^{\frac{\pi}{4 \nu}} -4\right) + \frac{\pi}{4\nu}\right ),
$$
for all $x\geq \nu ^2$. Thus we need $\nu$ to satisfy:
$$
\frac{\sqrt{2}}{24} >  \exp \left (-\frac{\nu^2}{2} \left (e^{\frac{\pi}{4 \nu}} -4\right) + \frac{\pi}{4\nu}\right ),
$$
which is true if $\nu$ is small enough.

In conclusion, we have proved that, for $\nu>0$ small enough and $x\ge \nu^2$, the function $K_\nu$ satisfies $K''_{i\nu}(x)>0$. It remains to prove that $K''_{i\nu}(x)>0$ if $x\leq \nu^2$. From~\eqref{der0K} and~\eqref{der1K} we have that

\begin{equation}\label{der2K}
K''_{i\nu}(x) = \left [\frac{\nu \pi}{\sinh \nu \pi}\right ]^{1/2} \left \{
\frac{\nu}{x^2} \sin \left (\nu \log \left (\frac{x}{2}\right )- \theta_{0,\nu}\right ) + \frac{1}{x^2} \cos \left (\nu \log \left (\frac{x}{2}\right )- \theta_{0,\nu}\right )+ \frac{h''(x)}{\nu}\right \}.
\end{equation}
For $2 e^2 e^{-\frac{\pi}{2\nu}}\leq x \leq \nu^2$, it is clear from~\eqref{ap:theta0nu}
\begin{align*}
& \nu \log \left (\frac{x}{2}\right )- \theta_{0,\nu} < 2 \nu\log \nu + (\gamma-\log 2) \nu +\O(\nu^2) <0, \\
& \nu \log \left (\frac{x}{2}\right )- \theta_{0,\nu} > 2\nu  - \frac{\pi}{2} + \gamma \nu + \O(\nu^2) > -\frac{\pi}{2},
\end{align*}
if we take $\nu$ small enough. Therefore, if $\nu$ is small enough,
\begin{align*}
\cos \left (\nu \log \left (\frac{x}{2}\right )- \theta_{0,\nu}\right ) & \geq \cos \left (
 - \frac{\pi}{2} + 2\nu + \gamma \nu + \O(\nu^2) \right )
= \sin \left ( (2+ \gamma) \nu + \O(\nu^2) \right )
\\ & \geq \left (1 + \frac{\gamma}{2} \right )\nu ,
\\
\sin \left (\nu \log \left (\frac{x}{2}\right )- \theta_{0,\nu}\right ) & \geq -1.
\end{align*}
Then, from expression~\eqref{der2K} of $K''_{i\nu}(x)$
$$
K''_{i\nu}(x) \geq
\left [\frac{\nu \pi}{x^4 \sinh \nu \pi}\right ]^{1/2} \left \{
\left (1+\frac{\gamma}{2}\right ) \nu  - \nu - C \frac{x^2 }{\nu} \right \}
\geq
\left [\frac{\nu \pi}{x^4 \sinh \nu \pi}\right ]^{1/2} \left \{
\frac{\gamma}{2} \nu  - C \nu^3 \right \}
>0,
$$
if  $\nu$ is small enough.
Therefore, we have just shown that $K''_{i\nu}(x)\geq 0$ if $x\geq 2e^2  e^{-\frac{\pi}{2\nu}}$. This result along with the asymptotic expressions \eqref{eqAsympt} provides the sign for $K'_{i\nu}$ and $K_{i\nu}$.
\end{proof}

\section{The dominant solutions in the inner region. Proof of Proposition~\ref{prop:f0v0_new}} \label{sec:inner_dominant}

We now prove the asymptotic properties of $\fdin, \vdin$ defined in~\eqref{dominantterminner_0}.
As we have already pointed out, the properties of $\f0=\f0^{\mathrm{in}}$ and $\partial_r \f0^{\mathrm{in}}$ are all provided in~\cite{Aguareles2011}.
With respect to the properties of $\v0^{\mathrm{in}}(r)=q\v0(r)$, with $\v0$ in~\eqref{expr:v_0}, in the second item, in~\cite{AgBaSe2016}  the function
$$
\overline{\v0}(r)=-\frac{1}{r\f0^2(r)}\int_{0}^r \xi \f0^2(\xi)(1-\f0^2(\xi))\dd \xi,
$$
was considered and the same asymptotic properties of $\overline{\v0}$ was considered as the ones stated in the second item but for all $r>0$.
We introduce
$$
\Delta \v0(r;k):=\v0(r;k) - \overline{\v0}(r)  = k^2 \frac{1}{r\f0^2(r)}\int_{0}^r \xi \f0^2 (\xi) \dd \xi.
$$
Note that, if $r\sim 0$, using that $\f0(r) \sim \alpha_0 r^n $
$$
\Delta \v0(r;k) \sim \frac{1}{2n+2} k^2 r, \qquad \partial_r \Delta \v0(r;k) \sim k^2 c ,
$$
for some constant $c$. Then it is clear that, for $r\sim 0$, the properties of $\v0^{\mathrm{in}}(r;k,q)$ are deduced from the analogous ones for $\overline{\v0}(r)$ proven in~\cite{AgBaSe2016}.

When $kr \leq n/\sqrt{2}$ and $r\gg 1$, we have that $1/2\leq f_0(r) \leq 1$. Then
$$
|\Delta \v0(r;k) |\leq M k^2 r.
$$
As a consequence $|\Delta \v0(r;k)|\leq M \frac{n^2}{2r} \leq M|\log r| r^{-1}$ if $kr\leq n/\sqrt{2}$. In~\cite{Aguareles2011} was already proven $|\overline{\v0}(r) |\leq M |\log r| r^{-1}$. Therefore this property (and analogously the one for $\v0'$) is satisfied.

It only remains to check that $\v0<0$. From its definition~\eqref{expr:v_0} it is enough to check that $1-k^2-\f0^2(r)>0$ for $0\leq r \leq \frac{n}{k\sqrt{2}}$.
We first notice that
there exists $r_0\gg 1$ such that
$$
1- \f0^2(r) \geq \frac{n^2}{2r^2},\qquad r\gg r_0.
$$
Therefore, for $kr \leq n/\sqrt{2}$ and $r\gg r_0$, we have that
$1- k^2 - \f0^2(r)\geq 0$.
Since $\f0$ is an increasing function, we have that
$1-k^2 - \f0^2(r)\geq 0$ for all $r\geq 0$ such that
$kr\leq n/\sqrt{2}$.

Now we prove the third item. We first deal with the asymptotic expression of $\vdin=q\v0$.
We use the asymptotic expressions of $\f0(r)$ already proven in the first item, namely $\f0(r) =1 - \frac{n^2}{2r^2} + \mathcal{O}(r^{-4})$ as $r\to \infty$.
We write
\begin{align*}
\v0(r) = &   -\frac{1}{r\f0^2(r)} \int_{0}^{r}  \xi \f0^2(\xi) (1- \f0^2(\xi) ) \dd \xi +\frac{k^2}{r\f0^2(r)}\int_{0}^r \xi \f0^2(\xi) \dd \xi=:\v0^1(r) + \v0^2(r).
\end{align*}
We take $r_* \gg 1$. It is clear that
$$
\frac{k^2}{r\f0^2(r)}\int_{0}^r \xi \f0^2(\xi) \dd \xi =
\frac{k^2}{r\f0(r)} \int_{0}^{r_*} \xi \f0^2(\xi) \dd \xi +
\frac{k^2}{r\f0(r)} \int_ {r_*}^r \xi \f0^2(\xi) \dd \xi.
$$
Notice that
$$
\frac{k^2}{r\f0(r)} \int_{0}^{r_*} \xi \f0^2(\xi) \dd \xi = k^2 \mathcal{O}(r^{-1}),
$$
and, using that $\f0^2(r)= 1-\frac{n^2}{r^2} + \mathcal{O}(r^{-4})$ if $r,r_* \gg 1$,
$$
\frac{k^2}{r\f0(r)} \int_ {r_*}^r \xi \f0^2(\xi) \dd \xi =
k^2 \frac{r^2 - r_*^2}{2r} -\frac{k^2 n^2 \log r}{r} + k^2 \mathcal{O}(r^{-1}).
$$

Consider now $r_*\gg 1$ and let us define
\begin{align*}
\Delta \v0(r,r_*) :=& \v0^1(r) +\frac{n^2}{r\f0^2(r)} \log \left ( \frac{r}{r_*} \right ) +\frac{1}{r\f0^2(r)} \int_{0}^{r_*} \xi \f0^2(\xi)(1-\f0^2(\xi))\dd \xi \\ =&
\frac{1}{r\f0^2(r)} \int_{r}^{r_*} \xi \f0^2(\xi)(1-\f0^2(\xi))\dd \xi + \frac{n^2}{r\f0^2(r)} \log \left ( \frac{r}{r_*} \right ).
\end{align*}
It is clear, using again that $\f0^2(r)= 1-\frac{n^2}{2r^2} + \mathcal{O}(r^{-4})$
\begin{align*}
\Delta \v0(r,r_*) = &  \frac{1}{r\f0^2(r)}\int_{r}^{r_*} \frac{n^2}{\xi} + \mathcal{O}\left (\frac{1}{\xi^3}\right )\dd \xi
+ \frac{n^2}{r\f0^2(r)} \log \left ( \frac{r}{r_*} \right )
\\ &= \mathcal{O}(r^{-3}) + \mathcal{O}(r^{-1} r_*^{-2}).
\end{align*}
Therefore, taking $r_*\to \infty$, we have that
\begin{align*}
\mathcal{O}(r^{-3} ) &=v_0^1(r) + \frac{n^2}{r\f0^2(r) } \log r + \frac{1}{r\f0^2(r)}\lim_{r_* \to \infty}
\left (-n^2   \log r_* +
 \int_{0}^{r_*} \xi \f0^2 (\xi) (1- \f0^2(\xi)\dd \xi \right )\\
 & =v_0^1(r)+\frac{1}{r\f0^2(r)}\left(n^2\log r+ C_n\right) = v_0^1(r)+\frac{1}{r}\left(n^2\log r+ C_n\right)\left(1+\mathcal{O}(r^{-2})\right)
\\ &=v_0^1(r) +\frac{n^2}{r} \log r + \frac{C_n}{r} +\mathcal{O}(r^{-3}\log r),
\end{align*}
with $C_n$ as defined in Theorem~\ref{thm:main}.
Collecting all these estimates, the proof of~\eqref{prop:v0innermatching} is complete.

\section*{Acknowledgements}
This work is part of the grant PID-2021-122954NB-100 (funding T.M. Seara and I. Baldomá) and the projects  PID2020-115023RB-I00 and PDC2021-121088-I00 (funding M. Aguareles) all of them financed
MCIN/AEI/ 10.13039/501100011033/ and by “ERDF A way of making Europe”. M. Aguareles is also funded by TED2021-131455A-I00 .  T. M. S.
are supported by the Catalan Institution for Research and Advanced Studies via an ICREA Academia Prize 2019. This work is also supported by the Spanish State
Research Agency, through the Severo Ochoa and María de Maeztu Program for Centers and Units of
Excellence in R\&D (CEX2020-001084-M).

\bibliographystyle{alpha}
\bibliography{qneq0}
\end{document}